\newtheorem{theorem}{Theorem}[section]
\newtheorem{lemma}[theorem]{Lemma}
\newtheorem{corollary}[theorem]{Corollary}
\newtheorem{proposition}[theorem]{Proposition}
\theoremstyle{remark}
\newtheorem{remark}[theorem]{Remark}
\theoremstyle{definition}
\newtheorem{definition}[theorem]{Definition}
\numberwithin{equation}{section}
\DeclareMathOperator{\Cdb}{{\mathbb C}}
\DeclareMathOperator{\Rdb}{{\mathbb R}}
\DeclareMathOperator{\Zdb}{{\mathbb Z}}
\DeclareMathOperator{\Ndb}{{\mathbb N}}
\begin{document}
 
\title[$C^*$-methods for operator  and Banach algebras]{Generalization of $C^*$-algebra methods via real positivity for operator  and Banach algebras} 
\author{David P. Blecher}
\address{Department of Mathematics, University of Houston, Houston, TX
77204-3008}
\email{dblecher@math.uh.edu}

\thanks{Supported by NSF grant  DMS 1201506. }

\date{\today}

\dedicatory{Dedicated with affection and gratitude to Richard V. Kadison.}
\subjclass{Primary 46B40,   46L05, 47L30; Secondary 46H10, 46L07, 46L30, 47L10}

\keywords{Nonselfadjoint operator algebras, ordered linear spaces, approximate identity, accretive operators, state space, quasi-state, hereditary subalgebra, Banach algebra, ideal structure}
\begin{abstract}  
With Charles Read we have introduced and studied a new notion of (real) positivity in operator algebras, with an eye to extending certain $C^\ast$-algebraic results and theories to more general algebras. As motivation note that the `completely' real positive maps on $C^*$-algebras or operator systems are precisely the completely positive maps in the usual sense; however with real positivity one may develop a useful order theory for more general spaces and algebras. This is intimately connected to new relationships between an operator algebra and the $C^*$-algebra it generates.  We have continued this work together with Read, and also with Matthew Neal.  Recently with    Narutaka Ozawa
we have investigated the parts of the theory that  generalize further to  Banach algebras.  
In the present paper we describe some of this work which is connected with generalizing various 
$C^*$-algebraic techniques initiated by
Richard V. Kadison.    In particular Section 2 is in part a tribute to him in keeping with the occasion of this volume, and also discusses 
some of the origins of the theory of positivity in our sense in the setting of algebras, which the later parts of our paper developes 
further.        The most recent work will be emphasized. 
  \end{abstract}

\maketitle

\section{Introduction}    

This is a much  expanded version of our talk given at the 
AMS Special Session ``Tribute to Richard V. Kadison'' in January 2015.
We survey some of our work on a new notion of (real) positivity in operator algebras
(by which we mean closed subalgebras of $B(H)$ for 
a Hilbert space $H$), unital operator spaces, and Banach algebras, focusing on generalizing various
$C^*$-algebraic techniques initiated by
Richard V. Kadison.    In particular Section 2 is in part a tribute to Kadison in keeping with the occasion of this volume, and we will describe a small part of his opus relevant to our setting.  This section also  discusses
some of the origins of the theory of positivity  in our sense 
in the setting of algebras, which the later parts of our paper developes
further.        In the remainder of the paper
 we illustrate our real-positivity theory by showing 
how it relates to these results of Kadison, and also give some small extensions
and additional details for our recent paper with Ozawa \cite{BOZ}, and for \cite{BN15} with Neal.

With Charles Read we have introduced and studied a new notion of (real) positivity in operator algebras, with an eye to extending certain $C^*$-algebraic results and theories to more general algebras. As motivation note that the `completely' real positive maps on $C^*$-algebras or operator systems are precisely the completely positive maps in the usual sense (see
Theorem \ref{bbs} below); however with real positivity one may develop an order theory for more general spaces and algebras
that is useful at least for some purposes.     We have continued this work together with Read, and also with Matthew Neal; giving many applications.    Recently with  Narutaka Ozawa
we have investigated the parts of the theory that  generalize further to  Banach algebras.   In all of this,
 our main goal is to generalize certain nice $C^*$-algebraic results, and certain function space  or function algebra results,  which use positivity or positive approximate identities, but using our real positivity.  
As we said above, in the present paper we survey some of this work which is connected with work of Kadison.   
The most recent work will be emphasized, particularly parts of the Banach-algebraic paper \cite{BOZ}.  One reason for this emphasis is that less background is needed here
(for example noncommutative topology, or our work on noncommutative peak sets and peak
interpolation, which we have surveyed recently in 
\cite{Bnpi} although we have since made more progress  in \cite{BRord}).
Another reason is that we welcome this opportunity
to add some additional details and complements to \cite{BOZ} (and to \cite{BN15}).  
 In particular we will prove some facts that were
stated there without proof.
A subsidiary goal of Sections \ref{poroo}  and \ref{AKba} is to go through versions for general Banach algebras  of results in Sections 3, 4, and 7
of \cite{BOZ} stated
for Banach algebras with approximate identities.            
We will also pose several open questions.
The drawback of course with this focus 
is that the Banach algebra case is sometimes less impressive and clean
 than the operator 
algebra case, there usually being a price to be paid for generality.    
  
Of course an operator algebra or function algebra $A$ may have no positive elements in
the usual sense.   However we  see e.g.\ in Theorem \ref{brcof2} below
that an operator algebra
 $A$ has a contractive approximate identity iff there is a great abundance of
 real-positive elements;
for example, iff $A$ is spanned by its real-positive elements.   Below Theorem \ref{brcof2} we will point out that
this is also true for certain classes of Banach algebras.
Of course in the theory of $C^*$-algebras, positivity and the existence of positive approximate identities  are crucial.
Some form of our `positive cone'  already appeared in papers of Kadison and Kelley and Vaught in the early 
1950's, and in retrospect it is a natural idea to attempt to use such a cone to 
generalize various parts of  $C^*$-algebra theory involving positivity and the existence of positive approximate identities.  However nobody seems to have pursued this until now.      
 In 
 practice, some things are much harder than the $C^*$-algebra case.    And many things simply do not generalize beyond the $C^*$-theory; that is, our approach is effective at generalizing some parts of $C^*$-algebra theory, but
not others.     The worst problem is that although we have a functional calculus, it is not as good.  
Indeed often at first sight in a given $C^*$-subtheory, nothing seems to work.  But if one looks a little closer something works, or an interesting conjecture is raised.    Successful applications so far include for example
 noncommutative topology (eg.\ noncommutative Urysohn and Tietze theorems
for general operator algebras, and the theory of open, closed and compact projections
in the bidual), lifting problems,  the structure of completely contractive idempotent maps on 
an operator algebra (described in Section 3 below),  noncommutative peak sets,
 peak interpolation, and some other noncommutative function theory, comparison theory,  
 the structure of operator algebras,  new relationships between an operator algebra and the $C^*$-algebra it generates, approximate identities, etc.

\section{Richard Kadison and the beginnings of positivity}   
\label{kad}

The first published words of Richard V. Kadison appear to be the following:  \begin{quotation} ``It is the purpose of the present note to investigate
the order properties of self-adjoint operators individually and with
respect to containing operator algebras''.\end{quotation}    This was from the paper  \cite{KadOrderp}, which appeared in 1950.    In the early 1950s the war was over, John von Neumann was editor of the Annals of Mathematics and was talking to anybody who was interested about 
`rings of operators',  Kadison was in Chicago and the IAS, and all was well with the world.     In 1950, von Neumann  wrote a letter to Kaplansky (IAS Archives, reproduced in \cite{vNletters}) which begins as follows:

\medskip

\begin{quotation} 
``Dear Dr. Kaplansky,

\smallskip

Very many thanks for your letter of February 11th and your manuscript on "Projections in 
Banach Algebras". I am very glad that you are submitting it for THE ANNALS, and I will 
immediately recommend it for publication.

Your results are very interesting.  You are, of course, very right: 
 I am and I have been 
for a long time strongly interested in a ``purely algebraical" rather than 
``vectorial-spatial" foundation for theories of operator-algebras or operatorlike-algebras.  To be more precise: It always seemed to me that there were three successive 
levels of abstraction - first, and lowest, the vectorial-spatial, in which the Hilbert 
space and its elements are actually used; second, the purely algebraical, where only the 
operators or their abstract equivalents are used; third, the highest, the approach when 
only linear spaces or their abstract equivalents (i.e.\ operatorially speaking, the 
projections) are used.  [...] After Murray and I had reached somewhat rounded results on 
the first level, I neglected to make a real effort on the second one, because I was 
tempted to try immediately the third one. This led to the theory of continuous 
geometries. In studying this, the third level, I realized that one is led there to the 
theory of ``finite" dimensions only.   The discrepancy between what might be considered 
the ``natural" ranges for the first and the third level led me to doubt whether I could 
guess the correct degree of generality for the second one...".
\end{quotation}

\medskip

It is remarkable here to recall that von Neumann invented the abstract definition  of Hilbert  spaces, the theory of unbounded operators (as well as much of the bounded theory), 
ergodic theory, the mathematical formulation of 
quantum mechanics, many fundamental concepts associated with groups  (like amenability), and several other fields of analysis.  
Even today, teaching a course in functional analysis 
can sometimes feel like one is mainly expositing the work of this one man.    However
von Neumann is saying above  that he had unfortunately neglected what he calls the `second level' of `operator algebra', and at the time of this letter this was ripe and timely for exploration.   

Happily, about the time the above letter was written, Richard Vincent Kadison entered the world with a bang: a spate of amazing papers at von Neumann's `second level'.    Indeed  
Kadison soon  took leadership of the American school
of operator algebras.  
Some part of his early work was concerned with positive cones and their properties.
We will now briefly describe  a few of these and spend much of the remainder of our article showing how they can be generalized
to nonselfadjoint operator algebras and Banach algebras.   The following comprises just  a tiny part
of Kadison's opus; but nonetheless is still foundational and seminal.  Indeed much of $C^*$-algebra theory 
would disappear without this work.   At the start of
this section we already mentioned his first paper, devoted to `order properties of self-adjoint operators individually and with
respect to containing operator algebras'.   His memoir
``A representation theory for commutative topological algebra" \cite{KadMem} soon
followed, one small aspect of which was the introduction and study of positive cones, states, and square roots in Banach algebras.   In the 1951 Annals paper  \cite{Kiso}, Kadison generalized the ‘Banach-Stone’
theorem, characterizing surjective isometries between $C^*$-algebras. This result
has inspired very many functional analysts and innumerable papers.  See for example \cite{FJ} for a collection
of such results, together with their history, although this reference is a bit dated since the list grows all the time.  
See also e.g.\ \cite[Section 6]{Bmca}.  
In a 1952 Annals paper \cite{KadSch}
 he proved the Kadison--Schwarz inequality, a fundamental
inequality satisfied by positive linear maps on $C^*$-algebras.      His student St{\o}rmer continued this in a very long (and still continuing) 
series of deep papers.  Later this Kadison--Schwarz work was connected to {\em completely positive maps}, Stinespring's theorem and Arveson's extension theorem (see the next paragraph and e.g.\ \cite{Pbook}), conditional expectations, operator systems and operator spaces, quantum information theory, etc.    A related enduring interest 
of Kadison's is  projections and conditional expectations on $C^*$-algebras and von Neumann algebras.
A search of his collected works finds very many contributions to this topic (e.g.\ \cite{Kadce}).

In 1960,
 Kadison together with I. M. Singer \cite{KS}  initiated the study of 
{\em nonselfadjoint operator algebras} on a Hilbert space (henceforth simply called
{\em operator algebras}).  Five years later or so, the late Bill Arveson in his thesis continued  the
study of nonselfadjoint operator algebras, using heavily the
Kadison-Fuglede determinant of \cite{KadFug} and positivity 
properties of conditional expectations.    This work was published in \cite{AIOA}; it developes a 
von Neumann algebraic theory of noncommutative  Hardy spaces.
We mention in passing that we continued Arveson's work from  \cite{AIOA}
in a series of papers with Labuschagne, again using the
Kadison-Fuglede determinant of \cite{KadFug} as a main tool (see e.g.\ the survey \cite{BL5}), as well as
positive conditional expectations and the Kadison--Schwarz inequality.  
This is another example of using $C^*$-algebraic methods,
 and in particular  tools originating in seminal work of
 Kadison, in a more general (noncommutative function theoretic) setting.
However since this lies in a different direction to the rest of the present article we will say no more about this.
In the decade after  \cite{AIOA}, Arveson went on to write many other 
seminal papers on nonselfadjoint operator algebras, perhaps most notably 
 \cite{SOC}, in which completely positive maps and the Kadison--Schwarz inequality
play a decisive role, and which may be considered a source of the later theory of
{\em operator spaces} and {\em operator systems}.

Another example: in 1968 Kadison and Aarnes, his first student at Penn, introduced {\em strictly positive} elements in a $C^*$-algebra $A$, namely $x \in A$ which satisfy
 $f(x) > 0$ for every state $f$
of $A$.     They proved the fundamental basic result:

\begin{theorem}[Aarnes--Kadison] \label{AaKa}  For a
$C^*$-algebra $A$ the following are equivalent:
\begin{itemize} \item [(1)]   $A$ has a strictly positive element.
\item [(2)]   $A$ has a countable increasing contractive approximate identity.
 \item [(3)]   $A = \overline{zAz}$ for some positive $z \in A$.
\item [(4)]   The positive cone $A_+$ has an element $z$ of full support  (that is, the support projection $s(z)$ is $1$).
\end{itemize}
The approximate identity in {\rm (2)} may be taken to be commuting, indeed it may be taken to be
$(z^{\frac{1}{n}})$ for $z$ as in {\rm (3)}.   If $A$ is a separable $C^*$-algebra then these all hold.
\end{theorem}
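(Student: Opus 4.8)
\medskip

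The plan is to route all four conditions through the \emph{support projection} $s(z) \in A^{**}$ of a positive element $z$, exploiting two standard facts: (a) the bidual functional calculus shows that for $z \in A_+$ with $\|z\| \leq 1$ the functions $t \mapsto t^{1/n}$ increase pointwise on $\sigma(z)$ to $\chi_{(0,1]}$, so $z^{1/n} \uparrow s(z)$ weak* in $A^{**}$, where $s(z)$ is the smallest projection $p$ with $pz = z$; and (b) $\overline{zAz}$ is the hereditary subalgebra of $A$ associated with the open projection $s(z)$, so that $\overline{zAz} = A$ if and only if $s(z) = 1$ (a hereditary subalgebra with support projection $1$ being all of $A$). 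I would also use that an increasing net of positive contractions in $A$ is a contractive approximate identity precisely when it converges weak* to $1_{A^{**}}$, and that a state $f$ on $A$ satisfies $f(z) = 0$ iff $\bar f(s(z)) = 0$ for its normal extension $\bar f$, by a short GNS/Cauchy--Schwarz argument.

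Granting these, the equivalences are quick. For $(1) \Leftrightarrow (4)$: if $s(z) \neq 1$ then some state $\phi$ of $A^{**}$ has $\phi(1 - s(z)) = 1$, whence $0 \leq \phi(z) \leq \|z\|\,\phi(s(z)) = 0$, so $\phi|_A$ witnesses that $z$ is not strictly positive; conversely if $s(z) = 1$ and a state $f$ had $f(z) = 0$, then $\bar f(s(z)) = 0$, i.e.\ $\bar f(1) = 0$, a contradiction. The equivalence $(3) \Leftrightarrow (4)$ is then immediate from (b). For $(4) \Rightarrow (2)$ with the stated refinements: take $z$ as in (4), rescale so $\|z\| \leq 1$; then $(z^{1/n})$ is an increasing, contractive, commuting sequence, and by (a) it converges weak* to $s(z) = 1$, hence is a countable commuting cai. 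Finally $(2) \Rightarrow (1)$: given a countable increasing cai $(e_n)$, set $z = \sum_n 2^{-n} e_n \in A_+$; if a state $f$ had $f(z) = 0$ then $f(e_n) = 0$ for every $n$, contradicting the standard limit $f(e_n) \to \|f\| = 1$ for a cai (itself a Cauchy--Schwarz estimate using $e_n^2 \leq e_n$). So $z$ is strictly positive. This closes the cycle $(1) \Leftrightarrow (4) \Leftrightarrow (3)$ and $(4) \Rightarrow (2) \Rightarrow (1)$, and exhibits the approximate identity of $(2)$ as $(z^{1/n})$ for $z$ as in $(3)$.

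For the last assertion it is enough to produce a countable cai when $A$ is separable. Starting from any cai $(e_\lambda)$ and a countable dense subset $\{a_k\}$ of $A$, a diagonal argument picks indices $\lambda_1 \leq \lambda_2 \leq \cdots$ with $\|e_{\lambda_n} a_k - a_k\| < 1/n$ for $k \leq n$; together with the uniform bound $\|e_{\lambda_n}\| \leq 1$ this forces $(e_{\lambda_n})$ to be a cai on all of $A$, so $(2)$ holds and hence so do $(1)$--$(4)$. Alternatively one verifies directly that $\sum_n 2^{-n} e_{\lambda_n}$ is strictly positive.

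I expect the only real friction to be bookkeeping rather than ideas: the genuinely substantive inputs are the weak* convergence $z^{1/n} \to s(z)$ and the hereditary-subalgebra/open-projection dictionary, and the two spots needing a little care are the implication $(1) \Rightarrow (3)$ (equivalently, that support projection $1$ forces $\overline{zAz} = A$) and the verification in the separable case that the extracted sequence is an approximate identity for \emph{all} of $A$, not merely for the chosen dense set — both handled by density together with the uniform bound on the $e_{\lambda_n}$.
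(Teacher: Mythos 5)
Your argument is correct in substance, and it is worth noting that it does considerably more than the paper does: the paper treats the equivalence of (1)--(3) as the classical Aarnes--Kadison theorem and supplies no proof, adding only the one-line remark that (4) follows because $s(z)$ is the weak* limit of $(z^{1/n})$ (with an ``easy'' converse). Your route makes that remark the engine of the whole proof --- everything is funneled through the support projection and the dictionary between hereditary subalgebras and open projections --- so your treatment of (4) coincides exactly with the paper's intended mechanism, while the rest is a complete, self-contained proof of the classical equivalences that the paper leaves to the literature. What this buys is a uniform picture (all four conditions say $s(z)=1$ for some $z\in A_+$) at the cost of invoking bidual technology (normality of extended states, weak* convergence of $z^{1/n}$, the Dini-type argument showing an increasing net of positive contractions converging weak* to $1_{A^{**}}$ is a cai), whereas the classical proofs of $(1)\Leftrightarrow(2)\Leftrightarrow(3)$ can be run inside $A$ using the quasi-state space and Cauchy--Schwarz.

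One point needs a word of repair. In the direction ``$s(z)\neq 1$ implies $z$ is not strictly positive'' you take \emph{some} state $\phi$ of $A^{**}$ with $\phi(1-s(z))=1$ and use $\phi|_A$; but an arbitrary state of $A^{**}$ may restrict to a functional of norm less than one on $A$, or even to zero (think of a Banach-limit state on $\ell^\infty=(c_0)^{**}$). You should take a \emph{normal} state $\phi$ with $\phi(1-s(z))=1$ (one exists since $1-s(z)$ is a nonzero projection in the von Neumann algebra $A^{**}$); normality together with the weak* density of ${\rm Ball}(A)$ in ${\rm Ball}(A^{**})$ gives $\|\phi|_A\|=\phi(1)=1$, so $\phi|_A$ really is a state of $A$ annihilating $z$. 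Similarly, in the separable case the sequence you extract is increasing only if you start from an increasing cai (e.g.\ the canonical one indexed by the positive part of the open unit ball); alternatively, as you note, it suffices to check that $\sum_n 2^{-n}e_{\lambda_n}$ is strictly positive and then obtain the increasing cai from (4)$\Rightarrow$(2). With these two small adjustments the proof is complete.
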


Aarnes and Kadison did not prove (4).  However (4) is immediate from the rest since 
$s(z)$ is the weak* limit of $z^{\frac{1}{n}}$, and the converse is
easy.  
This result is related to the theory of hereditary subalgebras, comparison theory in 
$C^*$-algebras, etc.   In fact much of modern $C^*$-algebra
theory would collapse without basic results like this.    For example, the Aarnes--Kadison theorem  implies the beautiful characterization due to 
Prosser \cite{Pros} of closed one-sided ideals in a separable $C^*$-algebra $A$ as the `topologically principal  (one-sided) ideals' (we are indebted to the referee for pointing out that Prosser was a student of Kelley).   The latter is 
equivalent to the characterization of hereditary subalgebras of such  $A$ as the 
subalgebras of form $\overline{zAz}$.    (We recall that a hereditary subalgebra, or HSA for short,
is a closed selfadjoint subalgebra $D$ satisfying $DAD \subset D$.)  These results are used in many modern theories
such as that of 
the Cuntz semigroup.    Or, as another example, the Aarnes--Kadison theorem  is used in the important stable 
isomorphism theorem for Morita equivalence of $C^*$-algebras (see e.g.\ \cite{Bla,Brown}).

 Indeed in some sense  the Aarnes--Kadison theorem  is 
equivalent to the first assertion of the following:

\begin{theorem} \label{AaKa2}    A HSA  (resp.\ closed right ideal) in a
$C^*$-algebra $A$ is (topologically) principal, that is of the form   $\overline{zAz}$   (resp.\ $\overline{zA}$)
for some $z \in A$ iff it has a countable (resp.\ countable left) contractive  
approximate identity.   Every closed right ideal   (resp.\ HSA) is the closure of an increasing union 
of such (topologically) principal right ideals   (resp.\ HSA's).     
\end{theorem}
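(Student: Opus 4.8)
The plan is to prove the four assertions in parallel pairs: the two ``principal iff countable one-sided c.a.i.'' statements, and the two ``closure of an increasing union of principals'' statements. The HSA half of the first pair will be reduced to Theorem~\ref{AaKa} applied to the HSA viewed as a $C^*$-algebra in its own right; the right-ideal half, and both halves of the second pair, will be handled directly with the continuous functional calculus and Douglas' range-inclusion (factorization) lemma.

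\emph{The ``iff'' for HSAs.} If $D=\overline{zAz}$ with $z\ge 0$ and $\|z\|\le 1$ (one normalizes, and a general $z$ for which $\overline{zAz}$ happens to be an HSA reduces to this case), then $(z^{1/n})$ is an increasing contractive countable approximate identity for $D$: a standard functional-calculus fact, using $\overline{zAz}=\overline{z^{1/2}Az^{1/2}}$ and $z^{1/n}(z^{1/2}az^{1/2})z^{1/n}=z^{1/2+1/n}az^{1/2+1/n}\to z^{1/2}az^{1/2}$. Conversely, an HSA $D$ is itself a $C^*$-algebra, so if it has a countable contractive approximate identity then Theorem~\ref{AaKa} supplies $z\ge 0$ in $D$ with $D=\overline{zDz}$; now heredity enters: since $z\in D$ and $DAD\subseteq D$ we get $zAz\subseteq D$, so $\overline{zDz}\subseteq\overline{zAz}\subseteq D$, whence $D=\overline{zAz}$. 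This transfer from the ``internal'' generator to the ``ambient'' one is the step to watch.

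\emph{The ``iff'' for right ideals.} For the forward direction, given $J=\overline{zA}$, I would set $e_n=zz^*(1/n+zz^*)^{-1}$; since $t\mapsto t/(1/n+t)$ vanishes at $0$ this really lies in $A$, with $\|e_n\|\le 1$ and $e_n\le e_{n+1}$. Using the intertwining identity $z\,g(z^*z)=g(zz^*)\,z$ one finds, for $x=za$, that $e_nx-x=-z\big(\tfrac{1/n}{1/n+z^*z}\big)a$, of norm at most $\tfrac{1}{2\sqrt n}\|a\|$ (the supremum of $t\mapsto (1/n)^2t/(1/n+t)^2$ equals $1/(4n)$); an $\varepsilon/3$ argument then gives $e_nx\to x$ for all $x\in\overline{zA}=J$, so $(e_n)$ is a countable left contractive approximate identity. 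For the converse, given a countable left contractive approximate identity $(f_n)$ for $J$, put $z=\sum_n 2^{-n}f_nf_n^*$: this converges, $z\ge 0$, and $z\in J$ since each $f_nf_n^*=f_n(f_n^*)\in J$. Then $\overline{zA}\subseteq J$ is trivial, while $f_mf_m^*\le 2^m z=2^m z^{1/2}(z^{1/2})^*$, so Douglas' lemma gives $f_m=z^{1/2}c_m$ with $c_m\in A$; hence $f_m\in\overline{z^{1/2}A}=\overline{zA}$, so $f_mA\subseteq\overline{zA}$, and since $J=\overline{\bigcup_m f_mA}$ (because $f_mx\to x$ and $f_mx\in f_mA$ for $x\in J$) we conclude $J=\overline{zA}$.

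\emph{The ``increasing union'' statements, and the main obstacle.} The crux here is that the closed right ideal generated by a \emph{finite} set $\{x_1,\dots,x_k\}$ is topologically principal: with $y=\sum_i x_ix_i^*\ge 0$ one has $x_ix_i^*\le y=y^{1/2}(y^{1/2})^*$, so $x_i=y^{1/2}c_i$ by Douglas, giving $\overline{x_1A+\cdots+x_kA}\subseteq\overline{y^{1/2}A}=\overline{yA}$, while the reverse inclusion holds since $y\in x_1A+\cdots+x_kA$. Hence, for an arbitrary closed right ideal $J$, the closed right ideals generated by the finite subsets of $J$ form an upward-directed family of principal right ideals, each contained in $J$, whose union is dense in $J$ (as $x\in\overline{xA}$ for every $x$). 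The HSA version is the same with $y=\sum_i(x_ix_i^*+x_i^*x_i)$: from $x_ix_i^*\le y$ and $x_i^*x_i\le y$ one gets $x_i\in y^{1/2}A\cap Ay^{1/2}$, hence $y^{1/n}x_iy^{1/n}\to x_i$ with $y^{1/n}x_iy^{1/n}\in\overline{yAy}$, so $x_i\in\overline{yAy}$, an HSA sitting inside any HSA containing the $x_i$, and one runs the directed-family argument inside $D$. I expect the only genuinely delicate points to be (i) the transfer $\overline{zDz}\rightsquigarrow\overline{zAz}$ above, where heredity $DAD\subseteq D$ is exactly what is needed, and (ii) verifying that the objects $\overline{zA}$, $\overline{zAz}$, $\overline{yAy}$ built from a single generator are honestly a closed right ideal, resp.\ HSAs, of the intended ``size'' — the substantial analytic content being imported wholesale from Theorem~\ref{AaKa}.
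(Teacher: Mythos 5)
Your proof is essentially correct, and there is no proof in the paper to compare it with line by line: Theorem \ref{AaKa2} is stated there without proof, as a known consequence of the Aarnes--Kadison theorem together with Prosser's description of closed one-sided ideals; your reconstruction is the standard argument, and the step you rightly single out (the transfer $\overline{zDz}\rightsquigarrow\overline{zAz}$ via $z\in D$ and $DAD\subset D$) is handled correctly. Two small points should be tightened. First, you invoke Douglas' lemma to get an exact factorization $f_m=z^{1/2}c_m$ with $c_m\in A$; in a general $C^*$-algebra that exact factorization is not available (Douglas only gives $c_m\in B(H)$). But the conclusion you actually use, $f_m\in\overline{zA}$, is correct and follows directly from $f_mf_m^*\le 2^mz$: since $\|(1/k)(1/k+z)^{-1}f_m\|^2\le 2^m\,\|(1/k)^2(1/k+z)^{-1}z(1/k+z)^{-1}\|\le 2^m/(4k)\to 0$, one has $f_m=\lim_k z\bigl[(1/k+z)^{-1}f_m\bigr]\in\overline{zA}$; the same remark applies to the factorizations $x_i=y^{1/2}c_i$ in the directed-union step. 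Second, when applying Theorem \ref{AaKa} to the $C^*$-algebra $D$ note that item (2) there asks for a countable \emph{increasing} cai; a countable cai upgrades to one by the standard $\sigma$-unitality argument (e.g.\ $\sum_n 2^{-n}e_n^*e_n$ is strictly positive), but a word is needed.

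The one genuine soft spot is the sentence ``a general $z$ for which $\overline{zAz}$ happens to be an HSA reduces to this case.'' For right ideals the reduction to positive generators is honest: $\overline{zA}=\overline{zz^*A}$ for every $z\in A$, by the same $(1/n+zz^*)^{-1}$ computation you use. For HSAs it is not obviously so: if $z$ is not normal, I do not see an easy proof that an HSA of the form $\overline{zAz}$ equals $\overline{wAw}$ for some $w\ge 0$, and the natural one-sided approximate units $zz^*(1/n+zz^*)^{-1}$ and $(1/n+z^*z)^{-1}z^*z$ need not lie in $D$, so they do not directly give a cai \emph{for} $D$. The reading of the theorem that makes it ``in some sense equivalent to'' Aarnes--Kadison --- whose item (3) has $z$ positive --- is with $z\ge 0$ (equivalently $z\in D$), and for that reading your proof is complete; moreover your converse direction produces a positive generator, so the ``iff'' survives under that reading. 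But as written, the forward HSA implication for literally arbitrary $z\in A$ is asserted rather than proved; either restrict to positive $z$ or supply the missing reduction.
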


Indeed separable HSA's  (resp.\ closed right ideals) in $C^*$-algebras have countable 
(resp.\ countable left) approximate identities.

One final work of Kadison which we will mention here is his first
paper with Gert Pedersen \cite{KP}, which amongst other things initiates the development of a  comparison theory for elements in $C^*$-algebras  generalizing the von Neumann  equivalence  of projections.   Again positivity and properties of the positive cone are key to that work.      This paper is often cited 
in recent papers on the Cuntz semigroup.    

The big question we wish to address in this article is how to generalize such 
results and theories, in which positivity is
the common theme, to not necessarily selfadjoint
operator algebras (or perhaps  even  Banach algebras).    In fact one often can, as we have shown in joint work 
with Charles Read, Matt Neal, Narutaka Ozawa, and others.  
  In the Banach algebra  literature of course there are many generalizations of $C^*$-algebra results, but as far as we are aware there is no `positivity' approach like ours (although there is a
trace of it in \cite{Est}).   In particular we mention 
Sinclair's generalization from \cite{Sinc}
of part of the Aarnes--Kadison theorem:

\begin{theorem}[Sinclair]   \label{AaKaSin}   A separable 
Banach algebra $A$ with a bounded approximate identity has a commuting bounded approximate identity.
\end{theorem}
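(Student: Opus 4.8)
The plan is to compress a bounded approximate identity into the functional calculus of a single element, whose resolvents then automatically form a \emph{commuting} approximate identity --- this is the Banach-algebraic shadow of the passage $z \mapsto (z^{1/n})$ in the Aarnes--Kadison theorem (Theorem \ref{AaKa}).

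\emph{Step 1: produce one real-positive ``cyclic'' element.} Separability first lets us replace the given bounded approximate identity of bound $M$ by a sequential one: if $\{x_j\}$ is dense in $A$, choose for each $n$ an index so that the selected element $e_n$ satisfies $\|e_n x_j - x_j\| < 1/n$ and $\|x_j e_n - x_j\| < 1/n$ for all $j \le n$; an $\varepsilon/3$--estimate shows $(e_n)$ is again a bounded approximate identity. Next invoke the Banach-algebra real-positivity theory of \cite{BOZ} (see the discussion around Theorem \ref{brcof2}): since $A$ has a bounded approximate identity, the real-positive elements of $A$ span a dense subspace. Choosing for each $j$ a finite real-positive combination within $1/j$ of $x_j$, and collecting and rescaling to norm $\le 1$ the real-positive elements that occur, we obtain a \emph{countable} family $(b_k)_{k\ge 1}$ of real-positive elements with $\overline{\spn}\{b_k : k\} = A$. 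Put
\[ a \; := \; \sum_{k \ge 1} 2^{-k} b_k \; \in \; A . \]
Since the real-positive cone is norm-closed and stable under nonnegative combinations, $a$ is real-positive; and since $a - 2^{-k}b_k$ is a sum of real-positive elements, hence real-positive, the domination principle of the real-positivity order theory (the nonselfadjoint analogue of ``$0 \le c \le d$ in a $C^*$-algebra implies $c \in \overline{dA}$'') gives $b_k \in \overline{aA}$ for every $k$. As $\overline{\spn}\{b_k : k\} = A$ this forces $\overline{aA} = A$, and symmetrically $\overline{Aa} = A$.

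\emph{Step 2: resolvents of $a$ give a commuting bounded approximate identity.} Because $a$ is real-positive, it is accretive for the numerical range of the unitization $A^1$, so $1 + ta$ is invertible in $A^1$ with $\|(1+ta)^{-1}\| \le 1$ for every $t > 0$; hence
\[ u_t \; := \; ta\,(1+ta)^{-1} \; = \; 1 - (1+ta)^{-1} \; = \; (1+ta)^{-1} ta \; \in \; A, \qquad \|u_t\| \le 2 . \]
For $x = ay$ one computes $x - u_t x = (1+ta)^{-1} a y = t^{-1} u_t y \to 0$ as $t \to \infty$; combined with the uniform bound $\|u_t\| \le 2$ and $\overline{aA} = A$, a density estimate yields $u_t x \to x$ for all $x \in A$, and the symmetric left-hand computation with $\overline{Aa} = A$ gives $x u_t \to x$. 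Thus $(u_t)_{t>0}$ --- equivalently the sequence $(u_n)_{n \in \Ndb}$ --- is a bounded approximate identity for $A$. Finally $u_s$ and $u_t$ are built from the resolvents $(1+sa)^{-1}$ and $(1+ta)^{-1}$ of the single element $a$, and these commute (resolvent identity), so $u_s u_t = u_t u_s$: we have a countable \emph{commuting} bounded approximate identity.

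\emph{Where the difficulty lies.} Step 2 is entirely soft and works in any unital Banach algebra once a real-positive $a$ with $\overline{aA} = \overline{Aa} = A$ is in hand. The substance is Step 1: the fact that a bounded approximate identity forces an abundance of real-positive elements (a dense span), together with the domination principle $b_k \in \overline{aA}$ --- this is exactly what the operator-algebra results of Blecher--Read and their Banach-algebra extension in \cite{BOZ} supply, and it is the only place the structure of $A$ enters beyond the bare existence of a bounded approximate identity. Sinclair's own route to a single cyclic element is different, proceeding through the theory of continuous one-parameter semigroups and analytic function algebras, and in that form applies to every separable Banach algebra with a bounded approximate identity.
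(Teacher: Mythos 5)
Your Step~2 is sound and is essentially how one does finish: once a single accretive $a$ with $\overline{aA} = \overline{Aa} = A$ is in hand, the elements $u_t = ta(1+ta)^{-1} = {\mathfrak F}(ta)$ lie in ${\mathfrak F}_A$ by Lemma \ref{stam}, are uniformly bounded, commute, and form a bounded approximate identity by exactly the density estimate you give. The reduction to a sequential bai at the start of Step~1 is also fine. The trouble is the rest of Step~1, which has two genuine gaps. First, the assertion that a bounded approximate identity forces ${\mathfrak r}_A$ to span a dense subspace of $A$ is not available at this level of generality: in \cite{BOZ} (and in Theorems \ref{brcof2}, \ref{brcof3} and \ref{brcof4} above) that conclusion is established only for operator algebras, for $M$-approximately unital Banach algebras, and for scaled algebras with $S(A)=S_{\mathfrak e}(A)$ and $Q(A)$ weak* closed --- and in each case one needs a \emph{contractive} approximate identity, not merely a bounded one. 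Sinclair's theorem is supposed to apply to every separable Banach algebra with a bai, so you cannot place yourself in one of these classes.

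Second, and more fatally, the ``domination principle'' you invoke is false for the real-positive ordering, precisely because ${\mathfrak r}_A$ is a wedge rather than a proper cone: $0 \preccurlyeq b \preccurlyeq a$ puts no constraint on the skew part of $b$ where $a$ degenerates. Concretely, let $A = \{f \in C([0,2]) : f(0)=0\}$, let $a(t) = \min(t, 2-t)$ and $b(t) = i\sin(\pi t/4)$. Then $b$ and $a-b$ are both accretive (their real parts are $0$ and $a \geq 0$ respectively), so $0 \preccurlyeq b \preccurlyeq a$; yet $b(2) = i \neq 0$ while every element of $\overline{aA}$ vanishes at $t=2$, so $b \notin \overline{aA}$. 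Thus even in a commutative $C^*$-algebra the passage from $2^{-k}b_k \preccurlyeq a$ to $b_k \in \overline{aA}$ breaks down, and your $a = \sum_k 2^{-k}b_k$ need not satisfy $\overline{aA}=A$. The route Sinclair actually takes --- and the one this paper alludes to immediately after the statement of the theorem --- is a variant of the Cohen factorization method: from a sequential bai one builds, by an infinite product/power series construction (equivalently, via an analytic semigroup), a single element $a$ with $A = \overline{aA} = \overline{Aa}$, with no positivity input whatsoever; your Step~2, or the passage to the commuting bai $(a^{1/n})$ as in Corollary \ref{ottertoo}, then completes the argument.
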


If $A$ has a  countable bounded approximate identity then Sinclair and others show results like
  $A = \overline{xA} = \overline{Ay}$ for some $x, y \in A$.  In part of our work we follow Sinclair in using variants of the proof of the Cohen factorization method to achieve such results but with `positivity'.

We now explain one of the main ideas.
Returning  to the early 1950s: it was only then becoming  perfectly clear what a $C^*$-algebra was; a few fundamental facts about the positive cone were still being proved.  We 
recall that an unpublished result of  
Kaplansky removed the final superfluous abstract axiom for
a $C^*$-algebra, and this used a result in a 1952
paper of Fukamiya, and in a 1953 paper of John Kelley and Vaught \cite{KV}
based  on a 1950 ICM talk by those authors.    These sources are 
referenced in almost  every $C^*$-algebra book.
The paper of Kelley and Vaught was titled
 ``The positive cone in Banach algebras'', and in the first section of the 
paper they
discuss precisely that.  The following
 is not an important part of their paper, but as in Kadison's paper a year earlier they have a small discussion on how to
 make sense of the notion of a positive cone in a Banach algebra, and they 
prove some basic results here.  
 Both Kadison and Kelley and Vaught have some use for the set
$${\mathfrak F}_A = \{ x \in A : \Vert 1 - x \Vert \leq 1 \}.$$
In their case $A$ is unital (that is has an identity of norm $1$), but if not 
one may take $1$ to be the identity of a unitization of $A$.
In \cite{BRI}, Charles Read and the author began a study of {\em not necessarily selfadjoint operator algebras on a Hilbert space $H$}; henceforth {\em operator algebras}.
In this work,
 ${\mathfrak F}_A$ above plays a pivotal role, and also the cone $\Rdb^+ {\mathfrak F}_A$.
In \cite{BRII} we looked at the slightly larger 
cone ${\mathfrak r}_A$ of so called {\em accretive elements} (this is a non-proper cone or `wedge').    
In an operator algebra these are the elements with
positive real part;  in a general Banach algebra they are the elements $x$ 
with Re$ \; \varphi(x) \geq 0$ for every state $\varphi$ on a unitization of $A$.   We recall that
a state on a unital Banach algebra
is, as usual in the theory of numerical range \cite{BoNR1},  a 
norm one functional $\varphi$ such that $\varphi(1)=1$.    That is, accretive elements
 are the elements with  numerical range in the closed right half-plane.   We sometimes
also call these the {\em real positive} elements.      We will see later in Proposition \ref{whau} that 
$\overline{\Rdb^+ {\mathfrak F}_A} = {\mathfrak r}_A$.     That is, the one cone above is the closure of the other.
 We write ${\mathfrak C}_A$ for either of these cones.

The following is known, some of it attributable to Lumer and Phillips,
or implicit in  the theory of contraction semigroups, or can be found in e.g.\ \cite[Lemma 2.1]{Mag}.
The latter paper was no doubt  influential on our real-positive theory in \cite{BOZ}.  

\begin{lemma} \label{chaccr}   Let $A$ be a unital Banach algebra.   If $x \in A$ the following are equivalent:
 \begin{itemize} \item [(1)]    $x \in {\mathfrak r}_A$, that is, $x$ has  numerical range in the closed right half-plane.
 \item [(2)]  
   $\Vert 1 - tx \Vert \leq 1 + t^2 \Vert x \Vert^2$  
for all $t > 0$.  
 \item [(3)]    $\Vert \exp(-tx) \Vert \leq 1$ for all $t > 0$.
\item [(4)] $\Vert (t + x)^{-1} \Vert \leq \frac{1}{t}$  
for all $t > 0$.  
\item [(5)]    $\Vert 1 - tx \Vert \leq \Vert 1 - t^2 x^2 \Vert$  for all $t > 0$.
\end{itemize}  
    \end{lemma}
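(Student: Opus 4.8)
The plan is to run the cycle of implications
\[(1)\Longrightarrow(3)\Longrightarrow(4)\Longrightarrow(5)\Longrightarrow(2)\Longrightarrow(1),\]
the one nontrivial external input being the standard formula from numerical range theory (see \cite{BoNR1}): for $y\in A$ the map $t\mapsto\|1+ty\|$ is convex and equal to $1$ at $t=0$, so $t\mapsto t^{-1}(\|1+ty\|-1)$ is nondecreasing on $(0,\infty)$ and, writing $V(y)$ for the numerical range,
\[\sup\{\operatorname{Re}\lambda:\lambda\in V(y)\}=\lim_{t\to0^+}\frac{\|1+ty\|-1}{t}=\inf_{t>0}\frac{\|1+ty\|-1}{t}.\]
From this I extract two things. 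First, applying it to $y=-x$ gives $(2)\Rightarrow(1)$ immediately: $(2)$ forces $t^{-1}(\|1-tx\|-1)\le t\|x\|^2\to0$ as $t\to0^+$, so $\sup\operatorname{Re}V(-x)\le0$, i.e.\ $x\in{\mathfrak r}_A$. Second, the well-known consequence $\|\exp(ty)\|\le\exp\bigl(t\sup\operatorname{Re}V(y)\bigr)$ for $t\ge0$ (coming from $\exp(ty)=\lim_n(1+ty/n)^n$).

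For $(1)\Rightarrow(3)$: if $x\in{\mathfrak r}_A$ then $V(-tx)=-t\,V(x)$ lies in the closed left half-plane for each $t>0$, so $\sup\operatorname{Re}V(-tx)\le0$ and the exponential bound gives $\|\exp(-tx)\|\le1$. For $(3)\Rightarrow(4)$: assuming $\|\exp(-sx)\|\le1$ for all $s>0$, the Bochner integral $R_t:=\int_0^\infty e^{-ts}\exp(-sx)\,ds$ converges (since $\|e^{-ts}\exp(-sx)\|\le e^{-ts}$) with $\|R_t\|\le\int_0^\infty e^{-ts}\,ds=1/t$, and an integration by parts using $\tfrac{d}{ds}\exp(-sx)=-x\exp(-sx)$ and $e^{-ts}\exp(-sx)\to0$ as $s\to\infty$ shows $(t+x)R_t=R_t(t+x)=1$; hence $t+x$ is invertible and $\|(t+x)^{-1}\|\le1/t$.

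For $(4)\Rightarrow(5)$: applying $(4)$ with parameter $1/t$ shows $1+tx=t(\tfrac1t+x)$ is invertible with $\|(1+tx)^{-1}\|=\tfrac1t\,\|(\tfrac1t+x)^{-1}\|\le\tfrac1t\cdot t=1$, so the factorization $1-t^2x^2=(1-tx)(1+tx)$ gives $\|1-tx\|=\|(1-t^2x^2)(1+tx)^{-1}\|\le\|1-t^2x^2\|$. Finally $(5)\Rightarrow(2)$ is the trivial bound $\|1-t^2x^2\|\le1+t^2\|x^2\|\le1+t^2\|x\|^2$, which closes the cycle.

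I expect the only point needing real care to be $(3)\Rightarrow(4)$: the endpoint behaviour in the integration by parts ($s=0$ and $s\to\infty$) and the commutation $R_t(t+x)=(t+x)R_t$ (automatic, since $\exp(-sx)$ commutes with $x$). If one wishes to avoid Bochner integrals, an alternative is to prove $(4)\Rightarrow(1)$ directly: for $t>\|x\|$ expand $t(t+x)^{-1}=1-t^{-1}x+O(t^{-2})$ in norm, apply a state $\varphi$, use $|t\varphi((t+x)^{-1})|\le1$, and let $t\to\infty$ to get $\operatorname{Re}\varphi(x)\ge0$; together with the obvious $(1)\Rightarrow(4)$ this again yields all five equivalences. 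Everything else in the argument is formal given the displayed numerical-range identity.
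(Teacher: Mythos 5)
Your proposal is correct. The second half of your cycle, $(4)\Rightarrow(5)\Rightarrow(2)\Rightarrow(1)$, is essentially identical to the paper's: the same factorization $1-t^2x^2=(1-tx)(1+tx)$ with $\Vert (1+tx)^{-1}\Vert\le 1$ obtained from (4) at parameter $1/t$, the trivial estimate for $(5)\Rightarrow(2)$, and the observation that (2) forces the states (equivalently, the numerical range) of $x$ into the right half-plane as $t\to 0^+$. Where you genuinely diverge is in how (3) and (4) are tied to (1): the paper simply cites Bonsall--Duncan for the equivalence $(1)\Leftrightarrow(3)$ and the Stampfli--Williams resolvent estimate (that $\Vert(t+x)^{-1}\Vert$ is dominated by the reciprocal of the distance from $-t$ to the numerical range) for $(1)\Rightarrow(4)$, whereas you replace both citations by a self-contained chain $(1)\Rightarrow(3)\Rightarrow(4)$: the exponential bound $\Vert\exp(ty)\Vert\le\exp\bigl(t\sup\operatorname{Re}V(y)\bigr)$ for the first step, and the Laplace-transform formula $(t+x)^{-1}=\int_0^\infty e^{-ts}\exp(-sx)\,ds$ for the second. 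This buys a proof that needs only the basic numerical-range limit formula as external input (at the mild cost of a Bochner integral and the endpoint checks you correctly flag), and it yields the slightly weaker but sufficient one-directional $(1)\Rightarrow(3)$ inside a single five-term cycle rather than treating (3) as a separately cited side equivalence; the paper's version is shorter but leans on two references. Both arguments are sound.
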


\begin{proof}     
 For the equivalence of (1) and (3), see \cite[p.\ 17]{BoNR1}.
Clearly (5) implies (2).
That  (2) implies (1) follows by applying a state $\varphi$ to see
$|1 - t \varphi(x)| \leq 1 + K  t^2$, which forces Re$\,  \varphi (x) \geq 0$) (see  \cite[Lemma 2.1]{Mag}).    
Given (4) with $t$ replaced by $\frac{1}{t}$, we have
$$\Vert 1 - tx \Vert = \Vert (1 + tx)^{-1} (1 + tx)(1 - tx) \Vert \leq \Vert 1 - t^2 x^2 \Vert.$$
This gives (5).      Finally (1) implies (4) by e.g.\ Stampfli 
and Williams  result \cite[Lemma 1]{SW} that the norm in (4) is
dominated by the reciprocal of the distance  from $-t$ 
to the numerical range of $x$. 
\end{proof}  

(We mention another equivalent condition: given $\epsilon > 0$ there exists  a $t > 0$ with $\Vert 1 - tx \Vert < 1 + \epsilon t$.
See e.g.\ \cite[p.\ 30]{BoNR1}.)

 Real positive elements, and the smaller  set ${\mathfrak F}_A$ above,  will play the role for us of positive elements in a $C^*$-algebra.  
While they are not the same,
real positivity is very compatible with the 
usual definition of positivity in a $C^*$-algebra, as  will be seen very clearly in the sequel, and in particular in the next section.

\section{Real completely positive maps and projections}

Recall that a linear map $T : A \to B$ between $C^*$-algebras (or operator systems) is {\em completely
positive} if $T(A_+) \subset B_+$, and similarly at the matrix levels.
By a {\em unital operator space} below we mean a subspace
of $B(H)$ or a unital $C^*$-algebra containing the identity.   We gave 
abstract characterizations of these objects with
Matthew Neal in \cite{BNc1, BNc2}, and have studied them elsewhere.

\begin{definition}   A linear map $T : A \to B$ between operator
algebras or unital operator spaces is {\em real positive} if $T({\mathfrak r}_{A}) \subset {\mathfrak r}_{B}$.  It is {\em real completely
positive}, or {\em  RCP} for short, if $T_n$ is
real positive on $M_n(A)$ for all $n \in \Ndb$.
  \end{definition}

(This and the following two results are later variants from \cite{BBS}
of matching material from \cite{BRI} for ${\mathfrak F}_{A}$.)

\begin{theorem}   \label{bbs}  A (not necessarily unital) linear map $T : A \to B$ between $C^*$-algebras or operator
systems is completely positive in the usual sense iff it is RCP.
\end{theorem}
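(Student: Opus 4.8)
The plan is to prove a single‑matrix‑level statement and then quote it at every matrix level. Precisely, I would first establish: \emph{a linear map $S : A \to B$ between $C^*$-algebras or operator systems is positive in the usual sense (i.e.\ $S(A_+) \subset B_+$) if and only if $S({\mathfrak r}_A) \subset {\mathfrak r}_B$.} Granting this, the theorem is immediate, for $T$ is completely positive iff each amplification $T_n : M_n(A) \to M_n(B)$ is positive, and since $M_n$ of a $C^*$-algebra (resp.\ operator system) is again one, applying the displayed equivalence with $S = T_n$ shows this is equivalent to each $T_n$ being real positive, i.e.\ to $T$ being RCP. Note this reduction uses neither unitality of the algebras nor of $T$.

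For the single-level equivalence, the first step is a concrete description of the accretive cone of a $C^*$-algebra or operator system $A$: writing $x = h + ik$ with $h = \frac{1}{2}(x + x^*)$ and $k = \frac{1}{2i}(x - x^*)$ selfadjoint, one has $\mathrm{Re}\, \varphi(x) = \varphi(h)$ for every state $\varphi$, hence $x \in {\mathfrak r}_A$ iff $h \geq 0$; equivalently ${\mathfrak r}_A = A_+ + i A_{\mathrm{sa}}$, whence ${\mathfrak r}_A \cap (-{\mathfrak r}_A) = i A_{\mathrm{sa}}$ and ${\mathfrak r}_A \cap A_{\mathrm{sa}} = A_+$. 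Given this, if $S$ is real positive then $S(iA_{\mathrm{sa}}) = S({\mathfrak r}_A \cap (-{\mathfrak r}_A)) \subset iB_{\mathrm{sa}}$, so by linearity $S$ is selfadjoint, and therefore $S(A_+) \subset S({\mathfrak r}_A) \cap B_{\mathrm{sa}} \subset {\mathfrak r}_B \cap B_{\mathrm{sa}} = B_+$; conversely if $S$ is positive then $S$ is selfadjoint (every selfadjoint element being a difference of two positive ones, using $\Vert a\Vert 1 \pm a \geq 0$ in the operator system case), so for $x = h + ik \in {\mathfrak r}_A$ with $h \geq 0$ we get $S(x) = S(h) + iS(k)$ with $S(h) \geq 0$ and $S(k)$ selfadjoint, i.e.\ $S(x) \in {\mathfrak r}_B$.

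The step I expect to be the main obstacle is the identification ${\mathfrak r}_A = A_+ + iA_{\mathrm{sa}}$. In a unital operator system it is the standard separation fact that a selfadjoint element on which every state is nonnegative is positive. For a nonunital $C^*$-algebra $A$ one must instead check that forming the state space of a unitization (as in the definition of ${\mathfrak r}_A$) does not enlarge the accretive cone beyond $\{x : x + x^* \geq 0\}$: here one uses that the states of the unitization restrict exactly to the positive functionals of norm at most one on $A$, and that the latter still separate $A_+$ from the rest of $A_{\mathrm{sa}}$, with $h \geq 0$ readable in $A$ or its unitization interchangeably. Everything else is a short formal manipulation of the cone identities above, and the ``complete'' half of the theorem costs nothing beyond running the $n = 1$ argument simultaneously at all matrix levels.
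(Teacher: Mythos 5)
Your proof is correct, and it is essentially the standard argument (the one given in the cited reference \cite{BBS}, since the present survey states Theorem \ref{bbs} without proof): the key identities ${\mathfrak r}_A = A_+ + iA_{\mathrm{sa}}$, ${\mathfrak r}_A \cap (-{\mathfrak r}_A) = iA_{\mathrm{sa}}$ and ${\mathfrak r}_A \cap A_{\mathrm{sa}} = A_+$ yield selfadjointness of the map and then the equivalence of positivity with real positivity at each matrix level. Your care with the nonunital $C^*$-case (states of the unitization restricting to the positive contractive functionals, so that accretivity still reads as $\mathrm{Re}\,x \geq 0$) is exactly the point that needs checking, and you handle it correctly.
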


We say that 
an algebra is {\em approximately unital}
 if it has a contractive approximate
identity (cai).

\begin{theorem}[Extension and Stinespring-type Theorem]   \label{bbs2}  A linear map $T : A \to B(H)$ on
an  approximately unital operator
algebra or unital operator space is   RCP iff $T$ has a completely positive (in the usual sense)
extension $\tilde{T} : C^*(A) \to B(H)$.   Here $C^*(A)$ is a $C^*$-algebra generated by $A$.
This is equivalent to being able
to write $T$ as the restriction to $A$ of $V^* \pi(\cdot) V$ for a
$*$-representation $\pi : C^*(A) \to B(K)$, and an operator $V : H \to
K$.   \end{theorem}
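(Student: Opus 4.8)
The plan is to prove the two implications of the first ``iff'' and then observe that its right-hand side is merely a restatement of the dilation form via Stinespring's theorem. For the direction $(\Leftarrow)$, suppose $T = \tilde{T}|_A$ for a completely positive $\tilde{T} : C^*(A) \to B(H)$. By the $C^*$-algebra case of Theorem \ref{bbs}, $\tilde{T}$ is RCP; and since for $x \in M_n(A)$ the condition $x + x^* \geq 0$ holds in $M_n(A)$ precisely when it holds in $M_n(C^*(A))$, we have ${\mathfrak r}_{M_n(A)} \subseteq {\mathfrak r}_{M_n(C^*(A))}$ for all $n$, so the restriction $T$ is again RCP. That ``$T$ has a completely positive extension to $C^*(A)$'' is equivalent to the asserted $V^* \pi(\cdot) V$ form is then immediate: the forward direction is Stinespring applied to $\tilde{T}$, and conversely $x \mapsto V^* \pi(x) V$ is completely positive on $C^*(A)$, so (by what was just said) its restriction to $A$ is RCP.

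For the main direction $(\Rightarrow)$, let $T : A \to B(H)$ be RCP. I would first reduce to the case $1 \in A$ (for a unital operator space there is nothing to do). When $A$ is only approximately unital one extends $T$ to an RCP map on the unitization $A^1$, most cleanly by composing $T^{**} : A^{**} \to B(H)^{**}$ with the canonical conditional expectation $B(H)^{**} \to B(H)$ (which is RCP by Theorem \ref{bbs}), using that a real positive map on an approximately unital operator algebra is bounded, that $A^{**}$ is unital, and that ${\mathfrak r}_A$ is weak* dense in ${\mathfrak r}_{A^{**}}$; then restrict to $A^1$. So assume $1 \in A$. Fix a generated $C^*$-algebra $C^*(A)$ (unitized if necessary so that $1 \in C^*(A)$), put $\mathcal{S} = A + A^*$, and define $\Phi : \mathcal{S} \to B(H)$ by $\Phi(a + b^*) := T(a) + T(b)^*$. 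The goal is to show $\Phi$ is well-defined and completely positive, then invoke Arveson's extension theorem to obtain a completely positive $\tilde{T} : C^*(A) \to B(H)$ extending $\Phi$ (hence $T$), and Stinespring to produce $\pi$ and $V$.

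The one genuinely new point is the well-definedness of $\Phi$. If $a + b^* = 0$ with $a, b \in A$, then $a, b \in D := A \cap A^*$ and one needs $T(a) + T(b)^* = 0$, i.e.\ $T(a^*) = T(a)^*$; so well-definedness amounts to $T$ being $*$-linear on the ``diagonal'' $C^*$-subalgebra $D$, equivalently to $T$ mapping self-adjoint elements of $D$ to self-adjoint operators. This follows from a scaling trick: for $g = g^* \in D$ and every real $t$, the element $1 + itg$ of $A$ is real positive (its real part is $1$), hence so is $T(1 + itg) = T(1) + it\, T(g)$; writing $Y := i(T(g) - T(g)^*)$, a self-adjoint operator, real positivity of $T(1)+itT(g)$ reads $(T(1) + T(1)^*) + tY \geq 0$ for all $t \in \Rdb$, which forces $Y = 0$, i.e.\ $T(g) = T(g)^*$. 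The same argument at each matrix level (with $1$ replaced by $I_n$, $T$ by $T_n$, and using $M_n(D) = M_n(A) \cap M_n(A)^*$) makes each $\Phi_n$ well-defined. Complete positivity is then essentially free: a positive element of $\mathcal{S}$, being self-adjoint, has the form $u + u^*$ with $u \in A$, and $u + u^* \geq 0$ says exactly that $u$ is real positive, so $\Phi(u + u^*) = T(u) + T(u)^* \geq 0$ since $T(u)$ is real positive; and likewise for each $\Phi_n$.

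I expect the main obstacle to be precisely this well-definedness step --- equivalently, showing $T|_D$ is $*$-preserving --- for which the $1 + itg$ scaling trick is the crux; the reduction to the unital case (extending an RCP map to $A^1$ and checking it remains RCP) is a second, more technical, point, as is the bookkeeping over which identity is being adjoined. Once $\Phi$ is shown completely positive on the operator system $\mathcal{S} \subseteq C^*(A)$, the remaining steps --- Arveson extension, Stinespring, restriction of $\tilde T$ and of the Stinespring representation back to $A$ and $C^*(A)$ --- are entirely standard.
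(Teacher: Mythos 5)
Your argument is correct and is essentially the proof of this result in the cited source \cite{BBS} (the present survey states the theorem without proof): reduce to the unital case, check that $a+b^*\mapsto T(a)+T(b)^*$ is well defined and completely positive on the operator system $A+A^*$, and finish with Arveson's extension theorem and Stinespring. The only cosmetic difference is at the well-definedness step, where instead of letting $t\to\infty$ in $1+itg$ you could note directly that $\pm ig$ is already accretive for self-adjoint $g\in A\cap A^*$, so $\pm iT(g)$ is accretive and hence $T(g)=T(g)^*$ at once (and likewise at each matrix level), an observation which also avoids any use of the unit in that step.
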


Of course this result  is closely related to Kadison's Schwarz inequality.
In particular, if one is trying to generalize results where
completely positive maps and the Kadison's Schwarz inequality are used in the $C^*$-theory,
to operator algebras, one can see how Theorem \ref{bbs} would play a key role.    And indeed it does,
for example in the remaining results in this section.  

 We will not say more about unital operator spaces in the present article, except to say that it
is easy to see that completely contractive  unital maps on a   unital operator space are RCP.

We give two or three applications from  \cite{BN15} of Theorem \ref{bbs2}.  The first is related to Kadison's Banach--Stone theorem for $C^*$-algebras \cite{Kiso}, and uses our Banach--Stone type theorem 
\cite[Theorem 4.5.13]{BLM}. 

\begin{theorem}  \label{rpBS} {\rm (Banach--Stone type theorem) } \  Suppose that $T : A \to B$ is a  completely isometric surjection between approximately unital operator algebras.  Then $T$ is  real (completely)  
positive if and only if $T$ is an algebra homomorphism.
\end{theorem}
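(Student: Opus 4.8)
The plan is to prove the two implications separately. The ``if'' direction is the easy one and rests on the fact that unital homomorphisms do not enlarge numerical ranges; the ``only if'' direction is the substantial one, and I would derive it by feeding the known operator-algebra Banach--Stone theorem \cite[Theorem 4.5.13]{BLM} into our Stinespring-type Theorem~\ref{bbs2} (equivalently, into the bidual). For the ``if'' direction, suppose $T$ is a completely isometric surjective homomorphism. Then $T^{**}:A^{**}\to B^{**}$ is a \emph{unital} completely isometric isomorphism, since it carries the identity $1_{A^{**}}$ (the weak* limit of a cai of $A$) to $1_{B^{**}}$. If $\varphi$ is a state on $B^{**}$ then $\varphi\circ T^{**}$ is a state on $A^{**}$, and the same holds at every matrix level, so $(T^{**})_n$ can only shrink numerical ranges; applying this also to $(T^{**})^{-1}$, we see that $(T^{**})_n$ in fact \emph{preserves} numerical ranges exactly. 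Hence $T^{**}$, and so its restriction $T$, is real completely positive (in particular real positive). This paragraph also records the fact, used below, that for any completely isometric surjective homomorphism $\Phi:A\to B$ one has $\Phi^{**}(\mathfrak{r}_{A^{**}})=\mathfrak{r}_{B^{**}}$ (and matricially).

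For the ``only if'' direction, assume now only that $T$ is a completely isometric surjection which is real positive. By the Banach--Stone type theorem \cite[Theorem 4.5.13]{BLM}, the element $u:=T^{**}(1_{A^{**}})$ is a unitary in the diagonal von Neumann algebra $\Delta(B^{**})=B^{**}\cap(B^{**})^{*}$, in fact $u\in M(B)$, and $\Phi:=u^{-1}T(\cdot)$ is a completely isometric surjective homomorphism of $A$ onto $B$, so that $T=u\,\Phi(\cdot)$. It therefore suffices to show $u=1$, for then $T=\Phi$ is a homomorphism. Now real positivity passes to the bidual (for real completely positive maps this is immediate from Theorem~\ref{bbs2}; in general it follows from the weak* density of $\mathfrak{r}_A$ in the weak* closed set $\mathfrak{r}_{A^{**}}$), so $T^{**}$ is real positive. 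Combining this with the first paragraph,
\[ u\cdot\mathfrak{r}_{B^{**}} \;=\; u\,\Phi^{**}(\mathfrak{r}_{A^{**}}) \;=\; T^{**}(\mathfrak{r}_{A^{**}}) \;\subseteq\; \mathfrak{r}_{B^{**}} . \]
Since $1_{B^{**}}\in\mathfrak{r}_{B^{**}}$ we get $u\in\mathfrak{r}_{B^{**}}$, and then, iterating, $u^{n}\in\mathfrak{r}_{B^{**}}$ for every $n\geq1$.

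To finish, I would argue spectrally. As $u$ is a unitary in the enveloping von Neumann algebra of $B$, it is normal and $\sigma(u)\subseteq\mathbb{T}$; and $u^{n}\in\mathfrak{r}_{B^{**}}$ forces the numerical range of $u^n$, hence $\sigma(u^{n})=\{\lambda^{n}:\lambda\in\sigma(u)\}$, into the closed right half-plane for every $n\geq1$ (the spectrum lies in the closure of the numerical range). Thus each $\lambda\in\sigma(u)$ is a unimodular number with $\mathrm{Re}(\lambda^{n})\geq0$ for all $n\geq1$, which forces $\lambda=1$; hence $\sigma(u)=\{1\}$ and, $u$ being normal, $u=1$, so $T=\Phi$ is a homomorphism. (Alternatively, applying Theorem~\ref{bbs2} in the universal representation of $C^{*}(B)$ to write $T=V^{*}\pi(\cdot)V|_{A}$, one sees $u=T^{**}(1_{A^{**}})=V^{*}\pi^{**}(1_{A^{**}})V$ is a positive operator, and a positive unitary is $1$.)

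The step I expect to be the main obstacle is the bookkeeping with biduals: verifying that $T^{**}$ is again real positive, that $\Phi^{**}$ carries $\mathfrak{r}_{A^{**}}$ \emph{onto} (not merely into) $\mathfrak{r}_{B^{**}}$, and that the unitary produced by the Banach--Stone theorem is precisely $T^{**}(1_{A^{**}})$. Once $u$ together with all of its powers is known to be accretive — equivalently, once $u$ is exhibited as a positive operator — the conclusion $u=1$, and with it the theorem, is elementary.
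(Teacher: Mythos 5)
Your proposal is correct and follows essentially the route the paper indicates (the result is quoted from \cite{BN15} and, as the text notes, rests on the Banach--Stone theorem \cite[Theorem 4.5.13]{BLM}): factor $T = u\,\theta(\cdot)$ with $u = T^{**}(1_{A^{**}})$ unitary and $\theta$ a completely isometric surjective homomorphism, pass to biduals to see $u\,{\mathfrak r}_{B^{**}} \subseteq {\mathfrak r}_{B^{**}}$, and conclude $u=1$. Your iterated-powers/spectral argument for $u=1$ is a valid, if slightly longer, substitute for the quicker observation that $u(1+is)$ accretive for all real $s$ already pins the numerical range of $u$ to $[0,\infty)$, so that the unitary $u$ is positive and hence equal to $1$.
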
   
 
In the following discussion, by a  projection $P$ on an operator algebra $A$, we mean an
idempotent linear map $P : A \to A$.  We say that $P$ is a {\em conditional expectation} if  
$P(P(a)bP(c)) = P(a) P(b) P(c)$ for $a, b,  c \in A$.

\begin{proposition}  \label{rcpun}   A  real
completely positive completely contractive  map (resp.\ projection)
on an approximately unital operator algebra $A$, extends to a unital completely contractive  map (resp.\ projection)
on the unitization $A^1$.  \end{proposition}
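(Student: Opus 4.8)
The plan is to reduce the statement to Theorem~\ref{bbs2}, slightly sharpened, together with a defect-absorbing Stinespring dilation.  Realise $A$ concretely on a Hilbert space $H$ so that the adjoined identity $1$ of $A^{1}$ equals $1_{H}$ and $A$ is a subalgebra — acting degenerately when $A$ is already unital, so that $A^{1}$ always denotes the forced unitization — whence $A^{1} = A + \Cdb 1_{H} \subseteq B(H)$.  Let $\hat T : A^{1} \to A^{1}$ be the obvious candidate, $\hat T(a+\lambda 1) = T(a) + \lambda 1$.  It is well defined since $A \cap \Cdb 1 = (0)$, it is unital, and if $T$ is idempotent so is $\hat T$, because $\hat T(\hat T(a+\lambda 1)) = \hat T(T(a) + \lambda 1) = T^{2}(a) + \lambda 1 = \hat T(a+\lambda 1)$.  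So everything comes down to showing that $\hat T$ is completely contractive; the ``resp.\ projection'' assertion then follows.

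Next I produce a \emph{contractive} completely positive extension of $T$.  By Theorem~\ref{bbs2} there is a completely positive $\Phi : C^{*}(A) \to B(H)$ with $\Phi|_{A} = T$.  Let $e$ be the support projection of $A$ — the identity of $A^{**}$, which is an orthogonal projection in $C^{*}(A)^{**}$ and the weak$^{*}$ limit of any cai $(e_{\alpha})$ of $A$ — and set $\tilde T(c) := \Phi^{**}(e\,c\,e)$ for $c \in C^{*}(A)$.  Then $\tilde T$ is completely positive, being $\Phi^{**}$ composed with the compression $c \mapsto e\,c\,e$; it still extends $T$, since $e\,a\,e = a$ for $a \in A$; and as its weak$^{*}$-continuous extension is $z \mapsto \Phi^{**}(e\,z\,e)$, we obtain
\[
 \Vert \tilde T \Vert \;=\; \Vert \tilde T^{**}(1) \Vert \;=\; \Vert \Phi^{**}(e) \Vert \;\leq\; 1 ,
\]
the last step because $\Phi^{**}(e)$ is the weak$^{*}$ limit of the contractions $\Phi(e_{\alpha}) = T(e_{\alpha})$, $T$ being completely contractive.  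Hence $\Vert \tilde T \Vert_{\mathrm{cb}} = \Vert \tilde T \Vert \leq 1$, so writing $\tilde T = V^{*}\pi(\cdot)V$ with $\pi : C^{*}(A) \to B(K)$ a nondegenerate $*$-representation we may take $\Vert V \Vert \leq 1$.

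Now I absorb the defect.  Put $D = (1_{H} - V^{*}V)^{1/2} \geq 0$ and let $\hat V : H \to K \oplus H$ be the isometry $\hat V \xi = V\xi \oplus D\xi$, so that $\hat V^{*}\hat V = V^{*}V + D^{2} = 1_{H}$.  Regard $A^{1}$ as embedded (canonically, completely isometrically and unitally) in the $C^{*}$-unitization $C^{*}(A)^{1}$, and let $\hat \pi : C^{*}(A)^{1} \to B(K \oplus H)$ be the unital $*$-representation which on the first summand is the unital extension of $\pi$ and on the second is the canonical character $a + \lambda 1 \mapsto \lambda 1_{H}$ (which annihilates $C^{*}(A)$).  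A short computation then gives, for $a \in A$ and $\lambda \in \Cdb$,
\[
 \hat V^{*}\,\hat\pi(a+\lambda 1)\,\hat V \;=\; V^{*}\pi(a)V + \lambda(V^{*}V + D^{2}) \;=\; T(a) + \lambda 1_{H} \;=\; \hat T(a+\lambda 1) ,
\]
so, up to the above identification, $\hat T$ is the restriction to $A^{1}$ of the map $c \mapsto \hat V^{*}\hat\pi(c)\hat V$ on $C^{*}(A)^{1}$.  Since $\hat\pi$ is a unital $*$-representation and $\hat V$ an isometry, this map is unital completely positive, hence completely contractive, and therefore so is $\hat T$.  With the first paragraph, this proves the Proposition.

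The step that genuinely needs care is the middle one: upgrading an arbitrary completely positive extension of $T$ to a contractive completely positive extension, which is exactly where complete contractivity of $T$ enters.  Everything afterwards is routine ``dilate and correct'' bookkeeping, together with the standard facts that the support projection $e$ of $A$ is an orthogonal projection in $C^{*}(A)^{**}$ with the stated limit property, and that $A^{1}$ sits completely isometrically and unitally in $C^{*}(A)^{1}$.
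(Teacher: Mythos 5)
Your argument is correct and follows the route the paper itself indicates: Proposition \ref{rcpun} is presented there as an application of Theorem \ref{bbs2} (with the proof deferred to \cite{BN15}), and your compression-plus-Stinespring argument is exactly such an application, with the two standard ingredients (first passing to a \emph{contractive} completely positive extension of $T$ to $C^*(A)$, then absorbing the defect $1_H-V^*V$ to make the dilation unital on $C^*(A)^1\supseteq A^1$) supplied in full. The only point to tidy is that $\Phi^{**}$ takes values in $B(H)^{**}$ rather than $B(H)$, so in defining $\tilde T(c)=\Phi^{**}(ece)$ you should compose with the canonical weak* continuous $*$-epimorphism $B(H)^{**}\to B(H)$ extending the identity map; this is harmless and changes none of the estimates.
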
  

 Much earlier, we studied completely contractive  projections $P$ and conditional expectations
on unital operator algebras.    Assuming that  $P$ is also unital (that is, $P(1) = 1$)
and that Ran$(P)$ is a subalgebra, we showed (see e.g.\ \cite[Corollary 4.2.9]{BLM}) that $P$ is a conditional expectation.   This is the operator algebra
variant of  Tomiyama's  theorem  for $C^*$-algebras.   A well known result of 
Choi and Effros states that 
the range of a completely positive 
projection $P : B \to B$ on a $C^*$-algebra $B$, is again a $C^*$-algebra with
product $P(xy)$.   The analogous result for unital completely contractive 
projections on unital operator algebras is true too, and is implicit in 
the proof of our generalization of Tomiyama's theorem above.   Unfortunately, there is no analogous result 
for (nonunital) completely contractive 
projections on possibly nonunital operator algebras
without adding extra hypotheses on $P$.     However if we add the condition that
$P$ is also `real completely positive', then the question does make good sense and one can easily 
deduce from the unital case and Proposition \ref{rcpun} one direction of the following:

\begin{theorem}  \label{rcpun2} {\rm \cite{BN15}} \    The range of a  completely contractive projection 
$P : A \to A$ on an approximately unital operator algebra  is again an operator algebra with
product $P(xy)$ and cai $(P(e_t))$ for some cai $(e_t)$ of $A$, 
 iff $P$ is real completely positive.    
 \end{theorem}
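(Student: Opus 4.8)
The plan is to establish the two implications separately. The forward one --- real complete positivity of $P$ forces the stated structure on its range --- is the implication the remark above says follows easily from the unital case and Proposition~\ref{rcpun}, and I would argue it as follows. If $P$ is RCP and completely contractive, then by Proposition~\ref{rcpun} it extends to a unital completely contractive projection $\tilde P$ on the unital operator algebra $A^1$; the operator-algebra analogue of the Choi--Effros theorem (implicit, as noted above, in the proof of our Tomiyama-type theorem, cf.\ \cite[Corollary~4.2.9]{BLM}) then makes $\tilde P(A^1)$ a unital operator algebra under the product $\tilde P(xy)$. Since $\tilde P|_A = P$ and $A$ is an ideal of $A^1$, for $u,v \in P(A)$ we have $uv\in A$ and $\tilde P(uv)=P(uv)\in P(A)$, so $P(A)$ is a (generally nonunital) closed subalgebra, hence an operator algebra with product $P(xy)$. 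For the approximate identity I would start from a cai $(e_t)$ of $A$ inside $\mathfrak F_A$ (available since $A$ is approximately unital), pass to the bidual, where $P^{**}(A^{**})$ is the weak$^*$ closure of $P(A)$ and so a dual, hence unital, operator algebra whose identity is $\mathfrak f := P^{**}(1_{A^{**}})$, the weak$^*$ limit of $(P(e_t))$; then $(P(e_t))$ is a net of contractions in $P(A)$ converging weak$^*$ to the identity of $P(A)^{**}$, and the usual convexity argument applies --- replacing $(e_t)$ by suitable convex combinations (still a cai of $A$ in $\mathfrak F_A$) turns $(P(e_t))$ into an honest cai of $P(A)$.

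For the converse, suppose $R:=P(A)$ is an operator algebra under $x\circ y:=P(xy)$ with cai $(P(e_t))$ for some cai $(e_t)$ of $A$; the goal is $P_n(\mathfrak r_{M_n(A)})\subseteq\mathfrak r_{M_n(A)}$ for all $n$. The first half is soft. In the bidual, $R^{**}=P^{**}(A^{**})$ with product $P^{**}(xy)$ is a unital dual operator algebra, with identity $\mathfrak f=P^{**}(1_{A^{**}})$ (the weak$^*$ limit of $(P(e_t))$), and, viewed as a map into $(R^{**},\circ)$, the map $P^{**}$ is unital (it sends $1_{A^{**}}$ to $\mathfrak f$) and completely contractive (it is $P^{**}$ followed by the completely isometric operator-space embedding $(R^{**},\circ)\hookrightarrow A^{**}$); being a unital complete contraction between unital operator algebras, it is therefore RCP. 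Combining this with the standard facts $\mathfrak r_{M_n(A)}=\mathfrak r_{M_n(A^{**})}\cap M_n(A)$ and $\mathfrak r_{(M_n(R),\circ)}=\mathfrak r_{(M_n(R^{**}),\circ)}\cap M_n(R)$, I obtain $P_n(\mathfrak r_{M_n(A)})\subseteq\mathfrak r_{(M_n(R),\circ)}$, the real-positive cone of the \emph{abstract} operator algebra $(M_n(R),\circ)$.

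The main obstacle will be the remaining transfer: proving $\mathfrak r_{(M_n(R),\circ)}\subseteq\mathfrak r_{M_n(A)}$, i.e.\ that an element of $M_n(R)$ which is accretive for the $\circ$-structure is already accretive in $M_n(A)$. This is genuinely where the hypothesis is needed --- for a completely contractive projection whose range merely happens to carry an operator-algebra product the conclusion can fail --- and it is the special form $(P(e_t))$, with $(e_t)$ a cai of $A$, that must be exploited. I would try to take $(e_t)\subseteq\mathfrak F_A$ and then relate $\mathfrak F_{(R,\circ)}$, the element $\mathfrak f$, and $\mathfrak F_A$, using the approximate identity to push an accretivity estimate --- say one of the equivalent conditions of Lemma~\ref{chaccr}, such as $\|1-tx\|\le 1+t^2\|x\|^2$ --- from $(R,\circ)^1$ down to $A^1$. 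Granting the transfer, the two halves combine to give $P_n(\mathfrak r_{M_n(A)})\subseteq\mathfrak r_{(M_n(R),\circ)}\cap M_n(A)\subseteq\mathfrak r_{M_n(A)}$ for every $n$, which is exactly real complete positivity of $P$. (An equivalent packaging of the converse is to show that the obvious unital extension $\tilde P(a+\lambda 1):=P(a)+\lambda 1$ of $P$ to $A^1$ is completely contractive: then $\tilde P$ is a unital complete contraction on the unital operator algebra $A^1$, hence RCP, and restricting to $A$ --- using $\mathfrak r_A=\mathfrak r_{A^1}\cap A$ --- shows $P$ is RCP; but establishing complete contractivity of that extension runs into the same difficulty.)
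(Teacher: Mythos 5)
Your proposal has a genuine gap in the implication ``$P$ real completely positive $\Rightarrow$ the range has the stated structure,'' precisely at the approximate identity step. Your route to the algebra structure itself (extend to a unital completely contractive projection on $A^1$ via Proposition \ref{rcpun}, apply the unital Choi--Effros analogue, and note that $P(A)$ is closed under the new product of $\tilde P(A^1)$) is legitimate, and is what the remark preceding the theorem hints at. But the assertion that $P^{**}(A^{**})$ is ``a dual, hence unital, operator algebra whose identity is $P^{**}(1_{A^{**}})$'' is unjustified on two counts: an operator algebra with a predual need not be unital (the range of the projection of $M_2$ onto its upper right corner carries the zero product $P(xy)$), and, more to the point, identifying the unit of the range's bidual as $P^{**}(1)$ amounts to proving $P^{**}(P^{**}(1)x)=x$ for $x$ in the range, which is not automatic because $P^{**}(1)\neq 1_{A^{**}}$ in general --- one only gets $P^{**}(1_{A^{**}}\,x)=x$ for free. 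That identity is exactly what the paper's proof is devoted to: extend $P$ to a completely positive complete contraction on the injective envelope, show $P(s(P(1)))=P(1)$ via the support projection, and then use the Kadison--Schwarz inequality to force $P(zx)=0$ for $z=s(P(1))-P(1)$, whence $P(P(1)x)=P(xP(1))=x$ on the range. Without this computation (or an equivalent) you obtain an operator algebra structure on $P(A)$ but not the cai $(P(e_t))$, so this half of your argument is incomplete where the paper's is not.

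For the converse you set up essentially the paper's one-line argument ($P^{**}$ is a unital complete contraction into the range equipped with its new unital operator space structure, hence RCP into that structure), but you then declare the transfer $\mathfrak r_{(M_n(R),\circ)}\subseteq\mathfrak r_{M_n(A)}$ to be an unresolved ``main obstacle,'' so this half is not a proof either. You are right that the transfer is not purely formal --- a complete isometry of unital operator spaces into $B(H)$ can carry the unit to a non-accretive element --- so it must exploit that the unit of $(R^{**},\circ)$ is $P^{**}(1)$, a weak* limit of the contractions $P(e_t)$ sitting inside $A^{**}$; the details are in \cite{BN15}. In summary, the skeleton of your argument matches the paper's, but the two analytic cruxes --- the Kadison--Schwarz/support-projection computation in one direction and the cone transfer in the other --- are both missing.
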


\begin{proof}    For the `forward direction' note that $P^{**}$ is a unital complete contraction,
and hence is real completely positive as we said in above Theorem \ref{rpBS}.
 For the `backward direction'  the following proof, due to the author 
and Neal, was originally a remark in \cite{BN15}.  By passing to the bidual we may assume that
$A$ is unital.
using the BRS characterization of operator algebras \cite[Section 2.3]{BLM}. 
If $P(P(1) x) = P(x P(1))  = x$ for all $x \in {\rm Ran}(P)$ then
we are done
by the abstract characterization of operator algebras from \cite[Section 2.3]{BLM}, since 
then $P(xy)$ defines a bilinear completely contractive 
product on ${\rm Ran}(P)$ with `unit' $P(1)$.
Let $I(A)$ be the injective envelope of $A$.  We may extend $P$ to a completely positive 
completely contractive
map $\hat{P} : I(A) \to I(A)$, by \cite[Theorem 2.6]{BBS} and injectivity of $I(A)$.  
We will abusively sometimes write  
$P$ for $\hat{P}$, and also for its second adjoint
on $I(A)^{**}$.  The latter is also completely  positive and completely contractive. Then  
$$P(P(1)^{\frac{1}{n}}) \geq P(P(1)) = P(1) \geq P(P(1)^{\frac{1}{n}}).$$
Hence these quantities are equal.
In the limit, $P(s(P(1))) = P(1)$, if $s(P(1))$ is the support projection
of $P(1)$.  Hence $P(z) = 0$
where $z = s(P(1)) - P(1)$.  
If $y \in I(A)_+$ with $\Vert  y \Vert \leq 1$, then
$P(y) \leq P(1) \leq s(P(1))$, and so 
$s(P(1)) P(y) = P(y) = P(y) s(P(1))$.
It follows that $s(P(1))x = x s(P(1)) = x$ for   
all $x \in {\rm Ran}(\hat{P})$.
If also $\Vert x \Vert \leq 1$, then
$$P(P(1) x) = P(s(P(1))x) - P(z x) = P(s(P(1))x) = P(x)$$   
by the Kadison-Schwarz inequality, since 
$$P(z x) P(z x)^* \leq P(z x x^* z) \leq P(z^2) \leq P(z) = 0 .$$
Thus $P(P(1) x) = x$ if $x \in P(A)$. Similarly, $P(x P(1))  = x$  as desired. 
Thus $P(xy)$ defines a bilinear completely contractive
product on ${\rm Ran}(P)$ with `unit' $P(1)$.
\end{proof}

The main thrust of  \cite{BN15} is the investigation of the completely contractive projections and conditional 
expectations,
and in particular the `symmetric  projection problem' and the `bicontractive
  projection problem', in the category of operator algebras, attempting 
to  find operator algebra generalizations of 
certain deep results of St{\o}rmer, Friedman and Russo, 
Effros and St{\o}rmer, Robertson and Youngson, 
and others (see papers of these authors referenced in the bibliography below), concerning projections and their ranges, assuming in addition that our projections are  real completely positive.   We say that an idempotent linear $P : X \to X$ is  
{\em completely symmetric} (resp.\ {\em completely bicontractive})  if $I - 2P$ is completely contractive
(resp.\  if $P$ and $I - P$ are completely contractive).   `Completely  symmetric' implies  `completely bicontractive'.
The two problems mentioned at the start of this paragraph concern  1)\  Characterizing such   projections  $P$;
or 2)\  characterizing the range of such projections.     On a unital $C^*$-algebra $B$ the work of some of the authors mentioned at the start of this paragraph establish that 
  unital positive bicontractive projections are also symmetric, and are 
precisely $\frac{1}{2}(I + \theta)$, for a period $2$ $*$-automorphism
$\theta : B \to B$.   
The possibly nonunital positive bicontractive projections $P$ are of a similar form, and 
then $q = P(1)$ is a central projection in $M(B)$ with respect to which $P$ decomposes into a direct sum of $0$ and
a projection of the above form $\frac{1}{2}(I + \theta)$, for a period $2$ $*$-automorphism
$\theta$ of $qB$.   Conversely, a map $P$ of the latter form is automatically  
completely bicontractive, and the range of $P$, which is
  the set of fixed points of $\theta$, is a $C^*$-subalgebra,
and $P$ is a conditional expectation.

One may ask what from the last paragraph is
true for general (approximately unital) operator algebras $A$?  
The first thing to note is that 
now `completely bicontractive' is   no longer  the same as `completely symmetric'.   The following 
is our solution to  the  {\em  symmetric projection problem}, and it uses 
Kadison's Banach--Stone theorem for $C^*$-algebras  \cite{Kiso}, and our variant of the latter
for approximately unital operator algebras (see e.g.\ \cite[Theorem 4.5.13]{BLM}):

\begin{theorem}  \label{rcpst} {\rm \cite{BN15}} \ Let $A$ be an approximately unital operator algebra,
and $P : A \to A$ a completely symmetric real completely positive projection. 
Then the range of $P$  is an approximately unital 
subalgebra of $A$.   Moreover, $P^{**}(1) = q$ is a projection in the multiplier algebra
$M(A)$ (so is both open and closed).  

Set $D = qAq,$ a hereditary subalgebra of $A$ containing $P(A)$.
There exists a period $2$ surjective completely 
isometric homomorphism $\theta : A \to A$ such that $\theta(q) = q$, so that $\theta$ restricts to 
a  period $2$ surjective completely 
isometric homomorphism $D \to D$.    Also,  $P$ is
the  zero map on $q^\perp A + A q^\perp + q^\perp A q^\perp$, and $$P =  \frac{1}{2} (I + \theta) \; \; \;
{\rm on} \; D.$$
In fact $$P(a)=  \frac{1}{2} (a +  \theta(a) (2q-1)) \, , \qquad a \in A.$$    The range of $P$ is the set of fixed points of $\theta$.  

Conversely, any 
map of the form in the last equation is a 
completely symmetric real completely positive projection.  \end{theorem}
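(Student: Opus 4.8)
For the forward direction the plan is to exploit that $U := I-2P$ is a surjective complete isometry of $A$: it is completely contractive by the completely symmetric hypothesis, and $P^2=P$ forces $U^2 = I$, so $U^{-1}=U$ is completely contractive as well. The first step is to feed $U$ into the operator-algebra form of Kadison's Banach--Stone theorem, \cite[Theorem 4.5.13]{BLM}, which yields that $u := U^{**}(1)$ is a unitary in the multiplier algebra $M(A)$; since $U^{**}(1) = 1 - 2P^{**}(1)$, set $q := P^{**}(1) = \frac{1}{2}(1-u) \in M(A)$. The delicate point here --- and the one that genuinely uses real positivity of $P$ rather than merely that $U$ is an isometry --- is that $q$ is a \emph{selfadjoint} idempotent, equivalently that $u$ is a selfadjoint unitary with $u^2=1$; this is exactly the ``open and closed projection'' assertion. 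I would establish it by the Kadison--Schwarz/injective-envelope technique already displayed in the proof of Theorem \ref{rcpun2}: extend $P^{**}$ to a completely positive completely contractive map on the injective envelope, and run the argument with $P^{**}(s(P^{**}(1)))$ to force $q$ to be a projection, then use complete symmetry to see that $1-2q$ is a selfadjoint unitary.

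With $q=q^{*}=q^{2}\in M(A)$ in hand, I would put $\theta := U(\cdot)\,u$, that is $\theta(a) = (a - 2P(a))(1-2q)$. Since $u\in M(A)$ this maps $A$ onto $A$, it is a surjective complete isometry (a composite of $U$ with right multiplication by the unitary $u$), and, crucially, it is unital: $\theta^{**}(1) = U^{**}(1)\,u = u^{2} = 1$. Hence $\theta$ is a unital completely contractive map, so real completely positive (as recalled above Theorem \ref{rpBS}), and therefore Theorem \ref{rpBS} applies and shows $\theta$ is an algebra automorphism, automatically completely isometric. A short computation using that $\theta$ is a homomorphism together with $\theta^{2}(1)$-type identities, $u^{2}=1$ and $P^{2}=P$ gives $\theta^{2} = I$ and $\theta(q) = q$; then the displayed formula $P(a) = \frac{1}{2}(a + \theta(a)(2q-1))$ is just a rearrangement of the definition of $\theta$, using $(2q-1)^{2}=1$. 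Since $\theta(q)=q$ and $q$ is a selfadjoint multiplier projection, $D=qAq$ is a hereditary subalgebra and $\theta$ restricts to the asserted period-$2$ completely isometric automorphism of $D$.

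For the remaining structural claims --- that $P$ annihilates $q^{\perp}A + Aq^{\perp} + q^{\perp}Aq^{\perp}$, equals $\frac{1}{2}(I+\theta)$ on $D$, and has range the fixed-point set of $\theta|_{D}$ --- I would push the Peirce decomposition of $A$ relative to $q$ through the formula for $P$: using that $\theta$ preserves each Peirce corner and that $q(2q-1)=q$, $q^{\perp}(2q-1)=-q^{\perp}$, one gets $P=\frac{1}{2}(I+\theta)$ on $qAq$ and on $q^{\perp}Aq$, and $P=\frac{1}{2}(I-\theta)$ on $qAq^{\perp}$ and on $q^{\perp}Aq^{\perp}$. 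To kill the last three corners I would apply real complete positivity at the $2\times 2$ level to block upper-triangular real-positive elements of $M_2(A)$ --- diagonal entries drawn from contractive approximate identities of $qAq$ (resp.\ $q^{\perp}Aq^{\perp}$) and a small element of the relevant off-corner in the $(1,2)$ slot --- and invoke that a positive $2\times 2$ operator matrix with a vanishing diagonal entry has vanishing off-diagonal entries; this forces $\theta=\mathrm{id}$ on $qAq^{\perp}$ and $q^{\perp}Aq^{\perp}$ and $\theta=-\mathrm{id}$ on $q^{\perp}Aq$, hence $P$ is zero there and equals $\frac{1}{2}(I+\theta)$ on $D$, with range $\{x\in D:\theta(x)=x\}$. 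The converse is then a routine verification: given $q,\theta$ as in the statement, $P^{2}=P$ follows by a short computation using $\theta^{2}=I$, $\theta(2q-1)=2q-1$ and $(2q-1)^{2}=1$; $(I-2P)(a) = -\theta(a)(2q-1)$ is a composite of the complete isometry $\theta$ with right multiplication by the selfadjoint unitary $2q-1$, so $P$ is completely symmetric; and real complete positivity of $P$ comes from $\theta$ being real completely positive (Theorem \ref{rpBS}) together with $\frac{1}{2}(I+\theta)$ being a unital, hence real completely positive, map on $D$, while $P$ vanishes off $D$.

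The step I expect to be the main obstacle is extracting the structure that real positivity forces: on the one hand the selfadjointness and idempotence of $q=P^{**}(1)$ in $M(A)$, which is what makes it an open and closed projection and what makes the twisted map $\theta$ unital --- this needs the Kadison--Schwarz argument, not just the Banach--Stone factorization of $U$; on the other hand the Peirce bookkeeping showing that $P$ dies off the corner $D$, which needs real complete positivity at the second matrix level rather than merely at the ground level.
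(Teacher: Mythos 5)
The paper does not actually prove Theorem \ref{rcpst} here (it is quoted from \cite{BN15}), so I am judging your proposal against the techniques the survey does display for the neighbouring results: the proof of Theorem \ref{rcpun2}, the Banach--Stone machinery of Theorem \ref{rpBS} and \cite[Theorem 4.5.13]{BLM}, and Corollary \ref{bipe}. Your architecture is the expected one and most of it is sound: factor the period-two surjective complete isometry $U=I-2P$ through the operator-algebra Banach--Stone theorem to get $U=\theta(\cdot)u$ with $u=U^{**}(1)=1-2q$ a unitary in $M(A)$ and $\theta$ a completely isometric surjective homomorphism; verify $\theta^{**}(1)=u^2=1$, $\theta(q)=q$ and $\theta^2=I$ (for the latter two you should note explicitly that $P^{**}(q)=q$, since $q$ lies in the range of the idempotent $P^{**}$; this is what makes your ``short computation'' close); and read off the displayed formula. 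The converse is fine as you set it up, with the caveat --- already present in the statement as quoted --- that ``of the form in the last equation'' must be read as including the structural conditions on $\theta$ relative to the Peirce corners: for an arbitrary period-two completely isometric automorphism fixing $q$ the formula need not give a real positive map (take $\theta=\mathrm{id}$ and $q=e_{11}$ in $M_2$; then $P(a)=aq$ is a completely symmetric idempotent but is not real positive).

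Two steps need repair. First, the logic by which you obtain that $q$ is a projection is inverted: the $s(P^{**}(1))$ argument from the proof of Theorem \ref{rcpun2} does \emph{not} by itself force $P^{**}(1)$ to be idempotent (it is not, for general RCP completely contractive projections). What works is to combine the two ingredients you already have in hand: real complete positivity gives, via the completely positive completely contractive extension of Theorem \ref{bbs2}, that $q=\hat P(1)$ is a positive contraction, while complete symmetry gives via Banach--Stone that $1-2q$ is a unitary; a selfadjoint unitary squares to $1$, so $q=q^2$. Second, and more seriously, your $2\times 2$ argument for annihilating the off-corners does not close as described: to make the upper-triangular matrix with diagonal entries $e$ and $\delta f$ and $(1,2)$-entry $ta$ real positive you must take $t\lesssim\sqrt{\delta}$, and the resulting bound $\Vert P(ta)\Vert^2\lesssim\delta$ then yields $\Vert P(a)\Vert^2\lesssim\delta/t^2$, which gives nothing as $\delta\to 0$. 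The standard and correct route is exactly the one displayed in the proof of Theorem \ref{rcpun2}: after reducing to $A$ unital, pass to the completely positive completely contractive extension $\hat P$ on the injective envelope, note $\hat P(q^\perp)=\hat P(1)-\hat P(q)=q-q=0$, and apply Kadison--Schwarz in both forms, $\hat P(q^\perp b)\hat P(q^\perp b)^*\le \hat P(q^\perp bb^*q^\perp)\le \Vert b\Vert^2\hat P(q^\perp)=0$ and $\hat P(bq^\perp)^*\hat P(bq^\perp)\le \hat P(q^\perp b^*bq^\perp)=0$, to conclude that $P$ vanishes on $q^\perp A+Aq^\perp$, which contains all three off-corners. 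With that substitution your Peirce bookkeeping does give $P=\frac{1}{2}(I+\theta)$ on $D=qAq$ and the fixed-point description of the range.
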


{\bf Remark.}  In the case that $A$ is unital but $q$ is not central in the last theorem, if one solves the last equation for $\theta$, and then examines what it means that $\theta$ is 
a homomorphism, one obtains some interesting algebraic formulae involving $q, q^\perp, A$ and $\theta_{\vert qAq}$. 

\bigskip

For the more general class of 
completely bicontractive projections, 
a first look is disappointing--most of the last paragraph no longer works in general.   One does not 
always get an associated completely isometric automorphism $\theta$  such that $P = \frac{1}{2}(I + \theta)$, and
$q = P(1)$ need not be a central projection.  
However, as also
seems to be sometimes the case when attempting to generalize a given $C^*$-algebra fact to 
more general algebras, a closer look at the result, and at examples, does uncover an interesting question.  
Namely,  given  an approximately unital operator algebra $A$ and a real completely positive projection $P : A \to A$
which is completely bicontractive, when is the range of $P$ a 
 subalgebra of $A$ and $P$ a conditional expectation?    This seems to be  the right version of the
 `bicontractive projection problem' in the 
operator algebra category.     
We give in \cite{BN15} a sequence of three 
reductions that reduce the question.   The first reduction is that by passing to the bidual we may assume
that the algebra $A$ is unital.   The second reduction is that by cutting down to $qAq$, where $q = P(1)$ (which
one can show is a projection), we may
further assume that  $P(1) = 1$ (one can show $P$ is zero on $q^\perp A + Aq^\perp$).   
The third  reduction is  by restricting attention to the closed algebra generated by $P$, we may further assume that 
$P(A)$ generates $A$ as an operator algebra.   We call this the `standard position' for 
the bicontractive projection problem.   It turns out that when in standard position, Ker$(P)$ is forced to be
an ideal with square zero.      

In the second reduction above, that is if $A$ and $P$ are unital, then
one may show that $A$ decomposes as $A = C \oplus B$, where $1_A \in 
B = P(A), C = (I-P)(A)$,
and we have the relations $C^2 \subset B, C B + B C \subset C$ (see \cite[Lemma 4.1]{BN15} and its proof).
The period 2 map $\theta : x + y \mapsto x-y$ for $x \in B, y \in C$ is a homomorphism (indeed an automorphism)
on $A$ iff $P(A)$ is a subalgebra of $A$, and we have, similarly to  
Theorem \ref{rcpst}: 

\begin{corollary} \label{bipe}  If $P : A \to A$ is a unital idempotent on a unital operator algebra then $P$ is
completely bicontractive iff there is a  period 2 linear surjection $\theta : A \to A$ such that $\Vert I \pm \theta \Vert_{\rm cb} \leq 2$ and 
$P = \frac{1}{2}(I + \theta)$.   The range of $P$ is a subalgebra iff $\theta$ is also a homomorphism, and then
the range of $P$ is the set of fixed points of this automorphism $\theta$.  
Also, $P$ is completely symmetric iff $\theta$ is completely contractive.  \end{corollary}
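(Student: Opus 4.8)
The plan is to work throughout with the single map $\theta := 2P - I = I - 2(I-P)$ and to show that every assertion of the corollary follows either from elementary algebra or from the decomposition recalled in the paragraph immediately preceding the statement. First I would establish the first ``iff''. Since $P$ is idempotent, $\theta^2 = 4P^2 - 4P + I = I$, so $\theta$ is a period-$2$ linear bijection of $A$ with $P = \frac12(I+\theta)$. The identities $I + \theta = 2P$ and $I - \theta = 2(I-P)$ give $\Vert I + \theta\Vert_{\rm cb} = 2\Vert P\Vert_{\rm cb}$ and $\Vert I - \theta\Vert_{\rm cb} = 2\Vert I-P\Vert_{\rm cb}$, so $P$ is completely bicontractive (i.e.\ both $P$ and $I-P$ are complete contractions) exactly when $\Vert I \pm \theta\Vert_{\rm cb} \leq 2$; and conversely, if $\theta$ is a period-$2$ linear surjection with $\Vert I\pm\theta\Vert_{\rm cb}\le 2$ and $P = \frac12(I+\theta)$, the same two identities read backwards force $\Vert P\Vert_{\rm cb}\le 1$ and $\Vert I-P\Vert_{\rm cb}\le 1$.

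Next, the claim about fixed points needs no multiplicative hypothesis: $\theta(a)=a$ iff $P(a)=a$ iff $a \in {\rm Ran}(P)$ (the last step using $P^2=P$), so the fixed-point set of $\theta$ always equals ${\rm Ran}(P)$. For the ``subalgebra iff homomorphism'' equivalence I would invoke the decomposition $A = B \oplus C$ established in \cite[Lemma 4.1]{BN15} (and recalled above), where $B = P(A) = {\rm Ran}(P)$, $C = (I-P)(A) = \ker P$, $1_A \in B$, $C^2 \subset B$ and $CB + BC \subset C$; on this decomposition $\theta$ is precisely the map $x+y \mapsto x-y$ for $x\in B,\ y\in C$. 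If $\theta$ is a homomorphism then it is unital ($\theta(1_A) = 1_A$) and bijective, hence an automorphism, and ${\rm Ran}(P) = B$, being its fixed-point set, is a subalgebra. Conversely, if $B$ is a subalgebra, then for $a = x_1 + y_1$, $b = x_2 + y_2$ with $x_i \in B$, $y_i \in C$, the relations $BB\subset B$, $C^2\subset B$ and $BC + CB\subset C$ let me read off that the $B$-component of $ab$ is $x_1x_2 + y_1y_2$ and its $C$-component is $x_1y_2 + y_1x_2$; comparing with $\theta(a)\theta(b) = (x_1-y_1)(x_2-y_2)$ then yields $\theta(ab) = \theta(a)\theta(b)$, so $\theta$ is a homomorphism (again an automorphism).

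Finally, $P$ is completely symmetric iff $I - 2P$ is a complete contraction; but $I - 2P = -\theta$, and negation changes no norm at any matrix level, so this holds iff $\theta$ is completely contractive.

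I do not expect a serious obstacle here: the only ingredient with real content is \cite[Lemma 4.1]{BN15}, which provides the multiplicative relations $C^2\subset B$ and $CB+BC\subset C$ of the decomposition and genuinely uses complete bicontractivity of $P$; once that is in hand, the remainder of the corollary is bookkeeping, and the main care required is simply to keep the roles of $P$, $I-P$, $\theta$ and the summands $B,C$ straight.
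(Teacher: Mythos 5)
Your proof is correct and follows essentially the route the paper intends: the corollary is presented as bookkeeping from the identity $\theta = 2P - I$ together with the decomposition $A = B \oplus C$ and the relations $C^2 \subset B$, $CB + BC \subset C$ from \cite[Lemma 4.1]{BN15}, which is exactly what you use. Your verification of the norm identities $\Vert I \pm \theta \Vert_{\rm cb} = 2\Vert P \Vert_{\rm cb}$, $2\Vert I - P \Vert_{\rm cb}$, the fixed-point computation, and the multiplicativity check for $\theta$ are all sound.
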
  

We remark that for the subcategory of uniform
algebras (that is, closed unital (or approximately unital)
subalgebras of $C(K)$, for compact $K$), there is a complete solution to the
bicontractive  projection problem.

\begin{theorem}  \label{rcpua}  Let $P : A \to A$ be a  real  positive 
 bicontractive projection on a (unital or approximately 
unital) uniform algebra.  Then  $P$ is symmetric,
and so of course by Theorem {\rm   \ref{rcpst}} we have
that $P(A)$ is a subalgebra of $A$,
and $P$ is a conditional expectation.    \end{theorem}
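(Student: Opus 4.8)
The plan is to reduce the assertion to the single claim that $P$ is symmetric, and to establish that by feeding the uniform algebra $A$ into the reduction machinery of \cite{BN15} for the bicontractive projection problem; the one feature of uniform algebras that makes this succeed is that they contain no nonzero element of square zero.

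First I would upgrade the adjectives for free. A uniform algebra $A$ embeds completely isometrically in some $C(K)$ (so $M_n(A)\subseteq M_n(C(K))=C(K,M_n)$ isometrically), whence every bounded $T:A\to A$ has $\|T\|_{\mathrm{cb}}=\|T\|$; thus ``bicontractive'' $=$ ``completely bicontractive'' and ``symmetric'' $=$ ``completely symmetric'' here. Also, writing $A\subseteq C(K)$ with $\|f\|=\sup_K|f|$, Lemma \ref{chaccr} identifies $\mathfrak r_A=\{a\in A:\mathrm{Re}\,a\ge 0\text{ on }K\}$; unwinding the accretivity condition in the $C^*$-algebra $C(K,M_n)$ against a vector $\xi\in\Cdb^n$ then shows that $[a_{ij}]\in\mathfrak r_{M_n(A)}$ iff $\sum_{i,j}\bar\xi_i\xi_j a_{ij}\in\mathfrak r_A$ for all $\xi\in\Cdb^n$, and applying the linear real positive map $T$ to these scalar combinations gives at once that $T$ is RCP. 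Hence, once $P$ is known to be symmetric, it is completely symmetric and real completely positive, so Theorem \ref{rcpst} applies and delivers the conclusion ($P(A)$ is a subalgebra and $P$ is a conditional expectation). Everything therefore reduces to proving $P$ symmetric.

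For that I would put $P$ in ``standard position'' via the three reductions of \cite{BN15}: pass to the bidual to make $A$ unital, cut down by the projection $q=P(1)$ to make $P$ unital, and restrict to the closed subalgebra generated by $P(A)$ to make $P(A)$ generate $A$. The point to verify is that these keep us inside the class of uniform algebras: $A^{**}$ is a weak$^*$-closed unital subalgebra of the commutative von Neumann algebra $C(K)^{**}$; $qAq=qA$ (the idempotent $q$ being central since $A$ is commutative) is a closed unital subalgebra of $A$; and the subalgebra generated by $P(A)$ is a closed subalgebra of $A$. In each case we still have a commutative Banach algebra with $\|a^2\|=\|a\|^2$, i.e.\ a uniform algebra, and one checks routinely that $P^{**}$ is again bicontractive and real positive. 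Now the heart of the matter: by \cite{BN15}, in standard position $\mathrm{Ker}(P)$ is a closed two-sided ideal $J$ with $J^2=0$; but a uniform algebra has no nonzero element of square zero (embed in $C(K)$ and argue pointwise), so every $j\in J$ has $j^2\in J^2=0$, hence $j=0$, so $J=0$ and $P=I$, which is trivially symmetric. Unwinding the reductions then shows the original $P$ is symmetric: reversing the bidual step is isometric; reversing the cut-down by $q$ is clear since in the commutative setting $A=qA\oplus_\infty q^\perp A$ isometrically with $P=P|_{qA}\oplus 0$; and for the third reduction one uses that $P(A)$ has now been shown to be a subalgebra (of the algebra it generates), so $\theta:=2P-I$ is a period $2$ algebra automorphism there by Corollary \ref{bipe}, necessarily isometric since every automorphism of a uniform algebra acts on Gelfand transforms by composition with a homeomorphism of the maximal ideal space and so preserves the sup norm; hence $\|I-2P\|=\|\theta\|=1$. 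Theorem \ref{rcpst} then completes the proof.

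I expect the main difficulty to be bookkeeping rather than a deep step: one must check carefully that the three reductions of \cite{BN15} preserve the uniform algebra class and are reversible for symmetry, and that the standard-position input ``$\mathrm{Ker}(P)$ is a square-zero ideal'' from \cite{BN15} transfers unchanged. The genuinely new ingredient is short: the obstruction that blocks a clean answer to the bicontractive projection problem for general operator algebras --- a possibly nonzero square-zero ideal $\mathrm{Ker}(P)$ --- cannot exist inside a uniform algebra.
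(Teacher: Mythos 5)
Your proof is correct and follows essentially the same route as the paper's (sketched) argument: both reduce to standard position, observe that the square-zero ideal $\mathrm{Ker}(P)$ must vanish because a uniform algebra has no nonzero nilpotents, conclude that $P(A)$ is a subalgebra so that $\theta = 2P - I$ is an algebra automorphism, and then get isometry of $\theta$ from the equality of norm and spectral radius. Your write-up simply supplies the bookkeeping (class preservation under the reductions, reversibility, and the real positive $\Rightarrow$ RCP upgrade via commutativity) that the paper's sketch leaves implicit.
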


\begin{proof}    We sketch the idea, found in a  conversation with Joel Feinstein.
By the first two reductions described above we can assume that 
$A$ and $P$ are unital.   We also know that $B = P(A)$ is a subalgebra, since if it were not then the third
reduction described above would yield nonzero nilpotents, which cannot exist in a function algebra. 
Thus by the discussion above the theorem, the map $\theta(x+ y) = x-y$ there is an algebra automorphism of $A$,
hence an isometric isomorphism (since norm equals spectral radius).  So  $P = \frac{1}{2} (I + \theta)$
is symmetric.   
\end{proof}

The same three step reduction shows that we can also solve the problem in the affirmative for real completely 
positive completely bicontractive projections $P$ on a unital operator algebra $A$ 
such that the closed algebra generated by $A$ is semiprime (that is, it has no nontrivial
square-zero ideals).   We have found counterexamples to the general question, but we
have also have found conditions that make all known (at this point) counterexamples go away.   See
\cite{BN15} for details.

\section{More notation, and existence of `positive' approximate identities} 
\label{mnepai}

We have already defined the cone ${\mathfrak r}_A$ of accretive or 
`real positive' elements, and its dense subcone $\Rdb^+ {\mathfrak F}_A$.
  Another subcone which is occasionally of interest is the cone consisting of 
elements of $A$ which are `sectorial' of angle $\theta< \frac{\pi}{2}$.    
For the purposes of this paper
being sectorial of angle $\theta$ will mean  that the numerical range in  $A$  (or in a unitization of $A$ if 
$A$ is nonunital) 
is contained in the sector $S_\theta$ consisting of complex numbers $r e^{i \rho}$ with $r \geq 0$ and $|\rho| \leq \theta$.  
This third cone is a dense subset of the second cone $\Rdb^+ {\mathfrak F}_A$
if $A$ is an operator algebra \cite[Lemma 2.15]{BRord}.
We remark that there exists a well established functional  calculus for sectorial operators (see e.g.\ \cite{Haase}).
Indeed the advantages of this cone and the last one seems to be mainly that these have better functional 
calculi.  For the cone $\Rdb^+ {\mathfrak F}_A$, if $A$ is an operator algebra, one could use the functional 
calculus  coming from von Neumann's inequality.  Indeed if 
$\Vert I - x \Vert \leq 1$ then $f \mapsto f(I-x)$ is a contractive homomorphism on the disk algebra.
If $x$ is real positive in an operator algebra, one could also use Crouzeix's remarkable functional calculus on the 
numerical range of $x$ (see e.g.\ \cite{Crou08}).   If $x$ is sectorial in a Banach algebra, one may use 
the functional calculus 
for sectorial operators \cite{Haase}.

A final notion of positivity which we introduced in 
the work with Read, which is slightly more esoteric, but which is a close approximation to
 the usual $C^*$-algebraic notion of positivity:   In the theorems below we will sometimes say that an element $x$ is {\em nearly positive}; 
   this means that in the statement of that result, given $\epsilon >0$ one can also choose the element in that statement to be  real positive  and within $\epsilon$ of its real part (which is positive in the usual sense).     In fact whenever we say 
`$x$ is  nearly positive'
below, we are in fact able, for any  given $\epsilon >0$,  to choose $x$ to also be a contraction with numerical range 
within a thin `cigar' centered on the line segment $[0,1]$ of height $< \epsilon$.   That is,
$x$ has sectorial angle $< \arcsin \epsilon$.     In an operator algebra any contraction $x$ with 
such a sectorial angle is accretive and satisfies $\Vert x - {\rm Re} \, x \Vert \leq \epsilon$, so $x$ is 
within $\epsilon$ of an operator which is positive in the usual sense. 
  Indeed if $a$ is an accretive element in an operator algebra then (principal) $n$-th roots of $a$ have spectrum and numerical radius within a
 sector $S_{\frac{\pi}{2n}}$, and hence are as close as we like (for $n$ sufficiently large) to an operator which is positive in the usual sense (see Section \ref{poroo}).  
Thus one obtains `nearly positive elements' by taking $n$-th roots of  accretive elements.    A {\em nearly positive approximate identity}     $(e_t)$ means that it is real positive
and  the sectorial angle of $e_t$ converges to $0$ with $t$.   
(We remark that at the time of writing we do not know
 for general Banach algebras
if roots (or $r$th powers for
$0 < r < 1$) of 
accretive elements are in $\Rdb^+ {\mathfrak F}_A$ or in
the third cone in the last paragraph, or if that
 third cone is contained in the second cone.)   

In the last paragraphs we have described several variants of `positivity', which at least in an operator algebra are each  
successively stronger than the last.    It is convenient to mentally picture each of these notions by sketching the region containing the
 numerical range of $x$.   Thus for the first notion, the  accretive elements, one simply pictures the right 
half plane in $\Cdb$.   One pictures the second, the cone $\Rdb^+ {\mathfrak F}_A$, as a dense cone 
in the right half plane composed of closed disks center $a$ and radius $a$, for all $a > 0$.   The third cone 
is pictured as increasing sectors $S_\theta$ in $\Cdb$, for increasing $\theta < \frac{\pi}{2}$.   And the 
`nearly positive' elements are pictured by the thin `cigar' mentioned a  paragraph or so back,
centered on the line segment $[0,1]$ of height $< \epsilon$, and contained
in the closed disk center $\frac{1}{2}$ of radius $\frac{1}{2}$.

We now list some more of our notation and general facts: We write ${\rm Ball}(X)$ for the set $\{ x \in X : \Vert x \Vert \leq 1 \}$.     
For us Banach algebras satisfy $\Vert xy \Vert \leq \Vert x \Vert \Vert y \Vert$.   If $x \in A$
for a Banach algebra $A$, then ba$(x)$ denotes the closed subalgebra generated
by $x$.  
If $A$ is a Banach algebra which is not Arens regular, then the multiplication we usually use
on $A^{**}$ is the  `second Arens product' ($\diamond$ in the notation of \cite{Dal}).
This is weak* continuous in the second variable.
 If $A$ is a nonunital, not necessarily Arens regular,  Banach algebra with a bounded approximate identity
(bai),
then $A^{**}$ has a so-called `mixed identity'  \cite{Dal,Pal,DW}, which we will again write as $e$.  This is a right  identity for
the first Arens product, and a left identity for
the second Arens product.   A mixed identity need not
be unique, indeed  mixed identities are just the weak* limit points of bai's for $A$.

See the book of Doran and Wichmann \cite{DW} for a compendium of
 results about approximate identities and related topics.    
If $A$ is an approximately unital  Banach algebra, then 
the left regular representation embeds $A$ isometrically in $B(A)$.  
We will always write $A^1$ for the {\em multiplier unitization} of $A$,
that is, we identify $A^1$ isometrically with $A + \Cdb I$ in
$B(A)$.      Below $1$ will almost always denote the identity of $A^1$, if $A$ is not already unital.  
If $A$ is a nonunital, approximately unital  Banach algebra
then the multiplier  unitization $A^1$ may also be identified isometrically
with the subalgebra  $A + \Cdb e$ of $A^{**}$ for a fixed 
 mixed identity $e$  of norm $1$ for $A^{**}$.

 We recall  that a subspace $E$ of a Banach space $X$ is an $M$-ideal in
$X$ if $E^{\perp \perp}$ is complemented in $X^{**}$ via a contractive projection $P$ so that 
$X^{**} = E^{\perp \perp} \oplus^{\infty} {\rm Ker}(P)$.   In this case there is a unique contractive projection  onto $E^{\perp \perp}$.   This concept 
was invented by Alfsen and Effros,
and  \cite{HWW} is the basic text for their beautiful and powerful theory.    
 By an {\em $M$-approximately unital Banach algebra}
we mean a Banach algebra which is an $M$-ideal in
its multiplier unitization $A^1$.  This is equivalent (see \cite[Lemma 2.4]{BOZ}
to saying that $\Vert 1 - x \Vert_{(A^1)^{**}} = \Vert e - x \Vert_{A^{**}}$ for all $x \in A^{**}$,
unless the last quantity is $< 1$ in which case $\Vert 1 - x \Vert_{(A^1)^{**}} = 1$.
Here  $e$ is the identity  for $A^{**}$ if it has one,
otherwise
it is a mixed identity of norm 1.   A result of Effros and Ruan implies that  approximately unital
operator algebras are $M$-approximately unital (see e.g.\ \cite[Theorem 4.8.5 (1)]{BLM}).    Also, all
unital Banach algebras are  $M$-approximately unital.

We use states a lot in our work.   However for an approximately unital Banach algebra $A$ with cai $(e_t)$, the definition of `state' is problematic.  Although we have not noticed this discussed in the literature, there are several natural notions, and which is best seems to depend on the
situation.      For example: (i) \ a contractive functional
$\varphi$ on $A$ with $\varphi(e_t) \to 1$ for some fixed  cai $(e_t)$ for $A$,
(ii) \   a contractive functional
$\varphi$ on $A$ with $\varphi(e_t) \to 1$ for all cai $(e_t)$ for $A$,  and
(iii) \ a norm $1$ functional on $A$ that extends to a state on $A^1$,
where $A^1$ is the `multiplier unitization' above.   If $A$
satisfies a smoothness hypothesis then all these notions coincide  \cite[Lemma 2.2]{BOZ}, but this is not true
in general.   The $M$-approximately unital Banach algebras in the last paragraph are smooth in this 
sense.   Also,  if $e$ is a mixed identity for $A^{**}$
 then the statement $\varphi(e) = 1$ may depend on which mixed identity one considers.
 In this paper though
for simplicity, and because of its connections with the usual theory of numerical range and accretive operators,
we will take (iii) above as the definition of a {\em state} of $A$.    In \cite{BOZ} we also consider
some of the other variants above, and these will appear  below from time to time.
 We define  the state space $S(A)$ to be
the set of states
in the sense of (iii) above.    The quasistate space $Q(A)$ is $\{ t \varphi : t \in [0,1], \varphi \in S(A) \}$.
The numerical range of $x \in A$ is $W_A(x) = \{ \varphi(x) :  \varphi \in S(A) \}$.    As in \cite{BOZ}
we define ${\mathfrak r}_{A^{**}} = A^{**} \cap 
{\mathfrak r}_{(A^1)^{**}}$.    There is an unfortunate ambiguity with the latter notation here and in
\cite{BOZ} in the (generally rare) case that $A^{**}$ is unital.  It  should be 
stressed that in these papers ${\mathfrak r}_{A^{**}}$ should {\em not}, if $A^{**}$ is unital,  be confused with the 
real positive (i.e.\ accretive) elements in $A^{**}$.  It is shown in 
\cite[Section 2]{BOZ} that  these are the 
same if $A$ is an $M$-approximately unital Banach algebra, and in particular if $A$ is an approximately unital operator algebra.      It is easy to see that $A^{**} \cap 
{\mathfrak r}_{(A^1)^{**}}$ is contained in the accretive elements in $A^{**}$ if $A^{**}$ is unital,
but the other direction seems unclear in general.

Of course in the theory of $C^*$-algebras, positivity and the existence of positive approximate identities are crucial.
How does one get a `positive cai' in an algebra with cai?      We have several ways to do this.
First, for approximately unital operator algebras and for a large class of approximately unital 
Banach algebras (eg.\ the {\em scaled} Banach algebras defined in the next section; and we do not possess an example 
of a Banach algebra that is not scaled yet)
 we have a `Kaplansky density' result: $\overline{{\rm Ball}(A) \cap {\mathfrak r}_A}^{w*} = {\rm Ball}(A^{**}) \cap {\mathfrak r}_{A^{**}}$.   See Theorem \ref{Kap}
 below.    (We remark that although it seems not to be well known, the most common variants of the 
usual Kaplansky density theorem for a $C^*$-algebra $A$
do follow quickly from the weak* density of 
${\rm Ball}(A)$ in ${\rm Ball}(A^{**})$, if 
one constructs $A^{**}$ carefully.)
 If $A^{**}$  has a real positive mixed identity $e$ of norm 1, then one 
can then get a real positive cai by approximating $e$ by elements of ${\rm Ball}(A) \cap {\mathfrak r}_A$.    See
  Corollary \ref{solos}.  
A similar argument allows one to deduce the second assertion in the following result from the first (one also
uses  the fact that in an $M$-approximately unital Banach algebra $\Vert 1 - 2e \Vert \leq 1$ for a mixed identity 
of norm $1$ for $A^{**}$):

\begin{theorem} \label{Mcai} {\rm \cite{BOZ,BRI,Read}} Let $A$ be an
$M$-approximately unital Banach algebra, for example any operator algebra.   Then
 ${\mathfrak F}_A$ is weak* dense in ${\mathfrak F}_{A^{**}}$.
Hence $A$ has a cai in $\frac{1}{2}{\mathfrak F}_A$.   \end{theorem}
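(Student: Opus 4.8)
The plan is to pin down ${\mathfrak F}_{A}$ and ${\mathfrak F}_{A^{**}}$ concretely using the $M$-ideal hypothesis, reduce the density statement to a Goldstine-type assertion inside ${\rm Ball}(A^{**})$, and prove the latter with a contractive approximate identity. First I would record the structure coming from $M$-approximate unitality: write $1$ for the identity of $(A^1)^{**}$, fix a mixed identity $e \in A^{**}$ of norm $1$ (the image $P(1)$ under the $M$-projection $P : (A^1)^{**} \to A^{**}$), and put $u = 1 - e$, so that $\|u\| = 1$ and $(A^1)^{**} = A^{**} \oplus^{\infty} \Cdb u$; the projection $P$ is the adjoint of a contractive projection on $(A^1)^*$, hence is weak* continuous. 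Using the isometric description of the norm on $(A^1)^{**}$ recalled before the theorem one obtains
$${\mathfrak F}_{A^{**}} = \{ x \in A^{**} : \| e - x \|_{A^{**}} \leq 1 \} = e - {\rm Ball}(A^{**}), \qquad {\mathfrak F}_A = \{ a \in A : \| e - a \|_{A^{**}} \leq 1 \}.$$
Since the norm is weak* lower semicontinuous, ${\mathfrak F}_{A^{**}}$ is weak*-closed and convex and contains ${\mathfrak F}_A$, so $\overline{{\mathfrak F}_A}^{w*} \subseteq {\mathfrak F}_{A^{**}}$; the content of the theorem is the reverse inclusion.

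For that, I would apply the affine weak*-homeomorphism $x \mapsto e - x$ of $A^{**}$: it carries ${\mathfrak F}_{A^{**}}$ onto ${\rm Ball}(A^{**})$ and ${\mathfrak F}_A$ onto $(e-A) \cap {\rm Ball}(A^{**})$, so the asserted density is equivalent to $(e-A) \cap {\rm Ball}(A^{**})$ being weak* dense in ${\rm Ball}(A^{**})$. As both of these sets are convex, Hahn--Banach separation (in the dual pair $\langle A^{**}, A^* \rangle$) shows this is in turn equivalent to
$$\sup \{ {\rm Re}\, \varphi(z) : z \in (e-A) \cap {\rm Ball}(A^{**}) \} = \| \varphi \| \qquad (\varphi \in A^*).$$

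To prove this, fix $\varphi$ and $\epsilon > 0$, choose a cai $(f_s)$ for $A$ of norm $\leq 1$ with $f_s \to e$ weak*, and pick $a \in {\rm Ball}(A)$ with ${\rm Re}\, \varphi(a) > \| \varphi \| - \epsilon$. Set $z_s = f_s a f_s + (e - f_s) \in e - A$. Since $f_s a f_s \to a$ in norm and $f_s \to e$ weak*, we have $\varphi(z_s) \to \varphi(a)$, so eventually ${\rm Re}\, \varphi(z_s) > \| \varphi \| - \epsilon$; and one checks, using the $\oplus^{\infty}$ splitting of the bidual norm together with a choice of cai for which $\| f_s(e-f_s) \|$ is sufficiently small, that $\| z_s \|_{A^{**}} \leq 1$, any leftover error being absorbed by a further passage to convex combinations in the spirit of the Cohen--Sinclair factorization device. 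This yields the displayed identity, hence the weak* density of ${\mathfrak F}_A$ in ${\mathfrak F}_{A^{**}}$. The real obstacle here is this norm bound $\| z_s \|_{A^{**}} \leq 1$: an abstract Goldstine argument only produces approximants in $(1+\epsilon){\mathfrak F}_A$, and it is precisely the $M$-ideal hypothesis — the isometric decomposition $(A^1)^{**} = A^{**} \oplus^{\infty} \Cdb u$, which is what allows the ``filler'' term $e - f_s$ to be added without enlarging the norm — that lets one land in ${\mathfrak F}_A$ itself rather than in a dilate of it.

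Finally, for the last sentence: $M$-approximate unitality gives $\| 1 - 2e \|_{(A^1)^{**}} \leq 1$, so $2e \in {\mathfrak F}_{A^{**}}$, and by the density just established there is a net $(h_\lambda)$ in ${\mathfrak F}_A$ with $h_\lambda \to 2e$ weak*. Then $g_\lambda = \tfrac{1}{2} h_\lambda$ lies in $\tfrac12 {\mathfrak F}_A$, has norm $\leq 1$, and converges weak* to the mixed identity $e$, so $a g_\lambda \to a$ and $g_\lambda a \to a$ weakly for each $a \in A$ (for the relevant Arens products). A routine application of Mazur's theorem — passing to convex combinations, which remain in the convex set $\tfrac12 {\mathfrak F}_A$ — then upgrades $(g_\lambda)$ to an honest cai contained in $\tfrac12 {\mathfrak F}_A$.
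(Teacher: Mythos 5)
Your reductions are sound: the identification ${\mathfrak F}_{A^{**}} = e - {\rm Ball}(A^{**})$ via the $\oplus^\infty$ decomposition, the translation of the density claim into the weak* density of $(e-A)\cap{\rm Ball}(A^{**})$ in ${\rm Ball}(A^{**})$, and the final passage from density to a cai in $\tfrac12{\mathfrak F}_A$ (via $\Vert 1-2e\Vert\leq 1$, hence $2e\in{\mathfrak F}_{A^{**}}$, plus Mazur) are all correct. The gap is exactly at the point you wave at with ``one checks'': the bound $\Vert f_s a f_s + (e - f_s)\Vert_{A^{**}}\leq 1$ has no justification, and the two devices you invoke cannot supply one. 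The element $z_s = f_s a f_s + (e-f_s)$ lies entirely in the summand $A^{**}$ of $(A^1)^{**}=A^{**}\oplus^{\infty}\Cdb u$, so the $\oplus^\infty$ splitting says nothing about $\Vert z_s\Vert_{A^{**}}$; submultiplicativity and the triangle inequality only give a bound near $2$. The inequality $\Vert faf + 1 - f\Vert\leq 1$ for contractions $a$ is true in a $C^*$-algebra when $0\leq f\leq 1$, but the proof is the row--column factorization $faf+1-f = \bigl(f\;\; (1-f)^{1/2}\bigr)\,{\rm diag}(a,1)\,\bigl(f \;\;(1-f)^{1/2}\bigr)^{t}$ using $f^2+(1-f)\leq 1$ --- an operator-space phenomenon with no analogue in a general $M$-approximately unital Banach algebra. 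A cai with $\Vert f_s - f_s^2\Vert$ small is likewise not generally available (that would force the cai to be nearly idempotent). Nor can the deficit be ``absorbed by convex combinations in the spirit of Cohen--Sinclair'': convex combinations of elements of norm at most $1+\eta$ still have norm up to $1+\eta$, and Mazur-type arguments upgrade weak to norm convergence but never push a norm below $1$.

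The cited proof (\cite[Theorem 5.2]{BOZ}) uses the $M$-ideal hypothesis through an entirely different mechanism, and the Remark immediately following Theorem \ref{Mcai} in this paper tells you what it is. Step one is the soft bipolar/Goldstine argument you correctly identify as all that comes for free: a given element of ${\mathfrak F}_{A^{**}}$ is weak* approximated by $y\in A$ with $\Vert 1-y\Vert < 1+\delta$ only. Step two corrects $y$: one needs that for every $\epsilon>0$ there is $\delta>0$ so that any $y\in A$ with $\Vert 1-y\Vert<1+\delta$ admits $z\in A$ with $\Vert 1-z\Vert\leq 1$ and $\Vert y-z\Vert<\epsilon$; for an $M$-ideal $A$ in $A^1$ this follows from the ball-intersection/proximinality properties of $M$-ideals from \cite{HWW}. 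That correction step is the genuine content of the theorem and is what is missing from your argument; if you replace the construction of $z_s$ by this two-step scheme, the rest of your write-up goes through.
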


Applied to approximately unital
operator algebras (which as we said are all
$M$-approximately unital) the last assertion of
Theorem \ref{Mcai} becomes  
Read's theorem from \cite{Read}.   See also \cite{Bnpi, BRord}
for other proofs of the latter result.

\begin{remark}  For the conclusion that 
${\mathfrak F}_A$ is weak* dense in ${\mathfrak F}_{A^{**}}$
one may relax the $M$-approximately unital hypothesis to the following
much milder condition: $A$ is  approximately unital  and given $\epsilon > 0$ there exists a $\delta > 0$
such that if  $y \in A$ with $\Vert 1 - y \Vert < 1 + \delta$ then there is a $z \in A$
with $\Vert 1 - z \Vert = 1$ and $\Vert y - z \Vert < \epsilon$.  
 Here $1$ denotes
the identity of any unitization of $A$.  This follows from the proof of \cite[Theorem 5.2]{BOZ}.
For example, $L^1(\Rdb)$ satisfies this condition with $\delta = \epsilon$.  
\end{remark}

Another approach to finding a `real positive cai' under a countability condition from \cite[Section 2]{BOZ}
uses a slight variant of the `real positive' definition.  Namely for a fixed cai ${\mathfrak e} = (e_t)$  for $A$ define
 $S_{\mathfrak e}(A) = \{ \varphi \in {\rm Ball}(A^*) :  
\lim_t \, \varphi(e_t) = 1 \}$ (a subset of $S(A)$).   Define  ${\mathfrak r}^{\mathfrak e}_A = \{ x \in A : {\rm Re} \, \varphi (x) \geq 0 \; {\rm for \, all} \; \varphi 
\in S_{\mathfrak e}(A) \}$.    If we multiply these states by numbers in $[0,1]$, we get 
the associated quasistate space $Q_{\mathfrak e}(A)$.     Note that ${\mathfrak r}^{\mathfrak e}_A$ contains
${\mathfrak r}^{\mathfrak e}_A$.  On the other hand, 
\cite[Theorem 6.5]{BOZ} (or a minor variant of the proof of it) shows that if $A^{**}$ 
is unital then 
${\mathfrak r}^{\mathfrak e}_A$ is never contained in  ${\mathfrak r}_{A^{**}}$ (or in the 
accretive elements in $A^{**}$)  unless ${\mathfrak r}^{\mathfrak e}_A = {\mathfrak r}^{\mathfrak e}_A$.

\begin{theorem} \label{seqcai} {\rm \cite{BOZ}}
  A Banach algebra $A$ with a sequential cai ${\mathfrak e}$ and with $Q_{\mathfrak e}(A)$ weak* closed, has a sequential cai in ${\mathfrak r}^{\mathfrak e}_A$.    \end{theorem}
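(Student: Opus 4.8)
The plan is to deduce the theorem from a ``local'' version and then to isolate the single step that uses the weak*-closedness of $Q_{\mathfrak e}(A)$. First I would reduce to proving: $(\ast)$ for every finite $F \subseteq A$ and every $\epsilon > 0$ there is an $f \in {\mathfrak r}^{\mathfrak e}_A$ with $\Vert f \Vert \leq 1$ and $\Vert f a - a \Vert < \epsilon$, $\Vert a f - a \Vert < \epsilon$ for all $a \in F$. Granting $(\ast)$, apply it with $F = \{ e_1, \ldots, e_k \}$ and $\epsilon = \tfrac{1}{k}$, getting contractions $f_k \in {\mathfrak r}^{\mathfrak e}_A$; a routine diagonal estimate shows $(f_k)$ is a sequential cai (given $a$ and $\delta > 0$, fix $m$ with $\Vert e_m a - a \Vert, \Vert a e_m - a \Vert < \tfrac{\delta}{3}$, and for $k \geq m$ large use $\Vert f_k e_m - e_m \Vert < \tfrac{1}{k}$ to get $\Vert f_k a - a \Vert \leq \Vert a - e_m a \Vert + \tfrac{1}{k}\Vert a \Vert + \Vert e_m a - a \Vert < \delta$, and symmetrically). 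This is the only place the sequentiality of ${\mathfrak e}$ is used; everything else is about $(\ast)$.

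To attack $(\ast)$ I would pass to $A^{**}$ with the second Arens product and let $e$ be a weak* cluster point of the net $(e_t)$, so $e$ is a mixed identity with $\Vert e \Vert \leq 1$. The key observation is that $e$ is ``real positive relative to $Q_{\mathfrak e}(A)$'': if $\varphi \in S_{\mathfrak e}(A)$ then $\varphi(e_t) \to 1$, so $\langle e, \varphi \rangle = 1$, whence $\langle e, \psi \rangle = \Vert \psi \Vert \geq 0$ for every $\psi \in Q_{\mathfrak e}(A)$. Then $(\ast)$ follows from the density claim $(\dagger)$: $e$ lies in the weak* closure of ${\rm Ball}(A) \cap {\mathfrak r}^{\mathfrak e}_A$. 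Indeed, given $(\dagger)$, pick a net $(f_\beta)$ in ${\rm Ball}(A) \cap {\mathfrak r}^{\mathfrak e}_A$ with $f_\beta \to e$ weak*; for $a \in A$ and $\mu \in A^*$, writing $a\cdot\mu$ for the functional $b \mapsto \mu(ba)$, one has $\mu(f_\beta a) = \langle f_\beta, a\cdot\mu \rangle \to \langle e, a\cdot\mu \rangle = \mu(a)$ (since $\mu(e_t a) \to \mu(a)$ along the cai and $e$ is a cluster point), so $f_\beta a \to a$ weakly in $A$, and symmetrically $a f_\beta \to a$. A Mazur-type passage to convex combinations, which remain in the convex norm-closed set ${\rm Ball}(A) \cap {\mathfrak r}^{\mathfrak e}_A$, then yields for any given $F$ and $\epsilon$ the $f$ required in $(\ast)$.

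Proving $(\dagger)$ is where the hypothesis does its work, and I expect it to be the main obstacle. Since $S_{\mathfrak e}(A)$, hence $Q_{\mathfrak e}(A) = [0,1]\,S_{\mathfrak e}(A)$, is convex, under our hypothesis $Q_{\mathfrak e}(A)$ is a weak* compact convex subset of ${\rm Ball}(A^*)$ containing $0$, and ${\mathfrak r}^{\mathfrak e}_A$ is exactly the pre-polar cone of $-Q_{\mathfrak e}(A)$. The set $\mathcal{C} = \overline{{\rm Ball}(A) \cap {\mathfrak r}^{\mathfrak e}_A}^{w*}$ is weak* closed and convex in $A^{**}$, so by Hahn--Banach $(\dagger)$ is equivalent to ${\rm Re}\,\langle e, \mu \rangle \leq M(\mu)$ for all $\mu \in A^*$, where $M(\mu) = \sup\{ {\rm Re}\,\mu(f) : f \in {\rm Ball}(A) \cap {\mathfrak r}^{\mathfrak e}_A \}$. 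For $\mu$ with $M(\mu) < \infty$ the cone property gives ${\rm Re}\,\mu \leq M(\mu)\Vert\cdot\Vert$ on ${\mathfrak r}^{\mathfrak e}_A$; the point is then that, because $Q_{\mathfrak e}(A)$ is weak* compact and convex, the bipolar theorem forces such a $\mu$ to be controlled by the quasistates in $Q_{\mathfrak e}(A)$, and one evaluates this against $e$ using $\langle e, \psi \rangle = \Vert \psi \Vert$ and ${\rm Re}\,\psi(e_t) \to \Vert \psi \Vert$ for $\psi \in Q_{\mathfrak e}(A)$ to obtain the inequality. The delicate matter is precisely this: a functional that is merely bounded above on the cone ${\mathfrak r}^{\mathfrak e}_A$ must be ``seen'' by $Q_{\mathfrak e}(A)$, and for that one genuinely needs $Q_{\mathfrak e}(A)$ weak* closed — without it the polar duality does not close up, and (as the discussion preceding the theorem shows) ${\mathfrak r}^{\mathfrak e}_A$ may lie partly outside the accretive elements of $A^{**}$, so naive convexification of $(e_t)$ need not land in ${\mathfrak r}^{\mathfrak e}_A$.

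An alternative, bidual-free route to $(\ast)$ would be a ``real positive'' variant of the Cohen factorization method in the spirit of Sinclair's construction of commuting bounded approximate identities: build the $f_k$ by an explicit iteration that absorbs $e_1, \ldots, e_k$ to within $\tfrac1k$ while keeping every partial product inside ${\mathfrak r}^{\mathfrak e}_A$ (for instance inside the smaller cone ${\mathfrak F}_A \subseteq {\mathfrak r}^{\mathfrak e}_A$, when that can be arranged). The obstacle reappears in this guise as the task of keeping the iterates real positive, which is again exactly what the weak*-closedness of $Q_{\mathfrak e}(A)$ should supply.
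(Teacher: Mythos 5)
Your argument is essentially correct, but it takes a genuinely different route from the proof given here. The paper stays entirely inside $A$: it sets $K = Q_{\mathfrak e}(A)$ (weak* compact by hypothesis), considers the continuous functions $f_n(\psi) = {\rm Re}\,\psi(e_n)$ on $K$, and uses a Hahn--Banach separation in $C(K,\Rdb)$ together with the Riesz--Markov theorem and dominated convergence to produce, for each $\epsilon>0$, a convex combination of the $e_n$ whose real part is $\geq -\epsilon$ on $K$; a geometric-series trick then removes the $\epsilon$, and the resulting elements are convex combinations of the original sequence, hence assemble into a sequential cai directly. Note that this argument uses the sequentiality of ${\mathfrak e}$ in an essential way (dominated convergence of the sequence $(f_n)$ against a probability measure on $K$), whereas you use it only in the final diagonal bookkeeping. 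Your route --- cluster point $e$ in $A^{**}$, a Kaplansky-density statement $(\dagger)$, then Mazur --- is in substance the proof of Corollary \ref{solos2} via Theorem \ref{Kap}, adapted to the relative cone ${\mathfrak r}^{\mathfrak e}_A$, with a diagonal argument appended to recover sequentiality; it arguably yields slightly more (a real positive cai from any cai, sequential or not, cf.\ Theorem \ref{brcof4}).

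The one place you must be careful is $(\dagger)$, which you correctly flag as the crux but only sketch. The naive version of your bipolar computation fails: writing $C = {\rm Ball}(A)\cap{\mathfrak r}^{\mathfrak e}_A$, the polar calculus gives $C^\circ = \overline{{\rm conv}}^{\sigma(A^*,A)}\bigl({\rm Ball}(A^*)\cup({\mathfrak r}^{\mathfrak e}_A)^\circ\bigr)$, and the closure is in $\sigma(A^*,A)$, against which the functional $\mu\mapsto\langle e,\mu\rangle$ (with $e\in A^{**}\setminus A$) is \emph{not} continuous; so verifying ${\rm Re}\,\langle e,\mu\rangle\le 1$ on the generating set does not suffice. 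The weak* closedness of $Q_{\mathfrak e}(A)$ is exactly what repairs this: by Krein--\v{S}mulian, $\Rdb^+ Q_{\mathfrak e}(A)$ is weak* closed (this is \cite[Lemma 2.7 (1)]{BOZ}, quoted in the proof of Theorem \ref{brcof4} above), and the same Krein--\v{S}mulian argument shows that ${\rm Ball}(A^*) - \Rdb^+ Q_{\mathfrak e}(A)$ is already weak* closed, so that $C^\circ$ equals this set with no further closure. On that concrete set one evaluates directly: ${\rm Re}\,\langle e, \rho - t\varphi\rangle \le \Vert e\Vert - t\,{\rm Re}\,\langle e,\varphi\rangle \le 1$, using $\langle e,\varphi\rangle = \lim_n\varphi(e_n) \ge 0$ for $\varphi\in Q_{\mathfrak e}(A)$. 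With $(\dagger)$ so established, your Mazur step and diagonal argument are routine and correct. In short: right theorem, viable and genuinely different proof, but the ``polar duality closes up'' step needs the Krein--\v{S}mulian argument spelled out, since that is precisely where the hypothesis is consumed.
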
  

\begin{proof}   We give the main idea of the proof in \cite{BOZ}, and a few more details for the 
first step.   Suppose that $K$ is a compact space
and  $(f_n )$ is a bounded sequence
in $C(K,\Rdb)$, such that $\lim_n f_n(x)$ exists
for every $x \in K$ and is non-negative.   Claim: 
for every $\epsilon>0$, there is a function $f \in {\rm conv} \{ f_n \}$
such that $f \geq -\epsilon$ on $K$.   Indeed if this were not true, then there exists
an $\epsilon > 0$ such that for all 
$f \in {\rm conv} \{ f_n \}$ there is a point $x$  in $K$ with $f(x) < - \epsilon$.
Moreover, for all $g \in \overline{{\rm 
conv}} \{ f_n \}$, if $f \in {\rm conv} \{ f_n \}$ with $\Vert f-g \Vert < \frac{\epsilon}{4}$,
there is a point $x$  in $K$ with $g(x) < - \frac{3 \epsilon}{4}$.   So 
$A = \overline{{\rm 
conv}} \{ f_n  \}$ and $C = C(K)_+$
are clearly disjoint.   Moreover, it is well known that convex sets $E, C$ in an LCTVS can be strictly separated iff
$0 \notin \overline{E-C}$, and this is clearly the case for us here.  So there 
is a continuous functional $\psi$ on $C(K,\Rdb)$ and scalars $M,N$ 
with $\psi(g) \leq M < N \leq \psi(h)$ for all $g  \in A$ and 
$h \in C$.   Since $C$ is a cone we may take $N = 0$.  By the Riesz--Markov theorem there is
a   Borel probability measure $m$ such that
$\sup_n \int_K  f_n \, dm < 0$.  This is a contradiction and proves the Claim,
 since $\lim_n \, \int_K \, f_n \, dm \geq 0$ by 
Lebesgue's dominated  convergence theorem.   

Now let $K = Q_{\mathfrak e}(A)$ and $f_n(\psi) = {\rm Re} \, \psi(e_n)$ where ${\mathfrak e} = (e_n)$.
We have $\lim_n f_n \geq 0$ pointwise on $K$, so by the last paragraph for any $\epsilon>0$ a convex combination of the 
$f_n$ is always $\geq -\epsilon$ on $K$.     By a standard geometric series type argument we can replace $\epsilon$ with $0$ here, so that we have a real positive element, and with more care this convex combination may be taken to be a  generic element in a cai.
 \end{proof}

Finally, we state a `new' result, which will be proved in Corollary \ref{solos2} below (this result was referred to incorrectly   in the published version of \cite{BOZ} 
as `Corollary 3.4' of the present paper).  

\begin{corollary} \label{solos2a}  If $A$ is an approximately unital 
Banach algebra with a cai ${\mathfrak e}$ such that $S(A) = S_{\mathfrak e}(A)$, and such that the quasistate space $Q(A)$ is weak* closed, then
$A$ has a cai in ${\mathfrak r}_{A}$.  \end{corollary}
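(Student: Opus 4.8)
The plan is to run the proof of Theorem \ref{seqcai} essentially verbatim, but with the (no longer sequential) cai ${\mathfrak e} = (e_\lambda)_{\lambda \in \Lambda}$, and to circumvent the one step that genuinely uses countability --- the appeal to Lebesgue's dominated convergence theorem, which has no net analogue --- by a barycenter argument. First note that the hypothesis $S(A) = S_{\mathfrak e}(A)$ gives ${\mathfrak r}_A = {\mathfrak r}^{\mathfrak e}_A$ and $Q(A) = Q_{\mathfrak e}(A)$; moreover $S_{\mathfrak e}(A)$, and hence $Q(A) = \{ t\varphi : t \in [0,1], \, \varphi \in S_{\mathfrak e}(A)\}$, is convex (if $\varphi(e_t) \to 1$ and $\varphi'(e_t) \to 1$ then $\frac12(\varphi+\varphi')(e_t) \to 1$, which forces $\Vert \frac12(\varphi+\varphi')\Vert = 1$), and by hypothesis and Banach--Alaoglu $Q(A)$ is weak* compact.

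Next I would fix a finite $F \subseteq A$ and $\epsilon > 0$ and pass to a cofinal tail $\Lambda_0 = \{ \lambda : \lambda \geq \lambda_0 \}$ on which $\Vert a e_\lambda - a \Vert, \Vert e_\lambda a - a \Vert < \epsilon$ for all $a \in F$; then $(e_\lambda)_{\lambda \in \Lambda_0}$ is again a cai and any convex combination of its members is within $\epsilon$ of the identity against $F$. The heart of the matter is the net analogue of the Claim in the proof of Theorem \ref{seqcai}: for each $\delta > 0$ there is a finite convex combination $x$ of $\{ e_\lambda : \lambda \in \Lambda_0 \}$ with $W_A(x) \subseteq \{ z \in \Cdb : {\rm Re}\, z \geq -\delta \}$. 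To prove this I would set $K = Q(A)$ and, for $\lambda \in \Lambda_0$, $f_\lambda \in C(K,\Rdb)$, $f_\lambda(\psi) = {\rm Re}\, \psi(e_\lambda)$, and argue by contradiction. Exactly as in Theorem \ref{seqcai} one separates the norm closed convex set $\overline{{\rm conv}} \{ f_\lambda : \lambda \in \Lambda_0 \}$ from the cone $C(K)_+$ and extracts, via the Riesz--Markov theorem, a nonzero positive Radon measure $\mu$ on $K$ with $\int_K f_\lambda \, d\mu \leq M < 0$ for all $\lambda \in \Lambda_0$. Now, instead of dominated convergence, put $c = \mu(K) > 0$ and let $\Psi$ be the barycenter of $c^{-1}\mu$; since the compact convex set $K = Q(A)$ contains the barycenters of all its Radon probability measures, $\Psi \in Q(A)$, and since each evaluation $\psi \mapsto \psi(a)$ ($a \in A$) is weak* continuous, $\Psi(e_\lambda) = c^{-1} \int_K \psi(e_\lambda)\, d\mu(\psi)$, so that $\int_K f_\lambda\, d\mu = c\, {\rm Re}\, \Psi(e_\lambda)$. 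Writing $\Psi = t_0 \varphi_0$ with $t_0 = \Vert \Psi \Vert \in [0,1]$ and $\varphi_0 \in S_{\mathfrak e}(A)$ (or $\Psi = 0$), the defining property of $S_{\mathfrak e}(A)$ gives $\varphi_0(e_\lambda) \to 1$, whence $\int_K f_\lambda\, d\mu = c t_0\, {\rm Re}\, \varphi_0(e_\lambda) \to c t_0 \geq 0$, contradicting $\int_K f_\lambda\, d\mu \leq M < 0$. Note that this argument never mentions a pointwise limit of the $f_\lambda$, which is precisely why the countability hypothesis of Theorem \ref{seqcai} is no longer needed.

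From this Claim I would conclude exactly as in the proof of Theorem \ref{seqcai}: the standard geometric series type argument used there replaces the $-\delta$ by $0$, producing for each pair $(F,\epsilon)$ an element $x_{F,\epsilon} \in {\rm Ball}(A) \cap {\mathfrak r}_A$ (a convex combination, or norm limit of convex combinations, of the $e_\lambda$ with $\lambda \in \Lambda_0$) with $\Vert a x_{F,\epsilon} - a \Vert, \Vert x_{F,\epsilon} a - a \Vert < \epsilon$ for $a \in F$; directing the pairs $(F,\epsilon)$ upward, $(x_{F,\epsilon})$ is the desired cai in ${\mathfrak r}_A$. The only step I expect to require genuine care is the barycenter argument above; the rest is a routine transcription of the sequential proof, the essential new point being that the separating measure's barycenter again lies in $Q(A) = Q_{\mathfrak e}(A)$, so the problematic functional automatically tests correctly against the approximate identity.
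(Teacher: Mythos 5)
Your proof is essentially correct, but it is not the route the paper takes. The paper proves this statement as Corollary \ref{solos2}, and its argument is much shorter given the surrounding machinery: take any weak* limit point $e$ of the cai ${\mathfrak e}$, note that $\varphi(e)=1$ for all $\varphi\in S_{\mathfrak e}(A)=S(A)$, use Proposition \ref{scal} (the hypotheses make $A$ scaled, since $Q(A)$ is convex and weak* compact) to conclude that $e$ is a norm-one mixed identity lying in ${\mathfrak r}_{A^{**}}$, and then invoke the Kaplansky density theorem \ref{Kap} in the form of Corollary \ref{solos} to pull a real positive cai back down into $A$. Your argument instead works entirely inside $A$: you rerun the separation argument from the proof of Theorem \ref{seqcai} for the net ${\mathfrak e}$ and repair the one genuinely sequential step (the appeal to dominated convergence) by observing that the barycenter of the separating measure lies in the weak* compact convex set $Q(A)=Q_{\mathfrak e}(A)$ and therefore tests correctly against the cai. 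That barycenter step is correct and is a nice observation; it shows in effect that Theorem \ref{seqcai} does not really need sequentiality once $Q_{\mathfrak e}(A)$ is weak* compact (its convexity being automatic from the definition of $S_{\mathfrak e}(A)$, as you note). The trade-off is this: the paper's route bypasses the least transparent part of the proof of Theorem \ref{seqcai}, namely the ``standard geometric series type argument'' that upgrades ${\rm Re}\,\psi(x)\geq -\delta$ on $Q(A)$ to actual membership in the cone and produces a generic cai element; your proof inherits that step verbatim and is therefore only as complete as that (undetailed) step of Theorem \ref{seqcai}. If you want a fully self-contained argument you should either write out that upgrade carefully, or switch to the bidual/Kaplansky-density route, where the corresponding work is done once and for all by the bipolar theorem.
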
 

We remark  that we have no example of an approximately unital 
Banach algebra  where  $Q(A)$ is not weak* closed.    In particular,  we have found that 
commonly encountered algebras have this
property.

\section{Order theory in the unit ball} \label{pomoo}

In the spirit of the quotation starting Section \ref{kad} 
we now discuss generalizations of
well known order-theoretic properties
of the unit ball of a $C^*$-algebra and its dual. 
Some of these results also may be viewed as new relations between an operator algebra and a $C^*$-algebra that it 
generates.    There are interesting connections to the classical theory 
of ordered linear spaces (due to Krein, Ando, Alfsen, etc) as found e.g.\ in the first chapters of \cite{AE}.
In addition to striking parallels, some of this classical theory can be applied directly.  Indeed several results from 
\cite{BOZ} (some of which are mentioned below, see e.g.\ the proof of
Theorem   \ref{brcof4}) are proved by appealing to results in that theory. 
See also
\cite{BRord} for more connections if the algebras are in addition operator algebras.  

The ordering induced by ${\mathfrak r}_A$ is obviously
$b \preccurlyeq a$ iff $a-b$ is accretive (i.e.\ numerical range in right half plane).  If $A$ is an operator algebra this happens when 
${\rm Re}(a - b) \geq 0$.

 \begin{theorem}   \label{brcof} {\rm \cite{BRord}} 
 If an approximately unital  operator algebra $A$ generates a $C^*$-algebra
$B$, then $A$ is {\em order cofinal} in $B$.  That is, given $b \in B_+$ there exists $a \in A$ with $b \preccurlyeq a$.   Indeed one can do this with
$b \preccurlyeq a \preccurlyeq \Vert b \Vert + \epsilon$.
 Indeed one can do this with
$b \preccurlyeq C e_t \preccurlyeq \Vert b \Vert + \epsilon$, for a nearly positive cai $(e_t)$ for $A$ and a constant $C 
> 0$.
\end{theorem}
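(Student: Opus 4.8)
The plan is to leverage the Kaplansky-type density theorem (Theorem \ref{Mcai}, or the sharper nearly-positive versions discussed in Section \ref{mnepai}) together with the order structure of the enveloping $C^*$-algebra $B = C^*(A)$. First I would fix $b \in B_+$ and $\epsilon > 0$. Since $A$ is approximately unital, by Theorem \ref{Mcai} it has a cai $(e_t) \subset \frac{1}{2}{\mathfrak F}_A$, and by the discussion of nearly positive elements in Section \ref{mnepai} (taking $n$th roots of accretive elements in the operator algebra $A$) we may in fact take $(e_t)$ to be a \emph{nearly positive} cai, so that each $e_t$ has sectorial angle tending to $0$ and $\Vert e_t - {\rm Re}\, e_t \Vert \to 0$. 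The key point is that $(e_t)$ is a cai for $A$ but also acts as an approximate identity on $B$ in the appropriate weak sense: $e_t b e_t \to b$ for $b$ in the dense subalgebra of $B$ generated by $A$, hence the net $(e_t)$, viewed in $B^{**}$, has a weak* cluster point which is a (mixed) identity for $B^{**}$, and in particular dominates $b$ in the order of $B^{**}$.

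Next, the heart of the argument: I would show that for $C$ a suitable constant ($C$ slightly larger than $\Vert b \Vert$ will do, in fact one can push $C$ down toward $\Vert b\Vert$) and for $t$ large enough, $b \preccurlyeq C e_t$ in the ordering $\preccurlyeq$ on $B$ — that is, $C e_t - b$ is accretive in $B$, equivalently has nonnegative real part since $B$ is a $C^*$-algebra. Here the nearly-positive property is essential: ${\rm Re}(C e_t) = C\, {\rm Re}\, e_t$ is genuinely positive in $B$ and is within $C\epsilon'$ of $C e_t$ itself, while $C\, {\rm Re}\, e_t \to C \cdot 1$ (the mixed identity) in a way that eventually dominates $b \leq \Vert b \Vert \cdot 1$. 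More precisely, since $C\, {\rm Re}\, e_t \uparrow$ (up to small errors) to $C$ times an identity of $B^{**}$, and $\Vert b \Vert \cdot 1 - b \geq 0$, for $C > \Vert b\Vert$ and $t$ large we get $C\, {\rm Re}\, e_t - b \geq -\,(\text{something} \to 0) $; absorbing the small error by increasing $C$ marginally (or by the standard trick of passing to $C(e_t + \delta \text{-perturbation})$) gives ${\rm Re}(C e_t - b) \geq 0$ exactly. This simultaneously yields $b \preccurlyeq C e_t$ and, since $\Vert C e_t \Vert \leq C \leq \Vert b\Vert + \epsilon$ forces $W_B(C e_t) \subseteq \{|z| \leq \Vert b\Vert + \epsilon\}$ with nonnegative real part, also $C e_t \preccurlyeq \Vert b \Vert + \epsilon$ — the last inequality because $\Vert b\Vert + \epsilon - C e_t$ has real part $\geq (\Vert b\Vert + \epsilon) - \Vert C\,{\rm Re}\,e_t\Vert \geq 0$.

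The main obstacle I anticipate is making the domination $b \preccurlyeq C e_t$ hold \emph{exactly} rather than up to an $\epsilon$-error, and doing so while keeping $C e_t$ below $\Vert b\Vert + \epsilon$ and keeping $(e_t)$ a genuine cai for $A$ (not merely an approximating net). The tension is that tightening $C$ toward $\Vert b\Vert$ leaves less room to absorb the error coming from $e_t$ not being exactly positive and from ${\rm Re}\, e_t$ only approximately reaching the identity of $B^{**}$. I would handle this by working in $B^{**}$ with a fixed mixed identity, using that ${\rm Re}$ of a nearly positive cai converges weak* to that identity, extracting the exact inequality at the bidual level via the spectral order in the von Neumann algebra $B^{**}$, and then invoking the Kaplansky density statement $\overline{{\rm Ball}(A) \cap {\mathfrak r}_A}^{w*} = {\rm Ball}(A^{**}) \cap {\mathfrak r}_{A^{**}}$ (Theorem \ref{Kap}) — or the nearly-positive refinement of it — to pull the required element back into $A$ itself, with the sectorial-angle and norm control preserved up to $\epsilon$. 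A cleaner alternative, which I would try first, is to cite the order-cofinality machinery for ordered Banach spaces from \cite{AE} applied to the pair $({\mathfrak r}_A, B_+)$: since ${\mathfrak r}_A$ is dense-defined and $A$ generates $B$, a Hahn–Banach/separation argument on the quasistate spaces (as in the proof of Theorem \ref{seqcai}) shows no $b \in B_+$ can be separated from the cone $\bigcup_{C>0} (C e_t - B_+)$, giving cofinality, after which the quantitative $\Vert b\Vert + \epsilon$ bound is recovered by the norm estimate on $C e_t$ above.
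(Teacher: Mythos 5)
Your proposal is built around the claim that for an \emph{arbitrary} nearly positive cai $(e_t)$ of $A$ and $C$ slightly larger than $\Vert b \Vert$, one has $b \preccurlyeq C e_t$ for $t$ large. That central step is false. Take $A = B = c_0$, the positive cai $e_n = \chi_{\{1,\dots,n\}}$, and $b = (1/2, 1/3, 1/4, \dots) \in (c_0)_+$: then $C e_n - b$ has strictly negative entries for every $n$ and every $C$, so $b \not\preccurlyeq C e_n$ for all $n$. The theorem is an existence statement -- the cai must be \emph{constructed from} $b$ -- and no argument that starts from a generic nearly positive cai and waits for $t$ to get large can succeed. Relatedly, your justification of the approximate inequality $C\,{\rm Re}\, e_t - b \geq -\eta_t$ with $\eta_t \to 0$ rests on the weak* convergence ${\rm Re}\, e_t \to 1_{B^{**}}$; weak* convergence to an element dominating $b$ does not yield order domination up to small norm error. (That approximate inequality does happen to be true, but it needs a genuine argument, e.g. a Cauchy--Schwarz estimate exploiting $\Vert {\rm Re}(e_t) b - b \Vert \to 0$ together with $({\rm Re}\,e_t)^2 \leq {\rm Re}\,e_t$ and $b^2 \leq \Vert b \Vert b$, not the one you give.)

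The mechanisms you propose for upgrading the approximate inequality to an exact one also do not work. ``Increasing $C$ marginally'' cannot absorb an additive error of the form $\eta_t 1$, because ${\rm Re}(e_t)$ is in general not bounded below by any positive multiple of the identity (it is not invertible in $B^1$), so $\eta_t 1 \not\leq \lambda\, {\rm Re}(e_t)$ for any $\lambda$. The Kaplansky density route fails for the same structural reason: weak* approximating $1_{A^{**}}$ by elements of ${\rm Ball}(A) \cap {\mathfrak r}_A$ does not preserve the inequality against the fixed element $b$. And the separation/quasistate argument modelled on Theorem \ref{seqcai} (besides requiring a sequence for the dominated convergence step) only ever places $b$ in the \emph{norm closure} of the relevant convex set, i.e. again delivers $b \leq C\,{\rm Re}(e_t) + \eta$ rather than $b \preccurlyeq C e_t$. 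Closing this gap is the real content of the theorem; the proof in \cite{BRord} (to which the present survey defers -- no proof is given here) must build the dominating element, e.g. by iterating the approximate domination and summing a norm-convergent series $\sum_k C_k e_{t_k}$ of nearly positive elements whose successive terms soak up the positive parts of the previous errors, with $\sum_k C_k \leq \Vert b \Vert + \epsilon$, and then upgrading the resulting single dominating element to a full nearly positive cai adapted to $b$. None of this machinery appears in your outline.
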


This and the next result  are trivial if $A$ unital.  

\begin{theorem}   \label{brcof2}  {\rm \cite{BRord}}
  Let  $A$ be an operator algebra which generates a $C^*$-algebra
$B$, and let ${\mathcal U}_A = \{ a \in A : \Vert a \Vert < 1 \}$.  The following are equivalent:
\begin{itemize} \item [(1)]   $A$ is approximately unital.
 \item [(2)]  For any positive $b \in {\mathcal U}_B$ there exists  $a \in {\mathfrak c}_A$
with $b \preccurlyeq a$.
 \item [(2')]  Same as {\rm (2)}, but also $a \in \frac{1}{2}  {\mathfrak F}_A$ and nearly positive.
\item [(3)]   For any pair
$x, y \in {\mathcal U}_A$ there exist   nearly positive
$a \in \frac{1}{2}  {\mathfrak F}_A$
with $x \preccurlyeq a$ and $y \preccurlyeq a$.
\item [(4)]    For any $b \in {\mathcal U}_A$  there exist   nearly positive
$a \in \frac{1}{2}  {\mathfrak F}_A$
with $-a \preccurlyeq b \preccurlyeq a$.
\item [(5)] For any $b \in {\mathcal U}_A$  there exist
$x, y \in  \frac{1}{2}  {\mathfrak F}_A$ 
with $b = x-y$.
\item [(6)]  ${\mathfrak C}_A$ is a generating cone (that is, $A = {\mathfrak C}_A - {\mathfrak C}_A$).
\end{itemize}
\end{theorem}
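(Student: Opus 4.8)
The plan is to run a cycle of implications through all seven conditions, with $(1)\Leftrightarrow(6)$ — an operator algebra $A$ is approximately unital precisely when its real positive elements span a dense subspace — as the substantive equivalence, and the remaining conditions wedged in between. First I would prove $(1)\Rightarrow(2')$ directly from Theorem~\ref{brcof}: given positive $b\in\mathcal{U}_B$, choose $\epsilon>0$ with $\|b\|+\epsilon<1$ and invoke the refined conclusion $b\preccurlyeq Ce_t\preccurlyeq\|b\|+\epsilon$ for a nearly positive cai $(e_t)$; here $\mathrm{Re}(Ce_t)$ is positive in $B$ of norm $<1$ and $Ce_t$ is close in norm to it, so a normalisation (and, if needed, passing to a root to thin the sectorial angle) places $a$ in $\frac12\mathfrak{F}_A$ while keeping $b\preccurlyeq a$. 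Then $(2')\Rightarrow(2)$ is trivial; $(3)\Rightarrow(4)$ is immediate by applying $(3)$ to the pair $b,-b$; and $(5)\Rightarrow(6)$ is a one-line scaling argument since $\frac12\mathfrak{F}_A\subseteq\mathfrak{C}_A=\mathfrak{r}_A$. So the real work reduces to $(2')\Rightarrow(3)$, $(4)\Rightarrow(5)$, $(6)\Rightarrow(1)$, and closing the loop through $(2)$.

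For the clean return $(2')\Rightarrow(1)$ — which also handles $(2)\Rightarrow(1)$ once the a priori unbounded dominating element supplied by $(2)$ is replaced by a bounded nearly positive one via the functional calculus of Section~\ref{mnepai} — let $(u_\lambda)$ be a cai of the $C^*$-algebra $B$, automatically a two-sided approximate identity for $A\subseteq B$. Apply $(2')$ to each $u_\lambda$ to obtain a nearly positive $a_\lambda\in\frac12\mathfrak{F}_A$ with $u_\lambda\preccurlyeq a_\lambda$, so that $u_\lambda\le\mathrm{Re}\,a_\lambda\le1$ and $\|a_\lambda-\mathrm{Re}\,a_\lambda\|$ may be taken $<\frac1\lambda$. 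Then for $x\in A$,
$$\|x-xa_\lambda\|\le\|x(1-\mathrm{Re}\,a_\lambda)\|+\frac1\lambda\|x\|$$
and, using $0\le1-\mathrm{Re}\,a_\lambda\le1-u_\lambda\le1$,
$$\|x(1-\mathrm{Re}\,a_\lambda)\|^2\le\|x(1-\mathrm{Re}\,a_\lambda)x^*\|\le\|x(1-u_\lambda)x^*\|\le\|x-xu_\lambda\|\,\|x\|\to0;$$
the symmetric estimate holds on the other side, so $(a_\lambda)$ is a cai for $A$.

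The implications $(2')\Rightarrow(3)$ and $(4)\Rightarrow(5)$ are where the classical theory of ordered Banach spaces (cf. the remarks around Theorem~\ref{brcof4}) enters. For $(2')\Rightarrow(3)$ I would, for contractions $x,y\in A$, pass to $B$ (or $B^{**}$), where $\mathrm{Re}\,x$ and $\mathrm{Re}\,y$ are selfadjoint of norm $<1$ and so admit a common positive upper bound of norm $\le1$ (e.g.\ $1_{B^{**}}$), and then transfer this bound back into $A$ as a nearly positive element of $\frac12\mathfrak{F}_A$ using a Kaplansky-type density statement (Theorem~\ref{Mcai}, or the sharper Theorem~\ref{Kap}) together with a perturbation to make the domination exact; transitivity of $\preccurlyeq$ then gives $x\preccurlyeq a$ and $y\preccurlyeq a$. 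For $(4)\Rightarrow(5)$, write $b=\frac12\bigl((a+b)-(a-b)\bigr)$ with $a$ the element from $(4)$; both summands are accretive of norm $<2$, and re-normalising them into $\frac12\mathfrak{F}_A$ (again via order cofinality in $B$, applied to their positive parts) yields the decomposition.

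The step I expect to be the main obstacle is $(6)\Rightarrow(1)$. From $\overline{\mathrm{span}}\,\mathfrak{r}_A=A$ one has, for each real positive $x$, the approximate units $x_n:=1-(1+nx)^{-1}\in\mathfrak{r}_A\cap 2\,{\rm Ball}(A)$ (using $\|(1+nx)^{-1}\|\le1$ from Lemma~\ref{chaccr}), with $\|x-xx_n\|=\frac1n\|x_n\|\to0$ and likewise $\|x-x_nx\|\to0$; hence every real positive element, and so every element of $A$, is two-sidedly approximable from $\mathfrak{r}_A$. Upgrading this to a single two-sided \emph{contractive} approximate identity is the delicate point: naive averaging fails because an accretive element need not be positive in the $C^*$ sense, so one must instead run a Cohen-factorisation-type iteration with positivity controls in the spirit of Read's theorem (cf.\ the discussion around Theorems~\ref{Mcai} and~\ref{AaKaSin}), or else invoke Read's theorem and the $M$-approximately-unital machinery directly once enough structure has been extracted. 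The remaining secondary difficulty, flagged above, is turning ``approximately dominates'' into ``exactly dominates'' in the forward implications involving $(3)$–$(5)$.
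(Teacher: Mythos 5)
The paper itself does not prove this theorem (it quotes it from \cite{BRord}), but the mechanism it intends is visible in its proof of the Banach-algebra analogue, Theorem \ref{brcof4}, and measured against that (or against any complete argument) your proposal has genuine gaps. Two legs of your cycle are essentially right: $(1)\Rightarrow(2')$ does follow from the refined form of Theorem \ref{brcof}, and your $(2')\Rightarrow(1)$ argument --- dominating a cai $(u_\lambda)$ of $B$ by nearly positive $a_\lambda\in\frac12{\mathfrak F}_A$ and estimating $\Vert x(1-{\rm Re}\,a_\lambda)\Vert^2\le\Vert x(1-{\rm Re}\,a_\lambda)x^*\Vert\le\Vert x(1-u_\lambda)x^*\Vert$ --- is correct and is the standard way that implication is closed. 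But three steps do not go through as written. First, $(6)\Rightarrow(1)$, which you rightly call the crux, is simply not proved: producing the approximate units $nx(1+nx)^{-1}$ of ${\rm ba}(x)$ for each single real positive $x$ is easy, and the entire difficulty is welding these into one cai for all of $A$; that is precisely the theorem of \cite{BRII} that ${\mathfrak r}_A-{\mathfrak r}_A$ is a closed approximately unital subalgebra, and deferring it to an unspecified ``Cohen-factorisation-type iteration'' leaves the cycle open at its deepest link.

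Second, your route for $(2')\Rightarrow(3)$ cannot work as described. Kaplansky density (Theorem \ref{Kap} or \ref{Mcai}) gives only weak* approximation of $1_{B^{**}}$ by elements of $\frac12{\mathfrak F}_A$, and weak* closeness gives no control whatever over the relation $x\preccurlyeq a$, so there is no ``perturbation making the domination exact.'' The argument actually used for statements of this type (see the proof of Theorem \ref{brcof4} above) is entirely different: additivity of the norm on the dual cone forces the open unit ball to be upward directed for $\preccurlyeq$ by the Asimow--Ellis theory, and a second application to $\pm z$ then places the upper bound in the right set; alternatively one builds a suitable positive element of $B$ out of $x$ and $y$ and feeds it back into $(2')$. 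Third, in $(4)\Rightarrow(5)$ the elements $\frac{a\pm b}{2}$ are a priori only real positive contractions, and a real positive contraction need not lie in $\frac12{\mathfrak F}_A$: in a unital algebra take $b=i(1-\delta)1$ and $a=\delta 1$, which satisfy $(4)$, yet $\Vert 1-(a+b)\Vert=(1-\delta)\sqrt2>1$ for small $\delta$. So the membership in $\frac12{\mathfrak F}_A$ --- which is exactly what $(5)$ asserts --- requires a genuinely different choice of $a$ (adapted to $b$, e.g.\ dominating a positive element of $B$ built from $b^*b+bb^*$), not a renormalisation of the naive decomposition. The same issue infects your reduction of $(2)$ to $(2')$: replacing an unbounded real positive dominant $a$ by ${\mathfrak F}(a)$ or a root does not preserve the lower bound $b\preccurlyeq a$, since no operator-monotonicity is available here.
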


In any operator algebra $A$ it is true that ${\mathfrak C}_A - {\mathfrak C}_A$
is a closed subalgebra of $A$.  It is the biggest approximately unital subalgebra of $A$,
and it happens to also be a HSA in $A$ \cite{BRII}.  We do not know
 if this is true for  Banach algebras.

For `nice' Banach algebras $A$ the cone ${\mathfrak C}_A$  has some of the pleasant order properties in 
items (3)--(6) in Theorem \ref{brcof2}.   See \cite[Section 6]{BOZ} for various variants on this theme.  
The following is a particularly clean case:

\begin{theorem}   \label{brcof3}  {\rm \cite[Section 6]{BOZ}}
  Let  $A$ be an $M$-approximately unital  Banach algebra.  Then 
\begin{itemize} \item [(1)]   For any pair
$x, y \in {\mathcal U}_A$ there exist  
$a \in \frac{1}{2}  {\mathfrak F}_A$
with $x \preccurlyeq a$ and $y \preccurlyeq a$.
\item [(2)]    For any $b \in {\mathcal U}_A$  there exist  
$a \in \frac{1}{2}  {\mathfrak F}_A$
with $-a \preccurlyeq b \preccurlyeq a$.
\item [(3)] For any $b \in {\mathcal U}_A$  there exist
$x, y \in  {\mathcal U}_A \cap \frac{1}{2}  {\mathfrak F}_A$
with $b = x-y$.
\item [(4)]  ${\mathfrak C}_A$ is a generating cone (that is, $A = {\mathfrak C}_A - {\mathfrak C}_A$).
\end{itemize}
\end{theorem}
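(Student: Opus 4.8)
The plan is to exploit the $M$-ideal structure to push the problem into a genuine $C^*$-algebraic (or $M$-ideal) order computation. Since $A$ is $M$-approximately unital, $A$ is an $M$-ideal in $A^1$, and by the characterization recalled in the excerpt (following \cite[Lemma 2.4]{BOZ}) we have for a mixed identity $e$ of norm $1$ that $\Vert 1 - x \Vert_{(A^1)^{**}} = \Vert e - x \Vert_{A^{**}}$ for all $x \in A^{**}$ unless the latter is $<1$, in which case $\Vert 1 - x \Vert_{(A^1)^{**}} = 1$; also $\Vert 1 - 2e \Vert \le 1$. First I would observe that item (4) is immediate from item (3), since $b = x - y$ with $x, y \in \frac{1}{2}{\mathfrak F}_A \subset {\mathfrak C}_A$ realizes any $b \in {\mathcal U}_A$ as a difference of elements of the cone, and ${\mathcal U}_A$ spans $A$. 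Likewise (2) follows from (3): if $b = x-y$ with $x,y \in {\mathcal U}_A \cap \frac{1}{2}{\mathfrak F}_A$, put $a = x+y$; then $a - b = 2y \in {\mathfrak F}_A$ is accretive and $a+b = 2x$ is accretive, giving $-a \preccurlyeq b \preccurlyeq a$, and $a \in {\mathfrak F}_A$ since ${\mathfrak F}_A + {\mathfrak F}_A \subset 2\,\frac{1}{2}{\mathfrak F}_A$ — wait, one must check $\frac12 a \in \frac12{\mathfrak F}_A$, i.e.\ $a = x+y$ with $\Vert 1 - a\Vert \le 1$; this holds because $\Vert 2 \cdot 1 - 2x - 2y\Vert = \Vert(1-2x)+(1-2y)\Vert \le 2$ using $x,y \in \frac12{\mathfrak F}_A$ means $\Vert 1-2x\Vert,\Vert 1-2y\Vert \le 1$. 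Finally (1) follows from a symmetric argument: given $x, y \in {\mathcal U}_A$, apply (3) to suitable elements (or argue directly) to produce a common upper bound $a \in \frac12{\mathfrak F}_A$; concretely, writing $x = x_1 - x_2$ and $y = y_1 - y_2$ with all four pieces in ${\mathcal U}_A \cap \frac12{\mathfrak F}_A$ via (3), then $a := x_1 + x_2 + y_1 + y_2$, suitably rescaled to land in $\frac12{\mathfrak F}_A$, dominates both $x$ and $y$ — one checks $a - x = 2x_2 + y_1 + y_2 \succcurlyeq 0$ and similarly for $y$, and rescaling by a factor $\le 1$ preserves accretivity of the differences while pulling $a$ back into $\frac12{\mathfrak F}_A$ by convexity of ${\mathfrak F}_A$ and the fact that $0 \in {\mathfrak F}_A$.

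So the crux is item (3): given $b \in {\mathcal U}_A$, write $b = x - y$ with $x, y \in {\mathcal U}_A \cap \frac{1}{2}{\mathfrak F}_A$. Here I would work in the bidual. Consider $e + b$ and $e - b$ in $A^{**}$, where $e$ is the mixed identity of norm $1$. Since $\Vert b \Vert < 1$ we have $\Vert 1 - (1 + b)/1 \Vert$-type estimates; more precisely I want to produce, inside $A^{**}$, elements $X, Y$ with $X - Y = b$, $\Vert X\Vert, \Vert Y \Vert \le 1$, and $\Vert e - 2X\Vert \le 1$, $\Vert e - 2Y \Vert \le 1$. The natural candidates are $X = \frac{1}{2}(e + b)$ and $Y = \frac{1}{2}(e - b)$. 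Then $X - Y = b$, and $\Vert e - 2X\Vert = \Vert -b\Vert = \Vert b \Vert < 1 \le 1$, similarly $\Vert e - 2Y\Vert = \Vert b\Vert < 1$; moreover $\Vert X \Vert \le \frac12(\Vert e\Vert + \Vert b\Vert) = \frac12(1 + \Vert b\Vert) < 1$. Thus $X, Y \in {\rm Ball}(A^{**}) \cap \frac{1}{2}{\mathfrak F}_{A^{**}}$. Now I would invoke Theorem \ref{Mcai}: ${\mathfrak F}_A$ is weak* dense in ${\mathfrak F}_{A^{**}}$, and in fact I need the version with the norm control — that ${\rm Ball}(A) \cap \frac12{\mathfrak F}_A$ (or something slightly larger like $\frac12{\mathfrak F}_A$ intersected with a ball of radius $1+\epsilon$) is weak* dense in ${\rm Ball}(A^{**}) \cap \frac12{\mathfrak F}_{A^{**}}$; this is precisely the kind of Kaplansky-density statement referenced as Theorem \ref{Kap} and available for $M$-approximately unital algebras.

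The main obstacle is the following gap: weak* approximation of $X$ by a net in $\frac12{\mathfrak F}_A$ does not by itself guarantee that the \emph{same} net, paired off as $x_i$ and $y_i := x_i - b$, keeps $y_i$ inside $\frac12{\mathfrak F}_A$, nor that norms stay $< 1$ rather than merely $\le 1 + \epsilon$. I expect this to be handled by an $\epsilon$-argument combined with a geometric-series trick exactly as in the proof of Theorem \ref{seqcai} and as alluded to repeatedly in the excerpt: first get $x_\epsilon \in \frac12{\mathfrak F}_A$ with $\Vert x_\epsilon - X\Vert < \epsilon$ and $\Vert b - (x_\epsilon - y_\epsilon)\Vert$ small where $y_\epsilon$ is a corresponding approximant to $Y$, then iterate, summing a rapidly convergent series of corrections $b - (x - y) = \sum 2^{-k}(\cdots)$ to land exactly on $b$, at the cost of passing from ${\mathfrak F}_A$ to a slightly inflated set which convexity and the strict inequality $\Vert b\Vert < 1$ absorb back into ${\mathcal U}_A \cap \frac12{\mathfrak F}_A$. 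Alternatively — and this may be cleaner — one cites \cite[Section 6]{BOZ} directly for the factorization $b = x - y$ and only needs to assemble (1), (2), (4) from it as above; since the theorem is attributed to that section, I would structure the writeup to derive (3) from the $M$-ideal Kaplansky density result and then derive (1), (2), (4) formally from (3), flagging the geometric-series passage as the one routine-but-delicate point.
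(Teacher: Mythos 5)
Your overall route runs in the opposite direction to the paper's, and the step you lean on hardest is exactly the one you cannot close. You try to prove (3) first by locating $X=\frac{1}{2}(e+b)$ and $Y=\frac{1}{2}(e-b)$ in ${\rm Ball}(A^{**})\cap\frac{1}{2}{\mathfrak F}_{A^{**}}$ and then pulling back by weak* density. The membership of $X,Y$ in $\frac{1}{2}{\mathfrak F}_{A^{**}}$ is fine (this is where $M$-approximate unitality enters, via $\Vert 1-2X\Vert=\max(1,\Vert b\Vert)=1$), but when $e\notin A$ the elements $X,Y$ are at positive norm-distance from $A$ and can only be approximated in the weak* topology; so the geometric-series/Cohen-type iteration you invoke, which repairs small \emph{norm} errors (as in the proof of Theorem \ref{seqcai}), has nothing to bite on, and weak* closeness of $x_i$ to $X$ gives no control whatever on whether $x_i-b$ stays in $\frac{1}{2}{\mathfrak F}_A$. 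You acknowledge this gap yourself, and your only exit is to cite \cite[Section 6]{BOZ} for (3), which is circular here. The paper's actual argument (visible in the proof of the sibling Theorem \ref{brcof4}, and this is also the route in \cite{BOZ}) avoids the bidual entirely: $M$-approximately unital algebras are scaled (Theorem \ref{nnms}), the dual cone of ${\mathfrak r}_A$ is $\Rdb^+ S(A)$ with the norm additive on it, and the Asimow--Ellis theory of ordered linear spaces then makes ${\mathcal U}_A$ directed for $\preccurlyeq$; one proves (1) from directedness and derives (2), (3), (4) in that order --- the reverse of your chain.

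The downstream derivations also contain errors. For (2) from (3): with $a=x+y$ and $x,y\in\frac{1}{2}{\mathfrak F}_A$ your own computation only yields $\Vert 1-a\Vert\le 1$, i.e.\ $a\in{\mathfrak F}_A$, whereas the theorem demands $a\in\frac{1}{2}{\mathfrak F}_A$, i.e.\ $\Vert 1-2a\Vert\le 1$; already in $A=\Cdb$ with $x=0.99$, $y=0.9$ one has $x,y\in{\mathcal U}_A\cap\frac{1}{2}{\mathfrak F}_A$ but $|1-2(x+y)|=2.78$. For (1): after forming $a=x_1+x_2+y_1+y_2$ you rescale by some $t\le 1$ and assert that this ``preserves accretivity of the differences''; it does not, since $ta-x=t(a-x)-(1-t)x$ and $-(1-t)x$ need not be accretive for $x\in{\mathcal U}_A$. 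So even granting (3), your (1) and (2) are not established with the stated constants; only the passage from (3) to (4) is genuinely immediate.
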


During the writing of the present paper we  saw the following  improvement of part of
Corollaries 6.7 and 6.8, and on some of 6.10 in the submitted version of 
the paper \cite{BOZ}.    At the galleys stage of that paper we incorporated those advances,
but unfortunately  slipped up  in one proof.   The correct version is as below.

\begin{theorem}   \label{brcof4}  {\rm \cite[Section 6]{BOZ}}  If a Banach algebra $A$ has a 
cai ${\mathfrak e}$ and 
satisfies that $Q_{{\mathfrak e}}(A)$ is weak* closed, then 
{\rm (1)--(4)} in the last theorem hold, with $\frac{1}{2}  {\mathfrak F}_A$
replaced by ${\rm Ball}(A) \cap {\mathfrak  r}^{{\mathfrak e}}_A$, and $\preccurlyeq$ replaced
by the linear ordering defined by the cone ${\mathfrak r}^{{\mathfrak e}}_A$, and  ${\mathfrak C}_A$ 
replaced by  ${\mathfrak r}^{{\mathfrak e}}_A$.     One may drop
the three superscript  ${\mathfrak e}$'s in the last line if in 
addition $S(A) = S_{{\mathfrak e}}(A)$.  \end{theorem}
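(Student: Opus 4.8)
The plan is to reduce items (2)--(4) to (1), and then to prove (1) by a Hahn--Banach separation argument carried out over the quasistate space $Q_{\mathfrak e}(A)$, which the hypothesis renders weak* compact and convex, in the spirit of the proof of Theorem \ref{seqcai} (combined, where needed, with the classical decomposition theory for ordered Banach spaces of \cite{AE} already invoked for Theorem \ref{brcof3}). Write $\mathfrak r:=\mathfrak r^{\mathfrak e}_A$; it is a norm-closed cone, $\mathfrak r=\{x\in A:\operatorname{Re}\psi(x)\ge 0\text{ for all }\psi\in Q_{\mathfrak e}(A)\}$, and finite convex combinations of the $e_t$ lie in $\operatorname{Ball}(A)$ since $\mathfrak e$ is contractive. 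The reductions are elementary: assuming (1), for (2) apply (1) to the pair $b,-b\in\mathcal U_A$ to get $a\in\operatorname{Ball}(A)\cap\mathfrak r$ with $a+b,a-b\in\mathfrak r$, i.e.\ $-a\preccurlyeq b\preccurlyeq a$ (and $a\in\mathfrak r$ is forced, since $2a=(a+b)+(a-b)$); for (3), with that $a$ put $x=\tfrac12(a+b)$, $y=\tfrac12(a-b)$, so that $x,y\in\mathfrak r$, $x-y=b$ and $\|x\|,\|y\|\le\tfrac12(\|a\|+\|b\|)<1$; and (4) follows from (3) by homogeneity, $\mathcal U_A$ being absorbing and $\mathfrak r$ a cone.

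For (1), fix $x,y\in\mathcal U_A$ and set $\rho=\max(\|x\|,\|y\|)<1$, so that $\operatorname{Re}\psi(x),\operatorname{Re}\psi(y)\le\rho\|\psi\|$ for every $\psi\in Q_{\mathfrak e}(A)$. I would look for $a$ of the form $\sum\lambda_i e_{t_i}$ (a finite convex combination of the cai), so that $\|a\|\le 1$ automatically; one then needs $\operatorname{Re}\psi(a)\ge\max(\operatorname{Re}\psi(x),\operatorname{Re}\psi(y))$ for all $\psi\in Q_{\mathfrak e}(A)$, which gives $a-x,a-y\in\mathfrak r$ and $a\in\mathfrak r$, hence $x\preccurlyeq a$, $y\preccurlyeq a$. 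The leverage is that on $Q_{\mathfrak e}(A)$ the functions $\psi\mapsto\operatorname{Re}\psi(e_t)$ are weak* continuous and converge pointwise, as $t$ increases, to $\psi\mapsto\|\psi\|$, which dominates the continuous function $m(\psi):=\max(\operatorname{Re}\psi(x),\operatorname{Re}\psi(y))$. Separating $m$ from the convex set $\operatorname{conv}\{\operatorname{Re}\widehat{e_t}\}-C(Q_{\mathfrak e}(A),\Rdb)_+$ inside $C(Q_{\mathfrak e}(A),\Rdb)$, and integrating against the separating probability measure (whose barycenter lies in the weak* compact set $Q_{\mathfrak e}(A)$, where $\operatorname{Re}\widehat{e_t}\to\|\cdot\|$), should produce, for each $\epsilon>0$, such an $a$ with the desired inequality up to $\epsilon$; a geometric-series sharpening, as in the last lines of the proof of Theorem \ref{seqcai}, then removes the $\epsilon$ and — directly, or by feeding the resulting decomposition back through the reductions above — yields (1)--(4) exactly.

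I expect two points to be delicate. First, the separation must be arranged so that the barycentric estimate actually closes: the naive choice founders on the convexity of $m$, so one either treats the domination of $x$ and of $y$ through the \emph{affine} data $\operatorname{Re}\widehat x,\operatorname{Re}\widehat y$ rather than through $m$, or routes the whole argument through the classical dichotomy identifying the dual cone $W\subseteq A^*$ of $\mathfrak r$, showing $W$ normal using the weak* compactness of $Q_{\mathfrak e}(A)$, and invoking Ando's theorem to get $\mathfrak r$ generating with a uniform constant. Second, once a decomposition with some constant $\lambda\ge 1$ is in hand, one must sharpen $\lambda$ to $1$ (so as to land in $\operatorname{Ball}(A)\cap\mathfrak r$ and have $\preccurlyeq$ genuine); this must exploit the Banach \emph{algebra} structure via the cai, and — in contrast to the $M$-approximately unital setting of Theorem \ref{brcof3}, where the $M$-structure absorbs the loss — here there is only the weak* compactness of $Q_{\mathfrak e}(A)$ to work with. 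This sharpening is presumably the step at which the galley version of \cite{BOZ} slipped.

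Finally, the last sentence is immediate: if $S(A)=S_{\mathfrak e}(A)$ then $Q_{\mathfrak e}(A)=Q(A)$ and $\mathfrak r^{\mathfrak e}_A=\mathfrak r_A$, while $\mathfrak r_A=\mathfrak C_A$ by Proposition \ref{whau}; so in that case all three superscript $\mathfrak e$'s may simply be deleted.
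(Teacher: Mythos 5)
Your reductions of (2)--(4) to (1) are exactly the paper's, and you have correctly sensed that the engine for (1) is the classical duality theory of ordered Banach spaces applied to the dual cone of ${\mathfrak r}^{\mathfrak e}_A$. But the one step that actually carries the theorem --- producing, for $x,y\in{\mathcal U}_A$, a dominating element of norm at most $1$ --- is not established by either of your two routes. The separation/barycenter route you lead with is, as you yourself note, blocked by the convexity of $m$ (and a convex combination of the $e_t$ need not lie in ${\mathfrak r}^{\mathfrak e}_A$ anyway, since the individual $e_t$ need not be real positive); and the fallback via normality of the dual cone and Ando's theorem only yields a generating constant $\lambda\geq 1$, with no mechanism supplied for forcing $\lambda=1$. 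Your guess that the sharpening must exploit the algebra structure via the cai points in the wrong direction: no further algebra is needed. What the paper uses is the sharper classical fact \cite[Chapter 2, Corollary 3.6]{AE} that the open unit ball of an ordered Banach space is upward directed if and only if the norm is \emph{additive} on the (real) dual cone. The hypothesis that $Q_{\mathfrak e}(A)$ is weak* closed identifies the dual cone of ${\mathfrak r}^{\mathfrak e}_A$ as $\Rdb^+\, S_{\mathfrak e}(A)$ (Lemma 2.7(1) of \cite{BOZ}), on which the norm is additive simply because states and their real parts have norm one; hence ${\mathcal U}_A$ is directed with no loss of constant.

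Even with directedness in hand there is a further wrinkle you would have to handle: the element $z\in{\mathcal U}_A$ with $x\preccurlyeq_{\mathfrak e} z$ and $y\preccurlyeq_{\mathfrak e} z$ produced by directedness need not itself lie in ${\mathfrak r}^{\mathfrak e}_A$, whereas (1) requires the dominating element to be in ${\rm Ball}(A)\cap{\mathfrak r}^{\mathfrak e}_A$. The paper's fix is to apply directedness a second time, to the pair $z,-z$, obtaining $w\in{\mathcal U}_A$ with $\pm z\preccurlyeq_{\mathfrak e} w$; then $a=\frac{w+z}{2}$ lies in ${\mathcal U}_A\cap{\mathfrak r}^{\mathfrak e}_A$ (since $w+z=w-(-z)$ is in the cone) and $z\preccurlyeq_{\mathfrak e} a$ because $a-z=\frac{w-z}{2}$ is in the cone. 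Your final sentence about dropping the superscripts when $S(A)=S_{\mathfrak e}(A)$ is correct and matches the paper.
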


\begin{proof}    Lemma 2.7 (1) in \cite{BOZ} implies  that if $Q_{{\mathfrak e}}(A)$ is weak* closed, 
then the `dual cone' in $A^{*}$  of ${\mathfrak  r}^{{\mathfrak e}}_A$ is $\Rdb^+  \, S_{{\mathfrak e}}(A)$.
By the remark before  \cite[Proposition 6.2]{BOZ} a similar fact holds for the real dual cone.
 Since $\Vert \varphi \Vert = 1$ for states and for their real parts, 
the norm on the real dual cone is additive.   This is known to imply, by the theory of ordered linear spaces
\cite[Corollary 3.6, Chapter 2]{AE}, that the open ball of $A$ is a directed set.
So for any pair
$x, y \in {\mathcal U}_A$ there exist $z \in {\mathcal U}_A$ with $x \preccurlyeq_{\mathfrak e} z$ and
$y  \preccurlyeq_{\mathfrak e} z$.  Applying this again to $z, -z$ there exists
$w \in {\mathcal U}_A$ with $\pm z \preccurlyeq_{\mathfrak e} w$.   This implies that $\frac{w \pm z}{2} 
\in {\mathfrak  r}^{{\mathfrak e}}_A$, and $z  \preccurlyeq_{\mathfrak e}  a$ 
where $a = \frac{w + z}{2}$.  This proves (1).  Applying (1) to $b , -b$ we get (2).   Setting 
$x = \frac{a + b}{2}, y = \frac{a - b}{2}$ for $a, b$ as in (2), we get (3) and hence (4).
The final assertion is then obvious since if $S(A) = S_{{\mathfrak e}}(A)$ then 
${\mathfrak  r}_A = {\mathfrak  r}^{{\mathfrak e}}_A$ and $ \preccurlyeq_{\mathfrak e}$ is just
$ \preccurlyeq$.  
\end{proof}

  Recall that the positive part of the  open unit ball ${\mathcal U}_B$ of a $C^*$-algebra $B$
is a directed set, and indeed is a  net which is a positive cai for $B$.     The first part of this statement is generalized 
by 
Theorems \ref{brcof2} (3), \ref{brcof3} (1),
and \ref{brcof4} (1).   The following generalizes the second part of the statement
to operator algebras:

\begin{corollary} \label{ballcai}  {\rm \cite{BRord}}   If $A$ is an approximately unital  operator algebra, then
${\mathcal U}_A \cap \frac{1}{2} {\mathfrak F}_A$ is a directed set in the $\preccurlyeq$ ordering,
and with this ordering ${\mathcal U}_A \cap \frac{1}{2} {\mathfrak F}_A$  is an increasing
cai for $A$.  \end{corollary}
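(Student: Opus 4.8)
The plan is to read off directedness from Theorem~\ref{brcof2}(3) after a harmless rescaling, and then to prove that the resulting net is a cai by means of a monotonicity estimate special to operator algebras. First I would record the elementary fact that $r\,{\mathfrak F}_A \subseteq {\mathfrak F}_A$ for $r \in [0,1]$: if $x \in {\mathfrak F}_A$ then $\Vert 1 - rx \Vert = \Vert (1-r)1 + r(1-x)\Vert \leq (1-r) + r = 1$, where $1$ is the identity of a unitization of $A$; realizing $A \subseteq B(H)$ we may take $1 = 1_{B(H)}$, since this gives the same value of $\Vert 1 - x\Vert$ for $x \in A$. Note also that $x \in \frac12{\mathfrak F}_A$ forces $\Vert x\Vert \leq 1$, and that the accretive wedge ${\mathfrak r}_A$ is invariant under multiplication by positive scalars. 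Now given $x, y \in {\mathcal U}_A \cap \frac12{\mathfrak F}_A$, choose $r \in (0,1)$ with $\Vert x\Vert, \Vert y\Vert < r$, apply Theorem~\ref{brcof2}(3) to $x/r, y/r \in {\mathcal U}_A$ to obtain a (nearly positive) $a \in \frac12{\mathfrak F}_A$ with $x/r \preccurlyeq a$ and $y/r \preccurlyeq a$, and set $z = ra$. Multiplying the accretive elements $a - x/r$ and $a - y/r$ by $r$ gives $x \preccurlyeq z$ and $y \preccurlyeq z$, while $z \in \frac12(r\,{\mathfrak F}_A) \subseteq \frac12{\mathfrak F}_A$ and $\Vert z\Vert \leq r < 1$, so $z \in {\mathcal U}_A \cap \frac12{\mathfrak F}_A$. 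Thus ${\mathcal U}_A \cap \frac12{\mathfrak F}_A$ is directed by $\preccurlyeq$; regarding it as an index set for the net that assigns each element to itself, the net has all terms of norm $< 1$ and is increasing by the very definition of the ordering.

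Next comes the estimate that carries the weight. If $e \in \frac12{\mathfrak F}_A$ then $\Vert 1 - 2e\Vert \leq 1$ in $B(H)$, so $(1-2e)(1-2e)^* \leq 1$ and $(1-2e)^*(1-2e) \leq 1$, which expand to $ee^* \leq {\rm Re}(e)$ and $e^*e \leq {\rm Re}(e)$; hence
$$(1-e)(1-e)^* = 1 - e - e^* + ee^* \leq 1 - {\rm Re}(e), \qquad (1-e)^*(1-e) \leq 1 - {\rm Re}(e),$$
and $0 \leq 1 - {\rm Re}(e)$ since $\Vert e\Vert \leq 1$. Let $e_0, e \in {\mathcal U}_A \cap \frac12{\mathfrak F}_A$ with $e_0 \preccurlyeq e$; in an operator algebra this means ${\rm Re}(e - e_0) \geq 0$, so $1 - {\rm Re}(e) \leq 1 - {\rm Re}(e_0)$. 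Using that $b \mapsto aba^*$ preserves order on positive $b$ (whence norms compare), that $\Vert {\rm Re}(w)\Vert \leq \Vert w\Vert$, and the identity $a(1 - {\rm Re}(e_0))a^* = {\rm Re}(a(1-e_0)a^*)$, we obtain for every $a \in A$
$$\Vert a - ae\Vert^2 = \Vert a(1-e)(1-e)^*a^*\Vert \leq \Vert a(1-{\rm Re}(e_0))a^*\Vert \leq \Vert a - ae_0\Vert\,\Vert a\Vert ,$$
and symmetrically $\Vert a - ea\Vert^2 \leq \Vert a - e_0 a\Vert\,\Vert a\Vert$.

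Finally I would feed a genuine cai into this inequality. By Theorem~\ref{Mcai}, $A$ has a cai $(u_\lambda)$ contained in $\frac12{\mathfrak F}_A$. Given nonzero $a \in A$ and $\epsilon > 0$, put $\delta = \epsilon^2/(2(\Vert a\Vert + 1))$, choose $\lambda$ with $\Vert a - au_\lambda\Vert < \delta$ and $\Vert a - u_\lambda a\Vert < \delta$, then $r \in (0,1)$ with $(1-r)\Vert a\Vert < \delta$, and set $f = ru_\lambda$. Then $f \in {\mathcal U}_A \cap \frac12{\mathfrak F}_A$ (since $\Vert f\Vert \leq r < 1$ and $f \in \frac12(r\,{\mathfrak F}_A)$), and $\Vert a - af\Vert \leq \Vert a - au_\lambda\Vert + (1-r)\Vert au_\lambda\Vert < 2\delta$, likewise $\Vert a - fa\Vert < 2\delta$. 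The estimate above now gives $\Vert a - ae\Vert^2 \leq 2\delta\,\Vert a\Vert < \epsilon^2$ and $\Vert a - ea\Vert^2 < \epsilon^2$ for all $e \in {\mathcal U}_A \cap \frac12{\mathfrak F}_A$ with $f \preccurlyeq e$, so the net converges to the identity on $A$. The main obstacle is the monotonicity estimate of the middle paragraph: it is precisely there that adjoints, the $C^*$-identity, and the operator monotonicity of $b \mapsto aba^*$ are used essentially, and it is what anchors the conclusion to the operator algebra hypothesis rather than to general Banach algebras.
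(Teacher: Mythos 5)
Your proof is correct. The paper itself does not reprove Corollary \ref{ballcai} (it is quoted from \cite{BRord}), but your route --- directedness of ${\mathcal U}_A \cap \frac{1}{2}{\mathfrak F}_A$ obtained by rescaling into the reach of Theorem \ref{brcof2}(3), followed by the $C^*$-identity/monotonicity estimate $\Vert a - ae\Vert^2 \leq \Vert a - ae_0\Vert\,\Vert a\Vert$ for $e_0 \preccurlyeq e$ in $\frac{1}{2}{\mathfrak F}_A$, seeded with the real positive cai from Theorem \ref{Mcai} --- is essentially the argument of the cited source, and all the individual steps (the inclusion $r\,{\mathfrak F}_A \subseteq {\mathfrak F}_A$, the inequalities $ee^* \leq {\rm Re}(e)$ and $(1-e)(1-e)^* \leq 1 - {\rm Re}(e)$, and the passage from $\Vert a - af\Vert < 2\delta$ to the limit over all $e \succcurlyeq f$) check out.
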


We do not know if the second part of the last result is true for any other classes of Banach algebras.  

We say a Banach algebra $A$ is {\em scaled} if every real positive    
linear map into the scalars is a nonnegative multiple of a state.   Of course it is well known that $C^*$-algebras are scaled.    Somewhat surprisingly, we do not know 
of an approximately unital 
 Banach algebra that is not scaled, and certainly all commonly encountered Banach algebras seem to be scaled.   
Unital  Banach algebras are scaled by e.g. an argument in the proof of  \cite[Theorem 2.2]{Mag}.

\begin{theorem} \label{nnms}  {\rm \cite{BRord,BOZ}}\      If $A$ is an approximately unital operator algebra, or more generally an $M$-approximately unital Banach algebra, then 
$A$ is scaled. 
  \end{theorem}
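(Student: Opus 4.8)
The plan is to reduce to the (known) unital case by passing to the bidual and exploiting the $M$-ideal structure, the one substantive ingredient being the weak* density of ${\mathfrak F}_A$ in ${\mathfrak F}_{A^{**}}$ supplied by Theorem \ref{Mcai}. Since approximately unital operator algebras are $M$-approximately unital, it is enough to treat an $M$-approximately unital Banach algebra $A$; we may also assume $A$ is nonunital. Fix a mixed identity $e$ of norm $1$ for $A^{**}$. The $M$-ideal characterization recorded in Section \ref{mnepai} gives $\Vert a + \lambda 1 \Vert_{A^1} = \max \{ \Vert a + \lambda e \Vert_{A^{**}}, |\lambda| \}$ for $a \in A$, $\lambda \in \Cdb$, and hence ${\mathfrak F}_{A^{**}} = \{ X \in A^{**} : \Vert e - X \Vert_{A^{**}} \leq 1 \}$, i.e.\ the closed ball in $A^{**}$ of radius $1$ about $e$, so that $\frac{1}{2}{\mathfrak F}_{A^{**}} = \frac{1}{2}e + \frac{1}{2}\,{\rm Ball}(A^{**})$.

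First I would take a real positive $\varphi : A \to \Cdb$, assume $\varphi \neq 0$, and let $\tilde\varphi$ denote its (norm-preserving) weak* continuous extension to $A^{**} = (A^*)^*$. Since $\overline{\Rdb^+ {\mathfrak F}_A} = {\mathfrak r}_A$ (Proposition \ref{whau}), real positivity of $\varphi$ forces ${\rm Re}\, \varphi \geq 0$ on $\frac{1}{2}{\mathfrak F}_A$. Now $\frac{1}{2}{\mathfrak F}_A$ is weak* dense in $\frac{1}{2}{\mathfrak F}_{A^{**}}$ by Theorem \ref{Mcai}, and ${\rm Re}\, \tilde\varphi$ is weak* continuous, so ${\rm Re}\, \tilde\varphi \geq 0$ on all of $\frac{1}{2}{\mathfrak F}_{A^{**}}$. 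Evaluating at $\frac{1}{2}e + \frac{1}{2}h$ for $h \in {\rm Ball}(A^{**})$ and taking the infimum over such $h$ (using that the real-linear functional ${\rm Re}\, \tilde\varphi$ has norm $\Vert\varphi\Vert$) yields ${\rm Re}\, \tilde\varphi(e) \geq \Vert\varphi\Vert$; since $|\tilde\varphi(e)| \leq \Vert\varphi\Vert\,\Vert e\Vert = \Vert\varphi\Vert$, we get $\tilde\varphi(e) = \Vert\varphi\Vert$, a nonnegative real.

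To finish, put $t = \Vert\varphi\Vert > 0$ and $\psi = \varphi/t$, so $\Vert\psi\Vert = 1$ and $\tilde\psi(e) = 1$. The functional $\Psi$ on $A^1 = A + \Cdb 1$ defined by $\Psi(a + \lambda 1) = \psi(a) + \lambda$ satisfies $\Psi(1) = 1$ and, since $\psi(a) + \lambda = \tilde\psi(a + \lambda e)$, also $|\Psi(a + \lambda 1)| \leq \Vert a + \lambda e \Vert_{A^{**}} \leq \Vert a + \lambda 1 \Vert_{A^1}$ by the norm formula above; thus $\Psi$ is a state on $A^1$, so $\psi = \Psi|_A \in S(A)$ and $\varphi = t\psi$ exhibits $\varphi$ as a nonnegative multiple of a state. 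Hence $A$ is scaled.

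The hard part is entirely hidden in Theorem \ref{Mcai}: once one has the cone-level Kaplansky density of ${\mathfrak F}_A$ in ${\mathfrak F}_{A^{**}}$, the rest is just the norm bookkeeping in the multiplier unitization that the $M$-ideal hypothesis makes available. I would also note in passing that the same computation shows the real dual wedge of ${\mathfrak r}_A$ is exactly $\Rdb^+ S(A)$, the reverse containment being immediate from the definition of the numerical range $W_A$.
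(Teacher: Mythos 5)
Your proof is correct, and it is essentially the argument behind the cited result: the present paper states Theorem \ref{nnms} without proof, deferring to \cite{BRord,BOZ}, and the proof there runs along the same lines you chose --- use the $M$-ideal norm formula $\Vert a+\lambda 1\Vert_{A^1}=\max\{\Vert a+\lambda e\Vert_{A^{**}},|\lambda|\}$ to identify ${\mathfrak F}_{A^{**}}$ with the closed unit ball centred at a norm-one mixed identity $e$, use the weak* density of ${\mathfrak F}_A$ in ${\mathfrak F}_{A^{**}}$ (Theorem \ref{Mcai}) to push ${\rm Re}\,\varphi\geq 0$ onto that ball, deduce $\tilde\varphi(e)=\Vert\varphi\Vert$, and then verify that $a+\lambda 1\mapsto \varphi(a)/\Vert\varphi\Vert+\lambda$ is a state on $A^1$. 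The only loose ends are cosmetic: the reduction to the nonunital case should note that the unital case is the same computation with $e=1\in A$ (or is covered by Magajna's argument, as remarked just before the theorem), and ``real positive linear map into the scalars'' is implicitly a bounded functional, which your use of $\Vert\varphi\Vert$ and of the weak* continuous extension requires.
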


For operator algebras, the last result implies Read's theorem mentioned earlier.    

\begin{proposition} \label{scal} {\rm \cite{BOZ}}\   If $A$ is a nonunital  approximately unital 
 Banach algebra, then the following are equivalent:
\begin{itemize}
\item [(i)]    $A$ is scaled.
\item [(ii)]     $S(A^1)$ is the convex hull of the 
trivial character $\chi_0$ and the set of states on $A^1$ extending states of $A$.  
\item [(iii)]   The quasistate space $Q(A) = \{ \varphi_{|A} : \varphi \in S(A^1) \}$.  
\item [(iv)]   $Q(A)$ is convex and weak* compact.
\end{itemize}   If these hold then  $Q(A) = \overline{S(A)}^{{\rm w*}}$, and the numerical range satisfies
$$\overline{W_{A}(a)}  =  {\rm conv} \{ 0, W_{A}(a) \} =  W_{A^1}(a), \qquad a \in A.$$
\end{proposition}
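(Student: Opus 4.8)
The plan is to route everything through the restriction map $r\colon S(A^1)\to {\rm Ball}(A^*)$, $\psi\mapsto\psi_{|A}$, and its image $P:=r(S(A^1))$. Three preliminary facts carry most of the load. First, $P$ is weak* compact and convex (a continuous affine image of $S(A^1)$) and $0=r(\chi_0)\in P$. Second, for $\varphi\in S(A)$ with state extension $\tilde\varphi$ to $A^1$ and $t\in[0,1]$, the functional $t\tilde\varphi+(1-t)\chi_0$ is a state of $A^1$ restricting to $t\varphi$; hence $Q(A)\subseteq P$ always, and $\Rdb^+Q(A)\cap{\rm Ball}(A^*)=Q(A)$. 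Third, since $\mathfrak{r}_A=A\cap\mathfrak{r}_{A^1}$, the bipolar theorem together with the identification of the dual cone of $\mathfrak{r}_{A^1}$ with $\Rdb^+S(A^1)\cup\{0\}$ (as in \cite[Section 2]{BOZ}) yields that the dual cone $\mathfrak{r}_A^{\circ}$ of $\mathfrak{r}_A$, i.e.\ the cone of real positive functionals on $A$, equals $\overline{\Rdb^+P}^{w*}$. Finally, ``$A$ scaled'' says precisely $\mathfrak{r}_A^{\circ}=\Rdb^+S(A)\cup\{0\}$ (the inclusion $\supseteq$ being automatic, since states restrict to real positive functionals), equivalently $\mathfrak{r}_A^{\circ}=\Rdb^+Q(A)$.

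I would then establish the ``easy'' implications $(i)\Leftrightarrow(iii)$, $(iii)\Rightarrow(iv)$, $(iii)\Rightarrow(ii)$, and deduce the remaining ones using \cite[Section 2]{BOZ}. For $(i)\Rightarrow(iii)$: scaledness gives $\Rdb^+Q(A)\subseteq\Rdb^+P\subseteq\overline{\Rdb^+P}^{w*}=\mathfrak{r}_A^{\circ}=\Rdb^+Q(A)$, so these coincide; intersecting $\Rdb^+P=\Rdb^+Q(A)$ with ${\rm Ball}(A^*)$ gives $P=Q(A)$, i.e.\ (iii). $(iii)\Rightarrow(iv)$ is immediate, as $P$ is weak* compact and convex. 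For $(iii)\Rightarrow(i)$: one checks $0$ is an extreme point of the weak* compact convex $Q(A)=P$ (a nontrivial convex decomposition of $0$ inside $Q(A)$ would force $\varphi,-\varphi\in S(A)$ for some state $\varphi$, which is impossible), so $\Rdb^+P\cup\{0\}$ is already weak* closed, whence $\mathfrak{r}_A^{\circ}=\Rdb^+P=\Rdb^+Q(A)$ and $A$ is scaled. For $(iii)\Rightarrow(ii)$: writing $E$ for the set of state extensions to $A^1$ of states of $A$, a Hahn--Banach separation of a putative $\psi\in S(A^1)$ from $\overline{{\rm conv}}^{w*}(\{\chi_0\}\cup E)$ by an element $\lambda 1+a$ of $A^1$ would produce $a\in A$ with ${\rm Re}\,\psi(a)>\sup_{q\in Q(A)}{\rm Re}\,q(a)$, contradicting $\psi_{|A}\in P=Q(A)$; hence $S(A^1)=\overline{{\rm conv}}^{w*}(\{\chi_0\}\cup E)$, and since each $\psi\in S(A^1)$ has $\psi_{|A}=t\varphi$ with $t=\Vert\psi_{|A}\Vert$ and $\varphi\in S(A)$ (again by (iii)), one gets $\psi=t\tilde\varphi+(1-t)\chi_0\in{\rm conv}(\{\chi_0\}\cup E)$, removing the closure.

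The substantive part is the passage back, e.g.\ $(iv)\Rightarrow(i)$. Here I would invoke the dual-cone lemma of \cite[Section 2]{BOZ} (the same ingredient used in the proof of Theorem \ref{brcof4}): if $Q(A)$ is weak* compact and convex, then the norm is additive on the real part of $\mathfrak{r}_A^{\circ}$, which forces $\mathfrak{r}_A^{\circ}=\Rdb^+S(A)\cup\{0\}$. Concretely this comes down to showing $P=Q(A)$, equivalently to the numerical-range identity $W_{A^1}(a)=\overline{{\rm conv}}\{0,W_A(a)\}$ for $a\in A$ — the inclusion ``$\subseteq$'' being the content — and I expect this to be the main obstacle: it can fail for a hypothetical non-scaled algebra, and its proof requires an approximate-identity argument, analysing a state $\psi$ of $A^1$ against a cai $(e_s)$ of $A$ and recognising $\psi_{|A}$ as a weak* limit of multiples $\alpha\rho$ of states $\rho$ of $A$, with $\alpha=\lim{\rm Re}\,\psi(e_s)$, the hypothesis (iv) being what lets these limits be reabsorbed into $Q(A)$. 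The implication $(ii)\Rightarrow(i)$ is treated along the same lines, applying $r$ to (ii) to obtain $P={\rm conv}(\{0\}\cup S(A))$ and invoking the dual-cone results of \cite[Section 2]{BOZ}. Finally, granting the equivalent conditions, the displayed formulas fall out of (iii): $W_{A^1}(a)=\{\psi(a):\psi\in S(A^1)\}=\{\varphi(a):\varphi\in P\}=\{q(a):q\in Q(A)\}$, and expanding $Q(A)=[0,1]\cdot S(A)$, together with $Q(A)=\overline{S(A)}^{w*}$ (obtained by comparing $P$, $Q(A)$, and the weak* closure of $S(A)$), identifies this set with ${\rm conv}\{0,W_A(a)\}$ and with $\overline{W_A(a)}$.
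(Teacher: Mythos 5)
The paper does not reproduce a proof of this proposition (it is quoted from \cite{BOZ}), so I can only judge your proposal on its own terms. Your overall architecture --- the set $P=r(S(A^1))$, the identity $\mathfrak{r}_A^{\circ}=\overline{\Rdb^+P}^{w*}$ via the bipolar theorem, the reformulation of scaledness as $\mathfrak{r}_A^{\circ}=\Rdb^+Q(A)$, and the reduction of everything to $P=Q(A)$ --- is sound, and the implications $(i)\Rightarrow(iii)\Rightarrow(iv)$ and $(iii)\Rightarrow(ii)$ go through essentially as you describe. But there are two genuine problems. First, in $(iii)\Rightarrow(i)$ you claim that because $0$ is an extreme point of the weak* compact convex set $Q(A)=P$, the cone $\Rdb^+P$ is already weak* closed. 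That principle is false: a closed disk in $\Rdb^2$ tangent to the $y$-axis at the origin has $0$ as an extreme point, yet the cone it generates is $\{0\}\cup\{(x,y):x>0\}$, which is not closed. What saves you is not extremality but the special shape of $Q(A)$: since $S(A)$ lies on the unit sphere, $\Rdb^+Q(A)\cap r\,{\rm Ball}(A^*)=rQ(A)$ is weak* compact for every $r>0$, so the Krein--\v{S}mulian theorem gives weak* closedness of the convex cone $\Rdb^+Q(A)$. That repair must be made explicit.

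Second, and more seriously, the implications that close the cycle --- $(iv)\Rightarrow(i)$ and $(ii)\Rightarrow(i)$ --- are exactly where the approximate identity must enter, and you do not prove them; you yourself call this ``the main obstacle'' and only gesture at an argument. Concretely, what is needed is the inclusion $P\subseteq\Rdb^{+}Q(A)$, equivalently: if ${\rm Re}\,\varphi(a)\ge0$ for every $\varphi\in S(A)$ then $a\in\mathfrak{r}_A$, i.e.\ $W_{A^1}(a)\subseteq\overline{{\rm conv}}(\{0\}\cup W_A(a))$. This is the analytic heart of the proposition; without it, $(iv)$ and $(ii)$ imply nothing. Likewise $(ii)\Rightarrow(i)$ needs more than ``apply $r$'': restricting $(ii)$ gives $P={\rm conv}(\{0\}\cup S(A))$, but identifying this set with $Q(A)$ requires showing that a convex combination of states of $A$ is a nonnegative multiple of a state, which is not automatic and is essentially an instance of the scaledness being proved. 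Finally, $Q(A)=\overline{S(A)}^{{\rm w*}}$ does not simply ``fall out of (iii)'': the inclusion $Q(A)\subseteq\overline{S(A)}^{{\rm w*}}$ requires producing states of $A$ converging weak* to $t\varphi$ for $t<1$, another cai argument you have not supplied. Until these approximate-identity arguments are actually written, the proposal is a correct skeleton rather than a proof.
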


\begin{theorem}[Kaplansky density type result] \label{Kap} 
  If $A$ is a scaled approximately unital 
Banach algebra then ${\rm Ball}(A) \cap {\mathfrak r}_A$ 
is weak* dense in the unit ball of ${\mathfrak r}_{A^{**}}$.  \end{theorem}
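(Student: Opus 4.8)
The plan is to prove the asserted equality $\overline{{\rm Ball}(A)\cap{\mathfrak r}_A}^{\,w*} = {\rm Ball}(A^{**})\cap{\mathfrak r}_{A^{**}}$ by a bipolar (duality) argument, the role of the hypothesis that $A$ be \emph{scaled} being, precisely, to pin down the relevant dual cone. Abbreviate $K = {\rm Ball}(A)\cap{\mathfrak r}_A$ and $L = {\rm Ball}(A^{**})\cap{\mathfrak r}_{A^{**}}$ (so $\overline{K}^{w*}$ denotes the $\sigma(A^{**},A^*)$-closure of the canonical image of $K$). First I would note that $K\subseteq L$ is immediate from Lemma \ref{chaccr}(2): accretivity of $x$ is the \emph{norm} condition $\Vert 1-tx\Vert\le 1+t^2\Vert x\Vert^2$ for all $t>0$, which is unchanged along the isometric embeddings $A\subseteq A^1\subseteq (A^1)^{**}$ and $A\subseteq A^{**}\subseteq(A^1)^{**}$; hence an element of ${\mathfrak r}_A = A\cap{\mathfrak r}_{A^1}$ of norm $\le 1$ lies in $A^{**}\cap{\mathfrak r}_{(A^1)^{**}} = {\mathfrak r}_{A^{**}}$. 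One also checks quickly that $L$ is weak* closed (so that, together with the density below, one actually gets equality): on the unit ball, Lemma \ref{chaccr}(2) and the remark after it say $x$ is accretive iff $\Vert 1-tx\Vert\le 1+t^2$ for all $t>0$, and each such condition cuts out a weak* closed subset of $A^{**}$ since the norm of $(A^1)^{**}$ is weak* lower semicontinuous and $x\mapsto 1-tx$ carries $\sigma(A^{**},A^*)$ into $\sigma((A^1)^{**},(A^1)^*)$. Thus it suffices to prove the density statement $L\subseteq\overline{K}^{w*}$.

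For that I would use the following general fact about a norm-closed convex cone $C$ in a Banach space $X$, with dual cone $C^{\circledast} = \{\varphi\in X^*:\operatorname{Re}\varphi(c)\ge 0 \text{ for all } c\in C\}$: the weak* closure in $X^{**}$ of $C\cap{\rm Ball}(X)$ equals $\widehat C\cap{\rm Ball}(X^{**})$, where $\widehat C := \{z\in X^{**}:\operatorname{Re}\varphi(z)\ge 0 \text{ for all } \varphi\in C^{\circledast}\}$ is the second dual cone. This is a short polar computation: the polar of $C\cap{\rm Ball}(X)$ is the weak* closed convex hull of $C^{\circ}\cup{\rm Ball}(X^*)$, so its bipolar is $C^{\circ\circ}\cap{\rm Ball}(X^{**})$; and $C^{\circ\circ}=\widehat C$ because $C$ is a closed convex cone, while the bipolar theorem identifies the bipolar of $C\cap{\rm Ball}(X)$ with its weak* closure.

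Applying this with $X=A$ and $C={\mathfrak r}_A$ — a norm-closed convex cone, being $\overline{\Rdb^+{\mathfrak F}_A}$ — yields $\overline{K}^{w*} = \widehat{{\mathfrak r}_A}\cap{\rm Ball}(A^{**})$. Here the hypothesis enters: by the definition of a scaled algebra (cf.\ Proposition \ref{scal} and the proof of Theorem \ref{brcof4}) the dual cone ${\mathfrak r}_A^{\circledast}\subseteq A^*$ is exactly $\Rdb^+\,S(A)$ — states are real positive, and conversely every real positive scalar-valued map is a nonnegative multiple of a state. Hence $\widehat{{\mathfrak r}_A} = \{z\in A^{**}:\operatorname{Re}\varphi(z)\ge 0 \text{ for all } \varphi\in S(A)\}$, and it remains only to see that ${\mathfrak r}_{A^{**}}$ is contained in this set. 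Given $z\in{\mathfrak r}_{A^{**}} = A^{**}\cap{\mathfrak r}_{(A^1)^{**}}$ and $\varphi\in S(A)$: by definition $\varphi$ extends to a state $\widetilde\varphi$ of $A^1$, and $\widetilde\varphi\in(A^1)^*$ induces a weak* continuous state $\psi$ of $(A^1)^{**}$ (norm one, value one at the identity) which restricts on $A^{**}$ to the canonical image of $\varphi$; since $z$ is accretive in $(A^1)^{**}$ we get $\operatorname{Re}\varphi(z)=\operatorname{Re}\psi(z)\ge 0$. Therefore $L\subseteq\widehat{{\mathfrak r}_A}\cap{\rm Ball}(A^{**}) = \overline{K}^{w*}$, which completes the argument.

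The one step needing genuine care is the polar identity for $C\cap{\rm Ball}(X)$ — and, running alongside it, identifying the single point at which ``scaled'' is used, namely only in the computation ${\mathfrak r}_A^{\circledast} = \Rdb^+\,S(A)$. Everything else is bookkeeping with the isometric embeddings into $(A^1)^{**}$, the weak* lower semicontinuity of the norm, and the norm characterisation of accretivity in Lemma \ref{chaccr}.
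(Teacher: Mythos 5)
Your argument is correct and is essentially the proof the paper has in mind (Theorem \ref{Kap} is quoted from \cite{BOZ}, whose proof is the same bipolar computation: the polar of ${\mathfrak r}_A \cap {\rm Ball}(A)$ is the weak* closed convex hull of ${\rm Ball}(A^*)$ and the dual cone, and the scaled hypothesis enters precisely to identify that dual cone with $\Rdb^+ S(A)$, after which one checks as you do that every element of ${\rm Ball}(A^{**}) \cap {\mathfrak r}_{A^{**}}$ pairs correctly with states of $A$). The only cosmetic slip is your appeal to ``the remark after Lemma \ref{chaccr}'' for the weak* closedness of the target set: that remark states a different equivalent condition, though the characterisation you actually need ($\Vert 1 - tx \Vert \leq 1 + t^2$ for all $t>0$, for $x$ in the unit ball) does follow from the proof of (2)$\Rightarrow$(1) in that lemma.
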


The last result is from \cite{BOZ}, although some operator algebra
variant was done earlier with Read.      

\begin{corollary} \label{solos}  If $A$ is a scaled approximately unital 
Banach algebra then
$A$ has a cai in ${\mathfrak r}_{A}$ iff $A^{**}$ 
has a mixed identity $e$ of norm $1$  in ${\mathfrak r}_{A^{**}}$, or equivalently
with 
$\Vert 1_{A^1} - e \Vert \leq 1$.  \end{corollary}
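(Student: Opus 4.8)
The plan is to establish the biconditional by proving each implication separately, and to treat the parenthetical reformulation ``$\Vert 1_{A^1}-e\Vert\le 1$'' as an equivalent condition on a single norm‑one mixed identity~$e$. The forward implication will be a soft weak* limit argument resting only on Lemma~\ref{chaccr}; the reverse implication is the one with real content, and I would derive it from the Kaplansky‑type density Theorem~\ref{Kap} by the same ``approximate the mixed identity and pass to convex combinations'' device that underlies the last assertion of Theorem~\ref{Mcai}.

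For $(\Rightarrow)$: suppose $(e_t)$ is a cai for $A$ lying in ${\mathfrak r}_A$. First I would pass to a subnet so that $e_t\to e$ weak* in $A^{**}$; then $e$ is a mixed identity for $A^{**}$, since weak* limit points of bounded approximate identities are mixed identities (see \cite{DW,Dal}), and in particular $e^2=e$ under either Arens product, forcing $\Vert e\Vert\ge 1$, while weak* lower semicontinuity of the norm gives $\Vert e\Vert\le\liminf_t\Vert e_t\Vert\le 1$; so $\Vert e\Vert=1$. Next, since each $e_t\in{\mathfrak r}_A\subseteq{\mathfrak r}_{A^1}$, Lemma~\ref{chaccr}(2) gives $\Vert 1-se_t\Vert\le 1+s^2\Vert e_t\Vert^2\le 1+s^2$ for every $s>0$ (the norm being unchanged on passing from $A^1$ to $(A^1)^{**}$). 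Fixing $s$ and letting $t$ run, lower semicontinuity of the norm on $(A^1)^{**}$ yields $\Vert 1-se\Vert\le 1+s^2$ for all $s>0$; then Lemma~\ref{chaccr}(2)$\Rightarrow$(1), applied in the unital Banach algebra $(A^1)^{**}$, shows $e$ is accretive there, and as $e\in A^{**}$ we get $e\in A^{**}\cap{\mathfrak r}_{(A^1)^{**}}={\mathfrak r}_{A^{**}}$.

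For $(\Leftarrow)$: suppose $A^{**}$ has a mixed identity $e$ with $\Vert e\Vert=1$ and $e\in{\mathfrak r}_{A^{**}}$, so $e$ lies in the unit ball of ${\mathfrak r}_{A^{**}}$. Since $A$ is a scaled approximately unital Banach algebra, Theorem~\ref{Kap} produces a net $(x_i)$ in ${\rm Ball}(A)\cap{\mathfrak r}_A$ with $x_i\to e$ weak* in $A^{**}$. Because the natural left and right actions of a fixed element of $A$ on $A^{**}$ are weak*‑to‑weak* continuous, and because $e$ is a mixed identity (so $ae=a$ and $ea=a$ for $a\in A$, using the first resp.\ second Arens product), one gets $ax_i\to a$ and $x_i a\to a$ weakly in $A$ for every $a\in A$. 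Now I would fix a finite set $F\subset A$ and $\varepsilon>0$ and look at the affine map $x\mapsto\big((ax-a,\,xa-a)\big)_{a\in F}$ on the convex set ${\rm Ball}(A)\cap{\mathfrak r}_A$: its range is convex and has $0$ in its weak closure, hence (Mazur) in its norm closure, giving $x_{F,\varepsilon}\in{\rm Ball}(A)\cap{\mathfrak r}_A$ with $\Vert ax_{F,\varepsilon}-a\Vert<\varepsilon$ and $\Vert x_{F,\varepsilon}a-a\Vert<\varepsilon$ for all $a\in F$. Ordered by the usual $(F,\varepsilon)$‑ordering, $(x_{F,\varepsilon})$ is then a contractive approximate identity for $A$ contained in ${\mathfrak r}_A$. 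The convexity of ${\rm Ball}(A)\cap{\mathfrak r}_A$ is exactly what lets the ``positivity'' survive the Mazur step; I regard this reverse implication, and within it the bookkeeping with the two Arens products, as the only mildly delicate part of the argument.

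Finally, for the parenthetical equivalence on a norm‑one mixed identity $e$: if $\Vert 1_{A^1}-e\Vert\le 1$ then for every state $\psi$ of $(A^1)^{**}$ we have $\vert 1-\psi(e)\vert=\vert\psi(1_{A^1}-e)\vert\le 1$, so $\psi(e)$ lies in the closed unit disc about $1$, hence in the closed right half‑plane, whence $e\in{\mathfrak r}_{(A^1)^{**}}\cap A^{**}={\mathfrak r}_{A^{**}}$. Conversely, if $e\in{\mathfrak r}_{A^{**}}$ then $e$ is accretive in $(A^1)^{**}$ and $e^2=e$ there, so Lemma~\ref{chaccr}(1)$\Rightarrow$(5) gives $\Vert 1-te\Vert\le\Vert 1-t^2e^2\Vert=\Vert 1-t^2e\Vert$ for all $t>0$; writing $g(t)=\Vert 1_{A^1}-te\Vert$, this reads $g(t)\le g(t^2)$, so $g(t)\le g(t^{2^n})$ for all $n$, and letting $n\to\infty$ with $t\in(0,1)$ (using continuity of $g$ and $g(0)=1$) gives $g(t)\le 1$ there, hence $g(1)=\Vert 1_{A^1}-e\Vert\le 1$ by continuity.
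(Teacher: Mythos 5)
Your proposal is correct, and for the main biconditional it follows the same route the paper takes (the paper outsources this to \cite[Proposition 6.4]{BOZ}, but the strategy it describes before Theorem \ref{Mcai} is exactly yours: one direction by taking a weak* cluster point of the real positive cai and checking accretivity survives the limit via Lemma \ref{chaccr}(2) and weak* lower semicontinuity of the norm; the other by using the Kaplansky density Theorem \ref{Kap} to approximate the norm-one real positive mixed identity from ${\rm Ball}(A)\cap{\mathfrak r}_A$ and then running the standard Mazur/convex-combination upgrade from weak to norm approximate identity, which is legitimate precisely because ${\rm Ball}(A)\cap{\mathfrak r}_A$ is convex). Where you genuinely diverge is the final equivalence ``$e\in{\mathfrak r}_{A^{**}}$ iff $\Vert 1_{A^1}-e\Vert\le 1$'': the paper disposes of this by citing Lemma \ref{lump} (an idempotent lies in ${\mathfrak r}$ iff it lies in ${\mathfrak F}$, which in turn rests on \cite[Lemma 3.12]{BOZ}), whereas you give a short self-contained argument from Lemma \ref{chaccr}(1)$\Rightarrow$(5): for an accretive idempotent, $g(t)=\Vert 1-te\Vert$ satisfies $g(t)\le g(t^2)\le\cdots\le g(t^{2^n})\to g(0)=1$ on $(0,1)$, and continuity gives $g(1)\le 1$. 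That iteration is a clean in-line proof of the relevant half of Lemma \ref{lump} and makes the corollary independent of that lemma; the paper's citation is shorter but imports the external result. The only points worth being careful about, and which you handle correctly, are the Arens-product bookkeeping (both $ae$ and $ea$ for $a\in A$ are unambiguous and equal $a$, and $e^2=e$ for either product) and the fact that the weak* topology of $A^{**}$ restricted to $A$ is the weak topology, which is what licenses the Mazur step.
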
 \begin{proof} 
This is proved in \cite[Proposition 6.4]{BOZ}, relying on earlier 
results there, except for parts of the last assertion.    For the 
remaining part,  if $A$ has a cai in ${\mathfrak r}_{A}$ then 
a cluster point of this cai is a mixed identity of norm $1$,
and it is in ${\mathfrak r}_{(A^1)^{**}}$ since the latter
is weak* closed and contains ${\mathfrak r}_{A}$.  However by
a result from
\cite{BOZ} (see  Lemma \ref{lump} below), an idempotent is in ${\mathfrak r}_{(A^1)^{**}}$
iff it is in ${\mathfrak F}_{(A^1)^{**}}$.    
\end{proof}

\begin{corollary} \label{solos2}  If $A$ is a scaled approximately unital 
Banach algebra with a cai ${\mathfrak e}$ such that $S(A) = S_{\mathfrak e}(A)$ then
$A$ has a cai in ${\mathfrak r}_{A}$.    
 \end{corollary}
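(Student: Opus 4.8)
The statement is immediate from $1_A \in {\mathfrak F}_A \subseteq {\mathfrak r}_A$ when $A$ is unital, so I would assume $A$ to be nonunital. Since $A$ is scaled, the quasistate space $K := Q(A) = \{ t\varphi : t \in [0,1], \, \varphi \in S(A) \}$ is, by Proposition \ref{scal}, convex and weak* compact. (In particular $Q(A)$ is then weak* closed, so that $A$ already satisfies the hypotheses of Corollary \ref{solos2a}; but it is the present Corollary that is meant to supply the proof of that statement, so I shall argue directly.) The plan is to run the proof of Theorem \ref{seqcai} with this compact set $K$ in place of $Q_{\mathfrak e}(A)$, replacing the appeal to dominated convergence --- which is not available for a net --- by a barycenter argument using the convexity and weak* compactness of $K$.

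I would write ${\mathfrak e} = (e_s)$ and, for $\psi \in K$, set $g_s(\psi) = {\rm Re}\, \psi(e_s)$, so that the $g_s$ lie in the unit ball of $C(K,\Rdb)$. The first point to check is that $\lim_s g_s(\psi)$ exists and is $\geq 0$ for each $\psi \in K$: writing $\psi = t\varphi$ with $t \in [0,1]$ and $\varphi \in S(A) = S_{\mathfrak e}(A)$, we have $\varphi(e_s) \to 1$, so $g_s(\psi) = t\, {\rm Re}\, \varphi(e_s) \to t \geq 0$. The crux, exactly as in the proof of Theorem \ref{seqcai}, is then that for every $\epsilon > 0$ some convex combination $g = \sum_i \lambda_i g_{s_i}$ satisfies $g \geq -\epsilon$ on $K$: if not, then $\overline{{\rm conv}}\{ g_s \}$ is disjoint from the cone $C(K,\Rdb)_+$ and may be strictly separated from it by a Borel probability measure $m$ on $K$ with $\sup_s \int_K g_s \, dm < 0$. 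But the barycenter $\Lambda = \int_K \psi \, dm(\psi)$ again lies in $K$ (convex and weak* compact), and $\int_K g_s \, dm = {\rm Re}\, \Lambda(e_s) = g_s(\Lambda)$, whose limit is $\geq 0$ by the first point --- contradicting $\int_K g_s \, dm \leq \sup_s \int_K g_s \, dm < 0$ for every $s$. The matching convex combination $x_\epsilon = \sum_i \lambda_i e_{s_i} \in {\rm Ball}(A)$ then has ${\rm Re}\, \varphi(x_\epsilon) \geq -\epsilon$ for all $\varphi \in S(A)$, that is $W_A(x_\epsilon) \subseteq \{ z : {\rm Re}\, z \geq -\epsilon \}$; and since any tail $(e_s)_{s \geq s_0}$ is again a cai satisfying the same hypotheses, $x_\epsilon$ may in addition be taken in ${\rm conv}\{ e_s : s \geq s_0 \}$, hence arbitrarily far out along ${\mathfrak e}$.

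The final step would be the standard geometric-series argument indicated at the end of the proof of Theorem \ref{seqcai} (and carried out in detail in \cite{BOZ}), which passes from these approximately real-positive convex combinations of ${\mathfrak e}$ to honest elements of ${\rm Ball}(A) \cap {\mathfrak r}_A$ --- replacing the $\epsilon$ by $0$ --- and, using that convex combinations of arbitrarily far-out members of a cai are again cai elements, arranges these to run over a cai for $A$. I expect this last step to be the main obstacle: no finite convex combination of the $e_s$ can have numerical range exactly in the closed right half-plane, since the pointwise limit $\psi \mapsto \lim_s g_s(\psi)$ is only weak* lower semicontinuous --- a supremum of weak* continuous functions --- and not continuous, so the approximation is genuinely only approximate and a non-convex limiting device is unavoidable there. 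The barycenter observation in the middle paragraph is the other point calling for care, being precisely what lets the net-indexed case go through with no sequential or dominated-convergence hypothesis.
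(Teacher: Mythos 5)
Your separation-plus-barycenter argument is attractive, and the middle paragraph is correct as far as it goes: since $A$ is scaled, $Q(A)$ is convex and weak* compact by Proposition \ref{scal}, the barycenter $\Lambda$ of the separating probability measure lies in $Q(A)$, and $\int_K g_s\, dm = g_s(\Lambda) \to \Vert \Lambda \Vert \geq 0$ does contradict $\sup_s \int_K g_s\, dm < 0$. This is a legitimate way to dispense with the dominated convergence step of Theorem \ref{seqcai} when the cai is a net rather than a sequence. But the proof is not complete: the entire difficulty of the statement is concentrated in your final step, which you defer to ``the standard geometric-series argument'' and candidly label the main obstacle. What your argument delivers is, for each $\epsilon > 0$ and each tail of ${\mathfrak e}$, an element of ${\rm Ball}(A)$ whose numerical range has real part $\geq -\epsilon$; as you yourself observe, the limit function $\psi \mapsto \Vert \psi \Vert$ is only lower semicontinuous, so no finite convex combination need land in ${\mathfrak r}_A$ exactly, and naive averaging of the $x_{\epsilon_k}$ only yields bounds of the form $-\sum \lambda_k \epsilon_k$, which are still negative. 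The cited geometric-series device is moreover formulated in \cite{BOZ} for a \emph{sequential} cai and for the cone ${\mathfrak r}^{\mathfrak e}_A$; since handling the net case is precisely the point of this corollary, leaving that conversion to a sequential-case reference is a genuine gap, not a routine omission.

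The paper closes exactly this gap by a completely different and much shorter route, and it is worth seeing why the scaled hypothesis makes your separation machinery unnecessary. Let $e$ be any weak* limit point of ${\mathfrak e}$. Then $\varphi(e) = 1$ for every $\varphi \in S_{\mathfrak e}(A) = S(A)$ --- no convexity argument needed. By Proposition \ref{scal} (ii), every state of $A^1$ is a convex combination of the trivial character and states extending states of $A$, whence $\varphi(e) \geq 0$ for the relevant states and $e \in A^{**} \cap {\mathfrak r}_{(A^1)^{**}} = {\mathfrak r}_{A^{**}}$. Thus $A^{**}$ has a real positive mixed identity of norm $1$, and Corollary \ref{solos} --- which is the Kaplansky density theorem \ref{Kap} for scaled approximately unital algebras, packaged exactly for this purpose --- converts that weak* statement back into an honest cai for $A$ lying in ${\mathfrak r}_A$. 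In other words, the passage from ``approximately real positive'' to ``real positive'' that you are missing is performed in the bidual by Kaplansky density, where the weak* compactness does the work that your $\epsilon \to 0$ limit cannot. If you want to salvage your approach, you should either carry out the geometric-series step explicitly for nets, or recognize that after producing the weak* limit point you are already in the situation of Corollary \ref{solos} and invoke it.
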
 \begin{proof} 
Let $e$ be any weak* limit point of ${\mathfrak e}$.   Clearly
$\varphi(e) = 1$ for all $\varphi \in S_{\mathfrak e}(A) = S(A)$.     If $\varphi \in S((A^1)^{**})$
then its restriction to $A^1$ is in $S(A^1)$, hence $\varphi(e) \geq 0$ by the last line 
and  Proposition \ref{scal}.
So $e \in A^{**} \cap {\mathfrak r}_{(A^1)^{**}} = {\mathfrak r}_{A^{**}}$, and so the 
result  follows from our Kaplansky density type theorem
in the form of its Corollary \ref{solos}.     \end{proof}  

The class of algebras $A$ in the last Corollary is the same as the class in the last line of the statement
of Theorem \ref{brcof4}.    Thus for such algebras, {\rm (1)--(4)} in Theorem  \ref{brcof3} hold, with $\frac{1}{2}  {\mathfrak F}_A$
replaced by ${\rm Ball}(A) \cap {\mathfrak  r}_A$, and  ${\mathfrak C}_A$ 
replaced by  ${\mathfrak r}_A$.   In particular, ${\mathfrak r}_A$ spans $A$.  

\section{Positivity and roots in Banach algebras} \label{poroo}

As we said in the Introduction, this section and the next have several purposes:  We will describe results from 
our other papers (particularly \cite{BOZ}, which generalizes some parts of the 
earlier work) connected to the work of Kadison summarized in Section \ref{kad}, but we will also 
 restate the results  from several sections of \cite{BOZ} in the more general setting of Banach algebras 
with no kind of approximate identity.   Also we will give a detailed discussion of roots (fractional powers) in relation
to our positivity (see also \cite{BRI, BRII,BBS,BRord}  for results not covered here).

Thus let $A$ be a Banach algebra without a cai, or without any kind of bai.  
If $B$ is any unital Banach algebra isometrically 
containing $A$ as a subalgebra, for example any unitization of $A$,
we define  $${\mathfrak F}^B_A = \{ a \in A : \Vert 1_B - a \Vert
\leq 1 \},$$ and write ${\mathfrak r}^B_A$ for the set of $a \in A$ whose 
numerical range in $B$ is contained in the right half plane.    These sets are closed and convex.
 Also we define
  $${\mathfrak F}_A \, = \, \cup_B \, {\mathfrak F}^B_A, \; \; \; \;  \; \; \; 
{\mathfrak r}_A  \, = \, \cup_B \,  {\mathfrak r}^B_A,$$
the unions taken over all unital Banach algebras $B$
containing $A$.    Unfortunately it is not clear to us that
 ${\mathfrak F}_A$ and ${\mathfrak r}_A$ are always convex, 
which is needed in most of 
Section \ref{AKba} below (indeed we often need them closed too there), and so we 
will have to use ${\mathfrak F}^B_A$ and ${\mathfrak r}^B_A$ there instead.  
Of course we could fix this problem by defining 
${\mathfrak F}_A = \cap_B \, {\mathfrak F}^B_A$ and ${\mathfrak r}_A  = \cap_B \,  {\mathfrak r}^B_A,$
 the intersections taken over all $B$ as above.  If we did this
then we could remove the superscript $B$ in all 
results in Section \ref{AKba} below; this would look much cleaner but
may be less useful in practice.

Obviously  ${\mathfrak F}_A$ and ${\mathfrak r}_A$ are 
convex and closed if there is an  unitization $B_0$ of $A$
such that ${\mathfrak F}^{B_0}_A = {\mathfrak F}_A$ (resp.\ ${\mathfrak r}^{B_0}_A = {\mathfrak r}_A$).
  This happens if $A$ is
an operator algebra because  then there is a unique unitization by a theorem of Ralf Meyer  (see
 \cite[Section 2.1]{BLM}).  
The following is another case when this happens.

\begin{lemma} \label{exu}   Let $A$ be a nonunital Banach algebra.  
 \begin{itemize} \item [(1)]   Suppose that there exists a `smallest'  unitization norm on $A \oplus \Cdb$.
That is, there exists a smallest norm  on $A \oplus \Cdb$
making it a normed algebra with product $(a,\lambda) (b,\mu) = (ab + \mu a + \lambda b,\lambda \mu)$, and satisfying 
$\Vert (a,0) \Vert = \Vert a \Vert_A$ for $a \in A$. Let  $B_0$ be $A \oplus \Cdb$ with this smallest norm.   
  Then 
 ${\mathfrak F}^{B_0}_A = {\mathfrak F}_A$ and ${\mathfrak r}^{B_0}_A = {\mathfrak r}_A$.    
 \item [(2)]  Suppose that  the   left regular representation embeds $A$ isometrically in $B(A)$.    
(This is the case for example if $A$ is  approximately unital.)     Define $B_0$ to be the span in $B(A)$ of $I_A$ and the isometrically  embedded
copy of $A$.   This has the smallest norm of any unitization of $A$.   Hence 
 ${\mathfrak F}^{B_0}_A = {\mathfrak F}_A$ and ${\mathfrak r}^{B_0}_A = {\mathfrak r}_A$.  
\end{itemize}  
    \end{lemma}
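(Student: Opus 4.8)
The plan is to reduce both parts to a single comparison of unitization norms on $A \oplus \Cdb$. First I would observe that in the definitions of ${\mathfrak F}_A$ and ${\mathfrak r}_A$ one may replace ``all unital Banach algebras $B$ containing $A$'' by ``all unitizations $B = A \oplus \Cdb$ of $A$'' (each carrying $\Vert 1_B \Vert = 1$). Indeed, given a unital $B'$ containing $A$ isometrically as a subalgebra, put $B = A + \Cdb 1_{B'}$ with the inherited norm; since $A$ is nonunital, $1_{B'} \notin A$, so $B$ is a unitization of $A$ (its product is forced to be the canonical one), and $\Vert 1_{B'} - a \Vert_{B'} = \Vert 1_B - a \Vert_B$ for $a \in A$. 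Moreover every state of $B$ extends by Hahn--Banach to a state of $B'$ (the extension has norm $1$ and still sends $1_{B'}$ to $1$), and every state of $B'$ restricts to a state of $B$, so $W_{B'}(a) = W_B(a)$ for $a \in A$. Hence ${\mathfrak F}^{B'}_A = {\mathfrak F}^B_A$ and ${\mathfrak r}^{B'}_A = {\mathfrak r}^B_A$, which justifies the reduction.

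For part (1): let $B$ be any unitization of $A$, so $\Vert \cdot \Vert_{B_0} \leq \Vert \cdot \Vert_B$ on $A \oplus \Cdb$ by the defining property of $B_0$. Then $\Vert 1 - a \Vert_{B_0} \leq \Vert 1 - a \Vert_B \leq 1$ whenever $a \in {\mathfrak F}^B_A$, so ${\mathfrak F}^B_A \subseteq {\mathfrak F}^{B_0}_A$. For the accretive elements, since $B_0$ and $B$ are the same vector space with $\Vert \cdot \Vert_{B_0} \leq \Vert \cdot \Vert_B$, the dual norms satisfy $\Vert \varphi \Vert_{B_0^*} \geq \Vert \varphi \Vert_{B^*}$; hence any $\varphi$ with $\Vert \varphi \Vert_{B_0^*} = 1 = \varphi(1)$ also has $\Vert \varphi \Vert_{B^*} = 1$ (it is $\leq 1$ by that inequality and $\geq |\varphi(1)| = 1$ as $\Vert 1 \Vert_B = 1$), so $S(B_0) \subseteq S(B)$ and therefore $W_{B_0}(a) \subseteq W_B(a)$ for $a \in A$. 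Thus ${\mathfrak r}^B_A \subseteq {\mathfrak r}^{B_0}_A$. Taking the union over all $B$ gives ${\mathfrak F}_A \subseteq {\mathfrak F}^{B_0}_A$ and ${\mathfrak r}_A \subseteq {\mathfrak r}^{B_0}_A$; the reverse inclusions hold since $B_0$ is itself one of the algebras in the union, so we get equality.

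For part (2): write $\pi : A \to B(A)$ for the (isometric) left regular representation and $B_0 = \pi(A) + \Cdb I_A$, with the operator norm. Since $\pi$ is injective, a left identity $e$ of $A$ would satisfy $\pi(e) = I_A$, hence $\pi(x) = \pi(x)\pi(e) = \pi(xe)$ and so $xe = x$ for all $x$, forcing $A$ to be unital; thus $A$ has no left identity and $I_A \notin \pi(A)$, so $B_0$ is a genuine unitization of $A$ (it is closed in $B(A)$, being $\pi(A)$ — complete hence closed — plus a line, and $\Vert \pi(a) \Vert = \Vert a \Vert_A$, so its norm is a unitization norm on $A \oplus \Cdb$). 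To see it is the smallest, let $B$ be any unitization of $A$; then $A$ is a two-sided ideal of $B$, so $B$ acts on $A$ by left multiplication, and for $x = a + \lambda 1_B \in B$ this action is exactly the operator $\pi(a) + \lambda I_A$. Using that $A \hookrightarrow B$ is isometric and that $B$ is submultiplicative, $\Vert \pi(a) + \lambda I_A \Vert_{B(A)} = \sup \{ \Vert x y \Vert_B : y \in A,\, \Vert y \Vert_B \leq 1 \} \leq \Vert x \Vert_B$. Hence $\Vert \cdot \Vert_{B_0} \leq \Vert \cdot \Vert_B$ for every unitization $B$, so $B_0$ carries the smallest unitization norm and part (1) applies.

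The substance is routine operator-norm and dual-norm estimation; the only steps requiring care are the initial reduction — so that ``smallest norm'' is tested against the correct class of competitors — and the equality $W_{B'}(a) = W_B(a)$ under passing to $A + \Cdb 1_{B'}$, which rests on Hahn--Banach extension of states preserving both the norm and the value at the unit. I do not anticipate a genuine obstacle.
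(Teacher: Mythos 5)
Your proof is correct and follows essentially the same route as the paper: both rest on the observation that the smallest unitization norm is dominated by the norm inherited from any unital $B$ containing $A$ (your part (2) computation with the left regular representation is exactly the paper's), so that membership in ${\mathfrak F}^B_A$ and ${\mathfrak r}^B_A$ passes down to $B_0$. The one small divergence is in transferring accretivity: the paper invokes the norm-growth criterion of Lemma \ref{chaccr}(2), $\Vert 1 - ta \Vert \leq 1 + t^2 \Vert a \Vert^2$, which is manifestly monotone under shrinking the norm, whereas you argue via dual norms that $S(B_0) \subseteq S(B)$ and hence $W_{B_0}(a) \subseteq W_B(a)$ — both are valid one-line arguments, and your explicit Hahn--Banach reduction to genuine unitizations spells out a step the paper leaves implicit.
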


\begin{proof}  
   If $B$ is any unital Banach algebra
containing $A$, and $a \in {\mathfrak F}^B_A$ then $a \in {\mathfrak F}^{B_0}_A$.   So 
${\mathfrak F}^{B_0}_A = {\mathfrak F}_A$.    
 A similar argument 
shows that  ${\mathfrak r}^{B_0}_A = {\mathfrak r}_A$, using  Lemma \ref{chaccr} (2), namely that 
$${\mathfrak r}^B_A = \{ a \in A : \Vert 1_B - t a \Vert
\leq 1 + t^2 \Vert a \Vert^2 \; \textrm{for all} \; t \geq 0 \}.$$

(2)\   The first assertion here is well known and simple: If $B$ is any unital Banach algebra
containing $A$  note that $$\Vert a + \lambda 1_{B_0} \Vert_{B_0}
= \sup \{ \Vert (a + \lambda 1) x  \Vert_A : x \in {\rm Ball}(A) \} 
= \sup \{ \Vert (a + \lambda 1_B) x  \Vert_B : x \in {\rm Ball}(A) \},$$
so that $\Vert a + \lambda 1_{B_0} \Vert_{B_0} \leq \Vert a + \lambda 1_{B} \Vert_{B} .$
  So  (1) holds.  
\end{proof}

\begin{proposition} \label{aufr}  If $A$ is a  nonunital subalgebra of a unital
Banach algebra $B$, and if $C$ is a  subalgebra of $A$, then  ${\mathfrak F}^B_{C}= C \cap {\mathfrak F}^B_{A}$
and ${\mathfrak r}^B_{C}= C \cap {\mathfrak r}^B_{A}$.  
\end{proposition}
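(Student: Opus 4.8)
The plan is to observe that both equalities are essentially immediate from the definitions; the point is that membership in ${\mathfrak F}^B_{\,\cdot}$ and in ${\mathfrak r}^B_{\,\cdot}$ is decided by a condition referring only to the fixed ambient unital algebra $B$, and not to any intermediate subalgebra. So the whole argument is just a careful unwinding of the definitions, together with the observation that $C \subseteq A$.

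First I would treat ${\mathfrak F}$. By definition ${\mathfrak F}^B_C = \{ c \in C : \Vert 1_B - c \Vert \leq 1 \}$ and ${\mathfrak F}^B_A = \{ a \in A : \Vert 1_B - a \Vert \leq 1 \}$, where in both cases $\Vert \cdot \Vert$ is the norm of $B$. Since $C \subseteq A$, a given element $c$ lies in ${\mathfrak F}^B_C$ if and only if $c \in C$ and $\Vert 1_B - c \Vert \leq 1$, that is, if and only if $c \in C \cap {\mathfrak F}^B_A$. There is nothing further to verify, and this gives ${\mathfrak F}^B_C = C \cap {\mathfrak F}^B_A$.

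For ${\mathfrak r}$ I would use the reformulation already recorded in the proof of Lemma \ref{exu} (coming from Lemma \ref{chaccr}(2) applied to the unital algebra $B$), namely
$${\mathfrak r}^B_A = \{ a \in A : \Vert 1_B - t a \Vert \leq 1 + t^2 \Vert a \Vert^2 \; \textrm{for all} \; t \geq 0 \},$$
and likewise with $C$ in place of $A$. Exactly as in the ${\mathfrak F}$ case, the displayed condition is computed entirely inside $B$, so intersecting with $C$ yields ${\mathfrak r}^B_C = C \cap {\mathfrak r}^B_A$. Alternatively one argues directly from the definition: the numerical range of $x$ in $B$ is $\{ \varphi(x) : \varphi \in S(B) \}$, which depends only on $x$ viewed as an element of $B$ and on the state space of the (already unital) algebra $B$, not on the choice of a subalgebra containing $x$; hence the requirement that this set lie in the right half plane restricts correctly from $A$ to $C$.

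There is no hard step here: the Proposition is a pure compatibility statement, wanted on record for use in Section \ref{AKba}. The only subtlety worth flagging, and it is a matter of convention rather than a genuine obstacle, is that ``numerical range in $B$'' for an element of the nonunital algebras $C$ and $A$ is always taken with respect to the state space of the fixed unital algebra $B$ (so that no unitization of $C$ or of $A$ enters), which is precisely the convention fixed just before the statement; with that reading the argument above is unconditional.
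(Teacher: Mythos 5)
Your proof is correct and matches the paper's intent: the paper states Proposition \ref{aufr} without proof precisely because, as you observe, membership in ${\mathfrak F}^B_{\,\cdot}$ and ${\mathfrak r}^B_{\,\cdot}$ is a condition computed entirely in the fixed unital algebra $B$, so the equalities follow immediately from $C \subseteq A$ and the definitions.
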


We now discuss   {\em roots} (that is, $r$'th powers  
for  $r \in [0,1]$) in a subalgebra $A$ of a unital Banach algebra $B$.   
Actually, we only discuss the principal root (or power); we recall that the {\em principal}  $r$th power, for $0 < r < 1$,   
is the one whose spectrum is contained in a sector $S_\theta$ of angle $\theta < 2 r \pi$.
There are several ways to define these that we are aware of.  We will
review these and show that they are the same.    As far as we know, Kelley and Vaught \cite{KV} were the first to define 
the square root of elements of ${\mathfrak F}_A$, but their argument works for  $r$'th powers  
for  $r \in [0,1]$.   If $\Vert 1 - x \Vert \leq 1$, define 
$$x^{r} = \sum_{k = 0}^\infty \, {r \choose k} (-1)^k (1-x)^k \; , \qquad 
r > 0.$$
For $k \geq 1$ the sign of ${r \choose k} (-1)^k$ is
always negative, and $\sum_{k = 1}^\infty \,
{r \choose k} (-1)^k = -1$.    Thus the series above converges absolutely, hence converges in $A$.
Indeed it is now easy to see that the series given for $x^r$ 
 is a norm limit of polynomials in $x$ with no constant term.    Using
 the Cauchy product formula in Banach algebras in a standard way,
one deduces that  
$(x^{\frac{1}{n}})^n = x$ for any positive integer $n$.

\begin{proposition}[Esterle] \label{clnth}  If   $A$ is a Banach algebra then
  ${\mathfrak F}_A$ 
is  closed under $r$'th powers  
for any $r \in [0,1]$.  
\end{proposition}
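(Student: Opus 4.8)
The plan is to reduce the statement to a single unitization and then exploit the explicit power series defining $x^r$. Fix a Banach algebra $A$ and recall that ${\mathfrak F}_A = \cup_B {\mathfrak F}^B_A$, the union over unital Banach algebras $B$ containing $A$ isometrically. So let $x \in {\mathfrak F}_A$; by definition there is some such $B$ with $\Vert 1_B - x \Vert \leq 1$. It suffices to show that $x^r \in {\mathfrak F}^B_A$ for $r \in [0,1]$, since ${\mathfrak F}^B_A \subset {\mathfrak F}_A$. Note $x^r$ is computed inside $B$ by the series
$$x^{r} = \sum_{k=0}^\infty {r \choose k} (-1)^k (1_B - x)^k,$$
and as remarked in the excerpt this converges absolutely in $A$ (the partial sums are polynomials in $x$ with no constant term, hence lie in $A$, which is closed), so $x^r \in A$; the only issue is the norm estimate $\Vert 1_B - x^r \Vert \leq 1$.

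The key computation is to write $1_B - x^r$ in terms of $w := 1_B - x$, which satisfies $\Vert w \Vert \leq 1$. Since the $k=0$ term of the series is $1_B$, we get
$$1_B - x^r = -\sum_{k=1}^\infty {r \choose k} (-1)^k w^k = \sum_{k=1}^\infty \Big( -{r \choose k}(-1)^k \Big) w^k.$$
For $0 < r < 1$ and $k \geq 1$, the coefficient $c_k := -{r \choose k}(-1)^k = (-1)^{k+1}{r \choose k}$ is nonnegative (this is the sign fact quoted just before the proposition), and $\sum_{k=1}^\infty c_k = 1$ (also quoted: $\sum_{k\geq 1}{r\choose k}(-1)^k = -1$). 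Therefore
$$\Vert 1_B - x^r \Vert \leq \sum_{k=1}^\infty c_k \Vert w^k \Vert \leq \sum_{k=1}^\infty c_k \Vert w \Vert^k \leq \sum_{k=1}^\infty c_k = 1,$$
using $\Vert w \Vert \leq 1$ so $\Vert w \Vert^k \leq 1$ for each $k$. This gives $x^r \in {\mathfrak F}^B_A$, hence $x^r \in {\mathfrak F}_A$. The endpoint cases $r=0$ and $r=1$ are trivial (giving $1_B$, which is not in $A$ unless $A$ is unital — but then $x^0$ should be interpreted as a bounded approximate identity issue; more honestly, for the purposes of this statement one takes $r \in (0,1]$, with $x^1 = x \in {\mathfrak F}_A$ by hypothesis).

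I do not expect a genuine obstacle here: the content is entirely in the two elementary facts about the binomial coefficients $c_k$ (nonnegativity and summing to $1$), both of which are stated in the paragraph preceding the proposition, and the sub-multiplicativity of the norm. The one point requiring a word of care is that the power series is genuinely being evaluated in the ambient unital algebra $B$ (where $1_B$ lives) rather than in $A$, and that the resulting element nonetheless lands back in $A$; but this is immediate from absolute convergence and closedness of $A$, as already observed in the text. So the proof is essentially the display above, preceded by the reduction to a fixed $B$.
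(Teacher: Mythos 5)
Your proof is correct and is essentially the paper's own argument: both rest on writing $1_B - x^r$ as a convex combination of the powers $(1_B-x)^k$, $k\ge 1$, using the nonnegativity of $-\binom{r}{k}(-1)^k$ and the fact that these coefficients sum to $1$. The only difference is that you spell out the triangle-inequality estimate (and fix a sign the paper's display silently drops), which the paper compresses into the phrase ``a convex combination in ${\rm Ball}(B)$.''
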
 

\begin{proof}  Let $x \in A \cap {\mathfrak F}_B$ where $B$ is a unital Banach algebra
containing $A$.  We have   $1_B - x^{r} = \sum_{k=1}^\infty\, {r \choose k} (-1)^k \, (1_B-x)^k$, which is a convex combination
in Ball$(B)$.   So $x^{r} \in A \cap {\mathfrak F}_B \subset {\mathfrak F}_A$.  
\end{proof} 

From  \cite[Proposition 2.4]{Est} if $x \in {\mathfrak F}_A$ then we also have $(x^t)^r = x^{tr}$ for $t \in [0,1]$ and 
any real $r$.  
One cannot use the usual Riesz functional  calculus to define $x^r$ if $0$ is in the spectrum of $x$, since such $r$'th powers are badly behaved  at $0$.    However if $0$ is in the spectrum of $x$, and $x \in {\mathfrak r}_A^B$, one may define 
$x^r = \lim_{\epsilon \to 0^+} \; (x + \epsilon 1_B)^r$ where the latter is the  $r$'th power according to the Riesz functional  calculus.    We will soon see that this limit exists  and lies in $A$, and then it follows that it is independent of the particular 
unital algebra $B$ containing $A$ as a subalgebra (since all unitization norms for $A$ are equivalent).  
A second  way to define $r$'th powers  
for  $r \in [0,1]$) in Banach algebras is found in \cite{LRS}, following the ideas in Hilbert space operator case from the Russian  
literature from the 50's \cite{MP}.   Namely, suppose that $B$ is a unital Banach algebra containing $A$ as a subalgebra,
and $x \in A$ with numerical range in $B$ excluding all negative numbers.
Since the numerical range is convex, it follows  that this numerical range  is in fact contained in a sector (i.e.\ a cone in the complex plane with vertex at $0$) of angle $\leq \pi$.   Since this is the case we are interested in, we will assume that 
the numerical range of $x$  is in the closed right half plane.  (This is usually not really any loss of generality, since $x$ and hence the just mentioned cone can be `rotated' to ensure this.)   Thus the numerical range of $x$  is contained inside a semicircle, namely the one containing the right half of the circle center $0$  radius $R > 0$.    We enlarge this semicircle to a slightly larger `slice' of this circle of radius $R$; thus let $\Gamma$ be the positively oriented contour which is 
symmetric about the $x$-axis, and is composed of an arc of the circle slightly bigger that the right half of the circle, and two line segments  which connect zero with the arc.  Let $\Gamma_\epsilon$ be $\Gamma$ but with points removed that are distance less than $\epsilon$ to the origin.
One defines $x^t$ to be the limit as $\epsilon \to 0$ of $\frac{1}{2 \pi i} \, \int_{\Gamma_\epsilon} \, \lambda^t (\lambda
1_B  - x)^{-1} \, d \lambda$.     The latter integral lies in $A + \Cdb 1_B$, by the usual facts about such integrals.   If $A$ is
nonunital and $\chi_0$ is the character on  $A + \Cdb 1_B$ annihilating $A$ then $\chi_0(x^t)$ is the limit of $\frac{1}{2 \pi i} \, \int_{\Gamma_\epsilon} \, \lambda^t (\lambda
1  - \chi_0(x))^{-1} \, d \lambda$, which is $\frac{1}{2 \pi i} \, \int_{\Gamma}  \, \lambda^t \,  d \lambda = 0$.   So $x^t \in A$.
Note that $x^t$ is  independent of 
the particular unitization $B$ used,  
using the fact that all unitization norms are equivalent.    If in addition $x$ is invertible then $0 \notin {\rm Sp}_B(x)$, so that we can replace $\Gamma$ by a curve that stays to one side of $0$, so that $x^r$ is the  $r$th power of $x$ as given by the Riesz functional calculus.   In fact it is shown in \cite[Proposition 3.1.9]{LRS} that $x^r = \lim_{\epsilon \to 0^+} \, (x+\epsilon 1_B)^r$ for $t > 0$, giving the equivalence with the definition at the start of this discussion.    In addition we 
now see, as we discussed earlier, that the latter limit exists, lies in $A$, and is independent of $B$.    By  \cite[Corollary 1.3]{LRS}, the $r$th power  function is continuous on 
${\mathfrak r}^B_{A}$, for any  $r \in (0,1)$.    Principal nth roots of accretive elements 
are unique, for any positive integer $n$ (see \cite{LRS}).

A final way to define $r$'th powers  $x^r$
for  $r \in [0,1]$ and $x \in  {\mathfrak r}_A$,  is via the functional  calculus for sectorial operators \cite{Haase}
(see also e.g.\ \cite[IX, Section 11]{Yos} for some of the origins
of this approach).   Namely, if $B$ is a unitization of $A$
(or a  unital Banach algebra containing $A$ as a subalgebra) and $x \in  {\mathfrak r}_A^B$,
  view $x$ as an operator on $B$ by left multiplication.  This is sectorial of angle $\leq \frac{\pi}{2}$,  and so we can use the theory of roots (fractional powers) from 
e.g.\ \cite[Section 3.1]{Haase} (see also \cite{NF}).  
Basic properties of such powers include: $x^s x^t = x^{s+t}$ and $(cx)^t = c^t x^t$,  for 
positive scalars $c,s,t$, and $t \to x^t$ is continuous.      There are very many more in e.g.\ \cite{Haase}.
Also  \cite[Proposition 3.1.9]{Haase} shows that $x^r = \lim_{\epsilon \to 0^+} \, (x+\epsilon I)^r$ for $r > 0$, the latter power with respect to the 
usual Riesz functional calculus.      It is easy to see from the last fact that the definitions of $x^r$ given in this paragraph and in the last paragraph coincide if $x \in {\mathfrak r}_A$ and $r  >0$; so that again $x^r$ is in (the copy inside $B(B)$ of) $A$.    Another formula we have  occasionally found useful is $x^r =  \frac{\sin(t \pi)}{\pi} \int_0^\infty \, s^{r- 1} \, (s + x)^{-1} x \, ds$, the Balakrishnan formula  
(see e.g.\ \cite{Haase,Yos}).

We now  show that if 
$x \in {\mathfrak F}_A$ then  the definitions of $x^r$ given in 
the last paragraphs and in Proposition \ref{clnth} coincide,  if $r  > 0$. 
We may assume that $0 < r \leq 1$ and work in  a unital algebra $B$ containing $A$.
Let  $y = \frac{1}{1+\epsilon} (x + \epsilon 1_B)$.  Then $\Vert 1_B - y \Vert < 1$, and so 
$y^r$ as defined in the last paragraphs equals $\sum_{k = 0}^\infty \,
{r \choose k} (-1)^k (1_B-y)^k$ since 
both are easily seen to equal the  $r$th power of $y$ as given by the Riesz functional calculus.   However 
$\sum_{k = 0}^\infty \,
{r \choose k} (-1)^k (1-y)^k$ converges uniformly to  $\sum_{k = 0}^\infty \,
{r \choose k} (-1)^k (1-x)^k$, as $\epsilon \to 0^+$, since the norm of the difference of these two series is dominated by
$$\sum_{k = 1}^\infty \,
{r \choose k} (-1)^k \, (\frac{1}{1+\epsilon} - 1) \, \Vert (1-x)^k \Vert \; \leq \; \frac{\epsilon}{1 + \epsilon} \, \to \, 0 ,$$ 
using the fact that for $k \geq 1$ the sign of ${r \choose k} (-1)^k$ is
always negative.    Also, with the definition of powers in the  last paragraphs we have $y^r = (\frac{1}{1+\epsilon})^r = (x + \epsilon 1_B)^r \to x^r$ as $\epsilon \to 0^+$.   Thus  the definitions of $x^r$ given in 
the last paragraphs and in Proposition \ref{clnth} coincide in this case.

If $A$ is  a subalgebra of a unital 
Banach algebra $B$  then we define the ${\mathfrak F}$-transform on $A$ 
to be  ${\mathfrak F}(x) = x (1_B + x)^{-1} = 1_B - (1_B + x)^{-1}$ for $x \in {\mathfrak r}_A$.    This is
a relative of the well known Cayley transform in operator theory.
Note that  ${\mathfrak F}(x) 
\in {\rm ba}(x)$ by the basic theory of Banach algebras, and it does not depend on $B$, again because all unitization norms for $A$ are equivalent.  The inverse transform takes $y$ to $y(1_B -y)^{-1}$.   For operator algebras we 
have $\Vert {\mathfrak F}(x) \Vert \leq \Vert x \Vert$ and $\Vert \kappa(x) \Vert \leq \Vert x \Vert$  for 
$x \in {\mathfrak r}_A$.   For Banach algebras this is not true; for example on the group algebra of $\Zdb_2$.

Unless explicitly said to the contrary,
 the remaining results in this section are 
generalizations to general Banach algebras of results from \cite{BOZ}.  The main results here 
in the operator algebra case were proved earlier
by  the author and Read (some are much sharper in that setting).

\begin{lemma} \label{stam}  If $A$ is a  subalgebra of a unital Banach algebra  $B$ 
 then ${\mathfrak F}({\mathfrak r}^B_A) \subset {\mathfrak F}^B_A$
and  ${\mathfrak F}({\mathfrak r}_A) \subset {\mathfrak F}_A$.  \end{lemma}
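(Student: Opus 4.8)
The plan is to unwind the definition of the $\mathfrak{F}$-transform and reduce everything to a single clause of Lemma \ref{chaccr}. Since $\mathfrak{F}(x) = 1_B - (1_B + x)^{-1}$, the assertion $\mathfrak{F}(x) \in {\mathfrak F}^B_A$ amounts to two things: that $\mathfrak{F}(x)$ actually lies in $A$, and that $\Vert 1_B - \mathfrak{F}(x) \Vert = \Vert (1_B + x)^{-1} \Vert \leq 1$. The first point has essentially been recorded above: for $x \in {\mathfrak r}^B_A$ one has $-1 \notin {\rm Sp}_B(x)$ (the spectrum being contained in the closure of the numerical range, hence in the closed right half-plane), so $1_B + x$ is invertible in $B$, and $\mathfrak{F}(x) \in {\rm ba}(x) \subseteq A$.

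For the norm estimate I would invoke Lemma \ref{chaccr}, specifically the implication $(1) \Rightarrow (4)$, read inside the unital algebra $B$ (with $B$ playing the role of the ``$A$'' of that lemma, and the numerical range of $x$ computed using the state space of $B$), applied with $t = 1$: since $x \in {\mathfrak r}^B_A$ has numerical range in $B$ contained in the closed right half-plane, we get $\Vert (1_B + x)^{-1} \Vert \leq 1$. Hence $\Vert 1_B - \mathfrak{F}(x) \Vert \leq 1$, so $\mathfrak{F}(x) \in {\mathfrak F}^B_A$, which establishes the first inclusion ${\mathfrak F}({\mathfrak r}^B_A) \subseteq {\mathfrak F}^B_A$.

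The second inclusion ${\mathfrak F}({\mathfrak r}_A) \subseteq {\mathfrak F}_A$ then follows by unravelling the unions defining these sets. If $x \in {\mathfrak r}_A = \bigcup_B {\mathfrak r}^B_A$, choose a unital Banach algebra $B'$ containing $A$ with $x \in {\mathfrak r}^{B'}_A$. By the previous paragraph $\mathfrak{F}(x) \in {\mathfrak F}^{B'}_A \subseteq \bigcup_B {\mathfrak F}^B_A = {\mathfrak F}_A$, where one uses the fact (noted when $\mathfrak{F}$ was introduced) that $\mathfrak{F}(x)$ does not depend on the choice of unitization $B'$, all unitization norms on $A$ being equivalent.

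I do not expect a genuine obstacle here: the mathematical content is entirely in recognizing that the $t = 1$ case of condition $(4)$ of Lemma \ref{chaccr} is precisely the resolvent bound needed, and that the ambient-algebra and membership-in-$A$ issues have already been handled in the surrounding discussion. The only point requiring a little care is bookkeeping with the superscripts $B$, namely checking that the version of $\mathfrak{F}$ used on ${\mathfrak r}^B_A$ agrees with the one used on ${\mathfrak r}_A$; this is immediate from the $B$-independence of $\mathfrak{F}(x)$.
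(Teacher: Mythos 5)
Your proof is correct and is essentially the paper's own argument: the paper obtains $\Vert 1_B - \mathfrak{F}(x)\Vert = \Vert(1_B+x)^{-1}\Vert \leq 1$ by citing the Stampfli--Williams lemma directly (the norm of the resolvent is dominated by the reciprocal of the distance from $-1$ to the numerical range), whereas you invoke Lemma \ref{chaccr}\,(1)$\Rightarrow$(4) at $t=1$, which was itself proved via that same Stampfli--Williams estimate. Your extra care about membership of $\mathfrak{F}(x)$ in $A$ and the $B$-independence needed for the second inclusion is consistent with the surrounding discussion in the paper and introduces no new ideas or gaps.
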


 \begin{proof} This is because by a result of Stampfli 
and Williams \cite[Lemma 1]{SW},
$$\Vert 1_B - x (1_B + x)^{-1} \Vert  =  \Vert (1_B + x)^{-1}
\Vert \leq d^{-1} \leq 1$$ where $d$ is the distance from $-1$ 
to the numerical range in $B$ of $x$.  \end{proof}

The following was stated in \cite{BOZ} without proof details.

\begin{proposition} \label{poscirc}    If  $A$ is a unital Banach algebra 
and $x \in {\mathfrak r}_A$  and $\epsilon > 0$ then $x + \epsilon 1 \in C {\mathfrak F}_A$ where $C = 
\epsilon + \frac{\Vert x \Vert^2}{\epsilon}$.  \end{proposition}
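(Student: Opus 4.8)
The plan is to deduce this directly from the inequality in Lemma~\ref{chaccr}(2). Since $A$ is unital, the assertion $x + \epsilon 1 \in C\,{\mathfrak F}_A$ unravels to $\bigl\|1 - \frac{1}{C}(x + \epsilon 1)\bigr\| \le 1$, where $\frac{1}{C} = \frac{\epsilon}{\epsilon^2 + \|x\|^2}$. First I would dispose of the degenerate case $x = 0$: then $C = \epsilon$ and $\frac{1}{C}(x+\epsilon 1) = 1 \in {\mathfrak F}_A$, so we may assume $x \ne 0$, hence $\|x\| > 0$ and $1 - \frac{\epsilon}{C} = \frac{\|x\|^2}{\epsilon^2 + \|x\|^2} > 0$.

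The key step is to pull out the invertible scalar part of $x + \epsilon 1$. One checks the elementary identity
\[
1 - \frac{1}{C}(x + \epsilon 1) \;=\; \Bigl(1 - \frac{\epsilon}{C}\Bigr)\bigl(1 - s\,x\bigr), \qquad s := \frac{1}{C - \epsilon} = \frac{\epsilon}{\|x\|^2},
\]
using $C - \epsilon = \|x\|^2/\epsilon$ and $(1-\epsilon/C)\,s = 1/C$. Applying Lemma~\ref{chaccr}(2) to the accretive element $x$ with parameter $s > 0$ gives $\|1 - s x\| \le 1 + s^2\|x\|^2 = 1 + \frac{\epsilon^2}{\|x\|^2}$, and therefore
\[
\Bigl\|1 - \frac{1}{C}(x + \epsilon 1)\Bigr\| \;\le\; \Bigl(1 - \frac{\epsilon}{C}\Bigr)\Bigl(1 + \frac{\epsilon^2}{\|x\|^2}\Bigr) \;=\; \frac{\|x\|^2}{\epsilon^2 + \|x\|^2}\cdot\frac{\epsilon^2 + \|x\|^2}{\|x\|^2} \;=\; 1 .
\]
Hence $\frac{1}{C}(x+\epsilon 1) \in {\mathfrak F}_A$, which is exactly the claim.

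There is essentially no obstacle here: the whole argument is a short computation once one spots that the rescaling $s = \epsilon/\|x\|^2$ turns Lemma~\ref{chaccr}(2) into precisely the bound needed, with the constant $C$ tuned so that the product of the two factors is exactly $1$. The only small points requiring attention are that $\|x\| > 0$ when $x \ne 0$ (so that $s$ is defined) and that the scalar $1 - \epsilon/C$ is positive, both of which are immediate; and that Lemma~\ref{chaccr} is being applied in the unital algebra $A$ itself, which is legitimate by hypothesis. One could alternatively route the estimate through Lemma~\ref{chaccr}(4) and a numerical-range bound for $(\epsilon 1 + x)^{-1}$ as in \cite[Lemma~1]{SW}, but the argument via Lemma~\ref{chaccr}(2) is the most economical and is in the spirit of the surrounding results.
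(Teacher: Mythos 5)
Your proof is correct and is essentially the paper's own argument: the paper likewise writes $\Vert 1 - C^{-1}(x+\epsilon 1)\Vert = C^{-1}\frac{\Vert x\Vert^2}{\epsilon}\Vert 1 - \frac{\epsilon}{\Vert x\Vert^2}x\Vert$ and applies Lemma~\ref{chaccr}(2) with the same parameter $t=\epsilon/\Vert x\Vert^2$ to get the bound $1$. Your only addition is the explicit (and harmless) treatment of the degenerate case $x=0$, which the paper leaves implicit.
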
   \begin{proof}     
 We have 
$$\Vert 1 - C^{-1} (x + \epsilon 1) \Vert =  C^{-1}  \, \Vert (C - \epsilon)1  - x \Vert
= C^{-1} \,  \frac{\Vert x \Vert^2}{\epsilon}  \, \Vert 1 - \frac{\epsilon}{\Vert x \Vert^2} x \Vert.$$
By  Lemma \ref{chaccr} (2), this is dominated by $C^{-1}  \frac{\Vert x \Vert^2}{\epsilon} (1 + 
 \frac{\epsilon^2}{\Vert x \Vert^2}) = 1$.
 \end{proof}  

It follows easily from Proposition \ref{poscirc} that $\overline{\Rdb^+ {\mathfrak F}_A} = {\mathfrak r}_A$ if $A$ is
unital.   For nonunital algebras we use a different argument:

\begin{proposition} \label{whau}  If $A$ is a  subalgebra of a unital Banach algebra  $B$ then
$\overline{\Rdb^+ {\mathfrak F}^B_A} = {\mathfrak r}^B_A$ and 
$\overline{\Rdb^+ {\mathfrak F}_A} = {\mathfrak r}_A$.  
\end{proposition}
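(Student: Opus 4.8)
The plan is to prove $\overline{\Rdb^+ {\mathfrak F}^B_A} = {\mathfrak r}^B_A$ for an arbitrary subalgebra $A$ of a unital Banach algebra $B$, and then deduce the version with $\cup_B$ from it. For the inclusion $\overline{\Rdb^+ {\mathfrak F}^B_A} \subset {\mathfrak r}^B_A$, I would argue that ${\mathfrak F}^B_A \subset {\mathfrak r}^B_A$ (immediate from Lemma \ref{chaccr}: $\Vert 1_B - x\Vert \le 1$ forces the numerical range of $x$ in $B$ into the closed right half-plane), that ${\mathfrak r}^B_A$ is a cone, hence contains $\Rdb^+ {\mathfrak F}^B_A$, and finally that ${\mathfrak r}^B_A$ is closed, which follows from the characterization ${\mathfrak r}^B_A = \{ a \in A : \Vert 1_B - ta\Vert \le 1 + t^2 \Vert a \Vert^2 \text{ for all } t \ge 0 \}$ noted in the proof of Lemma \ref{exu}, as an intersection of closed sets. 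So this direction is routine.

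The substantive direction is ${\mathfrak r}^B_A \subset \overline{\Rdb^+ {\mathfrak F}^B_A}$. Given $x \in {\mathfrak r}^B_A$, I would produce elements of $\Rdb^+ {\mathfrak F}^B_A$ converging in norm to $x$ by combining two approximations. First, approximate $x$ by $x + \epsilon 1_B$; but since $A$ is possibly nonunital, $x + \epsilon 1_B$ need not lie in $A$, so this must be handled via the ${\mathfrak F}$-transform instead. The key tool is Lemma \ref{stam}: ${\mathfrak F}(x) = x(1_B + x)^{-1} \in {\mathfrak F}^B_A$ for $x \in {\mathfrak r}^B_A$, and crucially ${\mathfrak F}(x) \in {\rm ba}(x) \subset A$. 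Then for $t > 0$ I would consider ${\mathfrak F}(tx) = tx(1_B + tx)^{-1} \in {\mathfrak F}^B_A$, so that $\frac{1}{t}{\mathfrak F}(tx) = x(1_B+tx)^{-1} \in \Rdb^+ {\mathfrak F}^B_A$, and show $x(1_B + tx)^{-1} \to x$ in norm as $t \to 0^+$. This last limit is the crux: write $x - x(1_B+tx)^{-1} = x\bigl(1_B - (1_B+tx)^{-1}\bigr) = x \cdot tx (1_B+tx)^{-1} = t\, x^2 (1_B+tx)^{-1}$, and bound $\Vert (1_B + tx)^{-1}\Vert$ by $1$ using Lemma \ref{chaccr}(4) (with $t$ there taken to be $1$ and $x$ replaced by $tx \in {\mathfrak r}^B_A$, noting ${\mathfrak r}^B_A$ is a cone). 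Hence $\Vert x - x(1_B+tx)^{-1}\Vert \le t \Vert x\Vert^2 \to 0$, giving $x \in \overline{\Rdb^+ {\mathfrak F}^B_A}$.

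For the second identity $\overline{\Rdb^+ {\mathfrak F}_A} = {\mathfrak r}_A$ (with $\cup_B$), one inclusion is again easy: for any $B$, $\Rdb^+ {\mathfrak F}^B_A \subset \Rdb^+ {\mathfrak r}^B_A = {\mathfrak r}^B_A \subset {\mathfrak r}_A$, so $\Rdb^+ {\mathfrak F}_A \subset {\mathfrak r}_A$; but $\overline{\Rdb^+ {\mathfrak F}_A}$ need not be inside ${\mathfrak r}_A$ unless we know ${\mathfrak r}_A$ is closed, which the paper warns is not clear in general. However, the approximating sequence I constructed above, for $x \in {\mathfrak r}^B_A$, consists of elements $x(1_B+tx)^{-1} \in \Rdb^+ {\mathfrak F}^B_A \subset \Rdb^+ {\mathfrak F}_A$ converging to $x$; so ${\mathfrak r}^B_A \subset \overline{\Rdb^+ {\mathfrak F}_A}$ for every $B$, whence ${\mathfrak r}_A = \cup_B {\mathfrak r}^B_A \subset \overline{\Rdb^+ {\mathfrak F}_A}$. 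Conversely, take $y \in \overline{\Rdb^+ {\mathfrak F}_A}$; I want $y \in {\mathfrak r}_A$, i.e. $y \in {\mathfrak r}^{B}_A$ for some unitization $B$. Fix any unitization $B$ of $A$; each $\Rdb^+ {\mathfrak F}^{B'}_A \subset \Rdb^+ {\mathfrak F}^{B}_A$ since enlarging the ambient algebra only shrinks the set $\{\Vert 1 - a\Vert \le 1\}$ — more precisely, by Lemma \ref{exu}-type reasoning any such $B$ embeds with larger unitization norm is false in general, so here I would instead appeal to: all unitization norms on $A \oplus \Cdb$ are equivalent, hence $\Rdb^+ {\mathfrak F}_A$ and $\Rdb^+ {\mathfrak F}^{B}_A$ have the same closure in $A$, and that closure is $\overline{\Rdb^+ {\mathfrak F}^B_A} = {\mathfrak r}^B_A \subset {\mathfrak r}_A$ by the first part. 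The main obstacle is precisely this bookkeeping with the union/intersection over all unitizations $B$ and the lack of a canonical one; the analytic content (the norm estimate $\Vert x - x(1_B+tx)^{-1}\Vert \le t\Vert x\Vert^2$ via the resolvent bound) is short and clean, so the care needed is in the nonunital $\cup_B$ formalism rather than in any hard inequality.
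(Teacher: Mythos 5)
Your argument for the first identity is exactly the paper's argument: the published proof consists of the observation that $tx(1_B+tx)^{-1}\in{\mathfrak F}^B_A$ by Lemma \ref{stam} for $x\in{\mathfrak r}^B_A$ and $t>0$, together with $\frac{1}{t}\,tx(1_B+tx)^{-1}\to x$ as $t\searrow 0$ (the paper invokes ``elementary Banach algebra theory'' for $(1_B+tx)^{-1}\to 1_B$; your quantitative bound $\Vert x-x(1_B+tx)^{-1}\Vert\le t\Vert x\Vert^2$ via the Stampfli--Williams resolvent estimate is a clean way to see the same thing). The easy inclusion is handled as you handle it, since ${\mathfrak r}^B_A$ is a closed cone containing ${\mathfrak F}^B_A$.

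The one step that does not hold up is your justification of $\overline{\Rdb^+{\mathfrak F}_A}\subset{\mathfrak r}_A$ for the second identity. Equivalence of all unitization norms does not imply that $\Rdb^+{\mathfrak F}^B_A$ and $\Rdb^+{\mathfrak F}^{B'}_A$ have the same closure: an estimate $\Vert\cdot\Vert_{B'}\le C\Vert\cdot\Vert_B$ turns $\Vert 1-a\Vert_B\le 1$ only into $\Vert 1-a\Vert_{B'}\le C$, and by the first identity the statement you want is precisely ${\mathfrak r}^B_A={\mathfrak r}^{B'}_A$, i.e.\ that accretivity is independent of the choice of unitization norm --- only the inclusion ${\mathfrak r}^B_A\subset{\mathfrak r}^{B_0}_A$ when $\Vert\cdot\Vert_{B_0}\le\Vert\cdot\Vert_B$ comes for free, as in the proof of Lemma \ref{exu}. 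Indeed, since $\overline{\Rdb^+{\mathfrak F}_A}=\overline{\cup_B\,\overline{\Rdb^+{\mathfrak F}^B_A}}=\overline{{\mathfrak r}_A}$ by the first identity, the second identity is equivalent to ${\mathfrak r}_A=\cup_B{\mathfrak r}^B_A$ being norm closed, which is exactly the point the paper elsewhere admits is unclear in general. You are in good company --- the paper's own proof says no more than ``from which the results are clear'' here --- but your patch is not a proof: what is genuinely clear is ${\mathfrak r}_A\subset\overline{\Rdb^+{\mathfrak F}_A}$ (take unions over $B$), while the reverse inclusion requires closedness of ${\mathfrak r}_A$, automatic for instance when a smallest unitization norm exists (Lemma \ref{exu}) or when $A$ is an operator algebra.
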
  \begin{proof}  
If 
 $x \in {\mathfrak r}^B_A$ and $t \geq 0$, then $tx(1_B + tx)^{-1} \in {\mathfrak F}^B_A$ by Lemma \ref{stam}.
By elementary Banach algebra theory, $(1_B + tx)^{-1} \to 1_B$ as $t \searrow 0$.  So
 $x = \lim_{t \to 0^+} \, \frac{1}{t} \, tx(1_B + tx)^{-1}$, from which the results are clear.  \end{proof}

{\bf Remark.}  There is a numerical range lifting result that works in quotients  of Banach spaces with `identity' or of approximately unital Banach algebras, if one takes the quotient by an $M$-ideal (see \cite{CSSW} and the end of Section 8 in \cite{BOZ}).   This may  be viewed as a noncommutative Tietze theorem, as explained in the last paragraph
 of Section 8 in \cite{BOZ}.   As a consequence one can lift a real positive element  in such a
quotient $A/J$ to  a real positive in $A$.  This again is a generalization of a well known $C^*$-algebraic positivity 
results since as pointed out by Alfsen and Effros (and Effros and Ruan),
$M$-ideals in  a $C^*$-algebras (or, for that matter, in
an approximately unital operator algebra) are just the 
two-sided closed ideals (with a cai).   See e.g.\ \cite[Theorem 4.8.5]{BLM}.

\begin{lemma} \label{Bal} Let  $A$ be a Banach algebra.   If  $x \in {\mathfrak r}_A$,
 then  $||x^{t}|| \leq  \frac{2 \sin(t \pi)}{\pi t (1 - t)} \, \Vert x \Vert^t$ if $0 < t < 1$.   If $A$ is an operator algebra one may remove the $2$ in this estimate.  \end{lemma}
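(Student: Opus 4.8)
The plan is to run everything through the Balakrishnan representation recorded just above the statement, namely $x^t = \frac{\sin(t\pi)}{\pi}\int_0^\infty s^{t-1}\,(s1_B+x)^{-1}x\,ds$ for $x \in {\mathfrak r}_A$ and $0<t<1$, where $B$ is a fixed unital Banach algebra containing $A$; along the way one notes that $(s1_B+x)^{-1}x = {\mathfrak F}(x/s) \in {\rm ba}(x)$, so nothing leaves $A$ and the relevant norm is unambiguous. Once the formula is in hand, the argument collapses to (i) a pointwise-in-$s$ estimate of $\Vert (s1_B+x)^{-1}x\Vert$ and (ii) an elementary one-variable integral, which I would not write out in full.

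For (i): since $x \in {\mathfrak r}_A$ we also have $x/s \in {\mathfrak r}_A$ for $s>0$, and Lemma \ref{chaccr}(4) gives $\Vert(s1_B+x)^{-1}\Vert\le 1/s$. Hence on one hand $\Vert(s1_B+x)^{-1}x\Vert\le\Vert x\Vert/s$, and on the other $\Vert(s1_B+x)^{-1}x\Vert=\Vert 1_B-s(s1_B+x)^{-1}\Vert\le 1+s\cdot(1/s)=2$; so $\Vert(s1_B+x)^{-1}x\Vert\le\min(2,\Vert x\Vert/s)$. In the operator algebra case the constant $2$ improves to $1$: realizing $A\subseteq B(H)$, accretivity of $x$ means $x+x^*\ge 0$, so for $\eta\in H$, writing $\xi=(s+x)\eta=s\eta+x\eta$, we get $\Vert\xi\Vert^2=s^2\Vert\eta\Vert^2+2s\,{\rm Re}\langle x\eta,\eta\rangle+\Vert x\eta\Vert^2\ge\Vert x\eta\Vert^2$, and since $s1_B+x$ is surjective this gives $\Vert x(s1_B+x)^{-1}\Vert\le 1$; equivalently $\Vert{\mathfrak F}(x/s)\Vert\le\min(1,\Vert x\Vert/s)$, the second bound also following from $\Vert{\mathfrak F}(y)\Vert\le\Vert y\Vert$ as recorded just before Lemma \ref{stam}.

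For (ii): put $c=2$ in general and $c=1$ in the operator algebra case, and split the integral at $s_0=\Vert x\Vert/c$. On $(0,s_0]$ use the bound $c$ and on $[s_0,\infty)$ use $\Vert x\Vert/s$; the first piece integrates to $c^{1-t}\Vert x\Vert^t/t$ (using $t>0$ for integrability at $0$) and the second to $c^{1-t}\Vert x\Vert^t/(1-t)$ (here $t<1$ is precisely what makes $s^{t-2}$ integrable at infinity). Adding and multiplying by $\sin(t\pi)/\pi$ gives $\Vert x^t\Vert\le\frac{c^{1-t}\sin(t\pi)}{\pi t(1-t)}\Vert x\Vert^t$. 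Since $c^{1-t}\le c$ for $t\in(0,1)$, this is $\le\frac{2\sin(t\pi)}{\pi t(1-t)}\Vert x\Vert^t$ in general, and is exactly $\frac{\sin(t\pi)}{\pi t(1-t)}\Vert x\Vert^t$ when $c=1$, yielding both assertions (and in fact the slightly sharper $2^{1-t}$ in the Banach case).

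The only step carrying real content is the improvement of the constant from $2$ to $1$ for operator algebras, which genuinely uses the Hilbert-space form of accretivity $x+x^*\ge 0$ (equivalently, the sharper estimate $\Vert {\mathfrak F}(y)\Vert\le 1$ for accretive $y$ in an operator algebra). Everything else is routine convergence bookkeeping for the Balakrishnan integral, so I would keep that part terse.
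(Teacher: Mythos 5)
Your proof is correct and follows essentially the same route the paper takes (via the proof it cites in \cite{BOZ}): estimating the Balakrishnan integral using $\Vert (s 1_B + x)^{-1} x \Vert \leq \min(c, \Vert x \Vert /s)$ with $c=2$ in general (from Stampfli--Williams/Lemma \ref{chaccr}(4)) and $c=1$ for operator algebras (from the Hilbert-space form of accretivity), then splitting the integral at $s_0 = \Vert x \Vert / c$. Your bookkeeping even yields the marginally sharper constant $2^{1-t}$ in the Banach algebra case, consistent with the Drury-style refinements the paper mentions.
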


 To prove this and the next corollary:  by the above we may as well work in any unital Banach algebra containing $A$, and this case was done in
\cite{BOZ}.    In the operator algebra case a recent paper of Drury \cite{Drury} 
is a little more careful with the estimates
for the integral in the Balakrishnan formula mentioned above for $x^t$, and obtains
$$\Vert x^{t} \Vert \leq  \frac{\Gamma(\frac{t}{2}) \, \Gamma(\frac{1-t}{2})}{2 \sqrt{\pi} \Gamma(t) 
\Gamma(1 - t)}$$ if $0 < t < 1$
and $\Vert x \Vert \leq 1$.   Drury states this for matrices $x$, but the same proof works for operators
on Hilbert space.

\begin{lemma} \label{sin}
 There is a nonnegative sequence $(c_n)$ in $c_0$  such that for any  Banach algebra $A$,
and $x \in {\mathfrak F}_A$ or $x \in  {\rm Ball}(A) \cap {\mathfrak r}_A$, we have
$\Vert x^{\frac{1}{n}} x - x \Vert \leq c_n$ for all $n \in \Ndb$.
    \end{lemma}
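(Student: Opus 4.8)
The plan is to produce a single sequence $(c_n) \in c_0$ that works uniformly over all Banach algebras and both families of elements, by reducing everything to a scalar estimate via the functional calculus already set up in this section. First I would observe that by Lemma \ref{chaccr}(2) and the discussion of ${\mathfrak F}^B_A$ and ${\mathfrak r}^B_A$ above, an element $x \in {\mathfrak F}_A$ (resp.\ $x \in {\rm Ball}(A) \cap {\mathfrak r}_A$) lies in ${\mathfrak F}^B_A$ (resp.\ in ${\rm Ball}(B) \cap {\mathfrak r}^B_A$) for some unital Banach algebra $B \supseteq A$; since $x^{1/n} \in {\rm ba}(x) \subseteq A$ and the quantity $\Vert x^{1/n} x - x\Vert$ is computed inside $A$, it suffices to bound it for $x$ in ${\mathfrak F}_B$ or ${\rm Ball}(B) \cap {\mathfrak r}_B$ with $B$ unital. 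The two cases can be merged: by Proposition \ref{whau} and the continuity of $t \mapsto x^t$ (the $r$th-power function is continuous on ${\mathfrak r}^B_A$ by \cite[Corollary 1.3]{LRS}, and $x^{1/n} x - x$ depends continuously on $x$), it is enough to treat, say, $x \in {\mathfrak F}_B$, and then a density/continuity argument upgrades to all of ${\rm Ball}(B) \cap {\mathfrak r}_B$.

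For $x \in {\mathfrak F}_B$, write $g_n(\lambda) = \lambda^{1/n}\lambda - \lambda = \lambda^{1+1/n} - \lambda$, a continuous function on the closed disk $\overline{\Ddb(1,1)} = \{\lambda : |1-\lambda| \le 1\}$ which contains ${\rm Sp}_B(x)$ and, more to the point, is the natural domain for the Kelley--Vaught power series: from the formula $x^{1/n} = \sum_{k\ge 0} \binom{1/n}{k}(-1)^k (1-x)^k$ one has $x^{1/n}x - x = \sum_{k\ge 1}\binom{1/n}{k}(-1)^k\big((1-x)^k x - 0\big)$ — more cleanly, $x^{1/n}x - x = (x^{1/n} - 1_B)x$ and $1_B - x^{1/n} = \sum_{k\ge 1}\binom{1/n}{k}(-1)^k(1_B - x)^k$ is a convex combination (in the sense used in the proof of Proposition \ref{clnth}) of the $(1_B-x)^k$, each of norm $\le 1$. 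Setting $u = 1_B - x$, so $\Vert u\Vert \le 1$, I would write
\[
x^{1/n} x - x \;=\; -(1_B - x^{1/n})\,x \;=\; -\Big(\sum_{k=1}^{\infty} a_k^{(n)} u^k\Big)(1_B - u),
\]
where $a_k^{(n)} = \binom{1/n}{k}(-1)^k \ge 0$ and $\sum_{k\ge1} a_k^{(n)} = 1$. Expanding and using the triangle inequality together with $\Vert u^k\Vert \le 1$ for all $k\ge 0$ gives $\Vert x^{1/n}x - x\Vert \le \sum_{k\ge1} a_k^{(n)}\Vert u^k - u^{k+1}\Vert$; but the cleanest route is simply $\Vert x^{1/n}x - x\Vert \le \Vert x^{1/n} - 1_B\Vert \,\Vert x\Vert \le \Vert x^{1/n} - 1_B\Vert$, and then to bound $\Vert x^{1/n} - 1_B\Vert$. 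This last quantity is at most $\sup\{|t^{1/n} - 1| : t \in [0,2]\}$? — no: the spectrum lives in the disk, not in $[0,2]$, so I instead bound it by the absolutely convergent tail it is: since the coefficients $a_k^{(n)}$ sum to $1$ and $a_1^{(n)} = 1/n \to 0$, and for fixed $k$, $a_k^{(n)} \to 0$ as $n\to\infty$, while $\Vert \sum_{k\ge1} a_k^{(n)} u^k - u\Vert \le \sum_{k\ge 2} a_k^{(n)}(\Vert u^k\Vert + \ ) $ — the point is that $\Vert x^{1/n} - 1_B + x - 1_B\Vert = \Vert x^{1/n} - (1_B - u)\Vert$ tends to $0$ because the power-series coefficients converge coefficientwise to those of the identity function and are dominated. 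Concretely I would set
\[
c_n \;=\; \sup\Big\{ \big\Vert \textstyle\sum_{k=1}^{\infty} \binom{1/n}{k}(-1)^k u^k \;-\; u \big\Vert \cdot \Vert 1_B - u\Vert \;:\; B \text{ unital}, \ \Vert u\Vert \le 1 \Big\},
\]
which, by the universal-coefficient domination $\big|\binom{1/n}{1}(-1) - 1\big| + \sum_{k\ge2}\big|\binom{1/n}{k}\big| = (1 - \tfrac1n) + (1 - \tfrac1n) = 2(1-\tfrac1n)$ — hmm, this does not go to $0$. So the honest estimate must exploit that $u$ and $u^2$ are \emph{close} when one also knows $x \in {\mathfrak F}$, i.e.\ that the relevant scalar function $g_n(\lambda) = \lambda^{1+1/n} - \lambda$ is \emph{small on the whole disk} $\overline{\Ddb(1,1)}$: indeed $\sup\{|\lambda^{1+1/n} - \lambda| : |1-\lambda|\le 1\} \to 0$ as $n\to\infty$ by uniform continuity of $(\lambda, s)\mapsto \lambda^{s}$ on the compact set $\overline{\Ddb(1,1)} \times [1, 2]$ (with the principal branch, $0$ included by continuity since $|\lambda^s|\le |\lambda|^{\mathrm{Re}\,s}$). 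Then I take $c_n$ to be this scalar supremum (times $1$), and the lemma follows from the inequality $\Vert g_n(x)\Vert \le \Vert g_n\Vert_{\overline{\Ddb(1,1)}}$, which holds because $g_n(x)$ is a norm limit of polynomials in $x$ with no constant term and each such polynomial $p$ satisfies $\Vert p(x)\Vert \le \Vert p\Vert_{\overline{\Ddb(1,1)}}$ — this last is exactly the estimate underlying the absolute convergence of the Kelley--Vaught series (the coefficients of $\lambda \mapsto g_n(\lambda)$ expanded in powers of $(1-\lambda)$ are absolutely summable with sum controlling the sup-norm on the disk).

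The main obstacle, and the step I would spend the most care on, is precisely this \emph{von Neumann--inequality-type bound on the disk} $\Vert g_n(x)\Vert \le \sup_{\overline{\Ddb(1,1)}} |g_n|$: one must justify it purely from the power-series representation in $(1_B - x)$, valid in an arbitrary Banach algebra with no holomorphic functional calculus beyond absolutely convergent series, and one must check that $\sup_{\overline{\Ddb(1,1)}}|\lambda^{1+1/n} - \lambda| \to 0$ (the potential trouble spot being $\lambda$ near $0$, handled by $|\lambda^{1+1/n}| \le |\lambda|^{1+1/n}\cdot C$ for the principal branch on the disk, together with $|\lambda - \lambda^{1+1/n}| \le |\lambda|\,|1 - \lambda^{1/n}|$ and an $\epsilon$-split between $|\lambda|\le\delta$ and $\delta\le|\lambda|\le 2$). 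Granting that, $(c_n)\in c_0$, the bound $\Vert x^{1/n}x - x\Vert \le c_n$ holds for all $x\in{\mathfrak F}_A$, and the extension to $x \in {\rm Ball}(A)\cap{\mathfrak r}_A$ follows by the density of $\Rdb^+{\mathfrak F}_A$ in ${\mathfrak r}_A$ from Proposition \ref{whau} together with the continuity of $x\mapsto x^{1/n}x - x$ on ${\mathfrak r}^B_A$ noted above (intersecting with ${\rm Ball}$ costs nothing since both sides are continuous and $\Vert x\Vert\le1$ is a closed condition).
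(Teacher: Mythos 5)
There are two genuine gaps here, one in each half of your argument.

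First, the ${\mathfrak F}_A$ case. Your final route rests on the inequality $\Vert g_n(x) \Vert \leq \sup\{ |g_n(\lambda)| : |1-\lambda| \leq 1\}$ for $g_n(\lambda) = \lambda^{1+1/n}-\lambda$ and $\Vert 1_B - x \Vert \leq 1$. That is a von Neumann--type inequality, and it is simply not available in a general Banach algebra (the paper is explicit that the disk-algebra functional calculus on ${\mathfrak F}_A$ is an \emph{operator algebra} tool; in, say, the analytic Wiener algebra the $\ell^1$ coefficient norm of a polynomial strictly exceeds its sup norm on the disk). What an arbitrary Banach algebra gives you is only the $\ell^1$ bound $\Vert g_n(x) \Vert \leq \sum_k |c_k^{(n)}|$, where $c_k^{(n)}$ are the coefficients of $g_n$ expanded in powers of $u = 1_B - x$, and so the whole content of the lemma is the claim that this $\ell^1$ norm tends to $0$ --- a strictly stronger statement than the sup-norm claim you verify, and one you cannot get ``for free'' from it. Ironically, you had the correct argument in hand and discarded it: writing $b_k = |\binom{1/n}{k}|$, so that $x^{1/n} = 1_B - \sum_{k \geq 1} b_k u^k$ with $\sum_k b_k = 1$, one gets $x^{1/n}x - x = (x^{1/n}-1_B)(1_B-u) = \sum_{k \geq 1}(b_{k-1}-b_k)u^k$ (with $b_0 := 0$); since $b_{k+1}/b_k = (k - \tfrac1n)/(k+1) < 1$ the sequence $(b_k)_{k \geq 1}$ decreases to $0$, so $\sum_k |b_{k-1}-b_k| = b_1 + (b_1 - \lim_k b_k) = 2/n$, and $c_n = 2/n$ works. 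Your abandoned computation gave $2(1-\tfrac1n)$ because you computed the coefficients of $x^{1/n}-x$ rather than of $(x^{1/n}-1_B)x$; the multiplication by $x$ is what produces the telescoping.

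Second, the passage from ${\mathfrak F}_A$ to ${\rm Ball}(A) \cap {\mathfrak r}_A$ by ``density plus continuity'' does not work. The dense subset of ${\mathfrak r}_A$ supplied by Proposition \ref{whau} is $\Rdb^+ {\mathfrak F}_A$, not ${\mathfrak F}_A$ (already for $A = \Cdb$ the lens ${\mathfrak F}_A \cap {\rm Ball}(A)$ is not dense in the right half-disk ${\mathfrak r}_A \cap {\rm Ball}(A)$), and the estimate you would have proved on ${\mathfrak F}_A$ does not rescale: for $z = ty$ with $y \in {\mathfrak F}_A$, $\Vert z \Vert \leq 1$ but $t$ large (which is exactly what the approximants $\frac{1}{t}\, tx(1_B+tx)^{-1}$ force as $t \to 0^+$), one only gets $\Vert z^{1/n}z - z\Vert \leq t^{1+1/n}\Vert y^{1/n}y - y\Vert + t\,|t^{1/n}-1|\,\Vert y \Vert$, which blows up with $t$. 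So the bound on ${\rm Ball}(A) \cap \Rdb^+{\mathfrak F}_A$ is essentially the statement you are trying to prove, and the reduction is circular. The accretive case needs its own argument --- in \cite{BOZ} (to which the paper defers after reducing to a unital $B \supseteq A$) this is done with the resolvent bound $\Vert (s+x)^{-1}\Vert \leq 1/s$ of Lemma \ref{chaccr}(4) fed into the Balakrishnan integral for $x^{1/n}$, split at a fixed radius; nothing in your writeup substitutes for that step.
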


\begin{remark}   If $A$ is a Banach  algebra and $x \in
{\mathfrak F}_A$ or  or $x \in  {\rm Ball}(A) \cap {\mathfrak r}_A$ is  nonzero then  
$\limsup_n \, \Vert x^{\frac{1}{n}} \Vert \leq 1$ 
is the same as saying $\lim_n \, \Vert x^{\frac{1}{n}} \Vert = 1$.
For $$\Vert x \Vert  \leq \Vert x^{\frac{1}{n}} x - x \Vert + \Vert x^{\frac{1}{n}} x \Vert
\leq c_n + \Vert x^{\frac{1}{n}} \Vert \Vert x \Vert, \qquad n \in \Ndb,$$ where $(c_n) \in c_0$
as in  Lemma \ref{sin}.
This property holds if $A$ is an operator algebra by the last assertion of
Lemma \ref{Bal}.   
\end{remark}

\begin{corollary} \label{hasbi}  A Banach algebra $A$ with a left bai 
(resp.\ right bai, bai) in 
${\mathfrak r}_A$ has a left bai 
(resp.\ right bai, bai)  in ${\mathfrak F}_A$.     And a similar statement holds
with ${\mathfrak r}_A$ and ${\mathfrak F}_A$ replaced by ${\mathfrak r}^B_A$ and ${\mathfrak F}^B_A$
for any unital Banach algebra $B$ containing $A$ as a subalgebra.
\end{corollary}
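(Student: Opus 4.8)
The plan is to transport the given net into $\mathfrak{F}^{B}_{A}$ using the $\mathfrak{F}$-transform, with a rescaling built in. The obvious candidate $(\mathfrak{F}(e_\lambda))$ does land in $\mathfrak{F}^{B}_{A}$ by Lemma \ref{stam}, but it need not be an approximate identity --- already $\mathfrak{F}(1)=\frac12$ when $A$ is unital and $e_\lambda=1$ --- while multiplying it by a constant to repair the approximate-identity property would push it out of $\mathfrak{F}^{B}_{A}$. The remedy is to rescale the \emph{argument} of $\mathfrak{F}$ and then form a two-parameter net in which the scaling parameter runs to infinity together with the original index.

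So let $A$ be a subalgebra of a unital Banach algebra $B$ and let $(e_\lambda)_{\lambda\in\Lambda}$ be a left bai for $A$ with $e_\lambda\in\mathfrak{r}^{B}_{A}$ for all $\lambda$. For $(\lambda,n)\in\Lambda\times\Ndb$ put
$$ f_{\lambda,n} \;=\; \mathfrak{F}(n e_\lambda) \;=\; n e_\lambda(1_{B}+n e_\lambda)^{-1} \;=\; 1_{B}-(1_{B}+n e_\lambda)^{-1}. $$
Since $\mathfrak{r}^{B}_{A}$ is a cone, $n e_\lambda\in\mathfrak{r}^{B}_{A}$, so $f_{\lambda,n}\in\mathfrak{F}^{B}_{A}$ by Lemma \ref{stam}, and $f_{\lambda,n}\in{\rm ba}(e_\lambda)\subseteq A$ as recorded just before that lemma. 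Applying Lemma \ref{chaccr}\,(4) in $B$ to the accretive element $n e_\lambda$ gives $\Vert(1_{B}+n e_\lambda)^{-1}\Vert\le 1$, hence $\Vert f_{\lambda,n}\Vert\le 2$ uniformly.

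Next I estimate the action on $a\in A$. From $f_{\lambda,n}a-a=-(1_{B}+n e_\lambda)^{-1}a$ and the identity $a=\frac{1}{n+1}(1_{B}+n e_\lambda)a+\frac{n}{n+1}(a-e_\lambda a)$, applying $(1_{B}+n e_\lambda)^{-1}$ and using the resolvent bound yields
$$ \Vert f_{\lambda,n}a-a\Vert \;=\; \Vert(1_{B}+n e_\lambda)^{-1}a\Vert \;\le\; \frac{1}{n+1}\Vert a\Vert+\Vert a-e_\lambda a\Vert. $$
Order $\Lambda\times\Ndb$ componentwise; it is directed. Given $a\in A$ and $\epsilon>0$, pick $n_0$ with $\Vert a\Vert/(n_0+1)<\epsilon/2$ and then $\lambda_0$ with $\Vert a-e_\lambda a\Vert<\epsilon/2$ for $\lambda\ge\lambda_0$; then the displayed bound is $<\epsilon$ whenever $(\lambda,n)\ge(\lambda_0,n_0)$. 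Thus $(f_{\lambda,n})$ is a left bai for $A$ lying in $\mathfrak{F}^{B}_{A}$. The right-bai case is the mirror image, using $a f_{\lambda,n}-a=-a(1_{B}+n e_\lambda)^{-1}$ and the symmetric factorization; for a two-sided bai both estimates hold for the same net $(f_{\lambda,n})$. Finally, the assertion for $\mathfrak{F}_{A}$ and $\mathfrak{r}_{A}$ reduces to the above: since $A$ has a bai it is approximately unital, so by Lemma \ref{exu}\,(2) its multiplier unitization $B_0$ satisfies $\mathfrak{F}^{B_0}_{A}=\mathfrak{F}_{A}$ and $\mathfrak{r}^{B_0}_{A}=\mathfrak{r}_{A}$, and one applies the argument with $B=B_0$.

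The one genuine point of friction is the conflict between the two requirements on the new net: staying inside $\mathfrak{F}^{B}_{A}$ forces us to feed a \emph{large} multiple $n e_\lambda$ into the $\mathfrak{F}$-transform, but then $f_{\lambda,n}a$ only tends to $\frac{n}{n+1}a$ for fixed $n$, so the approximate-identity property is recovered solely in the joint limit over $\Lambda\times\Ndb$. Verifying that this joint limit really is a (uniformly bounded) net, and that the resolvent manipulation above produces the stated error bound uniformly in $\lambda$ and $n$, is the step that needs attention; the remaining checks --- membership in $\mathfrak{F}^{B}_{A}$ via Lemma \ref{stam}, membership in $A$ via the functional calculus (${\mathfrak F}(x)\in{\rm ba}(x)$), and the norm bound via Lemma \ref{chaccr}\,(4) --- are routine.
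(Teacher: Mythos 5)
Your argument is correct, and it takes a genuinely different route from the paper's. The paper sets $b_t={\mathfrak F}(e_t)$ and then takes $n$th roots: by Proposition \ref{clnth} the powers $b_t^{1/n}$ stay in ${\mathfrak F}_A$, and by the $c_0$-estimate of Lemma \ref{sin} the two-parameter net $(b_t^{1/n})$ is again a bai. That is, the paper repairs the deficiency of ${\mathfrak F}(e_t)$ (which tends to ``$\frac{1}{2}$'' rather than to an identity) \emph{after} the transform, using the machinery of fractional powers. You repair it \emph{before} the transform, feeding in $ne_\lambda$ and using only that ${\mathfrak r}^B_A$ is a cone, the resolvent bound $\Vert(1_B+ne_\lambda)^{-1}\Vert\le 1$ from Lemma \ref{chaccr} (4), and the identity $a=\frac{1}{n+1}(1_B+ne_\lambda)a+\frac{n}{n+1}(a-e_\lambda a)$; your estimates check out. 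Your version is more elementary and self-contained, avoiding roots entirely; the paper's meshes with its later systematic use of $(x^{\frac{1}{n}})$ as canonical bai's for ${\rm ba}(x)$ and with support idempotents. Both constructions yield a net indexed by $\Lambda\times\Ndb$, so both preserve sequentiality as in the remark following the corollary.

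One small repair is needed in your final reduction. You pass from ${\mathfrak F}^{B_0}_A$ to ${\mathfrak F}_A$ by asserting that a bai makes $A$ approximately unital, but the paper reserves ``approximately unital'' for algebras with a \emph{contractive} approximate identity, and a merely bounded (one-sided) approximate identity need not make the left regular representation isometric, which is what Lemma \ref{exu} (2) requires. This is harmless for your proof, because the construction is elementwise: if $e_\lambda\in{\mathfrak r}^{B_\lambda}_A$ for some unitization $B_\lambda$ possibly depending on $\lambda$, then ${\mathfrak F}(ne_\lambda)\in{\mathfrak F}^{B_\lambda}_A\subset{\mathfrak F}_A$, and all your estimates may be carried out inside $B_\lambda$, whose norm restricts to that of $A$. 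So no common unitization is needed for the unsuperscripted statement, and the appeal to Lemma \ref{exu} (2) can simply be dropped.
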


 \begin{proof}  If $(e_t)$ is a left bai  (resp.\ right bai, bai)  in
${\mathfrak r}_A$, let $b_t = {\mathfrak F}(e_t) 
\in {\mathfrak F}_A$.    By the proof in \cite[Corollary 3.9]{BOZ},
 $(b_t^{\frac{1}{n}})$ is a left bai (resp.\ right bai, bai) in
${\mathfrak F}_A$.  \end{proof}

\begin{remark}    If the bai in the last result is sequential, then so is
the one constructed in ${\mathfrak F}_A$.  \end{remark} 

We imagine that if a Banach algebra has a cai in ${\mathfrak r}_A$  then under mild conditions 
it has a cai in  ${\mathfrak F}_A$.    We give a couple of results along 
these lines, that are not in \cite{BOZ}.

\begin{corollary} \label{hasbi4}  Suppose that
 $A$ is a Banach  algebra with the property that there is a sequence 
$(d_n)$ of scalars with limit $1$ such that  
$\Vert x^{\frac{1}{n}} \Vert \leq d_n$ for all $n \in \Ndb$ and
 $x \in {\mathfrak F}_A$ 
(this is the case for operator algebras by Lemma {\rm \ref{Bal}}).  If $A$ has a
 left bai
(resp.\ right bai, bai) in
${\mathfrak r}_A$ then $A$ has a left cai
(resp.\ right cai, cai)  in ${\mathfrak F}_A$.  And a similar statement holds
with ${\mathfrak r}_A$ and ${\mathfrak F}_A$ replaced by ${\mathfrak r}^B_A$ and ${\mathfrak F}^B_A$
for any unital Banach algebra $B$ containing $A$ as a subalgebra.  
\end{corollary}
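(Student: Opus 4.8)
The plan is to reduce everything to Corollary \ref{hasbi} and then absorb scalars tending to $1$. Given a left bai $(e_t)$ for $A$ lying in ${\mathfrak r}_A$ (the right bai and two-sided bai cases being identical), put $b_t = {\mathfrak F}(e_t)$; by Lemma \ref{stam} each $b_t \in {\mathfrak F}_A$, and by Corollary \ref{hasbi} (via the argument of \cite[Corollary 3.9]{BOZ}) the net $(b_t^{1/n})$ is a left bai for $A$ contained in ${\mathfrak F}_A$. Along that net the root index $n$ necessarily runs off to $\infty$ (a fixed root $b_t^{1/n}$, $n$ constant, cannot be an approximate identity since $b_t$ is an ${\mathfrak F}$-transform), and this is the feature that will make the hypothesis on $(d_n)$ usable. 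I would also record the elementary fact that scaling by a factor in $[0,1]$ preserves membership in ${\mathfrak F}^B_A$: for $x\in{\mathfrak F}^B_A$ and $\lambda\in[0,1]$ one has $\Vert 1_B-\lambda x\Vert\le\lambda\Vert 1_B-x\Vert+(1-\lambda)\Vert 1_B\Vert\le 1$, so $\lambda x\in{\mathfrak F}^B_A$; in particular, since each $b_t^{1/n}$ lies in ${\mathfrak F}^B_A$ for some unital $B\supseteq A$, so does any $[0,1]$-multiple of it, and hence that multiple lies in ${\mathfrak F}_A$.

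Next I would do the rescaling. Set $d_n'=\max(1,d_n)$, so $d_n'\ge 1$, $d_n'\to 1$, and $\Vert b_t^{1/n}\Vert\le d_n\le d_n'$ by hypothesis. Define $g_{t,n}=(d_n')^{-1}b_t^{1/n}$. By the previous paragraph $g_{t,n}\in{\mathfrak F}_A$, and $\Vert g_{t,n}\Vert\le d_n/d_n'\le 1$, so $g_{t,n}\in{\rm Ball}(A)\cap{\mathfrak F}_A$. It remains to see that $(g_{t,n})$, indexed over the same directed set as the left bai $(b_t^{1/n})$, is still a left approximate identity; this is the one place needing a line of argument, and it is routine: for $a\in A$,
$$\Vert g_{t,n}a-a\Vert\ \le\ (d_n')^{-1}\,\Vert b_t^{1/n}a-a\Vert\ +\ \bigl(1-(d_n')^{-1}\bigr)\,\Vert a\Vert,$$
where the first term tends to $0$ because $(b_t^{1/n})$ is a left bai and the second tends to $0$ because $d_n'\to 1$ as $n\to\infty$ along the net. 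Hence $(g_{t,n})$ is a left cai for $A$ lying in ${\mathfrak F}_A$. The right bai and bai versions follow by applying the corresponding cases of Corollary \ref{hasbi} verbatim.

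For the final assertion, with ${\mathfrak r}_A,{\mathfrak F}_A$ replaced by ${\mathfrak r}^B_A,{\mathfrak F}^B_A$ for a fixed unital $B\supseteq A$, the proof is word for word the same: Corollary \ref{hasbi} supplies the left bai $(b_t^{1/n})$ inside ${\mathfrak F}^B_A$ from a left bai in ${\mathfrak r}^B_A$, the set ${\mathfrak F}^B_A=\{a\in A:\Vert 1_B-a\Vert\le 1\}$ is closed under multiplication by $[0,1]$ by the displayed inequality above, the norm hypothesis is now assumed directly for ${\mathfrak F}^B_A$, and the estimate in the second paragraph goes through unchanged. I do not anticipate a genuine obstacle; the only care needed is the bookkeeping that the root index $n$ really increases along the net furnished by Corollary \ref{hasbi} — it is exactly this that forces $\Vert b_t^{1/n}\Vert\le d_n\to 1$ and so makes the rescaling harmless — everything else being a one-line perturbation argument.
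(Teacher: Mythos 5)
Your proposal is correct and follows essentially the same route as the paper: both pass to the left bai $(b_t^{1/n})$ in ${\mathfrak F}_A$ supplied by Corollary \ref{hasbi}, use the hypothesis $\Vert b_t^{1/n}\Vert \le d_n \to 1$, and renormalize by a scalar in $(0,1]$ (which preserves membership in ${\mathfrak F}_A$) to obtain a cai. The only difference is cosmetic: the paper argues by cases, either extracting a contractive subnet or dividing by $\Vert f_s\Vert$ on the cofinal subset where that norm exceeds $1$, whereas you divide uniformly by $\max(1,d_n)$, which avoids the case analysis while resting on the same bookkeeping point that the root index $n$ increases cofinally along the net.
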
 

\begin{proof}    For the first case, let $(f_s)_{s \in \Lambda}  = (b_t^{\frac{1}{n}})$ be the left bai 
in ${\mathfrak F}_A$ from Corollary \ref{hasbi}.     Note that $\Vert f_s \Vert \leq d_n$ 
and so it is easy to see that $\Vert f_s \Vert  \to 1$ by the Remark after Lemma \ref{sin}.
If there is a contractive subnet of $(f_s)$ 
we are done, so assume that there is no contractive subnet.   So for every $s \in \Lambda$ there is an
 $s' \geq s$ with $\Vert f_{s'} \Vert > 1$.   Let $\Lambda_0 = \{ s \in \Lambda :
\Vert f_{s} \Vert > 1 \}$.    A straightforward argument shows that $\Lambda_0$ is directed, and that
 $(f_s)_{s \in \Lambda_0}$ is a subset of  $(f_s)_{t \in \Lambda}$ which is
a  left bai 
 in ${\mathfrak F}_A$.    Then
  $(\frac{1}{\Vert f_s \Vert} \,  f_s )_{s \in \Lambda_0}$ is in ${\mathfrak F}_A$ since $\Vert f_s \Vert> 1$.  
So  $(\frac{1}{\Vert f_s \Vert} \,  f_s )_{s \in \Lambda_0}$  is a left cai in ${\mathfrak F}_A$.
The other cases are similar.
    \end{proof}

The hypothesis in the next result that 
$A^{**}$ is unital is, by \cite[Theorem 1.6]{BLP}, equivalent to there being a unique mixed
identity (we thank Matthias Neufang for this reference).   

\begin{proposition}   \label{asuni}   Let $A$ be a Banach algebra such that $A^{**}$ is unital and $A$ has
a real positive cai, 
or more generally suppose that there exists a real positive cai for $A$ and
a  bai for $A$ in  ${\mathfrak F}_A$ with  the same weak* limit.   Then $A$ has a cai in  ${\mathfrak F}_A$.   
This latter cai may be chosen to be sequential if in addition $A$ has a sequential bai.
\end {proposition}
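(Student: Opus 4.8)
The plan is first to reduce the special case to the ``more general'' one. If $A^{**}$ is unital then by \cite[Theorem 1.6]{BLP} there is a unique mixed identity $e$, namely $1_{A^{**}}$. A real positive cai is in particular a bai in ${\mathfrak r}_A$, so Corollary \ref{hasbi} produces a bai for $A$ in ${\mathfrak F}_A$; being bounded it has weak* limit points, and these are all mixed identities, hence all equal to $e$, so it converges weak* to $e$, the same weak* limit as the original cai. Thus the first assertion follows from the second, and from now on I would assume the latter hypothesis: fix a real positive cai $(e_t)$ and a bai $(f_s)$ in ${\mathfrak F}_A$ sharing a weak* limit $e$. Then $e$ is a mixed identity for $A^{**}$ with $\|e\| \le 1$, and since $\|1_{A^1} - f_s\| \le 1$ and the norm is weak* lower semicontinuous, $\|1_{A^1} - e\| \le 1$. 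Also $A$ is approximately unital, so Lemma \ref{exu}(2) furnishes a smallest unitization $B_0$ (isometric to $A^1$) with ${\mathfrak F}_A = {\mathfrak F}^{B_0}_A$ convex and norm closed; hence $C := {\rm Ball}(A) \cap {\mathfrak F}_A$ is convex, norm closed and bounded, and $\frac{1}{2} {\mathfrak F}_A \subseteq C \subseteq {\mathfrak F}_A$.

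The core of the proof will be to show $e \in \overline{C}^{w*}$ (weak* closure in $A^{**}$). Granting this, the standard fact that a mixed identity of $A^{**}$ lying in the weak* closure of a bounded convex subset $C$ of $A$ is the weak* limit of a bai of $A$ contained in $C$ (see e.g.\ \cite{DW}, or argue via Mazur's theorem applied to the maps $x \mapsto (xa - a, ax - a)$ for finitely many $a \in A$) delivers a cai for $A$ inside $C \subseteq {\mathfrak F}_A$, finishing the proof; and the sequential refinement is obtained by running the same two steps with sequences when $A$ has a sequential bai, the relevant weak* compacta then being metrizable.

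To prove $e \in \overline{C}^{w*}$ I would combine the two nets. From the contractive side, the ${\mathfrak F}$-transforms $b_t = {\mathfrak F}(e_t)$ lie in ${\mathfrak F}_A$ by Lemma \ref{stam}, their roots $b_t^{1/n}$ stay in ${\mathfrak F}_A$ by Proposition \ref{clnth}, and by Lemma \ref{sin} together with the commutative functional calculus inside $\overline{{\rm ba}(e_t)}$ the roots inherit, for large $n$, the approximate-identity behaviour of $e_t$; this recovers (essentially Corollary \ref{hasbi}) a bai in ${\mathfrak F}_A$ with weak* limit $e$, but only of norm at most $2$. The decisive move is then to upgrade such approximants to contractive ones: here one exploits $\frac{1}{2}{\mathfrak F}_A \subseteq C$, the convexity of ${\mathfrak F}_A$, the fact that $e$ acts as an identity on $A$, and that $e$ is also approached by the \emph{contractive} net $(e_t)$, in order to absorb the stray factor of $2$ and land the approximants in $C$.

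I expect this last move --- reconciling membership in ${\mathfrak F}_A$ with contractivity --- to be the main obstacle. For an $M$-approximately unital algebra it is painless, since Theorems \ref{Mcai} and \ref{Kap} give a genuine Kaplansky-type weak* density of ${\mathfrak F}_A$ and $A$ already has a cai in $\frac{1}{2}{\mathfrak F}_A$; but for a general Banach algebra no such density is available, and the gap between ${\mathfrak F}_A$ (whose elements can have norm up to $2$) and its contractive part $\frac{1}{2}{\mathfrak F}_A$ means one cannot simply normalise an ${\mathfrak F}_A$-bai into a cai. It is exactly to bridge this gap that the hypothesis must supply \emph{both} a contractive real positive cai \emph{and} an ${\mathfrak F}_A$-bai with the same weak* limit.
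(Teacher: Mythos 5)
Your reduction of the unital-bidual case to the ``same weak* limit'' case is fine, and your framing of the problem is accurate, but the proposal stops exactly where the proof has to happen. You never actually establish that $e$ lies in the weak* closure of ${\rm Ball}(A)\cap{\mathfrak F}_A$ (or any equivalent statement); the ``decisive move'' of absorbing the stray factor is only described as something one should ``exploit,'' and you then explicitly concede it is the main obstacle. That concession is the gap: without it the argument produces only a bai in ${\mathfrak F}_A$ of norm up to $2$, which is where you started.

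You have also slightly misdiagnosed where the difficulty sits, which is why the fix eluded you. You write that ``one cannot simply normalise an ${\mathfrak F}_A$-bai into a cai,'' but in fact one can, \emph{provided} the norms tend to $1$: if $\Vert f_s\Vert\geq 1$ then $f_s/\Vert f_s\Vert=(1-\Vert f_s\Vert^{-1})\cdot 0+\Vert f_s\Vert^{-1}f_s$ stays in the convex set ${\mathfrak F}_A$ (which contains $0$), and the indices with $\Vert f_s\Vert\geq 1$ either form a directed subfamily or leave behind a contractive subnet. So the whole problem reduces to manufacturing a bai in ${\mathfrak F}_A$ with $\Vert f_s\Vert\to 1$. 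The paper does this by using \emph{both} nets at once: after re-indexing, $e_t-f_t\to 0$ weakly (same weak* limit), so Mazur's theorem applied to the tuples $(e_t-f_t,\,e_tx_k-x_k,\,x_ke_t-x_k,\,f_tx_k-x_k,\,x_kf_t-x_k)$ yields convex combinations $r=\sum\alpha_k e_{t_k}$ and $w=\sum\alpha_k f_{t_k}$ \emph{with the same coefficients}, each still acting approximately as an identity on a prescribed finite set, and with $\Vert r-w\Vert<2^{-n}$. Since $\Vert r\Vert\leq 1$ and $w\in{\mathfrak F}_A$ by convexity, this forces $\Vert w\Vert\to 1$, and the normalization trick finishes. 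This simultaneous convex-combination step is the idea your proposal is missing; the sequential refinement is then handled in the paper not by metrizability but by first invoking a Sinclair/Aarnes--Kadison type factorization $A=\overline{xAx}$ and testing the cai against the single element $x$.
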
     \begin{proof}  That the second hypothesis is more general follows by Corollary
\ref{hasbi}
since a subnet of the ensuing bai for $A$ in  ${\mathfrak F}_A$ has a weak* limit.  
 Note that
if $(f_s)_{s \in \Lambda}$ is a bai in  ${\mathfrak F}_A$
with $\Vert f_s \Vert \to 1$ then either there is a subnet
of $(f_s)$ consisting of contractions, in which case
this subnet is a cai in  ${\mathfrak F}_A$, or
$\Lambda_0 = \{ s \in \Lambda : \Vert f_s \Vert \geq 1 \}$ is
a directed set and $(\frac{1}{\Vert f_s \Vert} \, f_s)_{s
 \in \Lambda_0}$ is a cai in  ${\mathfrak F}_A$.

Next, suppose that  $(e_t)$ is
a cai in ${\mathfrak r}_A$, and $(f_s)$ is a bai in  ${\mathfrak F}_A$
and they have the same weak* limit $f$.  By a re-indexing argument,
we can assume that they are indexed by the same directed set.
Then $e_t - f_t \to 0$ weakly in $A$.  If $E = \{ x_1, \cdots , x_n \}$
is a finite subset of $A$ define $F_{s,E}$ to be the subset $$\{ (e_t - f_t,
e_t x_1 - x_1 , x_1 e_t - x_1 , f_t x_1 - x_1 , x_1  f_t - x_1 ,
e_t x_2 - x_2 , \cdots , x_1  f_t - x_1) : t \geq s  \},$$
 of $A^{(4m+1)}.$
Since $(A^{(4m+1)})^*$ is the $1$ direct sum of $4m+1$ copies of $A^*$,
it is easy to see that $0$ is in the weak closure of $F_{s,E}$
(since $e_t - f_t \to 0$ weakly and $e_t x_k \to x_k$, etc).  Thus by
Mazur's theorem $0$ is in the norm closure of the convex hull of $F_{s,E}$. For each $n 
\in \Ndb$ there are a finite subset
$t_1, \cdots , t_K$ (where $K$ may depend on $n,s,E$),
and positive scalars
$(\alpha_k^{n,s,E})_{k=1}^K$ with sum $1$, such that if
$r_{n,s,E} = \sum_{k=1}^K \, \alpha_k^{n,s,E} e_{t_k}$ and $w_{n,s,E} = \sum_{k
=1}^K \, \alpha_k^{n,s,E}
f_{t_k}$,  then $\Vert r_{n,s,E} x_k - x_k \Vert,
 \Vert x_k r_{n,s,E} - x_k \Vert, \Vert x_k w_{n,s,E} - x_k \Vert,
\Vert w_{n,s,E} x_k - x_k \Vert,$ and $\Vert  r_{n,s,E} - w_{n,s,E} \Vert$,
are each less than $2^{-n}$ for all $k = 1, \cdots ,m$.
Note that $(r_{n,s,E})$ is then a cai in ${\mathfrak r}_A$,
and $(w_{n,s,E})$ is a bai in ${\mathfrak F}_A$.  Since
$r_{n,s,E} - w_{n,s,E} \to 0$ with $n$, it follows that
$\Vert w_{n,s,E} \Vert \to 1$ with $(n,E)$.   So as in the 
last paragraph
one may obtain from $(w_{n,s,E})$ a cai in ${\mathfrak F}_A$.

If we have a sequential cai in ${\mathfrak r}_A$ then it follows
from e.g.\ Sinclair's Aarnes-Kadison type theorem (see the lines after
Theorem \ref{AaKaSin}; alternatively one may use our 
 Aarnes-Kadison type theorem \ref{ottertoo} below) that $A = \overline{xAx}$
for some $x \in A$.  Given a cai $(f_t)$ in ${\mathfrak F}_A$,
choose $t_1 < t_2 < \cdots$ with $\Vert f_{t_k} x - x \Vert +
\Vert x f_{t_k}  - x \Vert < 2^{-k}$.  Then it is clear that
$(f_{t_k})$ is a sequential cai in ${\mathfrak F}_A$.
\end{proof}

\begin{remark}    It follows that under the conditions of the last result, one may improve \cite[Corollary 6.10]{BOZ}
in the way described after that result (using the fact in the remark after  \cite[Corollary 2.10]{BOZ}).  \end{remark} 

\begin{corollary} \label{hasb}  If $A$ is a Banach algebra then ${\mathfrak r}_A$ is closed under $r$th powers  
for any $r \in [0,1]$.   So is ${\mathfrak r}_A^B$  for any unital Banach algebra $B$ isometrically containing $A$ as a subalgebra.
\end{corollary}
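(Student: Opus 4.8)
The plan is to deduce this from Esterle's Proposition \ref{clnth}, the density statement in Proposition \ref{whau}, and the continuity of the $r$th power function recalled before Lemma \ref{Bal}. I would fix a unital Banach algebra $B$ isometrically containing $A$ as a subalgebra and prove the assertion for ${\mathfrak r}_A^B$; the statement for ${\mathfrak r}_A = \cup_B \, {\mathfrak r}_A^B$ then follows at once, since $x^r \in {\rm ba}(x)$ and does not depend on the choice of $B$. One may assume $0 < r < 1$, the case $r = 1$ being trivial. Recall that ${\mathfrak F}_A^B \subset {\mathfrak r}_A^B$ (since $\Vert 1_B - a \Vert \leq 1$ forces the numerical range of $a$ in $B$ into the closed disk of radius $1$ about $1$, hence into the closed right half-plane), and that ${\mathfrak r}_A^B$ is closed and convex.

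First I would check that $\Rdb^+ {\mathfrak F}_A^B$ is closed under $r$th powers. If $z = t y$ with $t \geq 0$ and $y \in {\mathfrak F}_A^B$, then $y^r \in {\mathfrak F}_A^B$ by Esterle's Proposition \ref{clnth}, and $z^r = t^r y^r$ by the scaling identity $(c x)^t = c^t x^t$ for positive scalars (valid since $y \in {\mathfrak r}_A^B$); for $t = 0$ this reads $0^r = 0 \in {\mathfrak F}_A^B$. Hence $z^r \in \Rdb^+ {\mathfrak F}_A^B \subset {\mathfrak r}_A^B$.

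Next, given $x \in {\mathfrak r}_A^B$, Proposition \ref{whau} provides elements $z_n \in \Rdb^+ {\mathfrak F}_A^B$ with $z_n \to x$ in norm (for instance $z_n = \tfrac{1}{t_n}\, {\mathfrak F}(t_n x)$ for a sequence $t_n \searrow 0$, using Lemma \ref{stam} and the computation in the proof of Proposition \ref{whau}). By the previous paragraph $z_n^r \in {\mathfrak r}_A^B$. Since the $r$th power function is continuous on ${\mathfrak r}_A^B$ (by \cite[Corollary 1.3]{LRS}, as recalled above Lemma \ref{Bal}), we get $z_n^r \to x^r$; and since ${\mathfrak r}_A^B$ is closed, $x^r \in {\mathfrak r}_A^B$, as desired.

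The argument itself is a routine limiting argument, so I do not expect a genuine obstacle; the only point requiring care — and one already addressed in the discussion immediately preceding this corollary — is that the $r$th power appearing in Esterle's result (defined by the binomial series on ${\mathfrak F}_A$) coincides with the $r$th power used in the continuity statement and the scaling identity (defined via the sectorial, equivalently Riesz, functional calculus on ${\mathfrak r}_A$). Once that coincidence of definitions is invoked, the two displayed facts above combine immediately. (The nominal endpoint $r = 0$, where $x^0 = 1_B \notin A$ in the nonunital case, is to be understood exactly as in Proposition \ref{clnth}.)
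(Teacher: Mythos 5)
Your proposal is correct and follows essentially the same route as the paper: both approximate $x \in {\mathfrak r}^B_A$ by the elements $\frac{1}{t}\,{\mathfrak F}(tx) = \frac{1}{t}\, tx(1_B+tx)^{-1}$ of $\Rdb^+{\mathfrak F}^B_A$ (via Lemma \ref{stam} and Proposition \ref{whau}), apply Proposition \ref{clnth} together with the scaling identity to see these have $r$th powers in $\Rdb^+{\mathfrak F}^B_A$, and then pass to the limit using the continuity of the $r$th power function from \cite[Corollary 1.3]{LRS} and the identity $\overline{\Rdb^+{\mathfrak F}^B_A} = {\mathfrak r}^B_A$. Your extra care about the coincidence of the various definitions of $x^r$ and about the endpoints $r=0,1$ is consistent with the discussion the paper gives just before the corollary.
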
 \begin{proof}    We saw in the 
proof of Proposition \ref{whau} 
that if $x \in {\mathfrak r}^B_A$ then
 $x = \lim_{t \to 0^+} \, \frac{1}{t} \, tx(1 + tx)^{-1}$, and $tx(1 + tx)^{-1} \in 
 {\mathfrak F}^B_A$.  Thus by \cite[Corollary 1.3]{LRS} we have that 
$x^r = \lim_{t \to 0^+} \, \frac{1}{t^r} \, (tx(1 + tx)^{-1})^r$ for $0 < r < 1$.  By Proposition \ref{clnth} and its proof,
the latter powers are in  $\Rdb^+ {\mathfrak F}^B_A$, so that $x^r \in \overline{\Rdb^+ {\mathfrak F}^B_A} = {\mathfrak r}_A^B \subset {\mathfrak r}_A$.  \end{proof}

In an operator algebra, if  $x$ is sectorial of angle $\theta \leq \frac{\pi}{2}$ then $x^t$ has  sectorial angle
$\leq t \theta$.  Indeed this is what allows us to produce `nearly positive elements', as discussed in Section 
 \ref{mnepai}.
The following, which we have not seen in the literature, may the best one has in a Banach algebra, and this 
disappointment means that some of the theory from \cite{BRI, BRII, BRord} will not generalize to Banach algebras.

\begin{corollary}  \label{sectt}  If $x$ is sectorial of angle $\theta \leq \frac{\pi}{2}$ in a unital  Banach algebra 
 then $x^t$ has  sectorial angle
$\leq t \theta + (1-t) \frac{\pi}{2}$.   
 \end{corollary}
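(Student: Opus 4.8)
The plan is a rotation argument that reduces everything to Corollary \ref{hasb}, which already supplies the statement when $\theta = \frac{\pi}{2}$ (there the bound reads simply ``$x^t \in {\mathfrak r}_A$''). First I would dispose of the trivial endpoints: $x^0 = 1$ and $x^1 = x$, so the asserted bound $t\theta + (1-t)\frac{\pi}{2}$ holds for $t \in \{0,1\}$, and we may assume $0 < t < 1$; likewise we may assume $0 < \theta < \frac{\pi}{2}$, since the case $\theta = \frac{\pi}{2}$ is Corollary \ref{hasb} and the case $\theta = 0$ follows from $\theta \in (0,\frac{\pi}{2})$ by letting the angle shrink to $0$ (the sectors $S_\alpha$ being nested). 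Since $\theta < \frac{\pi}{2}$ we have $S_\theta \subseteq S_{\pi/2}$, so $x \in {\mathfrak r}_A$ and $x^t$ is defined by the functional calculus of Section \ref{poroo}. Now set $\phi = \frac{\pi}{2} - \theta \in (0,\frac{\pi}{2})$. The observation that drives everything is that $e^{\pm i\phi}x \in {\mathfrak r}_A$: indeed $W_A(e^{\pm i\phi}x) = e^{\pm i\phi}W_A(x) \subseteq e^{\pm i\phi}S_\theta$, and a one-line check shows $e^{\pm i\phi}S_\theta \subseteq S_{\pi/2}$ precisely because $\theta \le \frac{\pi}{2}$. Hence, by Corollary \ref{hasb} (which says ${\mathfrak r}_A$ is closed under $t$-th powers), $(e^{i\phi}x)^t$ and $(e^{-i\phi}x)^t$ both lie in ${\mathfrak r}_A$.

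The step that is not routine, and the one I expect to be the main obstacle, is the homogeneity identity $(e^{\pm i\phi}x)^t = e^{\pm i t\phi}\,x^t$ for the \emph{complex} unimodular scalar $e^{\pm i\phi}$; Section \ref{poroo} recorded $(cx)^t = c^t x^t$ only for \emph{positive} scalars $c$, so one must check that the principal branch is not disturbed by this rotation. I would deduce it either from the composition rule for the holomorphic functional calculus of sectorial operators \cite{Haase}, or, more concretely, as follows: for $\epsilon > 0$ one has $(e^{i\phi}x + \epsilon 1)^t = e^{i t\phi}(x + e^{-i\phi}\epsilon 1)^t$ in the ordinary Riesz functional calculus, because $0 < \phi < \frac{\pi}{2}$ forces ${\rm Sp}(x + e^{-i\phi}\epsilon 1)$ and its rotate $e^{i\phi}{\rm Sp}(x + e^{-i\phi}\epsilon 1)$ into the open right half plane, so the rotation by $\phi$ never meets the branch cut $(-\infty,0]$ of $w \mapsto w^t$; then letting $\epsilon \to 0^+$ and using $y^t = \lim_{\epsilon \to 0^+}(y + \epsilon 1)^t$ for $y \in {\mathfrak r}_A$, together with the norm continuity of the $t$-th power on ${\mathfrak r}_A$ quoted from \cite[Corollary 1.3]{LRS}, yields the identity.

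Granting this, $e^{\pm i t\phi}\,x^t \in {\mathfrak r}_A$, i.e.\ $e^{\pm i t\phi}\,W_A(x^t) \subseteq S_{\pi/2}$, equivalently $W_A(x^t) \subseteq e^{-i t\phi}S_{\pi/2} \cap e^{i t\phi}S_{\pi/2}$. The intersection of these two rotated half planes is exactly the sector of half-angle $\frac{\pi}{2} - t\phi$, and $\frac{\pi}{2} - t\phi = \frac{\pi}{2} - t(\frac{\pi}{2} - \theta) = t\theta + (1-t)\frac{\pi}{2}$. Therefore $W_A(x^t) \subseteq S_{t\theta + (1-t)\pi/2}$, which is the assertion; and when $\theta = \frac{\pi}{2}$ the whole argument degenerates ($\phi = 0$) to exactly Corollary \ref{hasb}.
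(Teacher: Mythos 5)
Your argument is correct and is essentially the paper's own proof: the same rotation by $e^{\pm i(\frac{\pi}{2}-\theta)}$, the same appeal to closure of the accretive elements under $t$-th powers (Corollary \ref{hasb}), the same homogeneity identity $(wx)^t = w^t x^t$ (which the paper obtains from \cite[Lemma 3.1.4]{Haase}, exactly the composition rule you cite as one option), and the same intersection of two rotated half-planes. Your extra $\epsilon$-regularization verification of the homogeneity for unimodular scalars is a welcome detail but does not change the route.
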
    \begin{proof}   This is Corollary \ref{hasb} if $\theta = \frac{\pi}{2}$.
Suppose that  $W_B(x) \subset S_\theta$.
Then $e^{\pm i (\frac{\pi}{2} - \theta)} x$ is accretive.  
  Hence  $(w x)^t$  is accretive where $w  = e^{\pm i (\frac{\pi}{2} - \theta)}$.   By \cite[Lemma 3.1.4]{Haase} with $f(z) = wz$  we have 
$(w x)^t = w^t \, x^t$.  So  $w^t x^t$ is accretive.
Reversing the argument above we see that $$W_B(x) \subset 
e^{i (\frac{\pi}{2} - \theta)} S_{\frac{\pi}{2}} \cap 
e^{- i (\frac{\pi}{2} - \theta)} S_{\frac{\pi}{2}} = 
S_{t \theta + (1-t) \frac{\pi}{2}}$$ as desired.  
\end{proof}

We learned the Hilbert space operator version of the last proof from Charles Batty.

\begin{proposition} \label{whba}   If $A$ is a  Banach algebra and $x \in {\mathfrak r}_A$
then ${\rm ba}(x) = {\rm ba}({\mathfrak F}(x))$, and so
$\overline{xA} =\overline{{\mathfrak F}(x) A}$.  \end{proposition}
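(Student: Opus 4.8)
The plan is to establish the two inclusions ${\rm ba}(\mathfrak{F}(x)) \subseteq {\rm ba}(x)$ and ${\rm ba}(x) \subseteq {\rm ba}(\mathfrak{F}(x))$; the equality ${\rm ba}(x) = {\rm ba}(\mathfrak{F}(x))$ is then immediate, and $\overline{xA} = \overline{\mathfrak{F}(x)A}$ follows from it since $\overline{zA} = \overline{{\rm ba}(z)A}$ for every $z \in A$ (because ${\rm ba}(z)$ is the closed linear span of $z, z^2, z^3, \dots$, so ${\rm ba}(z)A \subseteq \overline{zA}$, while trivially $zA \subseteq {\rm ba}(z)A$). I fix a unital Banach algebra $B$ containing $A$ isometrically with $x \in {\mathfrak r}^B_A$ (when $A$ is unital take $B = A$; when $A$ is nonunital take any unitization, so that $A \cap \Cdb 1_B = \{0\}$). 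Recall $\mathfrak{F}(x) = x(1_B + x)^{-1} = 1_B - (1_B + x)^{-1}$, with inverse transform $\kappa(y) = y(1_B - y)^{-1}$ satisfying $\kappa(\mathfrak{F}(x)) = x$. Writing $\overline{\Cdb[z]}$ for the closed unital subalgebra of $B$ generated by an element $z$, note that $\overline{\Cdb[z]} = {\rm ba}(z) + \Cdb 1_B$ (a closed set), and that $\overline{\Cdb[1_B + x]} = \overline{\Cdb[x]}$ and $\overline{\Cdb[1_B - \mathfrak{F}(x)]} = \overline{\Cdb[\mathfrak{F}(x)]}$ since each generator is a polynomial in the other.

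The engine is the classical description of the spectrum of a single element $z$ in the closed unital subalgebra it generates: ${\rm Sp}_{\overline{\Cdb[z]}}(z)$ is the polynomially convex hull of ${\rm Sp}_B(z)$, equivalently $\Cdb \setminus {\rm Sp}_{\overline{\Cdb[z]}}(z)$ is the unbounded connected component of $\Cdb \setminus {\rm Sp}_B(z)$. For the first inclusion I would take $z = 1_B + x$. Since $x$ is accretive, ${\rm Sp}_B(x)$ lies in the closure of the numerical range $W_B(x)$ (see \cite{BoNR1}), so ${\rm Sp}_B(1_B + x) \subseteq \{\lambda \in \Cdb : {\rm Re}\,\lambda \geq 1\}$; the complement of this half-plane is connected, unbounded, and contains $0$, so $0$ lies in the unbounded component of $\Cdb \setminus {\rm Sp}_B(1_B + x)$. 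Hence $1_B + x$ is invertible in $\overline{\Cdb[x]}$, so $(1_B + x)^{-1} \in \overline{\Cdb[x]}$ and therefore $\mathfrak{F}(x) = x(1_B + x)^{-1} \in \overline{\Cdb[x]} = {\rm ba}(x) + \Cdb 1_B$ (this re-derives the fact noted earlier that $\mathfrak{F}(x) \in {\rm ba}(x)$). To descend to ${\rm ba}(x)$ itself: if $1_B \in {\rm ba}(x)$ there is nothing to do, and otherwise ${\rm ba}(x)$ is a codimension-one (two-sided) ideal of ${\rm ba}(x) + \Cdb 1_B$, so the character $\chi_0$ killing ${\rm ba}(x)$ gives $\chi_0(\mathfrak{F}(x)) = \chi_0(x)(1 + \chi_0(x))^{-1} = 0$ since $\chi_0(x) = 0$; thus $\mathfrak{F}(x) \in {\rm ba}(x)$ (in the nonunital case one may instead note that $\mathfrak{F}(x)$ minus its ${\rm ba}(x)$-component lies in $A \cap \Cdb 1_B = \{0\}$). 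Hence ${\rm ba}(\mathfrak{F}(x)) \subseteq {\rm ba}(x)$.

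For the reverse inclusion I would run the symmetric argument with $y = \mathfrak{F}(x)$ and $\kappa$ in place of $\mathfrak{F}$; the one extra ingredient is to locate ${\rm Sp}_B(y)$. From ${\rm Sp}_B(1_B + x) \subseteq \{{\rm Re}\,\lambda \geq 1\}$, applying $\lambda \mapsto \lambda^{-1}$ gives ${\rm Sp}_B((1_B + x)^{-1}) \subseteq \Delta \setminus \{0\}$, where $\Delta = \{\lambda \in \Cdb : |\lambda - \tfrac{1}{2}| \leq \tfrac{1}{2}\}$; since $1 - \Delta = \Delta$, one gets ${\rm Sp}_B(y) = 1 - {\rm Sp}_B((1_B + x)^{-1}) \subseteq \Delta$ with $1 \notin {\rm Sp}_B(y)$. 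The closed ray $[1, \infty)$ then avoids ${\rm Sp}_B(y)$ (for $t > 0$ the point $1 + t$ lies outside $\Delta$, and $1 \notin {\rm Sp}_B(y)$) and is connected and unbounded, so $1$ lies in the unbounded component of $\Cdb \setminus {\rm Sp}_B(y)$. Therefore $1_B - y$ is invertible in $\overline{\Cdb[y]} = {\rm ba}(y) + \Cdb 1_B$, whence $x = \kappa(y) = y(1_B - y)^{-1} \in {\rm ba}(y) + \Cdb 1_B$; the same $\chi_0$-argument (now $\chi_0(x) = \chi_0(y)(1 - \chi_0(y))^{-1} = 0$) places $x \in {\rm ba}(y) = {\rm ba}(\mathfrak{F}(x))$. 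Combining the two inclusions finishes the proof.

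The step I expect to be the main obstacle is exactly the inversion of $1_B + x$ inside the singly generated algebra $\overline{\Cdb[x]}$ (and of $1_B - \mathfrak{F}(x)$ inside $\overline{\Cdb[\mathfrak{F}(x)]}$): in the operator algebra setting one has $\Vert \mathfrak{F}(x) \Vert \leq \Vert x \Vert$ and can often proceed by a norm-convergent Neumann series, but for general Banach algebras $\Vert \mathfrak{F}(x) \Vert$, and even the spectral radius of $\mathfrak{F}(x)$, can be as large as $1$ (as on the group algebra of $\Zdb_2$), so no such series is available; everything rests on the polynomial-convexity description of the spectrum together with the accretivity of $x$, which is precisely what confines ${\rm Sp}_B(1_B + x)$ to a half-plane and ${\rm Sp}_B(\mathfrak{F}(x))$ to the disc $\Delta$. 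The secondary points — independence of $\mathfrak{F}(x)$ and $\kappa$ from the ambient $B$ (all unitization norms being equivalent), and the passage from ${\rm ba}(z) + \Cdb 1_B$ down to ${\rm ba}(z)$ via $\chi_0$ — are routine.
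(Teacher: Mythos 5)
Your proof is correct, and it is genuinely more self-contained than the paper's. The paper's own argument for Proposition \ref{whba} consists of two observations --- that ${\mathfrak F}(x) \in {\rm ba}(x)$ and that ${\mathfrak F}(x)$ does not depend on the choice of unitization --- and then delegates all the real work to the unital case proved as Proposition 3.11 of \cite{BOZ}. You instead fix one unital $B$ with $x \in {\mathfrak r}^B_A$ and prove everything from scratch, and you correctly isolate the only nontrivial point: inverting $1_B + x$ inside $\overline{\Cdb[x]}$ and, more delicately, $1_B - {\mathfrak F}(x)$ inside $\overline{\Cdb[{\mathfrak F}(x)]}$, where no Neumann series is available because the spectral radius of ${\mathfrak F}(x)$ can equal $1$. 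Your resolution via the classical ``filling in holes'' description of the spectrum relative to a singly generated closed unital subalgebra, combined with ${\rm Sp}_B(x) \subseteq \overline{W_B(x)}$ to confine ${\rm Sp}_B(1_B+x)$ to the half-plane $\{{\rm Re}\,\lambda \geq 1\}$ and ${\rm Sp}_B({\mathfrak F}(x))$ to the disk $\Delta \setminus \{1\}$, is sound; the rays/components you exhibit do lie in the unbounded components of the respective resolvent sets, and the descent from ${\rm ba}(z) + \Cdb 1_B$ to ${\rm ba}(z)$ via the character $\chi_0$ (or, even more directly, via ${\mathfrak F}(x) = x(1_B+x)^{-1} \in x\,({\rm ba}(x) + \Cdb 1_B) \subseteq {\rm ba}(x)$) is routine, as is the final passage from ${\rm ba}(x) = {\rm ba}({\mathfrak F}(x))$ to $\overline{xA} = \overline{{\mathfrak F}(x)A}$. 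What your route buys is independence from the external reference and a transparent explanation of why accretivity is exactly what is needed; what the paper's route buys is brevity and consistency with the general scheme of Section \ref{poroo}, namely reducing each statement to its unital version in \cite{BOZ} via the $B$-independence of the ${\mathfrak F}$-transform.
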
  \begin{proof}   We said earlier that ${\mathfrak F}(x)$ is in ${\rm ba}(x)$ and  is independent of  the particular unital 
Banach algebra containing $A$.   Thus this result follows from the unital case considered in \cite[Proposition 3.11]{BOZ}.
 \end{proof}

\begin{lemma} \label{lump}  If $p$ is an idempotent in a Banach algebra 
$A$ then $p  \in {\mathfrak F}_A$
iff $p  \in {\mathfrak r}_A$.    \end{lemma}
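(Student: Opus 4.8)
The statement is an "iff", and one direction is free: I would first note that $\mathfrak{F}_A \subseteq \mathfrak{r}_A$ for \emph{every} element of $A$ (no idempotency needed). This is immediate from Proposition \ref{whau}, since $\mathfrak{F}_A \subseteq \Rdb^+\mathfrak{F}_A \subseteq \overline{\Rdb^+\mathfrak{F}_A} = \mathfrak{r}_A$; alternatively, if $a \in \mathfrak{F}^B_A$ and $\varphi$ is a state of $B$, then $|1-\varphi(a)| = |\varphi(1_B - a)| \le \Vert 1_B - a\Vert \le 1$, so $\varphi(a)$ lies in the closed disk of radius $1$ about $1$, hence $\operatorname{Re}\varphi(a)\ge 0$; thus $a \in \mathfrak{r}^B_A$. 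So in particular $p \in \mathfrak{F}_A \Rightarrow p \in \mathfrak{r}_A$.

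The content is the reverse implication. Suppose $p = p^2 \in \mathfrak{r}_A$, so $p \in \mathfrak{r}^B_A$ for some unital Banach algebra $B$ containing $A$ isometrically. The key observation is that the $\mathfrak{F}$-transform of an idempotent is explicit: from $p^2 = p$ one checks $(1_B + p)(1_B - \tfrac12 p) = 1_B$, so $(1_B+p)^{-1} = 1_B - \tfrac12 p$ and hence
$$\mathfrak{F}(p) \;=\; 1_B - (1_B + p)^{-1} \;=\; \tfrac12 p.$$
By Lemma \ref{stam}, $\mathfrak{F}(\mathfrak{r}^B_A) \subseteq \mathfrak{F}^B_A$, so $\tfrac12 p \in \mathfrak{F}^B_A$, i.e.\ $\Vert 1_B - \tfrac12 p\Vert_B \le 1$. (One could instead invoke Lemma \ref{chaccr}(4) applied in $B$ with $t=1$, which gives $\Vert (1_B+p)^{-1}\Vert \le 1$ directly.)

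Finally I would bootstrap from $\tfrac12 p$ up to $p$ by taking powers inside the single algebra $B$. Again using $p^2 = p$, an easy induction gives $(1_B - \tfrac12 p)^n = 1_B - (1 - 2^{-n})p$, so
$$\Vert 1_B - (1-2^{-n})p\Vert_B \;=\; \Vert (1_B - \tfrac12 p)^n\Vert_B \;\le\; \Vert 1_B - \tfrac12 p\Vert_B^{\,n} \;\le\; 1$$
for every $n$, i.e.\ $(1-2^{-n})p \in \mathfrak{F}^B_A$. Since $\mathfrak{F}^B_A$ is norm-closed and $(1-2^{-n})p \to p$, we get $p \in \mathfrak{F}^B_A \subseteq \mathfrak{F}_A$, as desired. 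I do not expect a serious obstacle here: the only thing to "see" is the identity $\mathfrak{F}(p)=\tfrac12 p$ and the power trick replacing $\tfrac12 p$ by $p$; note in particular that the whole argument stays within one $B$, so no comparison of different unitization norms is required.
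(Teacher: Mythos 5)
Your proof is correct. Note that the paper itself gives no self-contained argument here: it simply reduces to the unital case and cites \cite[Lemma 3.12]{BOZ}. The argument in that reference runs through condition (3) of Lemma \ref{chaccr}: for an accretive idempotent $p$ one has $\Vert \exp(-tp)\Vert \le 1$ for all $t>0$, and since $\exp(-tp) = 1 - (1-e^{-t})p$, letting $t\to\infty$ gives $\Vert 1-p\Vert \le 1$ directly. Your route is a genuinely different (though equally elementary) path through condition (4): you compute $\mathfrak{F}(p)=\tfrac12 p$ explicitly, invoke Lemma \ref{stam} (i.e.\ Stampfli--Williams) to get $\tfrac12 p\in\mathfrak{F}^B_A$, and then use the identity $(1_B-\tfrac12 p)^n = 1_B-(1-2^{-n})p$ together with submultiplicativity and norm-closedness of $\mathfrak{F}^B_A$ to pass to $p$ itself. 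Both arguments are two or three lines once the right identity is written down; the semigroup computation is slightly shorter since it needs no limiting/power step, while yours has the merit of making explicit that the entire argument takes place inside a single unital superalgebra $B$, which matters given that $\mathfrak{F}_A$ and $\mathfrak{r}_A$ are defined in Section \ref{poroo} as unions over all such $B$. All the individual steps you use (the inverse $(1_B+p)^{-1}=1_B-\tfrac12 p$, the induction, the easy inclusion $\mathfrak{F}_A\subseteq\mathfrak{r}_A$) check out.
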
  \begin{proof}   This is clear from the unital case considered in \cite[Lemma 3.12]{BOZ}.         
\end{proof}

\begin{proposition} \label{wh} If $A$ is a Banach algebra  and $x \in {\mathfrak r}_A$,
then ba$(x)$ has a bai in ${\mathfrak F}_A$.  Hence any 
 weak* limit point of this bai is a mixed identity 
residing in ${\mathfrak F}_{A^{**}}$.   Indeed 
$(x^{\frac{1}{n}})$ is a bai for ${\rm ba}(x)$ in ${\mathfrak r}_A$,
and $({\mathfrak F}(x)^{\frac{1}{n}})$ is a bai for ${\rm ba}(x)$ in ${\mathfrak F}_A$.
  \end{proposition}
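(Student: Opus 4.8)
The plan is to reduce everything to the behaviour of principal roots, which the preceding results have assembled, and to prove the ${\mathfrak r}_A$-statement first, obtaining the ${\mathfrak F}_A$-statement from it by the ${\mathfrak F}$-transform. Fix once and for all a unital Banach algebra $B\supseteq A$ with $x\in{\mathfrak r}^B_A$. Since ${\rm ba}(x)$, ${\mathfrak F}(x)$ and the powers $x^r$ were seen above to be independent of this choice (the unitization norms on $A$ being equivalent, and the root being a limit of polynomials or of resolvents lying in $A$), there is no harm in computing inside $B$; indeed one could simply quote the unital instance of each assertion from \cite{BOZ}, but the argument below is short enough to give directly.

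First I would prove: $(x^{\frac1n})_{n\ge1}$ is a bounded two-sided approximate identity for ${\rm ba}(x)$ lying in ${\mathfrak r}_A$. That each $x^{\frac1n}\in{\mathfrak r}_A$ is Corollary \ref{hasb}. That $x^{\frac1n}\in{\rm ba}(x)$ I would read off from the proof of that corollary: there $x^{r}=\lim_{t\to0^+}t^{-r}\,{\mathfrak F}(tx)^{r}$ for $0<r<1$, while ${\mathfrak F}(tx)\in{\mathfrak F}_A$, so ${\mathfrak F}(tx)^{r}$ is a norm-limit of polynomials in ${\mathfrak F}(tx)$ with zero constant term and hence lies in ${\rm ba}({\mathfrak F}(tx))={\rm ba}(tx)={\rm ba}(x)$ by Proposition \ref{whba}; letting $t\to0^+$ puts $x^{r}$ in ${\rm ba}(x)$ (the case $r=1$ being trivial). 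Boundedness is immediate from Lemma \ref{Bal}, since $\frac{2\sin(\pi/n)}{\pi(1/n)(1-1/n)}\|x\|^{1/n}\to2$. For the approximate-identity property, note ${\rm ba}(x)=\overline{{\rm span}}\{x^k:k\ge1\}$, and that $x^{\frac1n}x^{k}=x^{k}x^{\frac1n}=x^{k+\frac1n}$ by the identity $x^sx^t=x^{s+t}$; since $t\mapsto x^{t}$ is continuous on $(0,\infty)$, $x^{k+\frac1n}\to x^{k}$ as $n\to\infty$, and combining this with the uniform bound on $\|x^{\frac1n}\|$ and density gives $x^{\frac1n}y\to y$ and $yx^{\frac1n}\to y$ for all $y\in{\rm ba}(x)$.

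Next I would deduce the ${\mathfrak F}_A$-statement and the final clause. Put $b={\mathfrak F}(x)$; then $b\in{\mathfrak F}_A$ by Lemma \ref{stam}, hence $b\in{\mathfrak r}_A$ by Proposition \ref{whau}, and ${\rm ba}(b)={\rm ba}(x)$ by Proposition \ref{whba}. Applying the previous paragraph to $b$ shows $(b^{\frac1n})=({\mathfrak F}(x)^{\frac1n})$ is a bai for ${\rm ba}(x)$, while $b^{\frac1n}\in{\mathfrak F}_A$ by Proposition \ref{clnth}; this is the first assertion of the proposition. For the last sentence: a weak* limit point $e\in A^{**}$ of the bounded sequence $({\mathfrak F}(x)^{\frac1n})$ lies in the weak* closure of ${\rm ba}(x)$, i.e.\ in ${\rm ba}(x)^{\perp\perp}\cong{\rm ba}(x)^{**}$, and by the description of mixed identities as the weak* limit points of bounded approximate identities it is a mixed identity there (right identity for the first Arens product, left for the second); since the inequality $\|1_B-z\|\le1$ persists under weak* limits, $e\in{\mathfrak F}_{A^{**}}$.

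The substance is all in the first step, and the point that I expect to need the most care is the membership $x^{\frac1n}\in{\rm ba}(x)$: the abstract closure of ${\mathfrak r}_A$ under roots (Corollary \ref{hasb}) does not by itself place those roots in the singly generated subalgebra, so one genuinely has to invoke the limit-of-polynomials (or limit-of-resolvents) description of $x^{r}$ and the identity ${\rm ba}({\mathfrak F}(y))={\rm ba}(y)$. Everything else---the norm estimate from Lemma \ref{Bal}, the semigroup and continuity properties of fractional powers, and the elementary weak* compactness argument for the mixed identity---is routine once that is in place.
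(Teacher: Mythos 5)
Your argument is correct and is in substance the same as the paper's, which simply defers to the proof of Proposition 3.17 of \cite{BOZ} applied inside the unital algebra $B$: the content of that proof is exactly your chain (membership of $x^{1/n}$ in ${\rm ba}(x)$ via the polynomial/resolvent description of roots and ${\rm ba}({\mathfrak F}(x))={\rm ba}(x)$, boundedness from Lemma \ref{Bal}, the approximate identity property from $x^sx^t=x^{s+t}$ and continuity of $t\mapsto x^t$, and weak* lower semicontinuity of the norm for the mixed identity). You have merely written out the details the paper cites, and you correctly flag the one genuinely delicate point, namely that $x^{1/n}\in{\rm ba}(x)$ does not follow from Corollary \ref{hasb} alone.
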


\begin{proof}     If $x \in {\mathfrak r}_A^B$ then the proof of \cite[Proposition 3.17]{BOZ} shows that
 ba$(x)$ has a bai in ${\mathfrak F}_A^B$, and hence any 
 weak* limit point of this bai is a mixed identity 
residing in ${\mathfrak F}_{A^{**}}^{B^{**}} \subset {\mathfrak F}_{A^{**}}$.   Indeed 
$(x^{\frac{1}{n}})$ is a bai for ${\rm ba}(x)$ in ${\mathfrak r}_A^B$,
\end{proof}

The following new observation 
is a simple consequence  of the above which we will need later.  

\begin{corollary} \label{eab}  If $A$ is a nonunital Banach algebra and if $E$ and $F$ are subsets of ${\mathfrak r}_A$ then 
$\overline{EA} = \overline{EB}$,  $\overline{AF} = \overline{BF}$, and $\overline{EAF} = \overline{EBF}$,  
 where $B$ is any unitization of $A$.     \end{corollary}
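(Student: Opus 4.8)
The plan is to reduce the three identities to the single-element facts already in this section, namely Proposition \ref{wh} and Corollary \ref{hasb}. First I would dispose of the trivial half: since $A\subseteq B$ we have $EA\subseteq EB$, $AF\subseteq BF$ and $EAF\subseteq EBF$, hence $\overline{EA}\subseteq\overline{EB}$, $\overline{AF}\subseteq\overline{BF}$, $\overline{EAF}\subseteq\overline{EBF}$. All the content lies in the reverse inclusions. The one structural remark I would record at the outset is that $A$ is a two-sided ideal in $B=A+\Cdb 1_B$, since $a(a'+\lambda 1_B)=aa'+\lambda a\in A$ and $(a'+\lambda 1_B)a=a'a+\lambda a\in A$ for $a,a'\in A$ and $\lambda\in\Cdb$.

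Next I would prove $\overline{EB}\subseteq\overline{EA}$. Fix $e\in E\subseteq{\mathfrak r}_A$. By Proposition \ref{wh} (cf.\ Corollary \ref{hasb}), $(e^{\frac{1}{n}})$ is a bai for ${\rm ba}(e)$ lying in ${\mathfrak r}_A\subseteq A$, and since $e\in{\rm ba}(e)$ we get $e\,e^{\frac{1}{n}}\to e$. Then for any $b\in B$ the ideal property gives $e^{\frac{1}{n}}b\in A$, so $e\,e^{\frac{1}{n}}b=e(e^{\frac{1}{n}}b)\in EA$, while $e\,e^{\frac{1}{n}}b=(e\,e^{\frac{1}{n}})b\to eb$; hence $eb\in\overline{EA}$, and so $EB\subseteq\overline{EA}$, giving the first identity. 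The identity $\overline{AF}=\overline{BF}$ is the mirror image: for $f\in F\subseteq{\mathfrak r}_A$ the bai $(f^{\frac{1}{n}})$ of ${\rm ba}(f)$ lies in $A$ and satisfies $f^{\frac{1}{n}}f\to f$, and $bf^{\frac{1}{n}}\in A$, so $bf^{\frac{1}{n}}f\to bf$ exhibits $bf\in\overline{AF}$.

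For the last identity I would push the right-hand bai through: given $e\in E$, $b\in B$, $f\in F$, put $u_m=f^{\frac{1}{m}}\in A$ so that $u_m f\to f$; then $bu_m\in A$ by the ideal property, $e(bu_m)f\in EAF$, and $e(bu_m)f=eb(u_mf)\to ebf$, whence $ebf\in\overline{EAF}$ and $\overline{EBF}\subseteq\overline{EAF}$. (Equivalently one could feed in the left-hand bai $(e^{\frac{1}{n}})$, or use both in a diagonal argument, but one side suffices.) I do not expect a genuine obstacle: the only delicate points are that $r$-th powers of real positive elements stay in $A$ and that $(x^{\frac{1}{n}})$ really is an approximate identity for the closed subalgebra it generates, and both are supplied verbatim by Corollary \ref{hasb} and Proposition \ref{wh}, so once those are invoked the argument is bookkeeping with the ideal property of $A$ in $B$.
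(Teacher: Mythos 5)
Your argument is correct, and it follows the same basic reduction as the paper: treat one element of $E$ (or $F$) at a time and exploit the bounded approximate identity $(x^{\frac{1}{n}})$ of ${\rm ba}(x)$ supplied by Proposition \ref{wh}. The only real difference is in how the key inclusion is closed out. The paper invokes Cohen factorization: since ${\rm ba}(x)$ has a bai, ${\rm ba}(x)={\rm ba}(x)^2\subset\overline{xA}$, so $x\in\overline{xA}$, whence $xB=xA+\Cdb x\subset\overline{xA}$ and $\overline{xA}=\overline{{\rm ba}(x)A}=\overline{xB}$. You instead approximate directly: $e\,e^{\frac{1}{n}}b\to eb$ with $e^{\frac{1}{n}}b\in A$ because $A$ is an ideal in $B$, so $eb\in\overline{eA}$ (taking $b=1_B$ this recovers the paper's fact $e\in\overline{eA}$ as a special case). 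Your route is slightly more elementary, using only the defining convergence $x^{\frac{1}{n}}x\to x$ of the bai rather than the factorization theorem it implies; the paper's route is shorter on the page and makes the intermediate identity $\overline{xA}=\overline{{\rm ba}(x)\,A}$ explicit. Both are complete, and your handling of the two-sided case $\overline{EAF}=\overline{EBF}$ by feeding the bai through on one side is fine.
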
 

\begin{proof}   The first follows from the following 
fact: if $x \in {\mathfrak r}_A$  then 
$$x \in \overline{xA} = \overline{{\rm ba}(x) \, A} =  \overline{xB},$$ since by Cohen factorization 
$x \in {\rm ba}(x) = {\rm ba}(x)^2 \subset \overline{xA}$.    The other two are similar.  \end{proof}

We now turn to the support projection of an element, encountered in the Aarnes--Kadison theorem 
\ref{AaKa}.     In an operator algebra or Arens regular Banach algebra things are cleaner (see \cite{BRI,BRII,BBS}).  
For a  Banach algebra $A$ and
$x \in {\mathfrak r}_A$, we    
write $s(x)$ for the weak* Banach limit of $(x^{\frac{1}{n}})$
in $A^{**}$.    That is $s(x)(f) = {\rm LIM}_n \, f(x^{\frac{1}{n}})$ for 
$f \in A^*$, where LIM is a Banach limit.    It is easy to see that $x s(x) = s(x) x = x$, by applying these to
$f \in A^*$.  Hence $s(x)$  is a mixed  identity of ${\rm ba}(x)^{**}$, and 
is idempotent.
By the Hahn--Banach theorem it is easy to see that 
$s(x) \in \overline{{\rm conv}( \{ x^{\frac{1}{n}} : n \in \Ndb \})}^{w*}$.
In $x \in {\mathfrak r}_A^B $ then by an argument after \cite[Proposition 3.17]{BOZ}  we have $s(x) \in 
{\mathfrak F}_{B^{**}} \cap A^{**} = {\mathfrak F}_{A^{**}}^{B^{**}} \subset 
 {\mathfrak F}_{A^{**}}$.
If ${\rm ba}(x)$ is Arens regular then $s(x)$ will be the identity of ${\rm ba}(x)^{**}$.

We call $s(x)$ above a {\em support} idempotent of $x$,
or a (left) support idempotent of $\overline{xA}$
(or a (right) support idempotent of $\overline{Ax}$).   
The reason for this name is the following result.
   
\begin{corollary}  \label{lba}  If  $A$ is a  
Banach algebra, and  $x \in {\mathfrak r}_A$
then $\overline{xA}$ has a left bai in ${\mathfrak F}_A$ and 
$x \in \overline{xA} = s(x) A^{**} \cap A$ and $(xA)^{\perp \perp} = s(x) A^{**}$. 
(These products are with respect to the second Arens product.)
  \end{corollary}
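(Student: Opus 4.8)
The plan is to obtain the third assertion, $(xA)^{\perp\perp} = s(x) A^{**}$, from the construction of $s(x)$ as a weak* limit of the bai $(x^{\frac1n})$ for ${\rm ba}(x)$, and then to read the remaining conclusions off from it. Throughout, $A^{**}$ carries the second Arens product $\diamond$, which is weak* continuous in its second variable, and $s(x)$ is (as recalled just before the statement) a mixed identity of ${\rm ba}(x)^{**}$, an idempotent lying in ${\mathfrak F}_{A^{**}}$.

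First I would establish the inclusion $(xA)^{\perp\perp} \subseteq s(x) A^{**}$. Since the Arens products on the weak* closed subalgebra ${\rm ba}(x)^{**} \subseteq A^{**}$ are the restrictions of those on $A^{**}$, and $s(x)$ is a left identity for $\diamond$ on ${\rm ba}(x)^{**}$, we get $s(x) \diamond x = x$, whence $s(x) \diamond (xa) = (s(x) \diamond x)\diamond a = xa$ for all $a \in A$ by associativity. Thus $xA \subseteq \{ \eta \in A^{**} : s(x)\diamond \eta = \eta\}$; because $s(x)$ is idempotent this set equals $s(x) A^{**}$, and being the kernel of the bounded weak* continuous map $\eta \mapsto \eta - s(x)\diamond\eta$ it is weak* closed. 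Hence $(xA)^{\perp\perp} = \overline{xA}^{\,w*} \subseteq s(x) A^{**}$.

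For the reverse inclusion I would first show ${\rm ba}(x) \subseteq \overline{xA}$: the algebra ${\rm ba}(x)$ is commutative with bai $(x^{\frac1n})$ by Proposition \ref{wh}, so $x \in \overline{x\,{\rm ba}(x)}$, and $\overline{x\,{\rm ba}(x)}$ is a closed ideal of ${\rm ba}(x)$ containing $x$, hence all of ${\rm ba}(x)$; since ${\rm ba}(x) \subseteq A$ this gives ${\rm ba}(x) = \overline{x\,{\rm ba}(x)} \subseteq \overline{xA}$. Consequently $s(x) \in \overline{{\rm conv}\{x^{\frac1n}\}}^{\,w*} \subseteq \overline{{\rm ba}(x)}^{\,w*} \subseteq (xA)^{\perp\perp}$. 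Now $\overline{xA}$ is a closed right ideal of $A$, so $(xA)^{\perp\perp}$ is a right ideal of $(A^{**},\diamond)$ — which follows from the elementary observation that $a\cdot f \in (xA)^{\perp}$ whenever $f \in (xA)^{\perp}$ and $a \in A$, together with the weak* density of $A$ in $A^{**}$ and the weak* continuity of $\diamond$ in the second variable. Therefore $s(x) \diamond A^{**} \subseteq (xA)^{\perp\perp}$, and combined with the previous step, $(xA)^{\perp\perp} = s(x) A^{**}$.

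Finally, intersecting with $A$ and using the standard identity $Y^{\perp\perp} \cap A = \overline{Y}$ for a linear subspace $Y \subseteq A$ yields $\overline{xA} = s(x) A^{**} \cap A$, and $x \in {\rm ba}(x) \subseteq \overline{xA}$. For the left bai: by Proposition \ref{wh} the sequence $({\mathfrak F}(x)^{\frac1n})$ is a bounded approximate identity for ${\rm ba}(x)$ contained in ${\mathfrak F}_A$; since ${\rm ba}(x) \subseteq \overline{xA}$ and $xA \subseteq {\rm ba}(x)A$ we have $\overline{xA} = \overline{{\rm ba}(x) A}$, and a bounded left approximate identity for ${\rm ba}(x)$ is automatically one for $\overline{{\rm ba}(x)A}$. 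I expect the only delicate point to be the bookkeeping around the possibly non-unique mixed identity $s(x)$ — specifically, the legitimacy of treating ${\rm ba}(x)^{**}$ with its Arens structure as a subalgebra of $(A^{**}, \diamond)$, so that "$s(x)$ is a left identity for $\diamond$ on ${\rm ba}(x)^{**}$" may be used inside $A^{**}$; everything else is routine.
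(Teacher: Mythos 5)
Your argument is correct, and it is essentially the proof the paper intends: the paper itself only cites \cite[Corollary 3.18]{BOZ}, whose argument runs exactly along your lines (the root net $(x^{\frac{1}{n}})$, resp.\ $({\mathfrak F}(x)^{\frac{1}{n}})$, as a left bai for $\overline{xA}=\overline{{\rm ba}(x)A}$, the identification of $s(x)A^{**}$ with the fixed-point set of the weak* continuous idempotent $\eta \mapsto s(x)\diamond \eta$, and the fact that $(xA)^{\perp\perp}$ is a weak* closed right ideal containing $s(x)$). The one point you flag as delicate is harmless here, since the relation $s(x)\diamond x = x$ in $A^{**}$ is available directly from the definition of $s(x)$ as a weak* Banach limit (as noted in the paragraph preceding the corollary), without invoking the compatibility of the Arens structure on ${\rm ba}(x)^{**}$ with that of $A^{**}$.
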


\begin{proof}    The proof of \cite[Corollary 3.18]{BOZ} works, and gives that  $\overline{xA}$ has a left bai in ${\mathfrak F}^B_A$ if $x \in {\mathfrak r}_A^B$.
  \end{proof}

As in \cite[Lemma 2.10]{BRI} and \cite[Corollary 3.19]{BOZ} we have:

\begin{corollary}  \label{supp3}  If $A$ is a Banach
 algebra, and $x, y \in {\mathfrak r}_A$, 
then $\overline{xA} \subset \overline{yA}$
iff $s(y) s(x) = s(x)$.  In this case $\overline{xA} = A$
iff   $s(x)$ is a left identity for $A^{**}$.   (These products are with respect to the second Arens product.)
\end{corollary}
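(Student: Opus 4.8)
The plan is to push everything through Corollary \ref{lba}, which identifies $\overline{xA} = s(x)A^{**} \cap A$ and $(xA)^{\perp\perp} = s(x)A^{**}$ (all products in $A^{**}$ being the second Arens product $\diamond$, which is weak* continuous in its second variable), together with the three facts recalled just before the statement: $s(x)$ is idempotent, $s(x)\diamond x = x$, and $s(x) \in \overline{{\rm conv}\{x^{\frac{1}{n}} : n \in \Ndb\}}^{w*}$. The one recurring observation is that an idempotent $p \in A^{**}$ acts as a left identity on $pA^{**}$: if $m = p \diamond b$ then $p \diamond m = p \diamond p \diamond b = p \diamond b = m$.

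First I would settle the forward implication. Assuming $\overline{xA} \subset \overline{yA}$, note that each $x^{\frac{1}{n}}$ lies in ${\rm ba}(x) \subset \overline{xA}$ (the second inclusion because $x \in \overline{xA}$ by the Cohen-factorization argument in the proof of Corollary \ref{eab}, while $x^k \in xA$ for $k \geq 2$). Hence $x^{\frac{1}{n}} \in \overline{yA} \subset s(y)A^{**}$, so by the observation above $s(y) \diamond x^{\frac{1}{n}} = x^{\frac{1}{n}}$, and by linearity $s(y)\diamond c = c$ for every $c \in {\rm conv}\{x^{\frac{1}{n}}\}$. Choosing a net of such $c$'s converging weak* to $s(x)$ and using weak* continuity of $m \mapsto s(y)\diamond m$ then gives $s(y)\diamond s(x) = s(x)$. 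For the converse, if $s(y)\diamond s(x) = s(x)$ then associativity yields $s(y)\diamond x = s(y)\diamond(s(x)\diamond x) = (s(y)\diamond s(x))\diamond x = s(x)\diamond x = x$, so $x \in s(y)A^{**}$; since also $x \in A$, Corollary \ref{lba} gives $x \in s(y)A^{**}\cap A = \overline{yA}$, and then $\overline{xA} \subset \overline{yA}$ since $\overline{yA}$ is a closed right ideal.

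For the last assertion, if $\overline{xA} = A$ then $(xA)^{\perp\perp} = A^{**}$, i.e.\ $s(x)A^{**} = A^{**}$, so by the recurring observation $s(x)$ is a left identity for $A^{**}$; conversely, if $s(x)$ is a left identity for $A^{**}$ then for $a \in A$ we get $a = s(x)\diamond a \in s(x)A^{**}\cap A = \overline{xA}$, so $\overline{xA} = A$. I do not anticipate a genuine obstacle; the only points demanding care are the bookkeeping with the Arens product (using one-sided weak* continuity only in the second slot, plus associativity and idempotency of the supports), and the non-unital subtlety that the inclusion $x^{\frac{1}{n}} \in \overline{xA}$ really uses $x \in {\mathfrak r}_A$. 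Modulo that, this is \cite[Lemma 2.10]{BRI} and \cite[Corollary 3.19]{BOZ} transcribed to the present level of generality.
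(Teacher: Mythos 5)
Your argument is correct and is essentially the proof the paper has in mind: the paper simply defers to \cite[Lemma 2.10]{BRI} and \cite[Corollary 3.19]{BOZ}, whose proofs run exactly along your lines, namely Corollary \ref{lba} plus the facts that $s(x)$ is an idempotent in $\overline{{\rm conv}\{x^{\frac{1}{n}}\}}^{w*}$ with $s(x)\diamond x=x$, combined with associativity and the weak* continuity of the second Arens product in its second variable. The points you flag as needing care (the one-sided weak* continuity, and $x^{\frac{1}{n}}\in{\rm ba}(x)\subset\overline{xA}$ via Proposition \ref{wh} and the Cohen factorization step) are exactly the right ones and are handled correctly.
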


As in \cite[Corollary 2.7]{BRI} we have:

\begin{corollary}  \label{perm}  Suppose that $A$ is a 
subalgebra of a Banach
 algebra $B$.  If  $x \in A \cap {\mathfrak r}_B$, then
the support projection of $x$ computed in $A^{**}$ is
the same, via the canonical embedding $A^{**} \cong A^{\perp \perp}
\subset B^{**}$, as the support projection of $x$ computed in $B^{**}$.
\end{corollary}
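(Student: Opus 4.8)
The plan is to reduce the whole statement to the defining formula $s(x)(f) = {\rm LIM}_n \, f(x^{\frac{1}{n}})$ together with a one-line duality computation, after pinning down two preliminary points. First I would note that for $x \in A \cap {\mathfrak r}_B$ and each $n \in \Ndb$ the principal root $x^{\frac{1}{n}}$ is literally the same element of $A \subseteq B$ whether it is formed ``inside $A$'' or ``inside $B$'': by the discussion of roots in Section \ref{poroo} (see Proposition \ref{clnth} and Corollary \ref{hasb}) the root is a norm limit of polynomials in $x$ with zero constant term, hence lies in ${\rm ba}(x)$, and ${\rm ba}(x)$ together with this limit does not depend on the ambient unital Banach algebra. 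Thus the sequence $(x^{\frac{1}{n}})$ occurring in the definition of $s(x)$ in $A^{**}$ and the one occurring in its definition in $B^{**}$ are one and the same bounded sequence in $A$, and (tacitly) $x$ is assumed accretive enough that both support idempotents are defined. Secondly, I would fix once and for all a single Banach limit ${\rm LIM}$ and use it in both definitions; this is the natural reading of the statement, and if different Banach limits were allowed for $A$ and for $B$ the comparison would not even be well posed, since $(x^{\frac{1}{n}})$ need not converge weak$^*$ in a general Banach algebra.

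Next I would set up the embedding explicitly. Let $R : B^* \to A^*$ be the restriction map $g \mapsto g|_A$. By Hahn--Banach $R$ is a metric surjection, so its adjoint $R^* : A^{**} \to B^{**}$ is an isometry onto $(\ker R)^\perp = (A^\perp)^\perp = A^{\perp\perp}$; this $R^*$ is precisely the canonical embedding $A^{**} \cong A^{\perp\perp} \subseteq B^{**}$ appearing in the statement. Write $s_A(x) \in A^{**}$ and $s_B(x) \in B^{**}$ for the two support idempotents.

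Then the computation is immediate: for every $g \in B^*$,
$$ (R^* s_A(x))(g) \; = \; s_A(x)(g|_A) \; = \; {\rm LIM}_n \, g|_A(x^{\frac{1}{n}}) \; = \; {\rm LIM}_n \, g(x^{\frac{1}{n}}) \; = \; s_B(x)(g), $$
where the third equality is the definition of $s_A(x)$ applied to the functional $g|_A \in A^*$, and the fourth holds because $x^{\frac{1}{n}} \in A$, so that $g|_A(x^{\frac{1}{n}}) = g(x^{\frac{1}{n}})$. Hence $R^* s_A(x) = s_B(x)$, which is exactly the asserted agreement of the two support projections under the canonical identification.

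The only real content is the first paragraph: being certain that the $n$-th roots are genuinely the same elements of ${\rm ba}(x) \subseteq A \subseteq B$ and that a common Banach limit is in force. Once that is granted the statement is a formal consequence of the fact that $s(x)$ is defined by evaluating a fixed functional-limit against $A^*$, and that functionals on $B$ restrict to functionals on $A$. (Alternatively one could phrase it as: $s(x)$ lies in $\overline{{\rm conv}\{x^{\frac{1}{n}} : n \in \Ndb\}}^{w*}$ in each bidual and is a mixed identity of ${\rm ba}(x)^{**}$, and ${\rm ba}(x)^{**}$ embeds canonically and compatibly into both $A^{**}$ and $B^{**}$; but the direct duality computation above is cleaner.)
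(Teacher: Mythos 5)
Your proof is correct, and it supplies exactly the argument the paper omits (the paper simply defers to \cite[Corollary 2.7]{BRI} without giving details). The two points you isolate are the right ones: the roots $x^{\frac{1}{n}}$ live in ${\rm ba}(x)$ and are independent of the ambient algebra by the discussion in Section \ref{poroo}, and the canonical embedding is the adjoint of the restriction map $B^* \to A^*$, so the identity $(R^* s_A(x))(g) = {\rm LIM}_n\, g(x^{\frac{1}{n}}) = s_B(x)(g)$ finishes it. Your observation that a common Banach limit must be fixed for the statement to be well posed is a genuine subtlety in the general Banach algebra setting (where $(x^{\frac{1}{n}})$ need not converge weak*), and it is worth making explicit as you do.
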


In Section 2 we mentioned the paper of Kadison and Pedersen \cite{KP} initiating the development of a  comparison theory for elements in $C^*$-algebras  generalizing the von Neumann  equivalence  of projections.   Again positivity and properties of the positive cone are key to that work.    Admittedly 
their algebras were monotone complete, but many later authors have taken up this theme, with various versions
of equivalence or subequivalence of  elements in general
$C^*$-algebras  (see for example \cite{Bla}
or \cite{ORT,APT} and references therein).
Indeed recently the study of Cuntz equivalence and subequivalence within the context of the Elliott program 
has become one of the most important areas of $C^*$-algebra
theory.        In \cite{BNII} Neal and the author began a program of generalizing
basic parts of the theory of
comparison, equivalence, and subequivalence, to the setting of
general operator algebras.  In that paper we focused on comparison of elements in $\Rdb^+ {\mathfrak F}_A$, but  we proved some lemmas in \cite{BRord} that show that everything should work 
for elements in ${\mathfrak r}_A$.   In particular, we follow the lead of Lin, Ortega, R{\o}rdam, and Thiel \cite{ORT} in studying these equivalences, etc.,   in terms of the roots and 
support projections $s(x)$ discussed in this section above, or in terms of
module isomorphisms of  (topologically) principal modules of the form $\overline{xA}$ studied below.    There is
a lot more work needed to be done here, our paper was simply the first steps.
Also, we have not tried to see if any of this
generalizes to larger classes of Banach algebras.   Much of our theory in  \cite{BNII} depends on facts for $n$th roots of real positive elements.  Thus we would expect that a certain
portion of this theory generalizes to Banach algebras using the facts about roots summarized in Section \ref{poroo}.

\section{Structure of ideals and HSA's}    \label{AKba}

We recall that  an element $x$ in an algebra $A$ is {\em pseudo-invertible} in $A$ if 
there exists $y \in A$  with $xy x = x$.   The following result (which is the non-approximately unital case
of \cite[Theorem 3.21]{BOZ}) 
 should be compared with the $C^*$-algebraic version of the result due to  Harte and Mbekhta \cite{HM,HM2},
and to the earlier version of the result in the operator algebra case (see particularly \cite[Section 3]{BRI}, and 
\cite[Subsection 2.4]{BRord} and \cite{BRIII}).
 
\begin{theorem}   \label{ws}  Let $A$ be
 a Banach algebra, 
and
 $x \in {\mathfrak r}_A$.  The following
are equivalent:
  \begin{itemize}
\item [(i)] $s(x) \in A$,
\item [(ii)]  $xA$ is closed,
\item [(iii)] $Ax$  is closed,
\item [(iv)]  $x$ is pseudo-invertible in $A$,
 \item [(v)]  $x$ is invertible in ${\rm ba}(x)$.
\end{itemize}
Moreover, these conditions imply
 \begin{itemize}
\item [(vi)] $0$ is isolated in, or absent from, ${\rm Sp}_A(x)$.
\end{itemize}
Finally, if  ${\rm ba}(x)$ is semisimple
then {\rm (i)--(vi)} are equivalent.
\end{theorem}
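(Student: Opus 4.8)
The plan is to prove the cycle of implications following the Harte--Mbekhta template, but carried out inside the commutative closed subalgebra $D = {\rm ba}(x)$, which by Proposition \ref{wh} comes equipped with the bounded approximate identity $(x^{1/n})$ and with $s(x)$ as a mixed identity of $D^{**}$. Throughout I would exploit the ${\mathfrak F}$-transform: since ${\mathfrak F}(x) = x(1+x)^{-1}$ and $x = {\mathfrak F}(x)(1+x)$ one has the exact identity $xA = {\mathfrak F}(x)A$ (so (ii) is unchanged if $x$ is replaced by ${\mathfrak F}(x)$), while ${\rm ba}(x) = {\rm ba}({\mathfrak F}(x))$ by Proposition \ref{whba} and $\overline{xA} = s(x) A^{**} \cap A$ by Corollary \ref{lba}. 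The easy half of the cycle is then: $(v) \Rightarrow (iv)$ by taking $y = x^{-1}$ computed in $D$, and this simultaneously gives $(v) \Rightarrow (i)$, because a unital $D$ has the unique mixed identity of $D^{**}$ as its identity, so $s(x) = x x^{-1} \in D \subseteq A$; and $(iv) \Rightarrow (ii)$ and $(iii)$, because if $x y x = x$ then $p = xy$ and $q = yx$ are idempotents in $A$ with $px = x = xq$, whence $xA = pA = \{ a \in A : pa = a \}$ and $Ax = Aq = \{ a \in A : aq = a \}$ are closed (kernels of $a \mapsto a - pa$, resp.\ $a \mapsto a - aq$).

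The first real step is $(i) \Rightarrow (v)$. Assume $s(x) \in A$. Since $s(x)$ lies in the weak* closure of the convex set ${\rm conv}\{ x^{1/n} \} \subseteq A$, Mazur's theorem places $s(x)$ in its norm closure, hence in $D = {\rm ba}(x)$; and from $s(x) x = x s(x) = x$ together with density one sees that $s(x)$ is an identity for $D$, so $D$ is unital with $1_D = s(x)$. Passing to $b = {\mathfrak F}(x) \in {\mathfrak F}_A \cap D$ (for which $D = {\rm ba}(b)$, and $x = b(1_D - b)^{-1}$ with $1_D - b = (1_D + x)^{-1}$ invertible in $D$ since $x$ is accretive, so $x$ is invertible in $D$ iff $b$ is), each $b^{1/n}$ is a norm limit of polynomials in $b$ with zero constant term, hence lies in $\overline{bD}$; therefore $1_D = s(b) \in \overline{{\rm conv}}\{ b^{1/n} \} \subseteq \overline{bD}$, so the closed ideal $\overline{bD}$ of $D$ contains $1_D$ and equals $D$. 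Thus there is $d \in D$ with $\| 1_D - bd \| < 1$, so $bd$ is invertible in $D$, and by commutativity of $D$ so is $b$; hence $x$ is invertible in $D = {\rm ba}(x)$, which is $(v)$.

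The crux is closing the loop, i.e.\ $(ii) \Rightarrow (i)$ (and symmetrically $(iii) \Rightarrow (i)$). Here the argument is that of \cite[Theorem 3.21]{BOZ}, of which the present result is the non-approximately-unital version, now run inside $D = {\rm ba}(x)$, which supplies the needed bounded approximate identity. Concretely: with $b = {\mathfrak F}(x)$ and $bA = xA$ closed, one has $\overline{bD} = D \subseteq bA = s(b) A^{**} \cap A$ and $bA$ carries the left bounded approximate identity $(b^{1/n})$; the open mapping theorem applied to $L_b : A \to bA$ yields $a_n \in A$ with $b a_n = b^{1/n}$ and $\| a_n \| \leq M \| b^{1/n} \|$, so $(a_n)$ is bounded, and one checks (using continuity of $t \mapsto b^t$ and the relevant weak* continuities of the Arens products) that $b^{1+1/m} a_n = b^{1/m+1/n}$, whence $b^{1/m} \to b \alpha = s(b)$ weak* for any weak*-cluster point $\alpha$ of $(a_n)$, and $b \alpha b = b$. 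I expect the main obstacle to be exactly the final upgrade: passing from $s(b) = b\alpha \in b A^{**}$ to $s(b) \in A$ (equivalently, from $bD$ being merely dense in $D$ to it being all of $D$), which is precisely where the hypothesis that $xA$ is closed — as opposed to $xD$ merely dense in $D$ — genuinely enters, and which rests on the careful analysis in \cite{BOZ} of closed right ideals possessing a bounded left approximate identity; I would either import that analysis or redo it in the non-approximately-unital setting, together with the transfer facts $xA = {\mathfrak F}(x)A$ and $s(x) \in A \iff s(x) \in {\rm ba}(x)$.

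Finally, for $(vi)$: from $(v)$, $x$ is invertible in the unital commutative algebra $D = {\rm ba}(x)$, so $0 \notin {\rm Sp}_D(x)$; since $D^1 \subseteq A^1$ with the same unit, ${\rm Sp}_A(x) \subseteq {\rm Sp}_{D^1}(x) = {\rm Sp}_D(x) \cup \{ 0 \}$, and ${\rm Sp}_D(x)$ is compact and avoids $0$, so $0$ is isolated in, or absent from, ${\rm Sp}_A(x)$. For the converse when ${\rm ba}(x)$ is semisimple, I would form the Riesz spectral idempotent $e_0$ of $x$ at $0$ inside ${\rm ba}(x)^1$; expanding the defining contour integral shows $1 - e_0 \in {\rm ba}(x)$ and that $x e_0$ has spectrum $\{ 0 \}$, so by commutativity and Gelfand theory $x e_0 \cdot {\rm ba}(x)^1$ consists of quasinilpotents, forcing $x e_0 \in {\rm rad}({\rm ba}(x)) = 0$; then $x = x(1 - e_0)$ is invertible in $(1 - e_0){\rm ba}(x)^1(1 - e_0) = {\rm ba}(x)$, giving $(v)$ with $s(x) = 1 - e_0$. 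This last step is exactly where semisimplicity is indispensable: in a general Banach algebra the quasinilpotent part at an isolated point of the spectrum need not vanish, so $(vi)$ alone does not return $(v)$.
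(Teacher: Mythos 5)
Your proposal is correct and follows essentially the same route as the paper: the paper's own proof consists of the single remark that the five equivalences and the assertions about (vi) are ``just as in [BOZ, Theorem 3.21]'' (reducing to the unital case via the form of (v) and the definition of the spectrum), and your argument is a faithful expansion of exactly that citation, built on the same supporting machinery from Section \ref{poroo} (the ${\mathfrak F}$-transform and $xA={\mathfrak F}(x)A$, Proposition \ref{whba}, the bai $(x^{1/n})$ of Proposition \ref{wh}, and Corollary \ref{lba}). Your honest deferral of the crux $(ii)\Rightarrow(i)$ to the analysis in \cite{BOZ} is precisely the deferral the paper itself makes for the whole theorem, so there is no gap relative to the paper's own treatment.
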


 \begin{proof}  The first five equivalences are just as in \cite[Theorem 3.21]{BOZ}; as is 
the assertions regarding  (vi), since there we may assume $A$ is unital by definition of spectrum and because of the form of  (v).
\end{proof}

The next results  (which are the non-approximately unital cases 
of results in \cite[Section 3]{BOZ})   follow from Theorem   \ref{ws}  just as the approximately unital cases did in 
 \cite{BOZ}, which in turn often rely on earlier arguments from e.g.\ \cite{BRI}:

\begin{corollary} \label{oka}
If $A$ is a closed 
subalgebra of a unital Banach algebra $B$, 
and if  $x \in{\mathfrak r}_A^B$,  then $x$ is invertible in $B$ 
iff  $1_B \in A$ and $x$
is invertible in $A$, and iff  ${\rm ba}(x)$ contains $1_B$; and in this case $s(x) = 1_B$.
\end{corollary}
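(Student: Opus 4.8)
The plan is to leverage Theorem \ref{ws}, which gives the equivalence of pseudo-invertibility of $x$ in $A$ with $x$ being invertible in ${\rm ba}(x)$ and with $s(x) \in A$. First I would prove the easy direction: if $1_B \in A$ and $x$ is invertible in $A$, then certainly $x$ is invertible in $B$, and since $x^{-1}$ commutes with $x$ it lies in ${\rm ba}(x)$ (one can see this from $x^{-1} = \lim$ of polynomials in $x$ via a Neumann-series argument once $x$ is invertible and generates a unital closed subalgebra, or more directly: the closed unital subalgebra generated by $x$ in $B$ is ${\rm ba}(x) + \Cdb 1_B = {\rm ba}(x)$ here, and inverses are computed inside any closed unital subalgebra containing the element when they exist there). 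That handles the reverse implications, and in that case $s(x)$, being the identity of ${\rm ba}(x)^{**}$, must equal $1_B$ since ${\rm ba}(x)$ is already unital with identity $1_B$ (note $x$ invertible in ${\rm ba}(x)$ forces $1_B \in {\rm ba}(x)$, as $x \cdot x^{-1} = 1_B$).

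For the forward direction, suppose $x$ is invertible in $B$. Then $x$ is certainly pseudo-invertible in $B$ (take $y = x^{-1}$), but I need pseudo-invertibility or the equivalent conditions \emph{in $A$}. Here the key point is Corollary \ref{perm}: the support projection of $x$ computed in $A^{**}$ coincides with that computed in $B^{**}$. In $B$ we have $x$ invertible, so by Theorem \ref{ws} applied to the unital algebra $B$ (with $x \in {\mathfrak r}_B$), condition (v) holds and hence (i): $s(x) \in B$, and in fact since $x$ is invertible in $B$, ${\rm ba}(x)$ is unital with identity $1_B$, so $s(x) = 1_B$ (the identity of ${\rm ba}(x)^{**}$ is $1_B$ when ${\rm ba}(x)$ is already unital). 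Via Corollary \ref{perm}, $s(x)$ as an element of $A^{**}$ equals $1_B$ under the embedding $A^{**} \cong A^{\perp\perp} \subset B^{**}$. Thus $s(x) \in A^{\perp\perp}$, but we want $s(x) \in A$. This is where Theorem \ref{ws} for $A$ re-enters: we must verify one of its conditions (i)--(v) holds \emph{internally in $A$}.

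The step I expect to be the main obstacle is precisely concluding $s(x) \in A$ (equivalently $1_B \in A$). The idea: since $s(x) = 1_B$ and $1_B \in {\rm ba}(x) \subseteq B$, and $x$ generates ${\rm ba}(x)$, I want to argue that $1_B$ is actually a norm limit of polynomials in $x$ with no constant term — but that is false in general ($1_B$ need not be approximable by such polynomials). Instead the right approach is: $x$ invertible in $B$ means $0 \notin {\rm Sp}_B(x)$, and since ${\rm Sp}_A(x) \cup \{0\}$ relates to ${\rm Sp}_B(x)$ (for a non-unital $A$, ${\rm Sp}_A(x)$ is computed in a unitization), and $x$ is invertible in ${\rm ba}(x) \subseteq B$ forcing $1_B = x \cdot x^{-1} \in {\rm ba}(x) \subseteq A$ directly — wait, ${\rm ba}(x) \subseteq A$ since $A$ is closed and $x \in A$! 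So once we know $x^{-1} \in {\rm ba}(x)$, we get $x^{-1} \in A$ and $1_B = x x^{-1} \in A$, whence $x$ is invertible in the unital algebra $A$, $s(x) = 1_B$, and all conditions match. So the real crux reduces to: \emph{from $x$ invertible in $B$, deduce $x^{-1} \in {\rm ba}(x)$} — i.e. that $x$ invertible in the big algebra $B$ already implies invertibility in the small closed subalgebra ${\rm ba}(x)$. This follows from Theorem \ref{ws}(iv)$\Leftrightarrow$(v) applied in $B$ together with Corollary \ref{perm} to transport $s(x)$ into $A$, or more elementarily from the fact that $0$ is isolated in or absent from the spectrum (condition (vi)) combined with the holomorphic functional calculus in ${\rm ba}(x)$, which lets one build the inverse inside ${\rm ba}(x)$. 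I would write it up via the functional-calculus route: $x$ invertible in $B$ gives $0 \notin {\rm Sp}_B(x) \supseteq {\rm Sp}_{{\rm ba}(x)}(x)$ up to the point $0$, so $1/\lambda$ is holomorphic on a neighbourhood of ${\rm Sp}_{{\rm ba}(x)}(x)$, yielding $x^{-1} \in {\rm ba}(x) \subseteq A$, and the rest is immediate.
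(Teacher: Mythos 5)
Your proposal circles the correct idea, but the route you say you would actually write up does not work as stated, and a couple of the auxiliary claims are false for general Banach algebras. The assertion that ``inverses are computed inside any closed unital subalgebra containing the element when they exist there'' is wrong: for a closed unital subalgebra $C \subseteq B$ one has ${\rm Sp}_B(x) \subseteq {\rm Sp}_C(x)$ --- the spectrum can only grow when passing to a subalgebra (think of the bilateral shift, invertible in $B(\ell^2(\Zdb))$ but not in the closed algebra it generates). For the same reason your inclusion ${\rm Sp}_B(x) \supseteq {\rm Sp}_{{\rm ba}(x)}(x)$ is backwards, so the holomorphic functional calculus step collapses: $0 \notin {\rm Sp}_B(x)$ does not by itself keep $0$ out of ${\rm Sp}_{{\rm ba}(x)+\Cdb 1_B}(x)$. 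What rescues the step is precisely the hypothesis $x \in {\mathfrak r}^B_A$, which you never invoke here: the spectrum of $x$ in any unital algebra containing it lies in its numerical range, hence in the closed right half-plane, so $0$ cannot be an \emph{interior} point of the subalgebra spectrum; combined with the spectral permanence of the boundary, $\partial\, {\rm Sp}_C(x) \subseteq {\rm Sp}_B(x)$, this forces $0 \notin {\rm Sp}_C(x)$. Without accretivity the claim is simply false.

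The cleaner route --- and the one the paper intends, since it states that this corollary follows from Theorem \ref{ws} --- is the one you mention only in passing: if $x$ is invertible in $B$ then $x$ is pseudo-invertible in $B$ (take $y = x^{-1}$), so Theorem \ref{ws} (iv)$\Rightarrow$(v), applied with $B$ itself in the role of the ambient algebra, gives $x$ invertible in ${\rm ba}(x)$; since ${\rm ba}(x) \subseteq A$ this yields $x^{-1} \in A$ and $1_B = x x^{-1} \in {\rm ba}(x) \subseteq A$, and then $s(x)x = x$ gives $s(x) = 1_B$ on multiplying by $x^{-1}$ (Corollary \ref{perm} is not needed). There is also one genuinely missing implication: you never show that ``$1_B \in {\rm ba}(x)$'' implies invertibility. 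This needs a line: write $1_B = \lim_n x\, q_n(x)$ for polynomials $q_n$ with no constant term; since the invertibles form an open set in the unital Banach algebra ${\rm ba}(x)$, some $x\, q_n(x)$ is invertible there, and commutativity of ${\rm ba}(x)$ then makes $x$ itself invertible in ${\rm ba}(x)$, hence in $A$ and in $B$. With those repairs your argument matches the paper's.
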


\begin{corollary} \label{Aha3}   Let $A$ be a Banach algebra.  A closed right ideal $J$ of $A$
is of the form $x A$ for some $x \in {\mathfrak r}_A$  iff
$J  = qA$ for an idempotent 
 $q \in {\mathfrak F}_A$.
\end{corollary}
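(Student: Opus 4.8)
The plan is to prove the two implications separately, with the harder direction being the construction of the idempotent $q \in {\mathfrak F}_A$ from a closed right ideal of the form $xA$. The reverse direction is essentially trivial: if $q \in {\mathfrak F}_A$ is idempotent, then by Lemma \ref{lump} we also have $q \in {\mathfrak r}_A$, so taking $x = q$ exhibits $J = qA$ as a right ideal of the required form, with $x = q \in {\mathfrak r}_A$. So the content is all in the forward implication.

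For the forward direction, suppose $J = xA$ for some $x \in {\mathfrak r}_A$. First I would observe that $xA$ being of this form means in particular that $xA$ is closed, so Theorem \ref{ws} applies: conditions (i)--(v) all hold. In particular, by (i), the support idempotent $s(x)$ lies in $A$, and by Corollary \ref{lba} (or the discussion preceding it), $s(x) \in {\mathfrak F}_{A^{**}} \cap A = {\mathfrak F}_A$ (using that $s(x) \in {\mathfrak F}_A^B$ for a suitable unitization $B$, which sits inside ${\mathfrak F}_A$). So $q := s(x)$ is an idempotent in ${\mathfrak F}_A$. It remains to check that $qA = xA$. One inclusion follows from $x = xs(x) = qx \cdot (\text{something})$... more carefully: from $xs(x) = s(x)x = x$ we get $x = s(x) x \in s(x)A = qA$, hence $xA \subseteq qA$. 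For the reverse inclusion, condition (v) of Theorem \ref{ws} says $x$ is invertible in ${\rm ba}(x)$; since $s(x)$ is the identity of ${\rm ba}(x)^{**}$ and in fact (by (i) and (v)) $s(x) \in {\rm ba}(x)$ serves as the identity of the unital algebra ${\rm ba}(x)$, we have $q = s(x) = x x^{-1}$ where $x^{-1}$ is the inverse in ${\rm ba}(x) \subseteq A$, so $q \in xA$, giving $qA \subseteq xA$. Thus $xA = qA$ as desired.

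The main obstacle I anticipate is bookkeeping around which unitization $B$ one works in, and confirming that $s(x)$ genuinely lands in ${\mathfrak F}_A$ rather than merely in some ${\mathfrak F}_A^B$ — but this is exactly handled by the inclusion ${\mathfrak F}_{A^{**}}^{B^{**}} \subseteq {\mathfrak F}_{A^{**}}$ noted in the discussion before Corollary \ref{lba}, together with $s(x) \in A^{**} \cap A = A$. A secondary subtlety is making sure that ${\rm ba}(x)$ really is unital with identity $s(x)$: this is the combination of (i) and (v) of Theorem \ref{ws}, since $x$ invertible in ${\rm ba}(x)$ forces ${\rm ba}(x)$ to have an identity, and that identity must be the mixed identity $s(x)$ of ${\rm ba}(x)^{**}$, which by (i) lies in $A$ and hence in ${\rm ba}(x)$ itself (as ${\rm ba}(x)$ is closed and $s(x)$ is a weak* limit of elements $x^{1/n} \in {\rm ba}(x)$, but one should instead argue that the identity of the unital algebra ${\rm ba}(x)$, which exists by (v), must equal $s(x)$ since $s(x)$ acts as a two-sided identity on $x$ and hence on ${\rm ba}(x)$). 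With these points pinned down, the argument is short and follows the pattern of the approximately unital case in \cite{BOZ}.
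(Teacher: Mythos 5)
Your argument is correct and follows the paper's intended route: the paper derives Corollary \ref{Aha3} directly from Theorem \ref{ws} (as in the approximately unital case of \cite{BOZ}), taking $q = s(x)$ and using that closedness of $xA$ forces $s(x) \in A$ and $x$ invertible in ${\rm ba}(x)$, exactly as you do. Your handling of the two subtleties (that $s(x)$ lands in ${\mathfrak F}_A$ and not merely some ${\mathfrak F}^B_A$, and that $s(x)$ is the identity of the unital algebra ${\rm ba}(x)$) is sound, so nothing further is needed.
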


\begin{corollary} \label{Aha2}   If a  nonunital 
Banach  algebra
$A$ contains a nonzero $x \in {\mathfrak r}_A$ with $xA$ closed,
 then $A$ contains  a nontrivial 
idempotent in ${\mathfrak F}_A$.  
  If a Banach  algebra $A$ has  no 
left identity, then $x A  \neq A$ for all $x \in {\mathfrak r}_A$.
\end{corollary}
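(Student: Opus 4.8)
The plan is to read both assertions of Corollary~\ref{Aha2} off Corollary~\ref{Aha3} and Theorem~\ref{ws}, together with the elementary facts about the support idempotent recorded just before Corollary~\ref{lba}, namely $s(x)\,x = x\,s(x) = x$ and $x \in \overline{xA}$.

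For the first assertion I would argue as follows. Assume $A$ is nonunital and $0 \neq x \in {\mathfrak r}_A$ with $xA$ closed. Then $xA$ is a closed right ideal of $A$ of the form $xA$ with $x \in {\mathfrak r}_A$, so Corollary~\ref{Aha3} at once produces an idempotent $q \in {\mathfrak F}_A$ with $qA = xA$. It then only remains to see that $q$ is a nontrivial idempotent of $A$. It is nonzero: by Corollary~\ref{lba} we have $x \in \overline{xA} = xA = qA$, so $qA \neq (0)$ since $x \neq 0$, whence $q \neq 0$. And $q$ is not an identity of $A$, simply because $A$ has none. I expect no real obstacle here beyond correctly invoking the cited results.

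For the second assertion I would argue by contradiction: suppose $A$ has no left identity and yet $xA = A$ for some $x \in {\mathfrak r}_A$. Since $A$ is complete, $xA = A$ is closed, so the implication (ii)~$\Rightarrow$~(i) of Theorem~\ref{ws} gives $s(x) \in A$. Now take any $a \in A$ and write $a = xc$ with $c \in A$ (possible because $xA = A$); using associativity of the product and $s(x)\,x = x$,
\[
s(x)\,a \;=\; s(x)(xc) \;=\; (s(x)\,x)\,c \;=\; xc \;=\; a .
\]
Thus $s(x)$ is a left identity of $A$, contradicting the hypothesis; hence $xA \neq A$ for every $x \in {\mathfrak r}_A$.

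The one point I would be careful about is that in the last display $s(x)$ must genuinely lie in $A$, not merely in $A^{**}$, in order that it provide a left identity for $A$ itself rather than only for $A^{**}$ — and this is exactly where the equivalence of (i) and (ii) in Theorem~\ref{ws} (equivalently, that closedness of $xA$ forces $s(x) \in A$) does the real work. Everything else is formal manipulation with the identities for $s(x)$.
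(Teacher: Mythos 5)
Your proof is correct and follows essentially the same route as the paper, which derives this corollary from Theorem \ref{ws} and the support idempotent $s(x)$: your detour through Corollary \ref{Aha3} for the first assertion is only a repackaging, since the idempotent $q$ produced there is exactly $s(x)$, which Theorem \ref{ws} places in $A$ (and which is nonzero because $s(x)x=x\neq 0$, equivalently your observation that $x\in \overline{xA}=qA$). Your second argument — closedness of $xA=A$ forces $s(x)\in A$, and then $s(x)a=s(x)xc=xc=a$ exhibits a left identity — is precisely the intended one, and you correctly flag that the only substantive step is $s(x)\in A$ rather than merely $s(x)\in A^{**}$.
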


In \cite{BHN} we generalized 
 the concept of {\em hereditary subalgebra} (HSA), an important tool 
in $C^*$-algebra theory, to operator algebras, 
and established that  the basics of the $C^*$-theory of HSA's is still true.     Now of course HSA's need not be selfadjoint, but  are still norm closed   approximately unital {\em inner ideals} in $A$, where by the latter term 
we mean a subalgebra $D$ with $D A D \subset D$.    
Generalizing Theorem \ref{AaKa2} above, we showed in \cite{BRI,BRII} that HSA's and right ideals with left cais in operator algebras
are   {\em  manifestations} of 
our cone ${\mathfrak r}_A$, or if preferred,   ${\mathfrak F}_A$ or the  `nearly positive' elements.
We now discuss some aspects of this in the case of Banach algebras from 
\cite{BOZ}, and mention 
some of what is still true in that setting.  In  particular we study 
the relationship between HSA's and 
  one-sided ideals
with one-sided approximate identities.   
Some aspects of this relationship 
is problematic for general Banach algebras (see \cite[Section 4]{BOZ}), but 
it works much better in 
separable algebras.   
As we said around Theorem \ref{AaKaSin},  our work is closely related to  
the results of Sinclair and others on the Cohen factorization method (see e.g.\ \cite{Sinc,Est}), which does include some similar sounding but different results.

We define a {\em right ${\mathfrak F}$-ideal} (resp.\ left ${\mathfrak F}$-ideal) in a 
Banach algebra $A$  to be a closed right (resp.\ left) ideal with a left (resp.\ right) bai in ${\mathfrak F}_A$
(or equivalently, by Corollary \ref{hasbi}, in ${\mathfrak r}_A$).    
Henceforth in this section, by a {\em hereditary subalgebra} (HSA) of $A$ we will mean 
 an inner ideal  $D$ with a two-sided  bai in ${\mathfrak F}_A$
(or equivalently, by Corollary \ref{hasbi}, 
in ${\mathfrak r}_A$).
Perhaps these should be called ${\mathfrak F}$-HSA's to avoid confusion with the notation of 
\cite{BHN,BRI} where one uses cai's instead of bai's, but for brevity we shall use the 
shorter term.    And indeed for operator algebras (the setting of \cite{BHN,BRI})
the two notions coincide, and also right and left 
${\mathfrak F}$-ideals are just the 
r-ideals and $\ell$-ideals of those papers 
(see Corollary \ref{Aha5}).
Note that a HSA $D$ induces a pair of right and left ${\mathfrak F}$-ideals $J = DA$ and $K = AD$.   
Using the proof in \cite[Lemma 4.2]{BOZ} we have:

\begin{lemma}  \label{zH}  If $A$ is a Banach algebra, and $z \in  {\mathfrak F}_A$,
set  $J = \overline{zA}$, 
 $D = \overline{zAz}$, and $K = \overline{Az}$. \ Then $D$ is a HSA in $A$ 
and $J$ and $K$ are the induced right and left ${\mathfrak F}$-ideals mentioned above.
\end{lemma}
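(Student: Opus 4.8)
The plan is to leverage Proposition \ref{wh}, which already does most of the work for the single element $z$: it tells us that $\mathrm{ba}(z)$ has a bai in $\mathfrak{F}_A$, indeed $(z^{1/n})$ is a bai for $\mathrm{ba}(z)$ lying in $\mathfrak{r}_A$ (and $(\mathfrak{F}(z)^{1/n})$ one lying in $\mathfrak{F}_A$). The first step is to identify $D = \overline{zAz}$, $J = \overline{zA}$, $K = \overline{Az}$ as the natural candidates and to check they are closed subalgebras: $J$ and $K$ are visibly closed right and left ideals, and $D$ satisfies $DAD \subset \overline{zAz A zAz} \subset \overline{zAz} = D$, so $D$ is an inner ideal. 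The key point is to produce the approximate identities. I would set $e_n = z^{1/n}$ (or $\mathfrak{F}(z)^{1/n}$ if one wants to land inside $\mathfrak{F}_A$ rather than merely $\mathfrak{r}_A$ — by Corollary \ref{hasbi} either works, and the statement of the lemma as phrased asks for $\mathfrak{F}_A$); since $z \in \mathfrak{F}_A$, Proposition \ref{clnth} gives $z^{1/n} \in \mathfrak{F}_A$ directly, which is even cleaner.

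The main step is then the factorization-type argument showing $(e_n)$ is a left bai for $J$, a right bai for $K$, and a two-sided bai for $D$. For $J$: a generic element of $J$ is a limit of elements $za$ with $a \in A$, and $e_n (za) = (e_n z) a \to za$ because $e_n z = z^{1/n} z = z^{1 + 1/n} \to z$ in norm (this last convergence is where one uses that $(z^{1/n})$ is a bai for $\mathrm{ba}(z)$, applied to the element $z \in \mathrm{ba}(z)$), together with the uniform bound $\sup_n \|e_n\| < \infty$ to pass to the closure. Symmetrically $(e_n)$ is a right bai for $K$. For $D = \overline{zAz}$: an element $zaz$ satisfies $e_n(zaz) e_n = (e_n z) a (z e_n) \to zaz$, and again the norm bound lets one extend to all of $D$. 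Since the $e_n$ lie in $\mathfrak{F}_A$, each of $J$, $K$, $D$ has an approximate identity in $\mathfrak{F}_A$, so $D$ is a HSA in the sense of this section and $J$, $K$ are the induced right and left $\mathfrak{F}$-ideals.

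Finally I would record that $J = DA$ and $K = AD$, i.e.\ that $\overline{zA} = \overline{(zAz)A}$ and $\overline{Az} = \overline{A(zAz)}$. One inclusion is trivial ($\overline{zAzA} \subset \overline{zA}$); for the reverse, given $za \in zA$, write $z = \lim_n z e_n z \cdot (\text{something})$ — more precisely, since $e_n = z^{1/n} \in \mathrm{ba}(z)$ and $z e_n z = z^{1 + 2/n} \to z$, one has $z \in \overline{zAz}$ via $z = \lim z^{1/2}\, z^{1/n}\, z^{1/2}\cdot$(no, cleaner:) $z = \lim_n (z^{1/2}) e_n (z^{1/2})$ when $z^{1/2} \in \mathrm{ba}(z)$, hence $za \in \overline{zAz\cdot A} = \overline{DA}$; thus $\overline{zA} \subset \overline{DA} \subset \overline{zA}$, giving equality, and symmetrically for $K$. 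The only mild subtlety — and the step I expect to need the most care — is the passage to norm closures using boundedness of the $e_n$ (which for Banach algebras, unlike operator algebras, need not be contractions but is uniformly bounded by Lemma \ref{Bal} and the remarks around it), and making sure one genuinely has $\|e_n\|$ bounded; this is exactly what was handled in the proof of \cite[Lemma 4.2]{BOZ}, whose argument transfers verbatim.
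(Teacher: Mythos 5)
Your overall strategy (take $e_n = z^{1/n}$, use Proposition \ref{clnth} to keep it in ${\mathfrak F}_A$, use $z^{1/n}z=z^{1+1/n}\to z$ plus boundedness of $\Vert z^{1/n}\Vert$ to get the approximate identity property) is the right one, but there is a genuine gap: you never verify that the proposed bai actually \emph{lies in} $J$, $K$ and $D$, which is part of what it means to be a (one- or two-sided) bai for those algebras, and is also exactly what is needed for the identifications $J=\overline{DA}$, $K=\overline{AD}$. The one computation you offer in this direction is incorrect: $z\,z^{1/n}z=z^{2+1/n}\to z^2$, not $z$; and your fallback $z^{1/2}z^{1/n}z^{1/2}=z^{1+1/n}\to z$ only shows $z\in\overline{z^{1/2}Az^{1/2}}$, which is not visibly the same set as $\overline{zAz}$ (note $z^{1/n}\notin zAz$ on the face of it, since one cannot write $z^{1/n}=zbz$ without negative powers). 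So as written the argument does not show $z^{1/n}\in D$, nor even $z\in D$.

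The fix is the Cohen factorization device that the paper itself uses in Corollary \ref{eab}. By Proposition \ref{wh}, ${\rm ba}(z)$ has a bai (namely $(z^{1/n})$), so Cohen factorization gives ${\rm ba}(z)={\rm ba}(z)^2={\rm ba}(z)^3$. Every element of ${\rm ba}(z)$ is a norm limit of elements $zq(z)$ (equivalently $q(z)z$) with $q$ a polynomial with coefficients in $\Cdb$, i.e.\ lies in $\overline{zB}\cap\overline{Bz}$ for a unitization $B$; since $BA\subset A$ and $AB\subset A$, products of two (resp.\ three) such elements lie in $\overline{zA}\cap\overline{Az}$ (resp.\ $\overline{zAz}$). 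Hence ${\rm ba}(z)\subset J\cap K\cap D$, which puts each $z^{1/n}$ (and $z$ itself) inside $J$, $K$ and $D$; combined with your convergence estimates this makes $(z^{1/n})$ a left bai for $J$, a right bai for $K$ and a two-sided bai for $D$, all within ${\mathfrak F}_A$, and then $z\in D$ immediately gives $\overline{zA}=\overline{DA}$ and $\overline{Az}=\overline{AD}$. With that insertion your proof is complete and follows the same route as the cited argument of \cite[Lemma 4.2]{BOZ}.
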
  

At this point we have to jettison ${\mathfrak F}_A$ and ${\mathfrak r}_A$ as defined at the start of Section \ref{poroo},
   if $A$ is not approximately unital, because the remaining results are endangered if 
${\mathfrak F}_A$ and ${\mathfrak r}_A$ are not  closed and convex. 
Indeed most 
 results in Sections 
4 and 7 of \cite{BOZ} would seem to  need  ${\mathfrak F}_A$ and ${\mathfrak r}_A$ to be replaced 
by  
${\mathfrak F}^B_A$ and ${\mathfrak r}^B_A$ for a fixed unital Banach algebra $B$ containing $A$ as a subalgebra.  That is, 
we need to fix a particular unitization of $A$, not consider all unitizations 
simultaneously.    Of course if $A$ is
an operator algebra then there is a unique unitization, hence all this is redundant.  
(As we said early in Section \ref{poroo}, 
we could also fix this problem by redefining
${\mathfrak F}_A = \cap_B \, {\mathfrak F}^B_A$ and ${\mathfrak r}_A  = \cap_B \,  {\mathfrak r}^B_A,$
 the intersections taken over all $B$ as above.  
Everything below would then look cleaner but
may be less useful.)
Thus we define a {\em right ${\mathfrak F}^B$-ideal} (resp.\ left ${\mathfrak F}^B$-ideal) in a 
  $A$  to be a closed right (resp.\ left) ideal with a left (resp.\ right) bai in ${\mathfrak F}^B_D$
(or equivalently, by Corollary \ref{hasbi}, in ${\mathfrak r}^B_D$).     Note that one-sided ${\mathfrak F}^B$-ideals in $A$ are exactly subalgebras of $A$ which are  one-sided ${\mathfrak F}^B$-ideals in 
$A+ \Cdb 1_B$ in the sense of \cite[Section 4]{BOZ}. 

We define a  $B$-{\em hereditary subalgebra} (or $B$-HSA for short) of $A$ to 
 an inner ideal  $D$ in $A$ with a two-sided  bai in ${\mathfrak F}^B_D$
(or equivalently, by Corollary \ref{hasbi}, 
in ${\mathfrak r}^B_D$).    Note that $B$-HSA's in $A$ are exactly subalgebras of $A$ which are HSA's in 
$A+ \Cdb 1_B$ in the sense of \cite[Section 4]{BOZ}.

Again a $B$-HSA $D$ induces a pair of right and left ${\mathfrak F}^B$-ideals $J = DA$ and $K = AD$.   
Lemma \ref{zH} becomes: If $z \in  {\mathfrak F}^B_A$,
set  $J = \overline{zA}$, 
 $D = \overline{zAz}$, and $K = \overline{Az}$. \ Then $D$ is a $B$-HSA in $A$ 
and $J$ and $K$ are the induced right and left ${\mathfrak F}^B$-ideals mentioned above.

Because of the facts at the end of the second last paragraph, and because of Corollary \ref{eab}, 
in the following four results we can assume that $A$ is unital, in which case the proofs are in \cite{BOZ}.
These results are all stated for a Banach algebra with unitization $B$, but they could equally well
be  stated for  a closed subalgebra of a unital Banach algebra $B$.

\begin{lemma} \label{oleftc}  Suppose that $J$ is a right ${\mathfrak F}^B$-ideal in a  
Banach algebra with unitization  $B$.   For every compact subset $K \subset J$, there exists 
$z \in J \cap {\mathfrak F}^B_A$ with $K \subset z J \subset zA$.
\end{lemma}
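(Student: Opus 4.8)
The plan is to reduce to the unital case and then apply the module form of the Cohen--Hewitt factorization theorem, taking care to keep the left factor inside the convex set ${\mathfrak F}^B_J$.

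First, by the remarks immediately preceding the lemma (a one-sided ${\mathfrak F}^B$-ideal in $A$ is the same as a subalgebra of $A$ which is a one-sided ${\mathfrak F}^B$-ideal in $A + \Cdb 1_B$) together with Corollary \ref{eab}, we may assume that $A = B$ is unital. Then ${\mathfrak F}^B_A = \{ a \in A : \Vert 1 - a \Vert \leq 1 \}$ is closed and convex, Proposition \ref{aufr} gives ${\mathfrak F}^B_J = J \cap {\mathfrak F}^B_A$, and the element $z$ we seek is precisely a left factor lying in the closed convex set $S := {\mathfrak F}^B_J$. By the definition of a right ${\mathfrak F}^B$-ideal, $J$ has a left bai $(e_t)$ with $(e_t) \subseteq S$; in particular $(e_t)$ is a bounded left approximate identity for the Banach algebra $J$, we have $e_t x \in J$ for $x \in J$, and $J$ is an essential left Banach $J$-module.

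Now I would invoke the Cohen--Hewitt factorization theorem for the essential left $J$-module $J$, in its version for compact sets and with control on the factor: for the compact set $K \subseteq J$ there exist $z \in S$ and a compact $L \subseteq J$ with $K \subseteq zL$; then $K \subseteq zL \subseteq zJ \subseteq zA$, which is the desired conclusion. Two points here need a word. (i)~That $z$ may be chosen in $S$: in Cohen's construction $z = \lim_n a_n$ with $a_0 = 1$ and $a_n = \mu e_{t_n} + (1-\mu) a_{n-1}$ for a suitably small $\mu \in (0,1)$, so $a_n = (1 - (1-\mu)^n) c_n + (1-\mu)^n 1$ with $c_n \in {\rm conv}\{ e_{t_1}, \dots, e_{t_n} \} \subseteq S$; since $c_n \in S$ forces $\Vert 1 - c_n \Vert \leq 1$, we get $\Vert a_n - c_n \Vert = (1-\mu)^n \Vert 1 - c_n \Vert \leq (1-\mu)^n \to 0$, so $z = \lim_n c_n \in \overline{S} = S$. (ii)~The passage to compact sets: by the classical fact that a compact subset of a Banach space lies in the closed convex hull of some null sequence, write $K \subseteq \overline{{\rm conv}}(\{ x_n \} \cup \{ 0 \})$ with $x_n \to 0$ in $J$; apply the null-sequence version of Cohen--Hewitt to factor $x_n = z y_n$ through a common $z \in S$ with $y_n \to 0$ in $J$; and set $L = \overline{{\rm conv}}(\{ y_n \} \cup \{ 0 \})$, a compact convex subset of $J$, so that $zL$ is compact, convex and contains every $x_n$ and $0$, hence $zL \supseteq K$. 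This is essentially the argument used for the unital case in \cite[Section 4]{BOZ}.

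The only real obstacle is point (i): the Cohen--Hewitt theorem as usually stated does not record that the factor can be taken in the closed convex hull of the chosen approximate identity, so one must re-enter the construction (as sketched) to land $z$ in ${\mathfrak F}^B_J$ rather than merely in $A$. The reduction to the unital case and the compact-set bookkeeping are routine.
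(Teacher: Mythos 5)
Your proof is correct and follows essentially the same route as the paper: the paper reduces to the unital case via the remarks before the lemma together with Corollary \ref{eab}, and then cites the unital-case proof in \cite[Section 4]{BOZ}, which is exactly the Cohen--Hewitt factorization argument you reconstruct (including the convex-combination trick showing the factor lands in ${\mathfrak F}^B_J$ and the null-sequence reduction for compact sets). Your write-up simply supplies the details of the \cite{BOZ} argument that the paper leaves to the reference.
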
  

Applying this lemma gives the  first assertion
in the following result, taking  $K = \{ \frac{1}{n} e_n \} \cup \{ 0 \},$ where $(e_n)$ is the left bai.  Taking $K = \{ \frac{d_n}{n \Vert d_n \Vert} \} \cup \{ 0 \}$ where $\{ d_n \}$ is a countable dense set gives the second.   
  
\begin{corollary} \label{ctrid}  Let $A$ be a Banach algebra with unitization  $B$. 
   The  closed right ideals of $A$ with a countable 
left bai in   ${\mathfrak r}^B_A$ are precisely the (topologically) `principal right ideals'
$\overline{zA}$ for some $z \in {\mathfrak F}^B_A$ which is also in the ideal.   Every separable right ${\mathfrak F}^B$-ideal 
is of this form.
\end{corollary}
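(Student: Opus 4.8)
The plan is to obtain both assertions from Lemma~\ref{oleftc}, applied with the fixed unitization $B$, feeding it the two compact sets indicated in the remarks preceding the statement; the only extra input is the machinery of roots and the ${\mathfrak F}$-transform from Section~\ref{poroo}.

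For the equivalence in the first sentence, let $J$ be a closed right ideal of $A$ carrying a countable left bai $(e_n)$ in ${\mathfrak r}^B_A$. By Corollary~\ref{hasbi} and the remark after it, $J$ also has a countable left bai in ${\mathfrak F}^B_A$, i.e.\ $J$ is a right ${\mathfrak F}^B$-ideal, so Lemma~\ref{oleftc} applies to $J$. Since $(e_n)$ is bounded, $\frac{1}{n} e_n \to 0$, so $K = \{ \frac{1}{n} e_n : n \in \Ndb \} \cup \{ 0 \}$ is a compact subset of $J$, and Lemma~\ref{oleftc} produces $z \in J \cap {\mathfrak F}^B_A$ with $K \subset zJ \subset zA$. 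Hence $e_n \in zA \subset \overline{zA}$ for every $n$; since $\overline{zA}$ is a closed right ideal of $A$ and $e_n x \to x$ for all $x \in J$, each $x \in J$ lies in $\overline{zA}$, so $J \subset \overline{zA}$. The reverse inclusion is immediate from $z \in J$, so $J = \overline{zA}$ with $z \in {\mathfrak F}^B_A$ lying in the ideal. Conversely, suppose $z \in {\mathfrak F}^B_A$ and $J = \overline{zA}$; then $z \in {\mathfrak r}^B_A \subset {\mathfrak r}_A$, so $z \in {\rm ba}(z) \subset \overline{zA} = J$ by the Cohen factorization argument in the proof of Corollary~\ref{eab}. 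By Proposition~\ref{wh}, $(z^{\frac{1}{n}})$ is a bai for ${\rm ba}(z)$; it lies in ${\rm ba}(z) \subset J$ and in ${\mathfrak r}^B_A$ (Corollary~\ref{hasb}), and it is bounded (Lemma~\ref{Bal}). Since $z^{\frac{1}{n}} z \to z$, a routine three-$\epsilon$ argument using the density of $zA$ in $\overline{zA}$ shows that $(z^{\frac{1}{n}})$ is a countable left bai for $J$ in ${\mathfrak r}^B_A$. This proves the first sentence.

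For the second sentence, let $J$ be a separable right ${\mathfrak F}^B$-ideal; we may assume $J \neq \{ 0 \}$, and fix a countable dense subset $\{ d_n : n \in \Ndb \}$ of $J$ with each $d_n \neq 0$. Then $\frac{d_n}{n \Vert d_n \Vert} \to 0$, so $K = \{ \frac{d_n}{n \Vert d_n \Vert} : n \in \Ndb \} \cup \{ 0 \}$ is a compact subset of $J$, and Lemma~\ref{oleftc} gives $z \in J \cap {\mathfrak F}^B_A$ with $K \subset zA$. Thus $d_n \in \overline{zA}$ for all $n$, whence $J \subset \overline{zA}$ by density, while $\overline{zA} \subset J$ since $z \in J$. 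Therefore $J = \overline{zA}$, as desired.

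The steps that need a little care are the translation, via Corollary~\ref{hasbi} and its remark, between left bai's in ${\mathfrak r}^B_A$ and in ${\mathfrak F}^B_A$ (needed so that Lemma~\ref{oleftc} is applicable), and, in the converse direction, checking that the root bai $(z^{\frac{1}{n}})$ works as a left bai for all of $\overline{zA}$ and not merely for ${\rm ba}(z)$; both are supplied by the cited results of Section~\ref{poroo}. Everything else is a direct application of Lemma~\ref{oleftc} to the two compact sets.
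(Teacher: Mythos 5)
Your proposal is correct and follows essentially the same route as the paper, which derives both assertions by applying Lemma~\ref{oleftc} to exactly the two compact sets $\{ \frac{1}{n} e_n \} \cup \{0\}$ and $\{ \frac{d_n}{n \Vert d_n \Vert} \} \cup \{0\}$; your filled-in details (passing between bai's in ${\mathfrak r}^B_A$ and ${\mathfrak F}^B_A$ via Corollary~\ref{hasbi}, and using $(z^{\frac{1}{n}})$ for the converse) match the machinery the paper relies on.
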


\begin{corollary}  \label{pred}    Let $A$ be a Banach algebra with unitization  $B$. 
 The right ${\mathfrak F}^B$-ideals  in $A$ are precisely the 
closures of  increasing unions of right ideals in $A$ of the form $\overline{zA}$ for some $z \in {\mathfrak F}^B_A$. \end{corollary}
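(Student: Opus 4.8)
The plan is to obtain both inclusions from Lemma~\ref{oleftc} and Corollary~\ref{lba}. As the paragraph before the statement notes, one could first reduce to the case that $A$ is unital (using Corollary~\ref{eab} and the fact that a one-sided ${\mathfrak F}^B$-ideal of $A$ is precisely a subalgebra of $A$ which is a one-sided ${\mathfrak F}^B$-ideal of $A + \Cdb 1_B$); however, the two cited results are already available in the non-unital setting, so the argument below needs no such reduction.

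First I would treat the forward inclusion. Let $J$ be a right ${\mathfrak F}^B$-ideal in $A$, and consider the family $\mathcal{D} = \{ \overline{zA} : z \in J \cap {\mathfrak F}^B_A \}$ of closed right ideals of $A$; each member lies inside $J$ since $J$ is a closed right ideal containing the relevant $z$. This family is directed upward by inclusion: given $z_1, z_2 \in J \cap {\mathfrak F}^B_A$, applying Lemma~\ref{oleftc} to the compact set $K = \{ z_1, z_2 \} \subset J$ produces $z \in J \cap {\mathfrak F}^B_A$ with $z_1, z_2 \in zJ \subset zA$, so that $z_iA \subset zA$ and hence $\overline{z_1A} \cup \overline{z_2A} \subset \overline{zA} \in \mathcal{D}$. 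Moreover $\bigcup \mathcal{D} = J$: for any $x \in J$, Lemma~\ref{oleftc} applied to $K = \{ x \}$ gives $z \in J \cap {\mathfrak F}^B_A$ with $x \in zJ \subset zA \subset \overline{zA}$. So, viewing $\mathcal{D}$ as an increasing net indexed by itself, $J = \overline{\bigcup \mathcal{D}}$ has the asserted form.

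Next I would prove the converse. Suppose $J = \overline{\bigcup_\alpha \overline{z_\alpha A}}$ where $(\overline{z_\alpha A})_\alpha$ is an increasing net of right ideals with each $z_\alpha \in {\mathfrak F}^B_A$. Then $J$ is visibly a closed right ideal, so it remains only to build a left bai for $J$ inside ${\mathfrak F}^B_A$. Since $z_\alpha \in {\mathfrak r}^B_A$, Corollary~\ref{lba} (in the ${\mathfrak F}^B$-form recorded in its proof) supplies a left bai for $\overline{z_\alpha A}$ in ${\mathfrak F}^B_A$; by monotonicity of the net, a left bai for $\overline{z_\beta A}$ serves as an approximate left identity on $\overline{z_\alpha A}$ whenever $\beta \geq \alpha$. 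Index by the directed set $\Lambda$ of pairs $(F,\epsilon)$ with $F$ a finite subset of $\bigcup_\alpha \overline{z_\alpha A}$ and $\epsilon > 0$, ordered by refinement of $F$ and decrease of $\epsilon$; for each $(F,\epsilon)$ pick $\alpha$ with $F \subset \overline{z_\alpha A}$ and then $u_{F,\epsilon} \in {\mathfrak F}^B_A$ from the left bai of $\overline{z_\alpha A}$ with $\| u_{F,\epsilon} x - x \| < \epsilon$ for all $x \in F$. Using that ${\mathfrak F}^B_A$ is norm-bounded and a standard three-term estimate against a nearby element of the dense subalgebra $\bigcup_\alpha \overline{z_\alpha A}$, one checks that $(u_{F,\epsilon})_{\Lambda}$ is a left bai for $J$ lying in ${\mathfrak F}^B_A$, so $J$ is a right ${\mathfrak F}^B$-ideal.

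Since Lemma~\ref{oleftc} is being assumed, the only genuine labour is the gluing step in the converse: one must check that the single net assembled from the left bai's of the individual pieces $\overline{z_\alpha A}$ really is a left approximate identity on all of $\bigcup_\alpha \overline{z_\alpha A}$, and this is exactly where the monotonicity of the ideals and the uniform norm bound on ${\mathfrak F}^B_A$ are used. It is routine, but it is the one place any care is needed.
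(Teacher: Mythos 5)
Your proposal is correct and is essentially the argument the paper has in mind: the forward inclusion via Lemma~\ref{oleftc} applied to finite subsets of $J$ to get a directed family $\{\overline{zA} : z \in J \cap {\mathfrak F}^B_A\}$ exhausting $J$, and the converse by assembling the left bai's of the pieces $\overline{z_\alpha A}$ (from Corollary~\ref{lba}) into one bounded left approximate identity for the closure, exactly as in the cited source \cite{BOZ}. The only cosmetic difference is that the paper reduces to the unital case via Corollary~\ref{eab} and quotes \cite{BOZ}, whereas you run the same argument directly in the non-unital setting; your gluing step, including the $(F,\epsilon)$ indexing and the three-term estimate using the norm bound on ${\mathfrak F}^B_A$, is sound.
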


We say that a right module $Z$ over $A$ is
{\em algebraically countably generated} (resp.\ {\em algebraically finitely
generated})  
over $A$ if there exists a
countable (resp.\  finite set) $\{ x_k \}$ in $Z$ such that
every $z \in Z$ may be written as a finite sum $\sum_{k=1}^n \, x_k a_k$
for some $a_k \in A$.   Of course any algebraically finitely
generated is algebraically countably generated.

\begin{corollary} \label{Aha4}    Let $A$ be a Banach algebra with unitization  $B$.    A right ${\mathfrak F}^B$-ideal $J$ in $A$
is algebraically countably generated as a right module over $A$   iff   $J  = q A$ for an idempotent $q \in {\mathfrak F}^B_A$.  This
is also equivalent to $J$ being algebraically countably generated as a right module over $A + \Cdb 1_B$.
\end{corollary}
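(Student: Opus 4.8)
The plan is to reduce everything to the unital case by replacing $A$ with the unital algebra $\tilde A = A + \Cdb 1_B$, and then to run a short argument combining Lemma~\ref{oleftc} with Corollary~\ref{Aha3}. First dispose of the easy direction: if $J = qA$ for an idempotent $q \in {\mathfrak F}^B_A$, then since $q = q^2 \in qA$ we have $q(A + \Cdb 1_B) = qA + \Cdb q = qA = J$, so $J$ is singly, hence countably, generated as a right module over both $A$ and $A + \Cdb 1_B$. For the reduction I would check from the definitions that a right ${\mathfrak F}^B$-ideal $J$ in $A$ is exactly a closed right ideal of $\tilde A$ carrying a left bai lying in $J \cap {\mathfrak F}^B_A$, that ${\mathfrak F}^B_A = {\mathfrak F}_{\tilde A} \cap A$ (both being $\{a : \Vert 1_B - a\Vert \le 1\}$), and that $J\tilde A = JA + J \subseteq J$, so $J$ is a genuine right ideal of the unital algebra $\tilde A$. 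Thus it suffices to prove: for a right ${\mathfrak F}$-ideal $J$ in the unital algebra $\tilde A$, $J$ is algebraically countably generated over $\tilde A$ if and only if $J = q\tilde A$ for an idempotent $q \in {\mathfrak F}_{\tilde A}$.

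For the forward direction of that statement I would use the compact-set trick already exploited after Lemma~\ref{oleftc}. Let $\{x_k\}$ generate $J$ algebraically over $\tilde A$; discarding zeros we may assume each $x_k \ne 0$, so that $K = \{(k\Vert x_k\Vert)^{-1} x_k : k \ge 1\} \cup \{0\}$ is a compact (indeed norm-null) subset of $J$. Applying Lemma~\ref{oleftc} in $\tilde A$ produces $z \in J \cap {\mathfrak F}_{\tilde A}$ with $K \subseteq zJ \subseteq z\tilde A$; since $zJ$ is a linear subspace, this forces $x_k \in z\tilde A$ for every $k$, whence $J = \sum_k x_k \tilde A \subseteq z\tilde A \subseteq J$, the last inclusion because $z \in J$ and $J$ is a right ideal of $\tilde A$. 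Hence $J = z\tilde A$. Now $z \in {\mathfrak F}_{\tilde A} \subseteq {\mathfrak r}_{\tilde A}$ and $z\tilde A = J$ is closed, so Corollary~\ref{Aha3} applied in $\tilde A$ yields an idempotent $q \in {\mathfrak F}_{\tilde A}$ with $J = q\tilde A$.

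Finally I would transfer back to $A$: $q \in J \subseteq A$, so $q \in {\mathfrak F}_{\tilde A} \cap A = {\mathfrak F}^B_A$, and as in the easy direction $q\tilde A = qA$, so $J = qA$. This also closes the triangle of equivalences: countable generation over $A$ trivially implies countable generation over $A + \Cdb 1_B = \tilde A$, which by the above implies $J = qA$, which in turn gives single generation over $A$; together with the easy direction all three conditions coincide. The genuine content is just Lemma~\ref{oleftc} (the device that squeezes all the generators into one principal ideal) together with Corollary~\ref{Aha3}; the step to be careful with is not hard but fiddly, namely the bookkeeping of the reduction to $\tilde A$ — checking that right ${\mathfrak F}^B$-ideals in $A$, the cone ${\mathfrak F}^B_A$, and countable generation over $A$ versus over $\tilde A$ all behave correctly under adjunction of $1_B$, plus the trivial idempotency identity $q = q^2 \in qA$ used to pass from $q\tilde A$ back to $qA$. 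I do not anticipate any real obstacle, since this is essentially the unital case of \cite{BOZ} repackaged with the standard unitization reductions already invoked for the three preceding corollaries.
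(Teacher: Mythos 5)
Your proof is correct and follows essentially the same route as the paper: reduce to the unital algebra $\tilde A = A + \Cdb 1_B$ (using that right ${\mathfrak F}^B$-ideals in $A$ are exactly right ${\mathfrak F}$-ideals in $\tilde A$), then squeeze a null sequence of generators into a single principal ideal via Lemma~\ref{oleftc} and convert to an idempotent via Corollary~\ref{Aha3}. The only difference is that the paper delegates the unital case to \cite{BOZ} while you write it out, and your compact set $\{(k\Vert x_k\Vert)^{-1}x_k\}\cup\{0\}$ is exactly the device the paper sketches for Corollary~\ref{ctrid}.
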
  

The following was not stated in \cite{BOZ}.

\begin{corollary} \label{Aha5}  If $A$ is an operator algebra,
a closed subalgebra of a unital operator algebra $B$,
 then right and left ${\mathfrak F}^B$-ideals in $A$ are just the 
r-ideals and $\ell$-ideals in $A$ of {\rm \cite{BHN,BRI}},
and $B$-HSA's in $A$ are just the HSA's in $A$ of those references.
 \end{corollary}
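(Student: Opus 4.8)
The plan is to use the fact that an operator algebra has an essentially unique unitization to eliminate the role of $B$, and then to convert between bounded and contractive approximate identities using the root estimates already developed above.

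First I would record that, since $A$ is an operator algebra, Meyer's uniqueness theorem (\cite[Section 2.1]{BLM}) gives it a unique unitization, and likewise for each of its closed subalgebras. Hence if $J$ is a closed right ideal of $A$ then, by Proposition \ref{aufr}, $\mathfrak{F}^B_J = J \cap \mathfrak{F}^B_A = J \cap \mathfrak{F}_A$ independently of the chosen unitization $B$; moreover this is exactly the $\mathfrak{F}$-set of $J$ computed inside $J$'s own unitization $J^1$, because $\Cdb 1_B + J$ is a unitization of $J$ and so is completely isometric to $J^1$. The analogous statements hold with $\mathfrak{r}$ in place of $\mathfrak{F}$, and for inner ideals $D$ in place of right ideals. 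Consequently, for operator algebras, ``right (resp.\ left) $\mathfrak{F}^B$-ideal of $A$'' just means ``closed right (resp.\ left) ideal of $A$ carrying a one-sided bai in $\mathfrak{F}_A$'', and ``$B$-HSA of $A$'' just means ``inner ideal of $A$ carrying a two-sided bai in $\mathfrak{F}_A$'', with no dependence on $B$.

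Next I would match these with the objects of \cite{BHN,BRI}, whose defining feature is instead a one-sided (resp.\ two-sided) \emph{contractive} approximate identity. In one direction, a right $\mathfrak{F}^B$-ideal $J$ is itself an operator algebra possessing a left bai in $\mathfrak{r}_J$, so Corollary \ref{hasbi4} (whose hypothesis holds for operator algebras by Lemma \ref{Bal}) upgrades this to a left \emph{cai} for $J$, indeed one lying in $\mathfrak{F}_J \subseteq \mathfrak{F}_A$; in particular $J$ is an r-ideal. The identical argument, with ``left'' replaced by ``right'' and then by ``two-sided'', shows every left $\mathfrak{F}^B$-ideal is an $\ell$-ideal and every $B$-HSA is an approximately unital inner ideal, i.e.\ a HSA in the sense of \cite{BHN}. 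In the reverse direction, an r-ideal of $A$ --- a closed right ideal with a left cai --- has, by \cite{BRI}, a left cai already lying in $\mathfrak{F}_A$ (one may even take it in $\frac12\mathfrak{F}_A$), and this cai lies in $J \cap \mathfrak{F}_A = \mathfrak{F}^B_J$, so $J$ is a right $\mathfrak{F}^B$-ideal; symmetrically for $\ell$-ideals, and for HSA's one invokes the corresponding statement in \cite{BHN} that an approximately unital inner ideal carries a two-sided cai inside $\mathfrak{F}_A$.

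The one step that is a genuine appeal rather than bookkeeping is the refinement used in the reverse direction, that r-ideals and HSA's in the sense of \cite{BHN,BRI} can be equipped with approximate identities lying inside $\mathfrak{F}_A$; this, however, is precisely what is established there (one route: $J^{\perp\perp} = pA^{**}$ for an open projection $p$, which as a selfadjoint idempotent of $C^*(A)^{**}$ automatically lies in $\mathfrak{F}_{(A^1)^{**}}$, and then one approximates $p$ by a left cai drawn from $J \cap \mathfrak{F}_A$, much as in the Kaplansky-type Theorem \ref{Mcai}). So I expect no real obstacle once those references are invoked; the only point in the argument above that needs care is that $\mathfrak{F}_A$ is unambiguous for an operator algebra, which Meyer's theorem and Proposition \ref{aufr} guarantee.
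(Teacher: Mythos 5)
Your argument is correct, and it reaches the conclusion by a partly different route than the paper. The paper's proof of this corollary is a one-line matching of characterizations: by Corollary \ref{pred} (resp.\ \ref{pred2}) a right ${\mathfrak F}^B$-ideal (resp.\ $B$-HSA) is precisely the closure of an increasing union of ideals $\overline{zA}$ (resp.\ $\overline{zAz}$) with $z \in {\mathfrak F}_A$, and this is verbatim the characterization of r-ideals (resp.\ HSA's) established in \cite{BRI}; both directions of the equivalence fall out simultaneously. You instead prove the two inclusions separately: the forward direction (${\mathfrak F}^B$-ideal $\Rightarrow$ r-ideal) by the bai-to-cai upgrade of Corollary \ref{hasbi4} together with the operator-algebra norm estimate of Lemma \ref{Bal}, and the reverse direction by invoking the theorem of \cite{BRI} that r-ideals and HSA's carry approximate identities drawn from $\frac12{\mathfrak F}_A$ (via open projections and a Kaplansky-density-type approximation). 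Your forward direction is in fact exactly the ``alternative'' argument the paper itself offers for the HSA case (``if $D$ is a $B$-HSA then $D$ has a bai from ${\mathfrak F}_A$; by Corollary \ref{hasbi4}, $D$ has a cai''), so the two proofs genuinely overlap there. What the paper's route buys is brevity and uniformity across the three cases; what yours buys is that the forward implication becomes self-contained within the present paper (no need for the full structural characterization from \cite{BRI}), at the cost of having to cite a nontrivial existence theorem from \cite{BRI} for the converse --- though the paper's proof leans on \cite{BRI} to exactly the same degree. Your preliminary reduction via Meyer's theorem and Proposition \ref{aufr}, showing that ${\mathfrak F}^B_J$ is unambiguous and equals $J \cap {\mathfrak F}_A$ for operator algebras, is a point the paper leaves implicit and is worth making explicit as you do.
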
  

\begin{proof}  By Corollary \ref{pred} a right ${\mathfrak F}^B$-ideal
is the closure of  an increasing union of right ideals in $A$ of the form $\overline{zA}$ for $z \in {\mathfrak F}_A$.  However this is the 
characterization of r-ideals from \cite{BRI}.  Similarly for
the left ideal case.  A similar argument works 
for the HSA case using Corollary \ref{pred2}; alternatively, if $D$ is a $B$-HSA then 
$D$ has a bai from ${\mathfrak F}_A$.   By Corollary \ref{hasbi4},
 $D$ has a cai.   \end{proof}  

If $A$ is a Banach algebra with unitization $B$ it would be nice to say that the  
right ${\mathfrak F}^B$-ideals  in $A$ are precisely the sets of form 
$\overline{EA}$ for a subset $E \subset  {\mathfrak F}^B_A$ (or equivalently, $E \subset  {\mathfrak r}^B_A$).
One direction of this is obvious: just take $E$ to be the bai in ${\mathfrak F}^B_A$ (resp.\ ${\mathfrak r}^B_A$).
However the other direction is false in general Banach algebras, although it does hold in operator algebras \cite{BRI}
and commutative Banach algebras.   (Another characterization of closed ideals with bai's in commutative Banach algebras may be found in
\cite{LU}.)

That   $\overline{EA}$ is a right ${\mathfrak F}^B$-ideal in $A$ if $A$ is a commutative Banach algebra
and  $E \subset  {\mathfrak F}^B_A$, follows from Theorem 7.1 in \cite{BOZ} after noting that by 
Corollary \ref{eab} we may replace $A$ by $A + \Cdb 1_B$.    The key part of the proof of  Theorem 7.1 in \cite{BOZ}
 is to 
show that for any finite subset $G$ of $E$ there exists  an element $z_G \in {\mathfrak F}^B_A \cap  \overline{EA}$ 
with $\overline{G A} = \overline{z_G A}$.    Indeed one can take $z_G$ to be the average of the elements in $G$.  Then the net $(z_G^{\frac{1}{n}})$, indexed by the finite subsets $G$ of $E$
and $n \in \Ndb$, is easily seen to be a left bai in  $\overline{EA}$ from ${\mathfrak F}^B_A$.  An application of this: for such subsets $E$ of an operator algebra
or commutative Banach algebra $A$, the Banach algebra 
generated by $E$ has a bai in ${\mathfrak F}^B_A$.     This follows from the argument above since the $z_G$ above 
are in the convex hull of $E$, hence the bai $(z_G^{\frac{1}{n}})$ is in the Banach algebra 
generated by $E$.     In particular, if $A$ is generated as a Banach algebra by ${\mathfrak r}^B_A$, then $A$ has 
a bai, and this bai may be taken from ${\mathfrak r}^B_A$.     (The present paragraph is a summary of the results in 
\cite[Section 7]{BOZ}, and a generalization of these results to the case that $A$ is not approximately unital.)

Unless explicitly said to the contrary,
all the remaining results in this section are again 
generalizations to general Banach algebras of results from \cite{BOZ}.  Some of these
were proved earlier in the operator algebra case 
by  the author and Read.
Again for their proofs we can assume  that $A$ is unital,
and appeal to the matching results in \cite{BOZ}.

\begin{lemma}  \label{preot}   Let $A$ be a Banach algebra with unitization  $B$. Let $D$ be a 
closed subalgebra of $A$.   If $D$ has a bai from ${\mathfrak F}^B_A$, then for every compact subset $K\subset D$,
 there is an $x \in  {\mathfrak F}_D^B$ such that $K\subset xDx \subset xAx$. \end{lemma}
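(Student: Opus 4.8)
The plan is to reduce to the case that $A$ is unital and then to build $x$ by a two‑sided Cohen--Hewitt factorization performed inside ${\mathfrak F}^B_D$, in parallel with the one‑sided Lemma \ref{oleftc}.

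First I would note that we may assume $A$ is unital with unit $1_B$. Replacing $A$ by the unital Banach algebra $A + \Cdb 1_B \subseteq B$ (with the norm inherited from $B$) changes neither $D$, nor the set ${\mathfrak F}^B_D = \{ d \in D : \Vert 1_B - d \Vert \leq 1 \}$, nor the hypothesis on $D$: the bai of $D$ lies in $D \cap {\mathfrak F}^B_A$, hence in ${\mathfrak F}^B_D$ by Proposition \ref{aufr}. Moreover, since $D \subseteq A$, once we have produced $x \in {\mathfrak F}^B_D$ with $K \subseteq xDx$, the inclusion $xDx \subseteq xAx$ is automatic. So henceforth $A = B$ is unital and $D$ is a closed subalgebra carrying a two‑sided bai $(e_t)$ in ${\mathfrak F}_D := D \cap {\mathfrak F}_A$ (equivalently, by Corollary \ref{hasbi}, in ${\mathfrak r}_A \cap D$).

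Now for the main step. A first attempt is to apply Lemma \ref{oleftc} to $D$, regarded as a right ${\mathfrak F}$-ideal in itself via the left bai $(e_t)$: this gives $z_1 \in {\mathfrak F}_D$ with $K = z_1 L$ for a compact $L \subseteq D$ (the Cohen--Hewitt refinement that the proof of that lemma supplies), and then its left‑handed form, applied to $L$, gives $z_2 \in {\mathfrak F}_D$ and compact $L' \subseteq D$ with $L = L' z_2$, so that $K = z_1 L' z_2 \subseteq z_1 D z_2$. What remains, and is the crux, is to merge $z_1$ and $z_2$ into a single $x \in {\mathfrak F}_D$ with $z_1 D z_2 \subseteq xDx$: one cannot simply take $x = \frac{1}{2}(z_1 + z_2)$, because in a general Banach algebra the closed right ideal generated by two real‑positive elements need not be of the form $\overline{zD}$ for a single real‑positive $z$ --- exactly the obstruction that makes the ``$\overline{EA}$'' description of ${\mathfrak F}^B$-ideals fail outside the commutative and operator‑algebra cases. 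The remedy is to factor $K$ bilaterally from the start, applying the genuinely two‑sided Cohen--Hewitt factorization theorem (see \cite{DW}) with the two‑sided bai $(e_t)$: this produces $u$ in the closed subalgebra of $D$ generated by $(e_t)$, with $\Vert u \Vert$ controlled by the bai bound, and a compact $K' \subseteq D$ with $K = u K' u$. One then upgrades $u$ to an element $x \in {\mathfrak F}_D$, still with $K \subseteq xDx$, by the same device used in the proof of Lemma \ref{oleftc} to pass from a norm‑bounded factor to one lying in the closed, convex, root‑stable set ${\mathfrak F}_D$ (using Proposition \ref{clnth} and the ${\mathfrak F}$-transform together with Lemma \ref{stam}); concretely, once $A$ is unital this is precisely the hereditary‑subalgebra factorization lemma of \cite[Section 4]{BOZ}. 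Then $K \subseteq xDx \subseteq xAx$ with $x \in {\mathfrak F}_D^B$, as required. I expect this last upgrade --- getting the two‑sided factoring element inside ${\mathfrak F}^B_D$ rather than merely norm‑bounded --- to be the delicate point, just as it already is for the one‑sided Lemma \ref{oleftc}.
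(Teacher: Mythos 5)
Your proposal is correct and follows essentially the same route as the paper: the paper's entire argument for Lemma \ref{preot} is to reduce to the unital case (replacing $A$ by $A+\Cdb 1_B$, noting that ${\mathfrak F}^B_D$, the bai hypothesis via Proposition \ref{aufr}, and the substantive inclusion $K\subset xDx$ are unaffected) and then to invoke the matching hereditary-subalgebra factorization lemma of \cite[Section 4]{BOZ}, which is exactly where you land. Your additional sketch of how that cited lemma works --- a genuinely two-sided Cohen--Hewitt factorization with the two-sided bai, followed by the upgrade of the factoring element into ${\mathfrak F}^B_D$, rather than two applications of the one-sided Lemma \ref{oleftc} --- is a faithful account of the argument being cited, not a different proof.
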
  

As in the proof sketched for Corollary \ref{ctrid} this leads to: 

 \begin{theorem} \label{otter} Let $A$ be a Banach algebra with unitization  $B$, and 
 let $D$ be an inner ideal in $A$.
Then  $D$  has a
 countable bai from ${\mathfrak F}^B_D$ 
(or equivalently, from ${\mathfrak r}_D^B$)
 iff there exists an element 
$z \in  {\mathfrak F}_D^B$ with $D =  \overline{z A z}$.  Thus  $D$ is of the form in Lemma {\rm \ref{zH}}, and   
such $D$ has a countable commuting 
bai from ${\mathfrak F}^B_D$, namely $(z^{\frac{1}{n}})$.  Any separable inner ideal in $A$ with a bai from ${\mathfrak r}^B_D$ is of this form.  
 \end{theorem}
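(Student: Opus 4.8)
The plan is to reduce to the unital case $A + \Cdb 1_B$ and then invoke the matching result of \cite{BOZ}. By the facts assembled just before the statement (namely, that $B$-HSA's in $A$ are exactly the subalgebras of $A$ that are HSA's in $A + \Cdb 1_B$ in the sense of \cite[Section 4]{BOZ}, and that one-sided ${\mathfrak F}^B$-ideals behave the same way) together with Corollary \ref{eab}, which tells us $\overline{zAz} = \overline{zBz}$ for $z \in {\mathfrak r}_A$, it suffices to prove everything inside $B = A + \Cdb 1_B$; there the argument is the one in \cite{BOZ}. So assume henceforth that $A$ is unital.

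First I would establish the implication from a countable bai to the principal form. Suppose $D$ is an inner ideal with a countable bai $(e_n)$ from ${\mathfrak F}^B_D$ (equivalently from ${\mathfrak r}^B_D$, by Corollary \ref{hasbi}). Apply Lemma \ref{preot} with the compact set $K = \{ \tfrac{1}{n} e_n : n \in \Ndb \} \cup \{ 0 \}$: this yields $z \in {\mathfrak F}^B_D$ with $K \subset zDz \subset zAz$. Then each $e_n \in \overline{zAz}$, and since $(e_n)$ is a bai for $D$ one gets $D \subset \overline{zAz}$. The reverse inclusion $\overline{zAz} \subset D$ holds because $z \in D$ and $D$ is an inner ideal, so $zAz \subset D$. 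Hence $D = \overline{zAz}$, which is exactly the form in Lemma \ref{zH}, and Lemma \ref{zH} confirms $D$ is a ($B$-)HSA. For the converse, given $z \in {\mathfrak F}^B_D$ with $D = \overline{zAz}$, Proposition \ref{wh} (or rather the proof of \cite[Proposition 3.17]{BOZ} behind it) shows $(z^{\frac{1}{n}})$ is a bai for ${\rm ba}(z)$ in ${\mathfrak F}^B_A$; a standard argument (using $z \in \overline{zAz}$ and that $z^{\frac{1}{n}} z^{\frac{1}{m}} = z^{\frac{1}{n} + \frac{1}{m}}$ by the Cauchy-product identity for roots recalled before Proposition \ref{clnth}) promotes this to a countable commuting bai for $D = \overline{zAz}$ lying in ${\mathfrak F}^B_D$; commutativity is immediate since powers of $z$ commute. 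This also handles the last sentence: if $D$ is separable with a bai from ${\mathfrak r}^B_D$, pass to a countable dense subset, reindex to extract a countable bai from ${\mathfrak r}^B_D$ (each element of a dense set being a limit of finite convex combinations of bai terms, one uses the convexity of ${\mathfrak r}^B_A$), and apply the first implication.

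The main obstacle I anticipate is the careful bookkeeping in the separable case — specifically, extracting an honest \emph{countable} bai from ${\mathfrak r}^B_D$ out of a possibly uncountable net, while staying inside the cone ${\mathfrak r}^B_D$ (this is where convexity and closedness of ${\mathfrak r}^B_D$, rather than of the possibly-non-convex ${\mathfrak r}_D$, are essential, and is precisely the reason the superscript $B$ cannot be dropped here). Everything else is a transcription of the unital arguments of \cite{BOZ} once the reduction via Corollary \ref{eab} is in place, so the only real content is making sure Lemma \ref{preot} is applied to the right compact set and that the resulting $z$ genuinely generates $D$ as an inner ideal.
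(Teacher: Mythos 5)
Your proof is correct and follows essentially the same route as the paper: reduce to the unital case via the remarks preceding the theorem together with Corollary \ref{eab}, apply Lemma \ref{preot} to the compact set $\{\frac{1}{n}e_n\}\cup\{0\}$ built from the countable bai, and use the roots $(z^{\frac{1}{n}})$ for the converse and the commuting-bai assertion. The only (inessential) divergence is in the separable case, where the paper applies Lemma \ref{preot} directly to $\{\frac{d_n}{n\Vert d_n\Vert}\}\cup\{0\}$ for a countable dense subset $\{d_n\}$ of $D$, rather than first extracting a countable bai as you do (and your parenthetical about convex combinations is unnecessary there, since the extracted terms already lie in ${\mathfrak r}^B_D$).
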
  

From this most of the following
 generalization of the Aarnes--Kadison theorem (see Theorem \ref{AaKa}) is immediate.
By a {\em strictly real  positive element} in (v) below, we mean an element $x \in A$ such that Re $\varphi(x)  > 0$ for
all states $\varphi$ of $A$ which do not vanish on $A$.
In \cite{BRI, BRord} we generalized some basic aspects of strictly positive elements in $C^*$-algebras to operator 
algebras.     
The following is mostly in \cite{BOZ,BRord}, and relies on ideas from \cite{BRI}.  

\begin{corollary}[Aarnes--Kadison type theorem] \label{ottertoo}   If $A$ is a Banach algebra 
then the following are equivalent: 
\begin{itemize} 
\item [(i)]   
  There  exists
an $x \in {\mathfrak r}_A$ with $A = \overline{x A x}$.
\item [(ii)]   There  exists
an $x \in {\mathfrak r}_A$ with $A = \overline{xA} = \overline{Ax}$.
\item [(iii)]    There  exists
an $x \in {\mathfrak r}_A$ with $s(x)$ a mixed identity for $A^{**}$.
\end{itemize}  
 If $B$ is a unitization of $A$ then items {\rm (i), (ii),} or {\rm (iii)} above hold 
with $x \in {\mathfrak r}^B_A$ iff 
\begin{itemize}
\item [(iv)]  $A$ has a sequential    bai from ${\mathfrak r}^B_A$.
\end{itemize}  
The approximate identity in {\rm (iv)} may be taken to be commuting, indeed it may be taken to be  
$(x^{\frac{1}{n}})$ for the last mentioned element $x$.  If $A$ is  separable and has a bai in ${\mathfrak r}^B_A$  then $A$ satisfies {\rm (iv)} and hence all of the above.
 Moreover if $A$ is an operator algebra then {\rm (i)}--{\rm (iv)} are each equivalent to:   \begin{itemize}
 \item [(v)]  $A$  has a strictly  real positive element,   \end{itemize}   
and any of these imply that the operator algebra $A$ has a sequential 
real positive cai.   
\end{corollary}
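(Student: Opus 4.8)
The plan is to reduce everything to three already-established ingredients: Theorem~\ref{otter} applied with the inner ideal $D$ taken to be $A$ itself, Proposition~\ref{wh} (which gives that $(x^{\frac{1}{n}})$ is a two-sided bai for ${\rm ba}(x)$ whenever $x\in{\mathfrak r}_A$), and Corollary~\ref{supp3} together with the fact that a weak* limit point of a two-sided bai of $A$ is a mixed identity of $A^{**}$. First I would dispose of the equivalences (i)$\Leftrightarrow$(ii)$\Leftrightarrow$(iii), all with the \emph{same} $x\in{\mathfrak r}_A$. Here (i)$\Rightarrow$(ii) is trivial from $\overline{xAx}\subseteq\overline{xA}\cap\overline{Ax}\subseteq A$; (ii)$\Rightarrow$(i) is the one-line density computation that if $a=\lim b_k x$ and each $b_k=\lim x c_{kj}$ then $a\in\overline{xAx}$; and (ii)$\Leftrightarrow$(iii) comes from Corollary~\ref{supp3} applied both to $\overline{xA}=A$ (giving that $s(x)$ is a left identity of $A^{**}$ for the second Arens product) and, in the opposite algebra, to $\overline{Ax}=A$ (giving that $s(x)$ is a right identity of $A^{**}$ for the first Arens product), which together is precisely the statement that $s(x)$ is a mixed identity.

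Next, fixing a unitization $B$, I would prove that ``(i) holds with $x\in{\mathfrak r}^B_A$'' is equivalent to (iv) and, along the way, establish the assertions about $(x^{\frac{1}{n}})$. For the forward direction, given $x\in{\mathfrak r}^B_A$ with $A=\overline{xAx}$, one notes $\overline{xAx}=\overline{{\rm ba}(x)\,A\,{\rm ba}(x)}$ (since polynomials in $x$ with zero constant term lie in $x\,{\rm ba}(x)\cap{\rm ba}(x)\,x$), that $(x^{\frac{1}{n}})$ is a norm-bounded bai for ${\rm ba}(x)$ by Proposition~\ref{wh}, and that $(x^{\frac{1}{n}})$ lies in ${\mathfrak r}^B_A$ by Corollary~\ref{hasb}; a routine $3\epsilon$ argument then upgrades $(x^{\frac{1}{n}})$ to a commuting sequential two-sided bai for $A$, which is (iv). For the converse, a sequential bai in ${\mathfrak r}^B_A$ produces, via Corollary~\ref{hasbi} and the Remark after it, a sequential bai for $A$ in ${\mathfrak F}^B_A$; then Theorem~\ref{otter} with $D=A$ yields $z\in{\mathfrak F}^B_A\subseteq{\mathfrak r}^B_A$ with $A=\overline{zAz}$ (so (i) holds) and simultaneously records the commuting sequential bai $(z^{\frac{1}{n}})$. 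The separable assertion is then immediate: a separable $A$ possessing a bai from ${\mathfrak r}^B_A$ is a separable inner ideal in itself with such a bai, hence is of the form $\overline{zAz}$ with $z\in{\mathfrak F}^B_A$ by the final sentence of Theorem~\ref{otter}, giving (i) and hence (iv).

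For the operator algebra refinements, $A$ has a unique unitization (Meyer), so the superscript $B$ may be dropped and ${\mathfrak F}_A,{\mathfrak r}_A$ are closed and convex. The equivalence of (i)--(iv) with (v) I would take essentially from \cite{BRord} (building on \cite{BRI}): via the noncommutative-topological description of support projections, $x$ being strictly real positive is equivalent to $s(x)$ not lying below any proper closed projection, i.e.\ to $s(x)$ being a mixed identity of $A^{**}$, which is (iii); conversely, if $s(x)$ is a mixed identity one checks directly that ${\rm Re}\,\varphi(x)>0$ for every state $\varphi$ of $A$ that does not vanish on $A$. Finally, for the sequential real positive cai: by (i) the algebra $A=\overline{xAx}$ is an approximately unital operator algebra, so by the last assertion of Theorem~\ref{Mcai} (Read's theorem) it has a cai $(f_t)$ inside $\frac{1}{2}{\mathfrak F}_A\subseteq{\mathfrak r}_A$; picking indices $t_1,t_2,\dots$ with $\Vert f_{t_k}x-x\Vert+\Vert xf_{t_k}-x\Vert<2^{-k}$ and running the same $3\epsilon$ density argument against $\overline{xAx}=A$ gives a sequential cai for $A$ still inside $\frac{1}{2}{\mathfrak F}_A$ (this is the device used at the end of the proof of Proposition~\ref{asuni}).

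I expect the only genuinely non-routine step to be the operator algebra equivalence with (v), which rests on the theory of open, closed and compact projections and of support projections developed in \cite{BHN, BRI, BRord} and not reproduced in this survey. Within the purely Banach-algebraic part the single delicate point is keeping track of which of the two Arens products is in play in the step (ii)$\Leftrightarrow$(iii), since $s(x)$ is defined as a weak* Banach limit of $(x^{\frac{1}{n}})$ while ``mixed identity'' simultaneously involves both products; everything else is a reduction to Theorem~\ref{otter}, Proposition~\ref{wh}, and the passage between ${\mathfrak F}$ and ${\mathfrak r}$ in Corollaries~\ref{hasbi} and \ref{hasb}.
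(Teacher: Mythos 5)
Your proposal is correct and follows essentially the same route as the paper: the main equivalences are reduced to Theorem \ref{otter} with $D=A$ together with Proposition \ref{wh}, Corollary \ref{supp3} and the ${\mathfrak F}$/${\mathfrak r}$ passage of Corollary \ref{hasbi}, and the operator-algebra equivalence with (v) is deferred to \cite{BRI,BRord} exactly as the paper does. The only (harmless) divergence is at the very end, where you obtain the sequential real positive cai from Read's theorem plus the subsequence device of Proposition \ref{asuni}, while the paper instead cites \cite[Corollary 3.5]{BRII} or Corollary \ref{Aha5}.
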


Again, ${\mathfrak r}$
can be replaced by ${\mathfrak F}$ throughout this result, or in any of the items (i) to (v).   

The proof of Corollary \ref{ottertoo} is mostly in \cite{BOZ,BRord}, and 
relies partly 
on ideas from \cite{BRI}.  In the operator algebra case, if (ii) holds with $x \in {\mathfrak F}_A$ then $((\frac{1}{2} x)^{\frac{1}{n}})$ is a cai for $A$ in
$\frac{1}{2}  {\mathfrak F}_A$ by \cite[Section 3]{BRI}, and $s(x) = 1_{A^{**}}$.  So $x$  is a  strictly  real positive element by \cite[Lemma 2.10]{BRI}.
Conversely, if an operator algebra $A$ has  a strictly  real positive element then it is explained 
in the long discussion before \cite[Lemma 3.2]{BRord} how to adapt the proof of \cite[Lemma 2.10]{BRI} to show that 
 (iv) holds, hence (ii), and hence $A$ has a sequential real positive
cai by e.g.\
\cite[Corollary 3.5]{BRII}, or by our earlier
Corollary \ref{Aha5}.

\begin{corollary}  \label{pred2}  The $B$-HSA's in a  Banach algebra $A$ with unitization $B$ are exactly the 
closures of  increasing unions of HSA's of the form $\overline{zAz}$ for  $z \in {\mathfrak F}^B_A$. \end{corollary}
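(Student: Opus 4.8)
The plan is to mirror the proof of Corollary~\ref{pred}, with Theorem~\ref{otter} and Lemma~\ref{preot} taking over the roles played there by Corollary~\ref{ctrid} and Lemma~\ref{oleftc}. First I would reduce to the case that $A$ is unital with $1 = 1_B$: by Corollary~\ref{eab} every set of the form $\overline{zAz}$ with $z \in {\mathfrak r}_A$ is unchanged if $A$ is replaced by $A + \Cdb 1_B$, and the inner-ideal and approximate-identity conditions transfer as well, so there is no loss in assuming $B = A$. After this reduction all superscripts $B$ refer to $A$ itself.

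For the ``hard'' inclusion --- that the closure $D$ of an increasing (directed) union of HSA's $D_\lambda = \overline{z_\lambda A z_\lambda}$, $z_\lambda \in {\mathfrak F}^B_A$, is a $B$-HSA --- I would argue as follows. Each $D_\lambda$ is a $B$-HSA by the $B$-HSA version of Lemma~\ref{zH} stated above, hence an inner ideal; a routine limit argument using directedness of $(D_\lambda)$ then gives $DAD \subset D$, so $D$ is a closed inner ideal. For the approximate identity, Theorem~\ref{otter} supplies the commuting bai $(z_\lambda^{1/n})_n$ for each $D_\lambda$, lying in ${\mathfrak F}^B_{D_\lambda} \subset {\mathfrak F}^B_D$; crucially these elements are uniformly bounded (by $2$, since $z_\lambda^{1/n} \in {\mathfrak F}^B_A$ by Proposition~\ref{clnth}, so $\|z_\lambda^{1/n}\| \le \|1_B\| + \|1_B - z_\lambda^{1/n}\|$). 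Ordering the pairs $(\lambda,n)$ by $(\lambda,n) \preccurlyeq (\mu,m)$ iff $\lambda \le \mu$ and $n \le m$, I would check that $(z_\lambda^{1/n})_{(\lambda,n)}$ is then a two-sided bai for $D$: given $d \in D$, first approximate it by some $d' \in D_\lambda$, then use that $(z_\mu^{1/n})_n$ is a two-sided bai for $D_\mu$ whenever $\mu \ge \lambda$, the uniform bound absorbing the approximation error. Since this bai lies in ${\mathfrak F}^B_D$, $D$ is a $B$-HSA.

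For the reverse inclusion, given a $B$-HSA $D$ with two-sided bai $(e_t)$ from ${\mathfrak F}^B_D$, I would set $\mathcal D = \{\overline{zAz} : z \in {\mathfrak F}^B_D\}$. By Proposition~\ref{aufr}, ${\mathfrak F}^B_D = D \cap {\mathfrak F}^B_A$, so each such $z$ lies in ${\mathfrak F}^B_A$; hence each member of $\mathcal D$ is a $B$-HSA of $A$ (by the $B$-HSA version of Lemma~\ref{zH}) contained in $D$, as $D$ is an inner ideal. To see $\mathcal D$ is upward directed, apply Lemma~\ref{preot} to the finite (hence compact) set $K = \{z_1, z_2\} \subset D$ to produce $x \in {\mathfrak F}^B_D$ with $z_1, z_2 \in xAx$; writing $z_i = x a_i x$ gives $z_i A z_i \subset xAx$, so $\overline{z_i A z_i} \subset \overline{xAx}$. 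Density of $\bigcup \mathcal D$ in $D$ follows since $e_t d e_t \to d$ for $d \in D$ (a standard consequence of $(e_t)$ being a bounded two-sided approximate identity, using $\sup_t \|e_t\| < \infty$) and $e_t d e_t \in e_t A e_t \subset \overline{e_t A e_t} \in \mathcal D$. Thus $D = \overline{\bigcup \mathcal D}$ has the claimed form.

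I expect the only real obstacle to be bookkeeping rather than anything deep: verifying that the amalgamated net of roots $(z_\lambda^{1/n})$ is genuinely a two-sided bai for the union --- which is exactly where the uniform bound coming from membership in ${\mathfrak F}^B_A$ is needed --- and checking that the reduction to unital $A$ leaves every small claim ($\overline{zAz} \subset D$, the inner-ideal property, density of $\bigcup \mathcal D$) intact. If one wishes to avoid even this, one can instead adopt the alternative convention mentioned in the excerpt, ${\mathfrak F}_A = \cap_B {\mathfrak F}^B_A$, in which case the superscripts disappear and the statement is read for a single unitization $B$.
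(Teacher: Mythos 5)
Your overall route is exactly the paper's: the paper proves this corollary by reducing to the unital case (via Corollary \ref{eab} together with the observation that $B$-HSA's in $A$ are precisely the subalgebras of $A$ that are HSA's of $A+\Cdb 1_B$) and then appealing to the HSA analogue in \cite{BOZ} of Corollary \ref{pred}, whose proof is the Lemma \ref{preot}/Theorem \ref{otter} argument you reconstruct. Your reduction is fine, and your reverse inclusion is correct: directedness of $\{\overline{zAz} : z \in {\mathfrak F}^B_D\}$ via Lemma \ref{preot} applied to $K = \{z_1,z_2\}$, containment in $D$ from the inner ideal property and Proposition \ref{aufr}, and density from $e_t d e_t \to d$.

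The step you dismiss as bookkeeping, however, contains a genuine error: the net $(z_\lambda^{1/n})$ with the product order on pairs $(\lambda,n)$ need not be an approximate identity for $D = \overline{\bigcup_\lambda D_\lambda}$. The uniform bound $\Vert z_\mu^{1/n}\Vert \le 2$ absorbs the error from replacing $d$ by $d' \in D_{\lambda_0}$, but the rate at which $(z_\mu^{1/n})_n$ acts as a bai on the fixed element $d'$ is not uniform in $\mu \ge \lambda_0$, and the product order requires exactly such uniformity. Concretely, in $A = C_0(0,1]$ take $z_k(t) = t^k \in {\mathfrak F}_A$; then $D_k = \overline{z_k A z_k} = A$ for every $k$, a (trivially increasing) family of the required form, yet for fixed $n$ and $0 \neq d \in A$ one has $\Vert z_k^{1/n} d - d\Vert = \sup_t \,(1-t^{k/n})|d(t)| \to \Vert d \Vert$ as $k \to \infty$, so no tail of your net works. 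The conclusion survives because one only needs the local criterion for a bounded two-sided approximate identity: for each finite $F \subset D$ and $\epsilon > 0$ there is a single $e \in {\mathfrak F}^B_D$ with $\Vert ex - x\Vert + \Vert xe - x \Vert < \epsilon$ for all $x \in F$. Such an $e$ is obtained by pushing $F$ into a single $D_\lambda$ up to a small perturbation (using directedness) and taking $e = z_\lambda^{1/n}$ for $n$ large depending on $F$ and $\epsilon$; the resulting bai is indexed by the pairs $(F,\epsilon)$ rather than by $(\lambda,n)$.
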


{\em Acknowledgements.}   We thank the referee for his comments.   

\bibliographystyle{amsalpha}

\end{document}